\providecommand{\norm}[1]{\|#1\|}
\newcommand*{\supp}{\mathrm{supp}}
\newcommand{\R}{\mathbb{R}}
\newcommand{\E}{\mathbb{E}}
\begin{document}

\title{\huge Active Subsampling for Measurement-Constrained M-Estimation of Individualized Thresholds with High-Dimensional Data}

\author{Jingyi Duan\thanks{Department of Statistics and Data Science, Cornell University, Ithaca, NY 14850, USA; e-mail: \texttt{jd2222@cornell.edu}.}~~~~Lehao Fu\thanks{Department of Statistics and Data Science, Cornell University, Ithaca, NY 14850, USA; e-mail: \texttt{lf449@cornell.edu}.}~~~~Yang Ning\thanks{Department of Statistics and Data Science, Cornell University, Ithaca, NY 14850, USA; e-mail: \texttt{yn265@cornell.edu}.}}

\date{\today}

\maketitle

\vspace{-0.5in}

\begin{abstract}

Measurement-constrained problems frequently arise in modern applications such as electronic health record studies. In such problems, despite the availability of large datasets, collecting labeled data can be highly costly or time-consuming, allowing only a small portion of the data to be labeled within a given budget. This raises a critical question: which data points are most beneficial to label given the budget constraint? We study this question in the context of estimating an optimal individualized threshold under a measurement-constrained M-estimation framework. 
In particular, our goal is to estimate a high-dimensional parameter $\btheta$ in a linear threshold $\btheta^T\bZ$ for a continuous variable $X$ such that the discrepancy between whether $X$ exceeds the threshold $\btheta^T\bZ$ and a binary outcome $Y$ is minimized. In the measurement-constrained setting, we propose a novel $K$-step active subsampling algorithm to estimate $\btheta$, which iteratively samples the most informative observations in the dataset and solves a regularized M-estimator. 
Our theoretical analysis reveals a sharp phase transition phenomenon with respect to $\beta$, the smoothness of the conditional density of $X$ given $Y$ and $\bZ$. In particular, for $\beta>(1+\sqrt{3})/2$, we show that the two-step algorithm (with $K=2$) yields an estimator with the parametric convergence rate $O_p((s \log d /N)^{1/2})$ in $l_2$ norm, where $d$ and $s$ are the dimension and sparsity of $\btheta$ respectively and $N$ is the label budget. The rate  is strictly faster than the minimax optimal rate $O_p((s \log d /N)^{\beta/(2\beta+1)})$ with $N$ i.i.d. samples drawn from the population, demonstrating the benefit of active sampling. For $1<\beta\leq (1+\sqrt{3})/2$, the estimator from the two-step algorithm is sub-optimal; running $K>2$ steps recovers the parametric rate. For $\beta\leq 1$, a super-parametric rate is obtained when $K$ is scaled properly with $N$. Furthermore, we formulate an  $N$-budget minimax framework for the measurement-constrained M-estimation problem and prove that our estimator is minimax rate optimal up to a logarithmic factor. We also develop Lepski's methods to achieve adaptation to the unknown
smoothness and sparsity respectively, and provide practical guidelines for the implementation of our algorithm. Finally, we demonstrate the superior performance of our method in simulation studies and apply the method to analyze a large diabetes dataset from 130 US hospitals.  

\end{abstract}

\noindent {\bf Keyword:} {\small Non-regular models, High-dimensional estimation, Sampling, Measurement constraints, Minimax optimality, Kernel smoothing}

\section{Introduction}

In many applications, the scientific questions can be formulated as identifying and interpreting the optimal individualized threshold value for a continuous variable such that the discrepancy with a binary outcome is minimized. One prominent example is the so-called minimum clinically important difference (MCID), which has attracted increasing interests in medical research over the past decade. The MCID is defined as the smallest difference in post-treatment changes, such that a patient is considered experiencing a clinically meaningful improvement
if her/his change exceeds this value. Since introduced by \cite{jaeschke1989measurement}, the MCID has been widely used by clinicians and health policy makers to evaluate the clinical effectiveness of the treatment, because it is tailored to reflect the patient's satisfaction or the improvement of her/his health condition. More recently, to account for the population heterogeneity, it was suggested by \cite{hedayat2015minimum,zhou2020estimation} to incorporate individual patient’s clinical profile to construct the individualized MCID (iMCID).


Formally, let $X$ denote a continuous variable representing the measurement of post-treatment change, $Y$ denote a binary outcome in $\{-1,+1\}$, where $Y=+1$ if the patient's health condition is improved after receiving the treatment and $Y=-1$ otherwise, and $\bZ$ denote a vector of $d$-dimensional covariates such as the patient's demographic information. \cite{zhou2020estimation} defined the iMCID as a function $c(\bZ)$ which minimizes 
\begin{equation}\label{objective0}
\PP\left(X <  c(\bZ) \mid Y= 1\right) + \PP\left(X > c(\bZ) \mid Y= -1\right).
\end{equation}
In other words, $c(\bZ)$ is the optimal individualized threshold for $X$ which minimizes the disagreement between the estimated patient's health condition and the binary outcome $Y$. 
The optimization problem (\ref{objective0}) provides a unified framework for a wide range of applications, including the estimation of optimal policy in causal inference \citep{zhao2012estimating}, semiparametric binary choice models in econometrics \citep{manski1975maximum}, and high-dimensional classification in machine learning. We will explore the connections to these and other examples in greater detail in Section \ref{sec_application}.


From a practical standpoint, a linear structure on the threshold $c(\bZ)$ is favored for its transparency and ease of interpretation, particularly when dealing with high-dimensional covariates $\bZ$. In this paper, we assume that $c(\bZ) = \boldsymbol{\theta}^T \bZ$ for some high-dimensional parameter $\btheta$. Under these assumptions, we can reformulate (\ref{objective0}) as the following M-estimation problem  
\begin{equation}\label{objective}
    \btheta^* =  \underset{{\btheta}}{\operatorname{argmin}} ~R(\btheta), \text{\ where\ } R(\btheta)= \EE\left[\gamma(Y) L_{01}\{Y(X-\btheta^T\bZ)\}\right],
\end{equation}
$L_{01}(u)=\frac{1}{2}\{1-\operatorname{sign}(u)\}$ is the 0-1 loss, with $\operatorname{sign}(u)=1$ if $u \geq 0$ and $-1$ otherwise, and $\gamma(\cdot)$ is a user-specified weight function. While this model-free formulation offers greater generality and potential for accommodating model misspecifications, minimization of the empirical version of $R(\btheta)$ is  computationally NP-hard due to the 0-1 loss.  Additionally, it is well known that the M-estimator of (\ref{objective}) is non-regular, resulting in non-standard limiting distributions and rates of convergence \citep{kim1990cube}. Recently, \cite{feng2022nonregular,feng2024test} proposed a regularized M-estimation framework with a smoothed surrogate loss to estimate and make inference on the high-dimensional parameter $\btheta^*$. In particular, they demonstrated that the convergence rate for estimating $\btheta^*$ in $l_2$ norm is given by $(s \log d /N)^{\beta/(2\beta+1)}$, where $d$ and $s$ are the dimension and sparsity of $\btheta^*$ respectively, $N$ is the sample size, and $\beta\geq 1$ is the smoothness of the conditional density of $X$ given the response $Y$ and the covariates $\bZ$. With the slower-than-classic root-$N$ rate, they also established that the resulting estimator is minimax rate optimal up to a logarithmic factor.

Up to this point, all aforementioned methods rely on the assumptions that the observations are i.i.d. and that the estimation process does not influence the data collection process. However, in practice, these two processes are often intertwined, especially under the measurement-constrained setting, where $(X, \bZ)$ are observed for all samples, but the response variable $Y$ can be measured only for a small subset \citep{wang2017computationally,zhang2021optimal}. Such constraints arise whenever labeling is expensive or time-consuming. For instance, in EHR (electronic health records) studies, while the database may contain rich patient-level covariates $(X,\bZ)$, the gold-standard outcome $Y$ often requires manual chart reviews from medical experts. Due to very expensive cost, only a carefully sampled subset of patients can be labeled, and the sampling scheme clearly affects the accuracy of the corresponding M-estimator.




To address this issue, we propose a novel $K$-step active subsampling algorithm for estimating the high-dimensional parameter $\btheta^*\in\RR^d$ in (\ref{objective}) under the following measurement-constrained setting. Formally, assume that we have access to a very large unlabeled dataset (e.g., the EHR database) $D = \{X_i,\bZ_i\}_{i=1}^n$ with $n$ i.i.d. samples, where the outcome $Y_i$ is  unavailable. Let $N$ denote the label budget, that is the expected total number of samples we are allowed to select. Once a specific data point is sampled, we can observe the outcome $Y_i$ (e.g., via chart reviews in the EHR studies). Our goal is to design a computationally and statistically efficient interactive sampling and estimation procedure for $\btheta^*$, subject to the budget constraint $N$, in the scenario $N\ll n$ and $N\ll d$. Our proposed algorithm begins by uniformly sampling a set of independent data from $D$, and solving  a regularized M-estimator with a smoothed surrogate loss to construct an initial estimator for $\btheta^*$. We then iteratively use the estimator from the previous iteration to guide the selection of a new set of independent samples and solve the corresponding regularized M-estimator. Repeating this process $K$ times yields our final estimator $\hat\btheta_K$. 
The underlying principle behind our algorithm is that, as the algorithm iterates, the M-estimators lead to a sequence of intervals with decreasing lengths that contain the true threshold $\btheta^{*T}\bZ$ with high probability, and in return sampling data in the corresponding neighborhoods around the threshold can further improve the estimation accuracy of our M-estimators. 
Importantly, the probability of the $i$th data point being sampled only depends on $(X_i,\bZ_i)$ and the previously sampled data, making it applicable to the measurement-constrained setting. Computationally, by leveraging the smoothness of the surrogate loss, we can design gradient-based
algorithms to solve the regularized M-estimator at each iteration. As a result, our proposed  $K$-step active subsampling algorithm is computationally efficient.


To investigate the theoretical results of our estimators, we assume that the conditional density of $X$ given $Y$ and $\bZ$ satisfies the Hölder smoothness condition with parameter $\beta$. Our analysis reveals a sharp phase transition  with respect to $\beta$. In particular, for $\beta>(1+\sqrt{3})/2$, with a proper choice of tuning parameters, the two-step algorithm, i.e., our algorithm with only $K=2$ iterations, attains the parametric rate $\|\hat\btheta_K-\btheta^*\|_2=O_p((s \log d /N)^{1/2})$, where $d$ and $s$ are the dimension and sparsity of $\btheta^*$ respectively and $N$ is the label budget. The rate is strictly faster than the minimax optimal rate $O_p((s \log d /N)^{\beta/(2\beta+1)})$ with $N$ i.i.d. samples drawn from the population  \citep{feng2022nonregular}. In other words, the two-step algorithm requires less data to attain the same order of convergence rate. However, for $1<\beta\leq (1+\sqrt{3})/2$, the estimator from the two-step algorithm is sub-optimal. To achieve the same parametric rate $O_p((s \log d /N)^{1/2})$, we need to run at least $K =  \lceil \log_{\frac{\beta}{2\beta+1}}(1-\frac{\beta+1}{2\beta^2})\rceil+1$ iterations in our algorithm, where $K$ is strictly greater than 2 but is fixed and finite. For the last case $\beta\leq 1$, 
we can achieve a super-parametric rate $O_p((s \log d /N)^{\frac{1}{2\beta}})$  up to logarithmic factors with $K = \lceil \log_{2\beta+1}(\log N) \rceil$ iterations, where $K$ diverges slowly as $N$ tends to infinity. The distinct regimes reflect how quickly the sequence of estimators enters a fast convergence neighborhood of $\btheta^*$.


Moreover, we formulate a minimax framework to study the optimality of our estimators. Unlike the traditional minimax framework, the distribution of a generic estimator $\hat\btheta$ depends on the joint distribution $P$ of $(X,\bZ,Y)$ as well as the sampling distribution $Q$, where $P$ is determined by the nature but we have the freedom to choose $Q$. After introducing two proper classes $\mathcal{P}(\beta,s)$ for $P$ and $\mathcal{Q}_N(\mathcal{P}(\beta,s))$ for $Q$,  we define the $N$-budget minimax risk in  $l_2$ norm as 
$$
\inf_{Q \in \mathcal{Q}_N(\mathcal{P}(\beta,s))}
\inf_{\hat{\btheta}} \sup_{P \in \mathcal{P}(\beta,s)} \mathbb{E}_{P,Q}\|\widehat{\boldsymbol{\theta}}-\boldsymbol{\theta}^*(P)\|_2,
$$
where the supremum is only for the distribution $P$ in $\mathcal{P}(\beta, s)$, the inner infimum is over all possible estimators $\hat{\btheta}$ based on the observed data and the outer infimum is over all possible sampling distributions $Q$ in $\mathcal{Q}_N(\mathcal{P}(\beta,s))$, which contains necessary constraints on $Q$ such as the conditional independence assumptions for the sampling mechanism and the budget constraint. We prove in Theorem~\ref{lowerbound1} that the $N$-budget minimax risk for estimating $\btheta$ in $l_2$ norm is lower bounded by $(s \log (d/s) /N)^{\frac{1}{2(\beta\wedge 1)}}$, which implies that our proposed estimators are indeed rate optimal up to logarithmic factors in the measurement-constrained setting. Finally, we also develop Lepski's methods to achieve adaptation to the unknown
smoothness $\beta$ and sparsity $s$ respectively.

\subsection{Related Work}

Subsampling is an effective method for handling computational constraints when dealing with massive datasets. 
There's a large literature on subsampling algorithms for regression models, such as linear regression  \citep{drineas2011faster,ma2014statistical,wang2019information,raskutti2016statistical} and generalized linear models \citep{wang2018optimal}. Given the budget constraint, the goal is to construct an estimator based on the sampled data to approximate the  least squares or the maximum likelihood estimator from the entire dataset, and find the optimal subsampling weight by minimizing the asymptotic variance. The similar idea has been extended to deal with the measurement-constrained problems \citep{wang2017computationally,zhang2021optimal}. More recently,  \cite{zrnic2024active} proposed to use a machine learning model to identify which data points are most beneficial to label, and then find the optimal sampling weight by minimizing the variance of the estimator or classification uncertainty.  
For binary data, the case-control subsampling is  considered by \cite{fithian2014local} among many others.

In recent years, substantial research has focused on adaptive experimental design, often aimed at efficiently estimating average treatment effects. 
i\cite{hadad2021confidence} considered how to construct confidence intervals for the average treatment effect with adaptively collected data. A recent overview of adaptive design is given by \cite{perera2020inventory}.

All aforementioned works share the similarity that the data are collected adaptively to improve asymptotic efficiency of the standard root-$N$ statistical inference. However, our work focuses on the threshold estimation problem which is known as a non-regular problem with rate slower than root-$N$. Our goal is to design a subsampling procedure to accelerate the convergence rate. Thus, both the method and its theory are fundamentally different from the prior literature.

Another closely related area is active learning, see \cite{balcan2007margin, koltchinskii2010rademacher, balcan2013active, castro2008minimax, wang2016noise}, among many others. 
While our problem setup is similar to the pool-based active learning algorithms, our work is distinct from the active learning literature in both method and theory. Specifically, our algorithm iteratively solves regularized M-estimators with a smoothed surrogate loss via gradient-based methods, which is computationally efficient. However, most of the margin-based active learning algorithms such as 
\cite{balcan2007margin, wang2016noise} require to minimize the empirical 0-1 loss, which is computationally intractable especially in high-dimensional setting. In addition, when the smoothness parameter $\beta$ is greater than $(1+\sqrt{3})/2\approx 1.37$, our proposed algorithm employs a streamlined two-step process (i.e., $K=2$ iterations), which significantly enhances implementation simplicity and efficiency. In contrast, the pipeline of a typical active learning algorithm often involves iterative model updates (usually $\log N$ iterations) until a stopping criterion is met. In theory, the Tsybakov noise condition plays a pivotal role in deriving theoretical guarantees in the active learning literature. 
Technically, when establishing the theoretical guarantees, the stopping criteria and the number of total iterations in the active learning algorithm are determined to ensure compliance with the assumed Tsybakov noise condition.  In contrast, our analysis relies on the smoothness of the conditional density of $X$ given $\bZ$ and $Y$ rather than such noise conditions, and our estimator exhibits a sharp phase transition in $\beta$ that has no counterpart in the active learning literature. 
Consequently, the two lines of work adopt different algorithms, rely on different structures, and yield distinct theoretical results and proof strategies. 

This work is also related to semi-supervised inference, where we observe a set of labeled data together with a set of unlabeled data \citep{tony2020semisupervised,zhang2022high,azriel2022semi,deng2024optimal}. Under the assumption that the covariates in both labeled and unlabeled samples follow the same distribution, the labeled data can be viewed as being uniformly sampled from the combined dataset in a measurement-constrained setting. Therefore, in semi-supervised inference, the sampling scheme is fixed and known, and the goal is to develop an inference procedure with improved statistical efficiency by leveraging information from the unlabeled data. In contrast, our problem involves designing not only an estimation procedure but also a subsampling algorithm to attain estimation optimality.

\subsection{Organization of the paper}
The rest of this paper is organized as follows. Section~\ref{method} introduces our proposed active subsampling algorithm and the corresponding estimator. Section~\ref{sec_theory} derives upper bounds on the estimation error and establishes a matching minimax lower bound. Section~\ref{sec_practice} discusses the practical implementation and provides a data-driven version of the procedure. Simulation studies and a real data application are presented in Sections~\ref{sec_sim} and \ref{sec_data}, respectively.

\subsection{Notations}
We write $\ind\{\}$ for the indicator function. For any set $S$, we write $|S|$ for its cardinality.
For $\bv=\left(v_1, \ldots, v_d\right)^T \in \mathbb{R}^d$, we use $\bv_S$ to denote the subvector of $\bv$ with entries indexed by
the set $S$.
For $q = [1, \infty)$, $\|\bv\|_q=(\sum_{i=1}^d\left|v_i\right|^q)^{1 / q}$ and $\|\bv\|_0=\sum_{i=1}^d \ind\{v_i \neq 0\}$. For any $a, b \in \RR$, we write $a \vee b = \max\{a, b\}$ and $a \wedge b = \min\{a, b\}$. For any positive sequences $\{a_1, a_2, \ldots\}$ and $\{b_1, b_2, \ldots\}$, we write $a_n \lesssim b_n$ or $a_n = O(b_n)$ if there exists a constant $c$ such that $a_n \leq c b_n$ for any $n$, and $a_n \asymp b_n$ if $a_n \lesssim b_n$ and $b_n \lesssim a_n$. Let $\lfloor a\rfloor$ be the greatest integer strictly less than $a$, and $\lceil a\rceil$ be the smallest integer strictly greater than $a$. Let $\lambda_{\min}(M)$ and $\lambda_{\max}(M)$ be the smallest and largest eigenvalues of $M$. 
A random variable $X$ is called sub-Gaussian if there exists a positive constant $K$ such that $\mathbb{P}(|X|\geq t) \leq 2\exp (-t^2 / K^2)$ for all $t\geq 0$. The sub-Gaussian norm of $X$ is defined as $\|X\|_{\psi_2}=\inf \left\{c>0: \EE[\exp (X^2/c^2)] \leq 2\right\}$.
A vector $\bX \in \RR^d$ is a sub-Gaussian vector if the one-dimensional marginals $\bv^T\bX$ are sub-Gaussian for all $\bv \in \RR^d$, and its sub-Gaussian norm is defined as $\|\bX\|_{\psi_2}=\sup _{\|\bv\|_2=1}\left\|\bv^T \bX\right\|_{\psi_2}$.

\section{Proposed Method}\label{method}

\subsection{Background and Heuristics for Subsampling}\label{background}
In this section, we briefly review the regularized M-estimation approach proposed by \cite{feng2022nonregular} for estimating $\btheta^*$ in (\ref{objective}) and use it to motivate our subsampling strategy. For now, assume that we observe $n$ i.i.d copies of $(X,\bZ, Y)$. Recall that the risk function $R(\btheta)$ is defined in (\ref{objective}). While $R(\btheta)$ is typically a smooth function of the parameter $\btheta$, the empirical version $R_n(\btheta)=\frac{1}{n}\sum_{i=1}^n \gamma(Y_i) L_{01}\{Y_i(X_i-\btheta^T\bZ_i)\}$ is  non-smooth. Minimizing $R_n(\btheta)$ is computationally intractable when $d$ is large and yields the cube root asymptotics  \citep{kim1990cube}.

To address these challenges, \cite{feng2022nonregular} proposed to approximate the 0-1 loss by the following smoothed surrogate loss 
\begin{equation} \label{surrogate}
     L_{\delta}(u) = \int_{u/\delta}^\infty K(t)dt,
\end{equation}
 where $K(t)$ is a proper kernel function defined in Assumption \ref{asp1}, and $\delta>0$ is a bandwidth parameter. As the bandwidth $\delta\rightarrow 0$, we have $L_\delta(u)\rightarrow L_{01}(u)$ for any $u\neq 0$. Thus, it is intuitive to estimate $\btheta^*$ by the minimizer of the regularized smoothed empirical risk, $\hat\btheta_{iid}=\argmin \{R^n_{\delta,iid}(\btheta)+\lambda\|\btheta\|_1\}$, where 
$$
 R^n_{\delta,iid}(\btheta)=\frac{1}{n}\sum_{i=1}^n \gamma(Y_i) L_{\delta}(Y_i(X_i-\btheta^T\bZ_i) ),
$$
and $\lambda$ is a tuning parameter. While $ R^n_{\delta,iid}(\btheta)$ is still non-convex, \cite{feng2022nonregular} showed that the  entire solution
path for the lasso type estimator $\hat\btheta_{iid}$ can be  computed efficiently via the path-following algorithm. In addition, with a proper choice of $\delta$ and $\lambda$, the convergence rate of $\hat\btheta_{iid}$ is faster than the classic cube root rate. 
 
By the M-estimation theory, since $ R^n_{\delta,iid}(\btheta)$ is differentiable in $\btheta$, the gradient of $R^n_{\delta,iid}(\btheta)$ at $\btheta^*$, 
$$
\nabla R^n_{\delta,iid}(\btheta^*)=\frac{1}{n}\sum_{i=1}^n \gamma(Y_i) \frac{Z_iY_i}{\delta}K\Big(\frac{Y_i(X_i-\btheta^{*T}\bZ_i)}{\delta} \Big),
$$
together with some curvature conditions, determine the convergence rate of $\hat\btheta_{iid}$. A key observation that inspires our subsampling method is that, by the property of the kernel function $K(\cdot)$, the gradient assigns larger weight to observations whose linearized residual $X_i-\btheta^{*T}\bZ_i$ is closer to zero. These ``near-threshold" points are therefore the most informative for estimating $\btheta^*$, and should be prioritized for labeling in our subsampling algorithm.

 \begin{algorithm} 
    \caption{$\btheta \leftarrow$ $K$-step Active Subsampling}
    \label{algoK}
 \begin{algorithmic}
     \STATE \textbf{input:} $D = \{X_i,\bZ_i\}_{i=1}^n$, label budget $N$, the number of iterations $K$ 
      \STATE \textbf{parameter:} $ \{\lambda_k\}_{k=1}^K, \{b_k\}_{k=1}^{K-1}, \{\delta_k\}_{k=1}^K$ and $\{N_k\}_{k=1}^K$ with $\sum_{k=1}^K N_k=N$
      \STATE \hspace*{5mm} Randomly split $D$ into $K$ batches: $D_1, \cdots, D_K$, each with batch size $n/K$.
     \STATE \hspace*{5mm} Draw  data $(X_i, \bZ_i)$ from $D_1$ with probability  $c_{n,1}=N_1 K/n$. Acquire the label $Y_i$ for each sampled data and form the dataset $D_1^*=\{X_{i}, \bZ_{i}, Y_{i}\}_{R_i=1}$, where $(X_i,\bZ_i)\in D_1$.  
      \STATE \hspace*{5mm} $\hat{\btheta}_1 \leftarrow \argmin_{\btheta}\{R^{D_1}_{\delta_1}(\btheta) + \lambda_1\|\btheta\|_1\}$. 
       \FOR{$k=2 $  to $K$}
       \STATE \hspace*{5mm} Compute the active set: $S_k \leftarrow \Big\{ (X, \bZ): -b_{k-1} \leq \frac{X-\hat{\btheta}_{k-1}^T\bZ}{\sqrt{1+\|\hat{\btheta}_{k-1}\|_2^2}} \leq b_{k-1}\Big\}$.
       \STATE \hspace*{5mm} Given $(X_i, \bZ_i) \in S_k$, draw the data point $(X_i, \bZ_i)$ from   $D_k$ with probability $c_{n,k} = N_k K / \left(n \PP\left((X,\bZ) \in S_k\right)\right)$. Acquire the label $Y_i$ for each sampled data and form $D_{k}^*=\{X_{i}, \bZ_{i}, Y_{i}\}_{R_i=1}$, where $(X_i,\bZ_i)\in D_{k}$.
         \STATE \hspace*{5mm} $\hat{\btheta}_k \leftarrow \argmin_{\btheta}\{R^{D_k}_{\delta_{k}}(\btheta) + \lambda_k\|\btheta\|_1\}$.
     \ENDFOR
     \RETURN $\hat{\btheta}_K$
 \end{algorithmic}
    \end{algorithm}

\subsection{Active Subsampling Algorithm}



Now, let us consider the measurement-constrained setting. Recall that we observe a very large unlabeled dataset $D = \{X_i,\bZ_i\}_{i=1}^n$ with $n$ i.i.d. samples, where the outcome $Y_i$ is  unavailable. We seek to sample $N\ll n$ data points (on average) from the dataset $D$ and collect their outcomes to construct an optimal estimator of $\btheta^*$. 

We introduce a binary random variable $R_i$ with $R_i=1$ if  $(X_i,\bZ_i)$ is sampled and $R_i=0$ otherwise. Our active subsampling approach is outlined in Algorithm~\ref{algoK}. The algorithm runs for a total of $K$ iterations, where $K$ is to be specified later on. To keep the data distribution during each iteration identical to the original data $D$, we randomly divide $D$ into $K$ batches $D_1, \cdots, D_K$ with equal size $n/K$.
 In the first iteration, since there is no prior information on $\btheta^*$, we uniformly sample data from $D_1$ with probability $0<c_{n,1}<1$.  That is, for each $(X_i, \bZ_i) \in D_1$, $R_i$ is generated independently with probability 
\begin{equation}\label{step1prob}
     \PP(R_i=1) = c_{n,1}=\frac{N_1K}{n},
\end{equation}
where $N_1$ is the expected number of data points sampled in the first iteration.  Given the sampled dataset $D_1^*=\{X_{i}, \bZ_{i}, Y_{i}\}_{R_i=1}$, we then minimize the regularized smoothed empirical risk function to obtain
\begin{equation}\label{penalized}
    \hat{\btheta}_1 := \argmin_{\btheta}\{R^{D_1}_{\delta_1}(\btheta) + \lambda_1\|\btheta\|_1\},
\end{equation}
where $R^{D_1}_{\delta_1}(\btheta)$ is a special case of (\ref{empiricalriskMk}) with a bandwidth parameter $\delta_1$ and $\lambda_1 >0$ is a regularization parameter. In general, with the sampled data from the dataset $D_k$ with $|D_k| = n/K$, we define
\begin{equation}\label{empiricalriskMk}
    R^{D_k}_{\delta_k}(\btheta) = \frac{K}{n}\sum_{(X_i, \bZ_i) \in D_k}\gamma(Y_i) L_{\delta_k}(Y_i(X_i-\btheta^T\bZ_i) )R_i,
\end{equation}
for all $1 \leq k \leq K$. For the iteration $2 \leq k \leq K$, we define an active set
\begin{equation}\label{activeset}
     S_k := \left\{ (X, \bZ) \in \RR \times \RR^d: -b_{k-1} \leq \frac{X-\hat{\btheta}_{k-1}^T\bZ}{\sqrt{1+\|\hat{\btheta}_{k-1}\|_2^2}} \leq b_{k-1}\right\},
\end{equation} 
where $\hat{\btheta}_{k-1}$ is the estimator derived from the $(k-1)$th iteration, and $b_{k-1}>0$ is the tuning parameter controlling the size of $S_k$. Then we uniformly sample the data points in $D_k$ which fall within the active set $S_k$. Specifically, given $(X_i, \bZ_i) \in D_k$ and $\hat\btheta_{k-1}$, we generate $R_i$ from a Bernoulli distribution with 
\begin{equation}\label{sampling}
\PP(R_i=1 \mid  X_i, \bZ_i, \hat\btheta_{k-1})=c_{n,k}\cdot \ind\{(X_i,\bZ_i) \in S_k\},
\end{equation}
where $c_{n,k} = N_k K / \left(n \PP\left((X,\bZ) \in S_k\right)\right)$ enforces the label budget constraint $N_k$ for this iteration. The sampling mechanism implies that only points whose estimated residual $X_i-\hat\btheta_{k-1}\bZ_i$ is sufficiently close to 0 are potentially sampled, aligning with the intuition  in Section \ref{background}.  

Given how $R_i$ is generated, we can verify that the independence assumptions $(X_i,\bZ_i, Y_i) \perp \bar{H}_{i-1}$ and $R_i \perp Y_i \mid X_i, \bZ_i, \bar{H}_{i-1}$ hold, where $\bar{H}_{i-1}$ denotes all observed data right before we decide whether $(X_i,\bZ_i)$ is sampled or not. These two independence assumptions play an important role in the minimax lower bound; see Section \ref{sec_lower} for more detailed discussions.

Our $k$-step estimator is 
\begin{equation}\label{estimatork}
    \hat{\btheta}_k:=\argmin_{\btheta}\{R^{D_k}_{\delta_{k}}(\btheta) + \lambda_k\|\btheta\|_1\},
\end{equation}
where $R^{D_k}_{\delta_{k}}(\btheta)$ is defined in (\ref{empiricalriskMk}). Repeating this procedure $K$ times, we obtain our final estimator $\hat{\btheta}_K$. 

In our algorithm, the sampling probability $c_{n,k}$, which depends on $\PP\left((X,\bZ) \in S_k\right)$, is assumed to be known. In practice, provided the active set $S_k$ is given, we can estimate $\PP\left((X,\bZ) \in S_k\right)$ easily from a large amount of unlabeled data. Specifically, in Algorithm \ref{algoK}, we can instead randomly divide the dataset $D$ into $2K-1$ batches $D_{1}, D_{20}, D_{21}, \cdots, D_{K0}, D_{K1}$, where we  use $D_{k0}$ to compute an empirical estimator $\hat{p}_k$ of $\PP\left((X,\bZ) \in S_k\right)$,  and then construct a plug-in estimator $\hat{c}_{n,k} := N_k K/(n\hat{p}_{k})$ of $c_{n,k}$. 
We draw samples from $D_{k1}$ according to (\ref{sampling}) with $c_{n,k}$ replaced by $\hat{c}_{n,k}$  and compute the estimator in (\ref{estimatork}). The sample splitting strategy guarantees the desired independence  when deriving the theoretical properties of the estimators; see Section \ref{sec_est_active_set} for the theoretical results. 
Further computational details are given in Section \ref{sec_practice}.



\section{Theoretical Results}\label{sec_theory}

We first list the technical assumptions in Section \ref{sec_ass}, then derive the convergence rate of our estimator in Section \ref{sec_rate} and establish the minimax lower bound in Section \ref{sec_lower}. Without loss of generality, we set the weight function in (\ref{objective}) to  $\gamma(y)=1/\PP(Y=y)$. 

\subsection{Assumptions}\label{sec_ass}

\begin{assumption}\label{asp3} 
$\btheta^*$ is $s$-sparse with $\left\|\boldsymbol{\theta}^*\right\|_0 \leq s$
and $\|\btheta^*\|_2\leq C$ for some constant $C$.
\end{assumption}

Assumption~\ref{asp3} quantifies the properties of $\btheta^*$. In addition to the sparsity of $\btheta^*$, we impose the bound on $\|\btheta^*\|_2$, so that 
the threshold $\btheta^{*T}\bZ$ has the same scale as $X$. Technically, this condition is used to verify the restricted strong convexity (RSC) condition \citep{feng2022nonregular}. In particular, we give an example in Section \ref{sec_example} showing that the RSC condition (such as Assumption \ref{asp6}  below) fails if $\|\btheta^*\|_2\rightarrow\infty$. 



\begin{assumption}\label{asp4}
(i) There exists a constant $0<c<1/2$ such that $c \leq \mathbb{P}(Y=1) \leq 1-c$.\\
(ii) Assume that $\left|Z_{ij}\right| \leq M_n$ for any $1\leq i\leq n$ and $1\leq j\leq d$, where $M_n$ is allowed to increase with $n$ such that  
    \begin{equation}\label{eq_asp4}
    M_n \leq C \sqrt{\frac{ n \min_{1 \leq k \leq K}\delta_k c_{n,k} }{K\log d}}
    \end{equation} 
    for some constant $C$, where $K$ is the number of iterations,  $\delta_k$ is the bandwidth parameter in the $k$th iteration and 
    $c_{n,k}$ is defined in (\ref{sampling}).
    In addition, it holds that
\begin{equation}\label{sparseeigen2}
    \sup _{\|\boldsymbol{v}\|_0 \leq s^{\prime}} \frac{\boldsymbol{v}^T \mathbb{E}\left(\boldsymbol{Z} \boldsymbol{Z}^T \mid Y=y\right) \boldsymbol{v}}{\|\boldsymbol{v}\|_2^2} \leq  M_1, 
\end{equation}
for some constant $M_1>0$, where $s^{\prime}=C s$ for some sufficiently large constant $C$. \\
(iii) $\bZ$ given $Y=y$ is a sub-Gaussian vector with a bounded sub-Gaussian norm. 
\end{assumption}

Part (i) ensures that the weight function $\gamma(y)=1/\PP(Y=y)$ is bounded away from infinity. For part (ii), if each component of $\bZ$ is sub-Gaussian with bounded sub-Gaussian norm, $\max_{1\leq i\leq n, 1 \leq j \leq d}\left|Z_{ij}\right| \leq M_n$ holds with high probability with $M_n \asymp (\log (d\vee n))^{1/2}$. Given the choices of $\delta_k$ and $c_{n,k}$ in {Theorem~\ref{beta>beta_*}}, (\ref{eq_asp4}) reduces to 
$M_n \leq C  \sqrt{\frac{N}{  K \log d}}
$, 
which is a mild condition provided $N$ is large enough. Furthermore, (\ref{sparseeigen2}) controls the maximal sparse eigenvalues of  $\mathbb{E}(\boldsymbol{Z} \boldsymbol{Z}^T \mid Y=y)$, we refer to \cite{buhlmann2011statistics} for the detailed discussion. Finally, part (iii) is used to provide a sharp bound for the plug-in error of $\hat\btheta_{k-1}$ in the active set $S_k$. This condition can be removed with the price of obtaining a sub-optimal rate for our estimator; see Section \ref{sec_theory_weaker} for the detailed result. 


The following definition and assumption quantify the smoothness of the conditional density of $X$ given  $Y$ and $\bZ$.

\begin{definition}\label{densityclass}
Let $l=\lfloor\beta\rfloor$ be the greatest integer strictly less than $\beta$. We say $P \in \mathcal{P}\left(\beta,L\right)$ if the conditional density $f(x \mid y, \boldsymbol{z}) $ of $ X $ given $(Y, \boldsymbol{Z})$ is $l$ times differentiable w.r.t $x\in [\btheta^{*T} \bz-c, \btheta^{*T} \bz+c]$ for an arbitrarily small constant $c>0$, and satisfies
  \begin{equation}\label{tsybakov}
      \left|f^{(l)}\left(\btheta^{*T}
      \bz +\Delta \mid y, \boldsymbol{z}\right)-f^{(l)}\left( \btheta^{*T}
      \bz\mid y, \boldsymbol{z}\right)\right| \leq L\left|\Delta\right|^{\beta-l},
  \end{equation}
for any $|\Delta|\leq c, y \in\{-1,1\}, \boldsymbol{z} \in \mathbb{R}^d$ and some constant $L>0$. 
\end{definition}


\begin{assumption}\label{asp5}
We assume $P \in \mathcal{P}\left(\beta,L\right)$, where $\beta>0$ and $L>0$ are constants. In addition,
\begin{equation}\label{eq_upper_density}
    \sup_{x \in \cX, y \in\{-1,1\}, \boldsymbol{z} \in \cZ} f(x \mid y, \boldsymbol{z})<p_{\max }<\infty,
\end{equation}
 and there exists a set $\mathcal{G} \in \RR^d$ such that $\PP(\bZ \in \mathcal{G})\geq C$ for some constant $0<C\leq 1$ and
\begin{equation}\label{eq_lower_density}
    \inf_{x \in B(\btheta^{*T}\bz, \epsilon_n), \boldsymbol{z} \in \mathcal{G}} f(x \mid \boldsymbol{z}) \geq p_{\min}>0,
\end{equation}
    where $\cX$ and $\cZ$ are the support sets of $X$ and $\bZ$, {$ B(\btheta^{*T}\bz, \epsilon_n) := \{x\in\cX : |x - \btheta^{*T}\bz| \leq \epsilon_n \}$, $\epsilon_n = C \max_{2\leq k\leq K}b_{k-1}$} for some constant $C$ large enough,
    and 
     $p_{\max}, p_{\min}$ are positive constants.
\end{assumption}

Here, $f(x \mid y, \bz)$ is assumed to belong to a $\beta$-smooth Hölder class at the point $x = \btheta^{*T} \bz$. Differentiability of $f(x \mid y, \bz)$ is required only in a small neighborhood of $x = \btheta^{*T} \bz$.
In addition, $f(x \mid y, \boldsymbol{z})$ is assumed to be upper bounded by a constant. To guarantee that the active set $S_k$ receives enough probability mass, we further require that on some region $\mathcal{G}$ of $\bz$, the density
$f(x \mid \bz)$ is lower bounded by some constant for any $\bz \in \mathcal{G}$ and $x\in B(\btheta^{*T}\bz, \epsilon_n)$.


\begin{assumption} \label{asp1}
Assume that $K(t)$ is a proper kernel of order $l=\lfloor\beta\rfloor$ with bounded support, where $\beta$ is the smoothness parameter in Assumption \ref{asp5}. That is $K(t)$ satisfies $K(t)=K(-t)$, $|K(t)| \leq K_{\max }<\infty$, $\int K(t) d t=1$, $\int K^2(t) d t<\infty$,  $ \int t^j K(t) d t=0, \forall j=1, \ldots, l$, and $\int |K(t)||t|^q dt$ is bounded by a constant for any $q \in [l,l+1]$.
\end{assumption}

As in classical non-parametric estimation, we adopt a kernel of order $l$ to control the higher order bias of the gradient of the smoothed empirical risk $\EE(\nabla  R^{D_k}_{\delta_k}(\btheta^*) \mid \hat{\btheta}_{k-1})$; see  Proposition \ref{prop2}. For clarity we present our theoretical results for the kernel with bounded support. With minor changes to the proofs, our results can be extended to kernels with mild tail conditions such as Gaussian kernels. The detailed results are deferred to Section \ref{sec_theory_weaker}.

\begin{assumption}\label{asp6}
There exists a sequence of sets $\Omega_{k-1}$, such that $\btheta^*\in \Omega_{k-1}$ and the following restricted strong convexity (RSC) and restricted smoothness (RSM) conditions hold for $R_{\delta_k}^{D_k}\left(\boldsymbol{\theta}\right)$ over sparse vectors in $\Omega_{k-1}$, for $1\leq k\leq K$. That is, uniformly over $1\leq k\leq K$ and  $\boldsymbol{\theta}, \boldsymbol{\theta}^{\prime} \in \Omega_{k-1}$ with  $(\left\|\boldsymbol{\theta}^{\prime}\right\|_0\vee \|\boldsymbol{\theta}\|_0) \lesssim s$, we have
    \begin{equation}\label{lowerrho}
R_{\delta_k}^{D_k}\left(\boldsymbol{\theta}^{\prime}\right) \geq R_{\delta_k}^{D_k}(\boldsymbol{\theta})+\nabla R_{\delta_k}^{D_k}(\boldsymbol{\theta})^T\left(\boldsymbol{\theta}^{\prime}-\boldsymbol{\theta}\right)+\frac{1}{2} \rho^{-}_{n,k}\left\|\boldsymbol{\theta}^{\prime}-\boldsymbol{\theta}\right\|_2^2,
\end{equation}
and 
\begin{equation}\label{upperrho}
R_{\delta_k}^{D_k}\left(\boldsymbol{\theta}^{\prime}\right) \leq R_{\delta_k}^{D_k}(\boldsymbol{\theta})+\nabla R_{\delta_k}^{D_k}(\boldsymbol{\theta})^T\left(\boldsymbol{\theta}^{\prime}-\boldsymbol{\theta}\right)+\frac{1}{2} \rho^{+}_{n,k}\left\|\boldsymbol{\theta}^{\prime}-\boldsymbol{\theta}\right\|_2^2,
\end{equation}
where, for some positive constants $C_1\leq C_2$,
\[
(\rho^{-}_{n,k}, \rho^{+}_{n,k}) =
\begin{cases}
(C_1 c_{n,k}, \; C_2 c_{n,k}), & \text{if } \beta \geq 1, \\[6pt]
(C_1 \delta_k^{\beta-1} c_{n,k}, \; C_2 \delta_k^{\beta-1} c_{n,k}), & \text{if } 0 < \beta < 1.
\end{cases}
\]
\end{assumption}


The RSC and RSM conditions are commonly used to analyze the statistical rate and  computational guarantees for non-convex optimization problems in high-dimensional regression. Similar conditions have been discussed extensively in the literature; see \cite{buhlmann2011statistics}. In our context, we require that the smoothed empirical risk $R_{\delta_k}^{D_k}\left(\boldsymbol{\theta}^{\prime}\right)$ is $\rho^{-}_{n,k}$-strongly convex  in (\ref{lowerrho}) and $\rho^{+}_{n,k}$-smooth in (\ref{upperrho}) when restricted to sparse vectors in $\Omega_{k-1}$, where both $\rho^{-}_{n,k}$ and $\rho^{+}_{n,k}$ scale with $c_{n,k}$ when $\beta \geq 1$ and $\delta_k^{\beta-1} c_{n,k}$ when $0 < \beta < 1$. The factor $c_{n,k}$ appears in the rate of $(\rho^{-}_{n,k}, \rho^{+}_{n,k})$ because only the selected data points with $R_i=1$ enter $R_{\delta_k}^{D_k}\left(\boldsymbol{\theta}\right)$. However, the assumed rate of $(\rho^{-}_{n,k}, \rho^{+}_{n,k})$ differs in terms of whether the smoothness parameter $\beta$ exceeds 1 or not. When $\beta> 1$, under mild  conditions,  the population risk function $R(\btheta)$ defined in (\ref{objective}) satisfies 
\begin{equation}\label{eq_population_curvature_beta}
\rho_- \le \lambda_{\min}\!\big(\nabla^2 R(\boldsymbol{\theta}^*)\big)
\le \lambda_{\max}\!\big(\nabla^2 R(\boldsymbol{\theta}^*)\big)
\le \rho_+,
\end{equation}
for some constants $\rho_+>\rho_->0$, so the smoothed empirical curvature is naturally of order $c_{n,k}$. When $0 < \beta \leq 1$, the function $R(\btheta)$ is not second-order differentiable and instead grows much faster than a quadratic function locally around $\btheta^*$. More precisely, we can show that, when $0 < \beta < 1$, there exists a small neighborhood $\bar\Omega$ of $\btheta^*$, such that for any $\btheta\in\bar\Omega$, 
\begin{equation}\label{eq_population_curvature_beta_2}
\rho_- \,\|\boldsymbol{\theta}-\boldsymbol{\theta}^*\|_2^{1+\beta}
\le R(\boldsymbol{\theta})-R(\boldsymbol{\theta}^*)
\le \rho_+ \,\|\boldsymbol{\theta}-\boldsymbol{\theta}^*\|_2^{1+\beta},
\end{equation}
for some constants $\rho_+>\rho_->0$. After smoothing, the smoothed empirical risk $R_{\delta_k}^{D_k}(\btheta)$ is always second-order differentiable, provided that $K'(\cdot)$ exists. However, the corresponding eigenvalues of $\nabla^2 R_{\delta_k}^{D_k}(\btheta)$ blow up with the rate $\delta_k^{\beta-1}\rightarrow \infty$ as $\delta_k\rightarrow 0$ (ignoring the effect of $c_{n,k}$), which is indeed driven by the geometric structure of $R(\btheta)$ in (\ref{eq_population_curvature_beta_2}).


Assumption~\ref{asp6} as well as (\ref{eq_population_curvature_beta}) and (\ref{eq_population_curvature_beta_2}) can be verified under specific distributional assumptions. The proof is deferred to Section \ref{rscexample}. 



\subsection{Convergence Rate of the Proposed Estimator}\label{sec_rate}

We first present a master theorem that characterizes the effect of subsampling  on the convergence rate of our estimators at each iteration.

\begin{theorem}\label{rate_k}
Under Assumptions~\ref{asp3}-\ref{asp6}, for any $1 \leq k \leq K $,  choose $\lambda_k\asymp  \sqrt{\frac{c_{n,k} K \log d}{n\delta_k}}$, and $\delta_k\asymp \left(\frac{K s  \log d}{nc_{n,k}}\right)^{1/(2\beta+1)}$. With probability greater than $1-2d^{-1}$, we have
\begin{equation}\label{eq_rate_k_1}
     \|\hat{\btheta}_1-\boldsymbol{\theta}^*\|_2 \lesssim 
     \left(\frac{s \log d }{N_1}\right)^{\frac{\beta \vee 1}{2\beta+1}},\ 
      \|\hat{\btheta}_1-\boldsymbol{\theta}^*\|_1 \lesssim 
         \sqrt{s}\left(\frac{ s \log d }{N_1}\right)^{\frac{\beta \vee 1}{2\beta+1}}.    
\end{equation}
For $2 \leq k \leq K$, if we further assume 
\begin{align}\label{eq_rate_k_3}
b_{k-1}\geq C\delta_k,~~\textrm{the event}~~\cW_{k-1}=\Big\{b_{k-1}\geq C\|\hat\btheta_{k-1}-\btheta^*\|_2 \sqrt{\log \frac{N_k}{s \log d}}\Big\}~~\textrm{holds}
\end{align}
for some large constant $C>0$ and there exists a large constant $\zeta$ such that $(\frac{s\log d}{N_k})^\zeta\lesssim \delta_k$ with $\delta_k=o(1)$, then with probability greater than $1-2d^{-1}$,
\begin{align}\label{eq_rate_k_2}
    \|\hat{\btheta}_k-\boldsymbol{\theta}^*\|_2 &\lesssim 
     \left(\frac{ \PP\left((X,\bZ)\in S_k \right)  s \log d }{N_k}\right)^{\frac{\beta \vee 1}{2\beta+1}},\nonumber\\
      \|\hat{\btheta}_k-\boldsymbol{\theta}^*\|_1 &\lesssim 
         \sqrt{s}\left(\frac{\PP\left((X,\bZ)\in S_k \right) s \log d }{N_k}\right)^{\frac{\beta \vee 1}{2\beta+1}},
\end{align}
where $N_k = \sum_{(X_i,\bZ_i)\in D_k}\EE\left( R_i\right)$ is the expected sample size at the $k$th iteration.    
\end{theorem}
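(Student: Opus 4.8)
The plan is to treat each iteration as a sub-sampled, localized instance of the regularized M-estimation problem and to run the standard restricted-strong-convexity oracle-inequality machinery conditional on the previous estimator. The engine of the argument is a high-probability bound on $\|\nabla R^{D_k}_{\delta_k}(\btheta^*)\|_\infty$, which I split into a deterministic smoothing-bias piece $\|\EE[\nabla R^{D_k}_{\delta_k}(\btheta^*)\mid\hat\btheta_{k-1}]\|_\infty$ and a stochastic-deviation piece. Once this bound is in hand, I choose $\lambda_k$ to dominate the relevant gradient scale, invoke the RSC/RSM bounds of Assumption~\ref{asp6} (with $\rho^{\pm}_{n,k}\asymp c_{n,k}$) over the localized set $\Omega_{k-1}$, and read off the $\ell_2$ and $\ell_1$ errors from the usual cone argument; the prescribed bandwidth $\delta_k$ is exactly the value that equalizes the bias and the stochastic contributions. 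For $k=1$ the sampling is uniform and independent of the data, so $R^{D_1}_{\delta_1}$ is a Bernoulli-thinned copy of the i.i.d.\ smoothed risk with effective sample size $N_1=(n/K)c_{n,1}$, and \eqref{eq_rate_k_1} follows by specializing the i.i.d.\ analysis of \cite{feng2022nonregular}.

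For the stochastic deviation I would condition on $\hat\btheta_{k-1}$, which by the sample-splitting in Algorithm~\ref{algoK} is independent of $D_k$, so the summands of $\nabla R^{D_k}_{\delta_k}(\btheta^*)$ are conditionally independent, uniformly bounded in magnitude by $O(M_n/\delta_k)$ after centering, and of conditional second moment $O(c_{n,k}/\delta_k)$ per coordinate (the extra $1/\delta_k$ coming from the $\delta_k^{-1}K(\cdot/\delta_k)$ kernel concentration and the $c_{n,k}$ from the sampling indicator). A coordinatewise Bernstein inequality together with a union bound over the $d$ coordinates then gives $\|\nabla R^{D_k}_{\delta_k}(\btheta^*)-\EE[\,\cdot\mid\hat\btheta_{k-1}]\|_\infty\lesssim \sqrt{c_{n,k}K\log d/(n\delta_k)}$, where the bound \eqref{eq_asp4} on $M_n$ is precisely what keeps the estimate in the sub-Gaussian regime of Bernstein's inequality (forcing the variance term to dominate the boundedness term), matching the prescribed $\lambda_k$.

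The smoothing bias is where the smoothness $\beta$ and the kernel order $l=\lfloor\beta\rfloor$ enter. Changing variables $t=(x-\btheta^{*T}\bz)/\delta_k$, Taylor expanding the conditional density to order $l$, and using the vanishing-moment and H\"older conditions (Assumption~\ref{asp1} and Definition~\ref{densityclass}) shows the local smoothing bias is $O(\delta_k^\beta)$; this is the content of Proposition~\ref{prop2}, and combined with $\nabla R(\btheta^*)=0$ it yields that the conditional-mean gradient has $\ell_\infty$ norm $\lesssim c_{n,k}\delta_k^\beta$. I expect this step, for $k\geq2$, to be the main obstacle: the conditional expectation is taken only over the active set $S_k$ of \eqref{activeset}, which is centered at $\hat\btheta_{k-1}^T\bZ$ rather than $\btheta^{*T}\bZ$ and has finite half-width $b_{k-1}$, so I must show the indicator $\ind\{(X,\bZ)\in S_k\}$ equals one throughout the bounded support of the kernel and hence does not truncate the bias integral. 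This is exactly what the hypotheses \eqref{eq_rate_k_3} buy: $b_{k-1}\geq C\delta_k$ makes the active window wider than the kernel support, while on the event $\cW_{k-1}$ the sub-Gaussian control of Assumption~\ref{asp4}(iii) bounds the center mismatch $|(\hat\btheta_{k-1}-\btheta^*)^T\bZ|\lesssim \|\hat\btheta_{k-1}-\btheta^*\|_2\sqrt{\log(N_k/(s\log d))}$ by a fraction of $b_{k-1}$ with high probability; the residual chance that the kernel support pokes outside $S_k$ is controlled by the requirement $(s\log d/N_k)^\zeta\lesssim\delta_k$ and is absorbed into the error.

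Finally, I would assemble the pieces. With $\lambda_k\asymp\sqrt{c_{n,k}K\log d/(n\delta_k)}$ dominating the stochastic gradient, the RSC inequality \eqref{lowerrho} and the cone condition produce $\|\hat\btheta_k-\btheta^*\|_2\lesssim \sqrt{s}\lambda_k/\rho^-_{n,k}$ together with the bias contribution of order $\delta_k^\beta$, and $\|\hat\btheta_k-\btheta^*\|_1\lesssim\sqrt{s}\|\hat\btheta_k-\btheta^*\|_2$. Substituting $\rho^-_{n,k}\asymp c_{n,k}$ and the prescribed $\delta_k\asymp(Ks\log d/(nc_{n,k}))^{1/(2\beta+1)}$ balances the two contributions and collapses the bound to $(Ks\log d/(nc_{n,k}))^{\beta/(2\beta+1)}$. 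It then remains only to substitute the value of $c_{n,k}$: for $k=1$, $c_{n,1}=N_1K/n$ gives $(s\log d/N_1)^{\beta/(2\beta+1)}$, and for $k\geq2$, $c_{n,k}=N_kK/(n\PP((X,\bZ)\in S_k))$ gives the stated $(\PP((X,\bZ)\in S_k)\,s\log d/N_k)^{\beta/(2\beta+1)}$, with $N_k=\sum_{(X_i,\bZ_i)\in D_k}\EE(R_i)$ following from $\EE(R_i)=c_{n,k}\PP((X,\bZ)\in S_k)$.
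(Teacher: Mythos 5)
Your statistical decomposition is exactly the paper's: the stochastic part of $\nabla R^{D_k}_{\delta_k}(\btheta^*)$ is controlled by conditioning on $\hat\btheta_{k-1}$ (independent of $D_k$ by sample splitting), bounding the per-coordinate conditional variance by $O(c_{n,k}/\delta_k)$, and applying coordinatewise Bernstein with a union bound, with \eqref{eq_asp4} keeping the variance term dominant (this is Lemma~\ref{var1} and Proposition~\ref{E_1}); the bias part is the change of variables plus order-$l$ Taylor expansion, with the two hypotheses in \eqref{eq_rate_k_3} and the condition $(s\log d/N_k)^\zeta\lesssim\delta_k$ used precisely as you describe to prevent the active-set indicator from truncating the kernel integral and to absorb the sub-Gaussian tail event where the center mismatch is large (this is Proposition~\ref{prop2}, case (ii)). The final substitution $N_k=nc_{n,k}\PP((X,\bZ)\in S_k)/K$ is also the paper's closing step.

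The genuine gap is in the sentence where you ``read off the $\ell_2$ and $\ell_1$ errors from the usual cone argument.'' The smoothed empirical risk $R^{D_k}_{\delta_k}$ is \emph{non-convex}, and Assumption~\ref{asp6} grants RSC/RSM only over \emph{sparse} vectors in the localized set $\Omega_{k-1}$. The standard basic-inequality/cone argument requires $f_{\lambda_k}(\hat\btheta_k)\leq f_{\lambda_k}(\btheta^*)$ together with RSC applied to the error vector $\hat\btheta_k-\btheta^*$; for a non-convex objective the global minimizer need not be computable, need not be sparse, and need not lie in $\Omega_{k-1}$, so the restricted RSC cannot be invoked on it without further argument. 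The paper resolves this by defining $\hat\btheta_k$ as the output $\tilde\btheta_{k,tgt}$ of the path-following algorithm and proving, in Theorem~\ref{paththm1} via Lemmas~\ref{algolemma1}--\ref{lemma10} and Proposition~\ref{proposition3}, that every iterate retains sparsity $\|(\btheta^j_{k,t})_{S^{*c}}\|_0\leq\tilde s$ and that the returned point satisfies the approximate first-order optimality $\omega_{\lambda}(\btheta)\leq\lambda/2$; only then does the oracle inequality of Lemma~\ref{algolemma1} (which takes sparsity and near-stationarity as hypotheses, not global optimality) deliver \eqref{eq_rate_k_2}. Your proof needs either this sparsity-propagation and approximate-stationarity argument, or an explicit assumption that the analyzed point is a sparse global minimizer inside $\Omega_{k-1}$, to make the last step valid.
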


In this theorem, the tuning parameters $\lambda_k$ and $\delta_k$ are determined by a bias-variance trade-off. Specifically, Proposition \ref{prop2} implies that the bias of the smoothed empirical risk $R^{D_k}_{\delta_k}(\btheta)$ due to kernel smoothing satisfies $$
\Big\|\EE(\nabla R^{D_k}_{\delta_k}(\btheta^*)|\hat\btheta_{k-1}) - \nabla R(\btheta^*)\Big\|_\infty \lesssim c_{n,k}\delta_k^\beta.
$$ 
Moreover, Proposition \ref{E_1} characterizes the stochastic error of $\nabla  R^{D_k}_{\delta_k}(\btheta^*)$, 
$$
\Big\|\nabla  R^{D_k}_{\delta_k}(\btheta^*)  - \EE(\nabla R^{D_k}_{\delta_k}(\btheta^*)|\hat\btheta_{k-1}) \Big\|_\infty \lesssim \sqrt{\frac{c_{n,k}K\log d }{n\delta_k}}.
$$
The bandwidth $\delta_k$ needs to be chosen to balance the bias $c_{n,k}\delta_k^\beta$ with the stochastic error $\sqrt{\frac{c_{n,k}K\log d }{n\delta_k}}$ multiplied by $\sqrt{s}$, which yields $\delta_k\asymp \Big(\frac{K s  \log d}{nc_{n,k}}\Big)^{1/(2\beta+1)}$. The shrinkage parameter $\lambda_k$ needs to dominate the stochastic error of $\nabla  R^{D_k}_{\delta_k}(\btheta^*)$ to exploit the sparsity of $\btheta$. In practice, the tuning parameters can be determined by a cross-validation approach  shown in Section \ref{sec_practice}. 

In the following, we comment on the convergence rate of $\hat\btheta_k$. For $k=1$, the non-standard convergence rate of $\hat{\btheta}_1$ in (\ref{eq_rate_k_1}) generalizes the results in \cite{feng2022nonregular}. In particular, when $\beta\geq 1$, the rate of $\hat{\btheta}_1$ in the $l_2$ norm is $O_p((\frac{s \log d }{N_1})^{\frac{\beta}{2\beta+1}})$, which recovers the result obtained by \cite{feng2022nonregular}  when data are i.i.d drawn from the population. However, their results do not cover the case $0<\beta<1$, as the population risk exhibits the non-quadratic behavior shown in (\ref{eq_population_curvature_beta_2}). We close this gap by showing that the rate of $\hat{\btheta}_1$ in the $l_2$ norm is $O_p((\frac{s \log d }{N_1})^{\frac{1}{2\beta+1}})$ for $0<\beta<1$. 

For $k\geq 2$, the convergence rate of $\hat{\btheta}_k$ in (\ref{eq_rate_k_2}) contains an additional factor $\PP((X,\bZ)\in S_k)$. By the definition of the active set $S_k$ in (\ref{activeset}), $\PP((X,\bZ)\in S_k)$ depends on the regularity of the joint distribution of $(X,\bZ)$, the accuracy of the estimator $\hat{\btheta}_{k-1}$ from the previous iteration and the choice of $b_{k-1}$. Under Assumption \ref{asp5} and when $\hat{\btheta}_{k-1}$ is close enough to $\btheta^*$, we can show that $\PP((X,\bZ)\in S_k)$ is proportional to $b_{k-1}$. If we allow $b_{k-1}\rightarrow 0$ and $N_k$ no smaller than $N_1$, the convergence rate of $\hat{\btheta}_k$ obtained via active subsampling is faster than $\hat{\btheta}_1$ obtained under uniform subsampling. Moreover, since (\ref{eq_rate_k_2}) holds for any choice of  $b_{k-1}$ provided (\ref{eq_rate_k_3}) is satisfied, there is a trade-off for determining the optimal choice of $b_{k-1}$ and the corresponding optimal rate of $\hat{\btheta}_k$. Our previous argument suggests that a smaller value of  $b_{k-1}$ is desirable to attain a faster rate of $\hat{\btheta}_k$ in (\ref{eq_rate_k_2}). However, condition (\ref{eq_rate_k_3}) prevents us from choosing the value of $b_{k-1}$ too small, see Remark \ref{remark_bk} below for interpretation of condition (\ref{eq_rate_k_3}). These together yield the optimal choice of $b_{k-1}$ and the corresponding optimal rate of $\hat{\btheta}_k$.

\begin{remark}\label{remark_bk}
Condition (\ref{eq_rate_k_3}) is inherited from Proposition \ref{prop2},  which  controls the approximation error of $\EE(\nabla R^{D_k}_{\delta_k}(\btheta^*)|\hat\btheta_{k-1})$ to  $\nabla R(\btheta^*)$ (which is 0 by definition). Since we show that $\PP((X,\bZ)\in S_k)\asymp b_{k-1}$, we can interpret $b_{k-1}$ as the size of the active set $S_k$. The first condition $b_{k-1} \geq C \delta_k$ in (\ref{eq_rate_k_3}) requires that the bandwidth $\delta_k$ should be chosen in a smaller order than the size of the active set. Otherwise, the surrogate risk $R_\delta(\cdot)$ is not shrunk to $R(\cdot)$ sufficiently fast. The event $\cW_{k-1}$ in (\ref{eq_rate_k_3}) is concerned with the stability of the active set $S_k$ with respect to the plug-in estimator $\hat\btheta_{k-1}$. Recall that the samples with $|X_i-\btheta^{*T}\bZ_i|\approx 0$ are the most informative for estimating $\btheta$. To keep them inside $S_k$ despite the plug-in error of $\hat\btheta_{k-1}$, we need $b_{k-1}$ to dominate $\|\hat\btheta_{k-1}-\btheta^*\|_2$ up to some logarithmic factors. 
\end{remark}

To obtain the optimal rate for our final estimator $\hat\btheta_K$ via Theorem \ref{rate_k}, we need to optimize the parameters $ \{\lambda_k\}_{k=1}^K, \{b_k\}_{k=1}^{K-1}, \{\delta_k\}_{k=1}^K$ and $\{N_k\}_{k=1}^K$ as well as the number of iterations $K$ in Algorithm \ref{algoK}. In the following, we show that our algorithm exhibits a phase transition at two critical values 1 and $(1+\sqrt{3})/2$ of the smoothness parameter $\beta$. We start from the theoretical result for  $\beta\in ((1+\sqrt{3})/2, +\infty)$.

\begin{theorem}[Optimal rate for $\beta > \frac{1+\sqrt{3}}{2}$]\label{beta>beta_*}
Assume that Assumptions~\ref{asp3}-\ref{asp6} hold, $K \geq 2$ and $\beta > \frac{1+\sqrt{3}}{2}$ are both fixed. We set $N_k= N/K$ for  $1 \leq k \leq K$ and 
\[
\delta_1 =c_{1}\left(\frac{s \log d}{N}\right)^{1/(2\beta+1)}, \ \lambda_1 =c_{2} \sqrt{\frac{N\log d }{n^2\delta_1}} ,
\]
\[
 \delta_k = c_{1} \left(\frac{s\log d}{N}\right)^{1/(2\beta)},
 \ \lambda_k = c_{2} \sqrt{\frac{N \log d}{n^2 b_{k-1}\delta_k}},
 \ b_{k-1} = c_{3} \left(\frac{s\log d}{N}\right)^{1/(2\beta)},     2\leq k \leq K,
\]
for some constants $c_{1}, c_{2}, c_{3}>0$. If
\begin{equation}\label{Nncondition.}
    N \lesssim (s \log d)^{\frac{1}{2\beta+1}} n^{\frac{2\beta}{2\beta+1}},
\end{equation}
and $s\log d=o(N)$, then with probability greater than $1-2K/d$, we have
\begin{equation}\label{eq_Kiteration_rate.}
   \|\hat{\btheta}_k-\boldsymbol{\theta}^*\|_2 \lesssim \left(\frac{s \log d}{N}\right)^{1/2},\  ~~\|\hat{\btheta}_k-\boldsymbol{\theta}^*\|_1 \lesssim \sqrt{s} \left(\frac{s \log d}{N}\right)^{1/2},
\end{equation}
uniformly over $2\leq k\leq K$, where $N$ is the pre-specified label budget. 
\end{theorem}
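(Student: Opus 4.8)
The plan is to prove the theorem by induction on the iteration index $k$, using Theorem~\ref{rate_k} as the engine at each step and tracking how the per-iteration rate propagates under the frozen tuning parameters. First I would check that the choices of $\delta_k,\lambda_k$ in the statement are admissible for Theorem~\ref{rate_k}. The key bookkeeping fact is that, because $\PP((X,\bZ)\in S_k)\asymp b_{k-1}$ (discussed below) and $N_k=N/K$, the sampling probability satisfies $c_{n,k}=N_kK/(n\,\PP((X,\bZ)\in S_k))\asymp N/(n b_{k-1})$; substituting this into the required relations $\delta_k\asymp (Ks\log d/(n c_{n,k}))^{1/(2\beta+1)}$ and $\lambda_k\asymp\sqrt{c_{n,k}K\log d/(n\delta_k)}$ reproduces, up to constants absorbing the fixed $K$, exactly the $\delta_k$ and $\lambda_k$ prescribed in the theorem. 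I would invoke condition (\ref{Nncondition.}) here: it is precisely the inequality guaranteeing $c_{n,k}\le 1$, so the prescribed scheme is a valid Bernoulli sampling, while $s\log d=o(N)$ gives $\delta_k=o(1)$ and the technical requirement $(s\log d/N_k)^\zeta\lesssim\delta_k$.

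For the base case $k=1$, I would apply the $k=1$ part of Theorem~\ref{rate_k} directly: with $N_1=N/K$ and $K$ fixed, it gives $r_1:=\|\hat\btheta_1-\btheta^*\|_2\lesssim (s\log d/N)^{\beta/(2\beta+1)}$ on an event of probability at least $1-2/d$. The first active step $k=2$ is the heart of the argument. Here I must verify the two conditions in (\ref{eq_rate_k_3}): $b_1\ge C\delta_2$ holds by taking $c_3$ large relative to $c_1$, since both scale as $(s\log d/N)^{1/(2\beta)}$, while the event $\cW_1=\{b_1\ge C r_1\sqrt{\log(N_2/(s\log d))}\}$ holds deterministically on the event that $r_1$ attains its bound, provided $b_1\asymp (s\log d/N)^{1/(2\beta)}$ dominates $r_1\asymp (s\log d/N)^{\beta/(2\beta+1)}$ up to the $\sqrt{\log}$ factor. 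Ignoring the logarithmic factor, this domination holds as soon as $1/(2\beta)<\beta/(2\beta+1)$, which rearranges to $2\beta^2-2\beta-1>0$, i.e. $\beta>(1+\sqrt3)/2$ — this is exactly where the stated threshold originates. Applying the $k\ge 2$ part of Theorem~\ref{rate_k} with $\PP((X,\bZ)\in S_2)\asymp b_1$ then gives $r_2\lesssim (b_1 s\log d/N_2)^{\beta/(2\beta+1)}$, and substituting $b_1\asymp (s\log d/N)^{1/(2\beta)}$ the exponent on $s\log d/N$ collapses to $(1+\tfrac{1}{2\beta})\cdot\tfrac{\beta}{2\beta+1}=\tfrac12$, so a single active step reaches the parametric rate $r_2\lesssim (s\log d/N)^{1/2}$.

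For the inductive maintenance $k\ge3$, I would assume $r_{k-1}\lesssim(s\log d/N)^{1/2}$ and observe that the parametric rate is a fixed point of the recursion. Indeed, since $\beta>1$ we have $1/(2\beta)<1/2$, so $b_{k-1}\asymp(s\log d/N)^{1/(2\beta)}$ now dominates $r_{k-1}\sqrt{\log(\cdot)}\lesssim(s\log d/N)^{1/2}\sqrt{\log(\cdot)}$ by an even larger polynomial margin, making $\cW_{k-1}$ hold a fortiori on the step-$(k-1)$ good event; a repeat of the same computation yields $r_k\lesssim(s\log d/N)^{1/2}$. The $l_1$ bounds follow from the companion $l_1$ statements in Theorem~\ref{rate_k} by the same substitutions, and a union bound over the $K$ iterations (each application failing with probability at most $2/d$, and each $\cW_{k-1}$ subsumed in the previous step's good event) yields the overall probability $1-2K/d$. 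Because $\hat\btheta_{k-1}$ is computed from batch $D_{k-1}$ while the sampling and estimation at step $k$ use the disjoint batch $D_k$, I can condition on $\hat\btheta_{k-1}$ and treat $S_k$ as fixed, which legitimizes the iterated use of Theorem~\ref{rate_k}.

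I would flag two items as the main obstacles. The first and most essential is the verification of $\cW_1$ at $k=2$: this is the only place the threshold $(1+\sqrt3)/2$ enters, and it demands careful control of the competition between the polynomial rate gap $(s\log d/N)^{1/(2\beta)-\beta/(2\beta+1)}$ (which blows up since the exponent is negative precisely when $\beta>(1+\sqrt3)/2$) and the slowly growing factor $\sqrt{\log(N_2/(s\log d))}$; the polynomial margin must be shown to dominate the polylogarithmic factor for $N$ large. The second is the auxiliary geometric estimate $\PP((X,\bZ)\in S_k)\asymp b_{k-1}$, which underlies both the reduction of $c_{n,k}$ and the rate recursion: establishing it uses the upper density bound (\ref{eq_upper_density}) for the $\lesssim$ direction and the lower density bound (\ref{eq_lower_density}) on $\mathcal G$ for the $\gtrsim$ direction, together with the fact that $r_{k-1}=o(1)$ keeps the slab $S_k$ essentially aligned with the true hyperplane $x=\btheta^{*T}\bz$, so that the constant-width region $B(\btheta^{*T}\bz,\epsilon_n)$ with $\epsilon_n\asymp b_{k-1}$ is captured.
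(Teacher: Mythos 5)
Your proposal is correct and follows essentially the same route as the paper's proof: Theorem~\ref{rate_k} applied iteratively, the geometric estimate $\PP((X,\bZ)\in S_k)\asymp b_{k-1}$ established from the two-sided density bounds together with sub-Gaussian control of the plug-in error $(\hat\btheta_{k-1}-\btheta^*)^T\bZ$, verification of (\ref{eq_rate_k_3}) at $k=2$ via the comparison $1/(2\beta)<\beta/(2\beta+1)\iff\beta>(1+\sqrt3)/2$ (with the polynomial gap absorbing the $\sqrt{\log}$ factor), the exponent collapse $(1+\tfrac{1}{2\beta})\tfrac{\beta}{2\beta+1}=\tfrac12$, induction using $1/(2\beta)<1/2$, the identification of (\ref{Nncondition.}) with $c_{n,k}\le1$, and a union bound. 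No gaps.
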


The tuning parameters $\delta_k$ and $\lambda_k$ are chosen in the same way as in Theorem \ref{rate_k}, where we plug in $c_{n,k} = \frac{KN_k}{n\PP\left( (X,\bZ) \in S_k \right)}$ and use the intermediate result $\PP\left( (X,\bZ) \in S_k \right)\asymp b_{k-1}$. In addition, when $\beta > \frac{1+\sqrt{3}}{2}$, we choose the smallest value of $b_{k-1}$ such that the condition $b_{k-1} \geq C \delta_k$ in (\ref{eq_rate_k_3}) holds, and we verify that the event $\cW_{k-1}$ in (\ref{eq_rate_k_3}) also holds with high probability for such choice of $b_{k-1}$.  The condition (\ref{Nncondition.}) guarantees that the active set $S_k$ contains enough unlabeled data to draw from. In this theorem, the budget $N$ is evenly divided across $K$ iterations so that the expected sample size at the final iteration is $N_K=N/K$.  Intuitively, it may be desirable to allocate more budget to compute $\hat\btheta_k$ as $k$ increases. However, this does not improve the convergence rate of $\hat\btheta_K$; see Section \ref{iterationexample} for a variant of Theorem \ref{beta>beta_*} in this case.

The convergence rate of $\hat\btheta_k$ at each iterations is shown in (\ref{eq_Kiteration_rate.}). For any $k\geq 2$, the rate can be viewed as the parametric rate for sparse models and is faster than the minimax optimal rate with $N$ i.i.d. data \citep{feng2022nonregular}, demonstrating the benefit of active sampling. Moreover, as the rate (\ref{eq_Kiteration_rate.}) stays the same for any $k\geq 2$, it suffices to only run $K=2$ iterations in Algorithm \ref{algoK}. 

\begin{theorem}[Optimal rate for $1 < \beta \leq  \frac{1+\sqrt{3}}{2}$]\label{beta^{**}< beta < beta^*}
    Assume that Assumptions~\ref{asp3}-\ref{asp6} hold, $K =  \lceil \log_{\frac{\beta}{2\beta+1}}(1-\frac{\beta+1}{2\beta^2})\rceil+1$ and $1 < \beta \leq  \frac{1+\sqrt{3}}{2}$ are fixed. We set  $N_k= N/K$ for  $1 \leq k \leq K$, 
\[
\delta_1 = c_{1}\left(\frac{s \log d}{N}\right)^{1/(2\beta+1)}, \ \lambda_1 =c_{2}  \sqrt{\frac{N \log d }{n^2\delta_1}},
\]
for $2\leq k \leq K-1$,
 \[
   \ b_{k-1} = c_3 
        \left(\log(\frac{N}{s \log d}) \right)^{\frac{(2\beta+1)(1-(\frac{\beta}{2\beta+1})^{k-1})}{2(\beta+1)}} \left(\frac{s \log d}{N}\right)^{\frac{\beta}{\beta+1}(1-(\frac{\beta}{2\beta+1})^{k-1})},
   \]
\[
     \delta_k  = c_1 \left(\frac{b_{k-1} s \log d}{N}\right)^{1/(2\beta+1)},\ 
 \lambda_k = c_{2} \sqrt{\frac{N \log d}{n^2 b_{k-1}\delta_k}},   
\]
and
\[
b_{K-1} = c_3 \left(\frac{s\log d}{N}\right)^{1/(2\beta)}, \delta_{K} = c_1 \left(\frac{s\log d}{N}\right)^{1/(2\beta)}, 
\lambda_K = c_{2} \sqrt{\frac{N \log d}{n^2 b_{K-1}\delta_K}},
\]
for some constants $c_{1}, c_{2}, c_{3}>0$. If
\begin{equation}\label{Nncondition1}
N\lesssim  \Big(\log(\frac{N}{s \log d})\Big)^{\frac{\beta+1}{2(2\beta+1)}} (s \log d)^{\frac{\beta}{2\beta+1}}n^{\frac{\beta+1}{2\beta+1}},
\end{equation}
(\ref{Nncondition.}) and $s\log d=o(N)$ hold, then with probability greater than $1-2K/d$, we have
\begin{equation}\label{eq_beta^{**}< beta < beta^*_rate}
   \|\hat{\btheta}_K-\boldsymbol{\theta}^*\|_2 \lesssim \left(\frac{ s \log d }{N}\right)^{1/2},\  ~~~\|\hat{\btheta}_K-\boldsymbol{\theta}^*\|_1 \lesssim \sqrt{s} \left(\frac{ s \log d }{N}\right)^{1/2}.
\end{equation}
\end{theorem}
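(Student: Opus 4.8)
The plan is to iterate the per-step bound of Theorem~\ref{rate_k}, choosing $b_{k-1}$ as small as the active-set stability event $\cW_{k-1}$ allows until the accumulated error enters the fast-convergence region, and then switching to the floor bandwidth at the terminal step to reach the parametric rate. Write $r_k:=\|\hat\btheta_k-\btheta^*\|_2$, $\alpha:=\beta/(2\beta+1)\in(0,\tfrac12)$, and $L:=\log(N/(s\log d))$. Because $N_k=N/K$ with $K$ fixed, we have $N_k\asymp N$ and $c_{n,k}\asymp N/\big(n\,\PP((X,\bZ)\in S_k)\big)$. Using the intermediate fact $\PP((X,\bZ)\in S_k)\asymp b_{k-1}$ (valid once $\hat\btheta_{k-1}$ is close to $\btheta^*$, by Assumption~\ref{asp5}), the bound (\ref{eq_rate_k_2}) becomes $r_k\lesssim(b_{k-1}\,s\log d/N)^{\alpha}$. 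The only force keeping $b_{k-1}$ bounded away from zero is the event $\cW_{k-1}$ in (\ref{eq_rate_k_3}), which requires $b_{k-1}\gtrsim r_{k-1}\sqrt{L}$; taking $b_{k-1}\asymp r_{k-1}\sqrt{L}$ (the minimal admissible value) and substituting gives the scalar recursion $r_k\asymp(r_{k-1}\sqrt{L}\,s\log d/N)^{\alpha}$, started from $r_1\asymp(s\log d/N)^{\alpha}$ by (\ref{eq_rate_k_1}).

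I would then solve this recursion explicitly. Writing $r_k=L^{a_k}(s\log d/N)^{x_k}$, the exponents satisfy the affine recursions $x_k=\alpha(x_{k-1}+1)$ and $a_k=\alpha(a_{k-1}+\tfrac12)$ with $x_1=\alpha$, $a_1=0$. Since $0<\alpha<\tfrac12$ both are contractions; the polynomial exponent has fixed point $\alpha/(1-\alpha)=\beta/(\beta+1)$, yielding $x_k=\frac{\beta}{\beta+1}(1-\alpha^{k})$ and, correspondingly, the $b_{k-1}$ log-exponent $a_{k-1}+\tfrac12=\frac{1-\alpha^{k-1}}{2(1-\alpha)}$. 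Using $1/(1-\alpha)=(2\beta+1)/(\beta+1)$, these are exactly the polynomial and logarithmic powers prescribed for $b_{k-1}$ in the theorem statement, which confirms that the stated tuning parameters trace out the minimal admissible recursion; an induction over $2\le k\le K-1$ then establishes $r_k\asymp L^{a_k}(s\log d/N)^{x_k}$ with $x_k\uparrow\beta/(\beta+1)$.

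The terminal iteration is treated separately. At step $K$ I replace the recursive choice by the floor $b_{K-1}\asymp\delta_K\asymp(s\log d/N)^{1/(2\beta)}$; this is admissible precisely when $\cW_{K-1}$ still holds, i.e. $b_{K-1}\gtrsim r_{K-1}\sqrt{L}$, which to leading polynomial order is the requirement $x_{K-1}\ge 1/(2\beta)$. By the closed form this is $\alpha^{K-1}\le 1-\frac{\beta+1}{2\beta^2}$, and taking logarithms (with $\alpha=\beta/(2\beta+1)$) gives $K-1\ge\log_{\alpha}\!\big(1-\frac{\beta+1}{2\beta^2}\big)$, i.e. the stated $K=\lceil\log_{\frac{\beta}{2\beta+1}}(1-\frac{\beta+1}{2\beta^2})\rceil+1$; the paper's convention that $\lceil\cdot\rceil$ is the \emph{strictly} larger integer even makes the crossing strict, $x_{K-1}>1/(2\beta)$, so the polynomial margin dominates the residual $L^{a_{K-1}+1/2}$ factor. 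Note $1-\frac{\beta+1}{2\beta^2}>0$ exactly because $(2\beta+1)(\beta-1)>0$ for $\beta>1$, which is why $\beta=1$ is excluded and handled as a separate regime. Substituting $b_{K-1}$ into $r_K\lesssim(b_{K-1}s\log d/N)^{\alpha}$ produces exponent $\alpha\big(\tfrac{1}{2\beta}+1\big)=\frac{\beta}{2\beta+1}\cdot\frac{2\beta+1}{2\beta}=\tfrac12$, i.e. $r_K\lesssim(s\log d/N)^{1/2}$, and the matching $l_1$ bound follows from the companion estimate in (\ref{eq_rate_k_2}).

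The main obstacle is the probabilistic bookkeeping that turns this deterministic recursion into a valid high-probability statement. Theorem~\ref{rate_k} is conditional on $\hat\btheta_{k-1}$ and on $\cW_{k-1}$, so I must (i) show $\cW_{k-1}$ holds with high probability under the chosen $b_{k-1}$, which uses the sub-Gaussianity of $\bZ\mid Y$ in Assumption~\ref{asp4}(iii) to bound how much the plug-in $\hat\btheta_{k-1}^{T}\bZ$ displaces the active-set boundary relative to $\btheta^{*T}\bZ$; this displacement is $O(r_{k-1}\sqrt{L})$ with high probability, which is precisely why the $\sqrt{L}$ inflation of $b_{k-1}$ is needed and where those logarithmic factors originate, and (ii) propagate the rate through the iterations, conditioning each step on the batch-independent history $\bar H_{i-1}$ so that the per-step failure probabilities $2/d$ sum to the stated $2K/d$. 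One must also check feasibility throughout: the smallest active-set probability occurs at the terminal floor step, and $c_{n,K}\le 1$ there is equivalent to (\ref{Nncondition.}), while the additional constraint (\ref{Nncondition1}) guarantees that the coordinate bound (\ref{eq_asp4}) and hence the concentration and bias estimates of Propositions~\ref{prop2} and~\ref{E_1} remain valid at the intermediate steps, where the active sets are larger but the bandwidths $\delta_k$ are correspondingly smaller. The delicate quantitative point is verifying that the accumulated $\sqrt{L}$ factors never compound into a genuine polynomial loss — the closed form shows they contribute only the bounded powers $L^{a_k}$, which are absorbed the moment $x_k$ crosses $1/(2\beta)$ at the clean final step.
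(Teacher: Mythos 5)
Your proposal is correct and follows essentially the same route as the paper's proof: iterate the per-step bound of Theorem~\ref{rate_k} with $b_{k-1}\asymp r_{k-1}\sqrt{\log(N/(s\log d))}$ (the minimal value permitted by $\cW_{k-1}$), which yields exactly the paper's recursion $r_k\asymp(b_{k-1}\,s\log d/N)^{\beta/(2\beta+1)}$ and closed form $x_k=\frac{\beta}{\beta+1}(1-(\frac{\beta}{2\beta+1})^{k})$, then switch to the floor $b_{K-1}\asymp(s\log d/N)^{1/(2\beta)}$ once $x_{K-1}$ crosses $1/(2\beta)$, which is precisely the paper's condition (\ref{betacondition}) determining $K$. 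Your explicit affine-recursion solution of the exponents and your identification of the feasibility constraints with (\ref{Nncondition1}) and (\ref{Nncondition.}) match the paper's induction argument step for step.
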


Compared with Theorem \ref{beta>beta_*}, our estimator  behaves differently  when $1 < \beta \leq  (1+\sqrt{3})/2$.  While we still obtain the same parametric rate for $\hat{\btheta}_K$ in (\ref{eq_beta^{**}< beta < beta^*_rate}), we need to run at least $K =  \lceil \log_{\frac{\beta}{2\beta+1}}(1-\frac{\beta+1}{2\beta^2})\rceil+1$ iterations in Algorithm \ref{algoK} to attain (\ref{eq_beta^{**}< beta < beta^*_rate}) as opposed to only $K=2$ iterations in Theorem \ref{beta>beta_*}. Note that $\log_{\frac{\beta}{2\beta+1}}(1-\frac{\beta+1}{2\beta^2})$ is well defined and is strictly greater than 1 for $1 < \beta \leq  (1+\sqrt{3})/2$. Indeed, following a mathematical induction argument, we show that, for any $2\leq k\leq K-1$, 
\begin{equation}\label{eq_beta^{**}< beta < beta^*_rate_sub}
\|\hat{\btheta}_k - \btheta^*\|_2 \lesssim \left(\log(\frac{N}{s\log d})\right)^{\frac{\beta}{2(1+\beta)}(1-(\frac{\beta}{2\beta+1})^{k-1})} \Big(\frac{s\log d}{N}\Big)^{(1-(\frac{\beta}{2\beta+1})^k)\frac{\beta}{1+\beta}},
\end{equation}
with high probability. It can be verified from (\ref{eq_beta^{**}< beta < beta^*_rate_sub}) that stopping early produces estimators with sub-optimal rates. This phenomenon occurs because, for $1 < \beta \leq  (1+\sqrt{3})/2$, the rate of $\hat\btheta_1$ is not fast enough  so that we have to choose a larger value of $b_1$ to satisfy the event $\cW_1$ in  (\ref{eq_rate_k_3}). This remains the case, until $\hat{\btheta}_k$ enters a fast convergence region 
\begin{equation}\label{eq_fast}
\Theta_{fast,\beta}=\Big\{\btheta: \|\btheta-\btheta^*\|_2\lesssim \Big(\frac{Ks\log d}{N}\Big)^{1/(2\beta)}\sqrt{\log (\frac{Ks\log d}{N})}\Big\},
\end{equation} 
which is derived by matching the rate of $\hat{\btheta}_{k-1}$ with the order of $\delta_k$ up to a logarithmic factor (see the two conditions in (\ref{eq_rate_k_3})). Once $\hat{\btheta}_k\in\Theta_{fast,\beta}$, we only need one more iteration to achieve the parametric rate, mirroring the analysis of $K = 2$  in Theorem \ref{beta>beta_*}.


\begin{theorem}[Optimal rate for $0 < \beta \leq 1$]\label{beta < beta^**}
Assume that Assumptions~\ref{asp3}-\ref{asp6} hold, $0 < \beta \leq 1$ and $K = \lceil \log_{2\beta+1}(\log N) \rceil$. We set  $ N_k= N/K$ for  $1 \leq k \leq K$,  
  \[
 \delta_1 = c_{1}\left(\frac{K s \log d}{N}\right)^{1/(2\beta+1)}, \ \lambda_1 =c_{2} \sqrt{\frac{N K\log d }{n^2\delta_1}},
  \]
and for $2\leq k \leq K$,
\begin{align} \label{parameterchoice}
b_{k-1} =& c_3 \left(\log (\frac{N}{K s \log d})\right)^{\frac{2\beta+1}{4\beta}[1 -  (\frac{1}{2\beta+1})^{k-1}]} \left(\frac{K s \log d}{N}\right)^{\frac{1}{2\beta}[1 - (\frac{1}{2\beta+1})^{k-1}]}, \\
   \delta_k  =& c_1 \left(\frac{b_{k-1} K s \log d}{N}\right)^{1/(2\beta+1)},\ 
 \lambda_k = c_{2} \sqrt{\frac{N K \log d}{n^2 b_{k-1}\delta_k}},   
\end{align}
    for some constants $c_1, c_2, c_3> 0$. If
\begin{equation}
    \label{Nncondition1.}
 N \lesssim  \Big(\log(\frac{N}{K s \log d})\Big)^{\frac{1}{2}} (K s \log d)^{\frac{1}{2\beta+1}} n^{\frac{2\beta}{2\beta+1}},
\end{equation}
and $Ks\log d=o(N)$ hold, then with probability greater than $1-2 Kd^{-1}$,
\begin{align} 
 \|\hat{\btheta}_K - \btheta^*\|_2 &\lesssim    \left(\log(\frac{N}{K s \log d})\right)^{\frac{1}{4\beta}}
   \left(\frac{K s \log d}{N}\right)^{\frac{1}{2\beta}},\label{eq_beta < beta^**_rate1}\\
 \|\hat{\btheta}_K - \btheta^*\|_1 &\lesssim 
   \sqrt{s}  \left(\log(\frac{N}{K s \log d})\right)^{\frac{1}{4\beta}}
   \left(\frac{K s \log d}{N}\right)^{\frac{1}{2\beta}}.\label{eq_beta < beta^**_rate2}
\end{align}
\end{theorem}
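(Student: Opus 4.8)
The plan is to run the master bound of Theorem~\ref{rate_k} iteratively and track the estimation error through the recursion that the parameter choices in (\ref{parameterchoice}) induce. Write $a_N := Ks\log d/N$ and $L_N := \log(N/(Ks\log d))$, and let $r_k$ denote a high-probability upper bound on $\|\hat\btheta_k-\btheta^*\|_2$. For the base case I would invoke (\ref{eq_rate_k_1}) with $N_1=N/K$ and $\beta=1$, which gives $r_1\lesssim (s\log d/N_1)^{1/3}=a_N^{1/3}$, together with the companion $l_1$ bound. For $2\le k\le K$ the key is the intermediate fact $\PP((X,\bZ)\in S_k)\asymp b_{k-1}$, established from Assumption~\ref{asp5} once $\hat\btheta_{k-1}$ is close to $\btheta^*$, which turns the bound (\ref{eq_rate_k_2}) with $c_{n,k}=KN_k/(n\PP((X,\bZ)\in S_k))$ and $N_k=N/K$ into the scalar recursion $r_k\lesssim (b_{k-1}a_N)^{1/3}$.

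To close the induction at step $k$ I must check the hypotheses of Theorem~\ref{rate_k}: the bandwidth condition $b_{k-1}\ge C\delta_k$ (immediate from $\delta_k=c_1(b_{k-1}a_N)^{1/3}$ and $a_N=o(1)$), the technical requirement $(s\log d/N_k)^\zeta\lesssim\delta_k=o(1)$, and, most importantly, the event $\cW_{k-1}$ in (\ref{eq_rate_k_3}), namely $b_{k-1}\ge C\|\hat\btheta_{k-1}-\btheta^*\|_2\sqrt{\log(N_k/(s\log d))}$. Here the role of the $\sqrt{L_N}$-type prefactor in (\ref{parameterchoice}) becomes clear: on the high-probability event $\|\hat\btheta_{k-1}-\btheta^*\|_2\le r_{k-1}$, the deterministic choice $b_{k-1}\asymp r_{k-1}\sqrt{L_N}$ dominates the random quantity in $\cW_{k-1}$, so $\cW_{k-1}$ holds while $b_{k-1}$ is kept as small as possible. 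Substituting $b_{k-1}\asymp r_{k-1}\sqrt{L_N}$ into $r_k\lesssim(b_{k-1}a_N)^{1/3}$ yields $r_k\lesssim (r_{k-1}\sqrt{L_N}\,a_N)^{1/3}$; unrolling this with $r_1\asymp a_N^{1/3}$ produces exactly the closed form $r_k\asymp L_N^{(1-1/3^{k-1})/4}a_N^{(1-1/3^k)/2}$, which I would verify matches the explicit $b_{k-1}$ of (\ref{parameterchoice}) term by term. Taking logarithms shows the recursion contracts toward the fixed point $L_N^{1/4}a_N^{1/2}$ with residual exponent $(1/3)^k$.

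It remains to choose $K$ so that the residual is absorbed into a constant. After taking logs, the discrepancy between $r_K$ and the fixed point is of order $(1/3)^K\log(1/a_N)\asymp (1/3)^K L_N$; setting $K=\lceil\log_3(\log N)\rceil$ makes $3^K\asymp L_N$ (as $L_N$ is of order $\log N$) so this discrepancy is $O(1)$, giving $r_K\asymp L_N^{1/4}a_N^{1/2}$, i.e. the near-parametric rate (\ref{eq_beta < beta^**_rate1}); the $l_1$ bound follows by the same route using the $l_1$ part of (\ref{eq_rate_k_2}). Finally I would verify that (\ref{Nncondition1.}) together with $Ks\log d=o(N)$ keeps every sampling probability $c_{n,k}=N/(nb_{k-1})\in(0,1)$ and guarantees enough unlabeled points fall in each active set (this is where $n\gg N$ enters), and assemble the overall $1-2Kd^{-1}$ guarantee by a union bound over the $K$ iterations, each succeeding with probability $1-2/d$.

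The main obstacle I anticipate is the termination analysis, which is what separates $\beta=1$ from the regimes of Theorems~\ref{beta>beta_*} and \ref{beta^{**}< beta < beta^*}. When $\beta>1$ the recursion contracts to a fixed point lying strictly inside the fast-convergence region (\ref{eq_fast}), so after finitely many steps one can take a single finishing step with the optimal bandwidth $b_{K-1}=\delta_K\asymp a_N^{1/(2\beta)}$ and reach the exact parametric rate $a_N^{1/2}$. For $\beta=1$ the contraction factor degenerates to $1/3$ and the fixed point is $L_N^{1/4}a_N^{1/2}$; a corresponding finishing step would demand $r_{K-1}\lesssim L_N^{-1/2}a_N^{1/2}$, which is strictly below this fixed point and hence unattainable by the recursion. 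There is therefore no clean way to shed the $L_N^{1/4}$ factor, and the best one can do is iterate until $\hat\btheta_K$ reaches the fixed point up to constants, which is exactly why $K$ must diverge like $\log_3\log N$ so that the residual $(1/3)^K L_N$ becomes $O(1)$. Propagating the high-probability events through this diverging number of iterations, in particular guaranteeing $\cW_{k-1}$ with the deterministic, as-small-as-possible choice of $b_{k-1}$ at every step while holding the union-bound failure probability at $2Kd^{-1}$, is the delicate bookkeeping at the heart of the proof.
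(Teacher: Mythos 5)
Your proposal is correct and follows essentially the same route as the paper: the base rate $r_1\asymp (Ks\log d/N)^{1/3}$ from Theorem~\ref{rate_k}, the intermediate fact $\PP((X,\bZ)\in S_k)\asymp b_{k-1}$, the recursion $r_k\lesssim (b_{k-1}\,Ks\log d/N)^{1/3}$ closed by choosing $b_{k-1}\asymp r_{k-1}\sqrt{\log(N/(Ks\log d))}$ to guarantee $\cW_{k-1}$, the unrolled closed form $r_k\asymp L_N^{(1-1/3^{k-1})/4}a_N^{(1-1/3^k)/2}$, and the termination at $K=\lceil\log_3(\log N)\rceil$ so that $a_N^{-1/(2\cdot 3^K)}=O(1)$, with the union bound and the check $c_{n,k}\le 1$ via (\ref{Nncondition1.}) exactly as in the paper. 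Your closing discussion of why the $L_N^{1/4}$ factor cannot be shed when $\beta=1$ also matches the paper's explanation of the phase transition.
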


For any $\beta \leq 1$, ignoring all the logarithmic factors, the convergence rate of $\hat{\btheta}_K$ in (\ref{eq_beta < beta^**_rate1}) reduces to $O_p((s/N)^{\frac{1}{2\beta}})$, which differs from the parametric rate $O_p(\sqrt{s/N})$ in Theorems \ref{beta>beta_*} and \ref{beta^{**}< beta < beta^*}. In addition, unlike the previous two cases (i) $\beta >  (1+\sqrt{3})/2$ and (ii) $1 < \beta \leq  (1+\sqrt{3})/2$, we have to run at least $K = \lceil \log_{2\beta+1}(\log N) \rceil$ iterations in Algorithm \ref{algoK} to attain the optimal rate, where $K$ has to grow with $N$, although, very slowly. To see the reason, we first apply a similar induction argument to show that, for $0<\beta\leq 1$ and $k\geq 2$, 
  \begin{align}\label{eq_beta < beta^**_rate_sub}
              \|\hat{\btheta}_k - \btheta^*\|_2
        \lesssim & \left(\log(\frac{N}{K s \log d})\right)^{\frac{1}{4\beta}(1-1/(2\beta+1)^{k-1})}
        \Big(\frac{K s \log d}{N}\Big)^{\frac{1}{2\beta}(1-1/(2\beta+1)^k)},
   \end{align}
with high probability. Therefore, for any fixed $\beta\leq 1$ and any fixed $k\geq 2$, the rate of $\hat\btheta_k$ in (\ref{eq_beta < beta^**_rate_sub}) is always slower than the bound in (\ref{eq_fast}), and therefore the sequence of estimators $\hat\btheta_1, \hat\btheta_2,...,\hat\btheta_k,...$ can only approach the fast convergence region $\Theta_{fast,\beta}$ but never enters it in finite $k$. Finally, we stop the algorithm at $K = \lceil \log_{2\beta+1}(\log N) \rceil$ steps when $\hat\btheta_K$ is close enough to the fast convergence region. More precisely, we choose  $K$ such that the convergence rate of $\hat{\btheta}_{K+1}$ matches with that of $\hat{\btheta}_K$. 

\begin{remark}
\cite{mallik2020m} studied a general M-estimation problem with multistage sampling procedures. In particular, they considered the following classification problem $d^*=\argmin \EE[L_{01}(Y(X-d))]$, where $L_{01}(\cdot)$ is the 0-1 loss. Under the assumption that $\eta(x)=\PP(Y=1|X=x)$ is continuously differentiable in a neighborhood of $d^*$, they proposed an isotonic regression approach to estimate $\eta(x)$ and then invert this function at $1/2$ to estimate $d^*$. Using this approach, their first-stage estimator $\hat d_1$ with $N$ i.i.d data sampled uniformly from the population has the rate $O_p(N^{-1/3})$. In the second stage, they sampled another $N$ i.i.d data in a zoomed-in neighborhood of $\hat d_1$ and the resulting  second-stage estimator $\hat d_2$ has the rate $O_p(N^{-(1+\gamma)/3})$ for any $\gamma<1/3$, whose  limiting distribution was also established. Their setting is similar to our  $\beta=1$ case. With $d,s$ being fixed, the proof of Theorem \ref{beta < beta^**} shows that our estimator $\hat\btheta_1$ has the rate $O_p((\log(\log N)/N)^{1/3})$ and $\hat\btheta_2$ has the rate $O_p((\log N)^{\frac{1}{6}}(\log(\log N)/N)^{\frac{4}{9}})$, which are comparable to the rates of $\hat d_1$ and $\hat d_2$, respectively. In their paper, they did not pursue the theoretical results of the estimators beyond two stages, whereas our results confirm that the convergence rate can be further accelerated to the near-parametric rate $O_p((\log N)^{\frac{1}{4}}(\log(\log N)/N)^{\frac{1}{2}})$ with $K = \lceil \log_3(\log N) \rceil$ stages. 

 \end{remark}

\begin{remark}
It is seen that the optimality of the proposed estimator $\hat\btheta_K$ critically depends on the choice of tuning parameters $ \{\lambda_k\}_{k=1}^K, \{b_k\}_{k=1}^{K-1}, \{\delta_k\}_{k=1}^K$ and the number of iterations $K$, which requires the knowledge of the smoothness parameter $\beta$ and the sparsity parameter $s$. To overcome this limitation, we propose Lepski’s methods \citep{lepskii1991problem} to achieve adaptation to the unknown smoothness $\beta$ and sparsity $s$ respectively. The detailed theoretical guarantees are presented in Section~\ref{sec_adaptation}.
\end{remark}

Finally, we briefly summarize our main conclusions: 
\begin{itemize}
\item [(i)] $\beta>(1+\sqrt{3})/2$. The first step estimator $\hat\btheta_1$ already lies in the fast convergence region $\Theta_{fast,\beta}$; one more iteration yields the parametric rate.
\item [(ii)] $1<\beta\leq (1+\sqrt{3})/2$.  After $K-1=\lceil \log_{\frac{\beta}{2\beta+1}}(1-\frac{\beta+1}{2\beta^2})\rceil$ iterations, we have $\hat\btheta_{K-1}\in\Theta_{fast,\beta}$ with high probability; the next step attains the parametric rate.
\item [(iii)] $0 < \beta \leq 1$. There does not exist a finite $k$ that places $\hat\btheta_k$ inside  $\Theta_{fast,\beta}$; $K=\lceil \log_{2\beta+1}(\log N) \rceil$ iterations are required to approach the fast convergence region.
\end{itemize}

\subsection{Minimax Lower Bound}\label{sec_lower}

For clarity, we write $R_P(\boldsymbol{\theta})$ for $R(\boldsymbol{\theta})$ in (\ref{objective}), emphasizing that the expectation is taken under the joint distribution $P$ of $(X, Y, \bZ)$. Similarly, we use $\btheta^*(P)$ to denote the unique minimizer of $R_P(\boldsymbol{\theta})$. Let $\mathcal{P}(\beta, L, p_{\min},p_{\max})$ denote the class of distributions which belong to the Hölder class $\mathcal{P}(\beta, L)$ in Definition~\ref{densityclass} and satisfy $\sup_{x \in \cX, y \in\{-1,1\}, \boldsymbol{z} \in \cZ} f(x \mid y, \boldsymbol{z})<p_{\max }$ and $\inf_{x \in \cX, \boldsymbol{z} \in \cZ} f(x \mid \boldsymbol{z}) \geq p_{\min}$, where $\cX$ and $\cZ$ are the support sets of $X$ and $\bZ$. We consider the following class of distributions
\begin{align}\label{def: prob measure class}
\mathcal{P}(\beta,s)=
\begin{cases}
\begin{aligned}
\big\{\, P\in &\mathcal{P}(\beta,L,p_{\min},p_{\max}) :
\|\boldsymbol{\theta}^*(P)\|_0 \le s,\;
\|\boldsymbol{\theta}^*(P)\|_2 \le C,\;
\eqref{sparseeigen2}\ \text{holds},\\
&\rho_- \le \lambda_{\min}\!\big(\nabla^2 R_P(\boldsymbol{\theta}^*(P))\big)
\le \lambda_{\max}\!\big(\nabla^2 R_P(\boldsymbol{\theta}^*(P))\big)
\le \rho_+ \,\big\}
\end{aligned}
& \text{if }\beta>1, \\[15pt]
\begin{aligned}
\big\{\, P\in &\mathcal{P}(\beta,L,p_{\min},p_{\max}) :
\|\boldsymbol{\theta}^*(P)\|_0 \le s,\;
\|\boldsymbol{\theta}^*(P)\|_2 \le C,\;
\eqref{sparseeigen2}\ \text{holds},\\
&\rho_- \,\|\boldsymbol{\theta}-\boldsymbol{\theta}^*(P)\|_2^{1+\beta}
\le R_P(\boldsymbol{\theta})-R_P(\boldsymbol{\theta}^*(P))\le \rho_+ \,\|\boldsymbol{\theta}-\boldsymbol{\theta}^*(P)\|_2^{1+\beta},
\\
&\qquad~~\text{for any }\|\boldsymbol{\theta}-\boldsymbol{\theta}^*(P)\|_2 \le r_n\big\}
\end{aligned}
& \text{if }0<\beta\le 1,
\end{cases}
\end{align}
where we treat $L,p_{\min},p_{\max}, C, M_1, \rho_-$ and $\rho_+$ as positive constants, and $r_n$ is a sequence converging to $0$. Thus $\cP(\beta,s)$ encodes both the sparsity of $\boldsymbol{\theta}^*(P)$ and the curvature of the population risk $R(\btheta)$ at $\boldsymbol{\theta}^*(P)$; see  (\ref{eq_population_curvature_beta}) for $\beta>1$ and (\ref{eq_population_curvature_beta_2}) for $0<\beta\leq 1$.  

For $1\leq i\leq n$, assume that $(X_i, \bZ_i, Y_i)$ are i.i.d. from the distribution $P$. We observe $O_i=(X_i, \bZ_i, Y_i)$ if $R_i = 1$ and $O_i=(X_i, \bZ_i)$ if $R_i = 0$. Here, the assumed observed data mechanism is more flexible than that considered in Section \ref{method}, as we can still observe the data $O_i=(X_i, \bZ_i)$ even if the $i$th data point is not sampled. Let $\bar{H}_{i-1}=\{O_1,...,O_{i-1}\}$ denote the collection of the first $i-1$ observed data, which can be viewed as the historical data prior to the $i$th sample. For simplicity, denote $\bar{H}_{i-1}=\emptyset$ for $i=1$. In terms of the sampling procedure, we allow $R_i$ to depend on $(X_i, \bZ_i)$ as well as the first $i-1$ observed data $\bar{H}_{i-1}$. We denote $Q_i:= \PP(R_i =1 \mid X_i, \bZ_i, \bar{H}_{i-1})$ and $Q=(Q_1,...,Q_n)$. Based on the  observed data $\{O_i\}_{i=1}^n$, our goal is to estimate the unknown parameter $\btheta^*(P)$. In this setting, the estimator $\hat{\btheta}$ is a measurable function of the observed data $(O_1,...,O_n)$, whose accuracy can be assessed by the following $l_q$ risk
$$
\mathbb{E}_{P,Q}\|\widehat{\boldsymbol{\theta}}(O_1,...,O_n)-\boldsymbol{\theta}^*(P)\|_q,
$$
where the expectation is taken under $(P,Q)$ which determines the distribution of $(O_1,...,O_n)$. For any given sampling distribution $Q$, the minimax risk for estimating  $\btheta^*(P)$ is defined as 
\[
\mathcal{M}_n(\mathcal{P}(\beta,s), Q) := \inf_{\hat{\btheta}} \sup_{P \in \mathcal{P}(\beta,s)} \mathbb{E}_{P,Q}\|\widehat{\boldsymbol{\theta}}(O_1,...,O_n)-\boldsymbol{\theta}^*(P)\|_q,
\]
where the supremum is only for the distribution $P$ in $\mathcal{P}(\beta, s)$ and the infimum is over all possible estimators $\hat{\btheta}$ based on the observed data  $\{O_i\}_{i=1}^n$.

Since $\mathcal{M}_n(\mathcal{P}(\beta,s), Q)$ depends on the sampling distribution $Q$, to  characterize the assumptions on $Q$, we define the following class. For any given distribution $P\in \mathcal{P}(\beta,s)$ and budget $N$, let
\begin{align}\label{sampling1}
    \mathcal{Q}_N(P) := \Big\{ (Q_1, \cdots, Q_n) &:
            \forall 1 \leq i \leq n, (X_i,\bZ_i, Y_i) \perp_{(P,Q)} \bar{H}_{i-1}, ~~R_i \perp_{(P,Q)} Y_i \mid X_i, \bZ_i, \bar{H}_{i-1},\nonumber\\
&~~\EE_P\Big(\sum_{i=1}^n Q_i\Big)\leq N,~~\sup _{\|\boldsymbol{v}\|_0 \leq s} \frac{\boldsymbol{v}^T \mathbb{E}_{P}\left(\sum_{i=1}^n Q_i\boldsymbol{Z}_i \boldsymbol{Z}_i^T\right) \boldsymbol{v}}{\|\boldsymbol{v}\|_2^2} \leq CN\Big\},
\end{align}
where $C$ is a positive constant. In (\ref{sampling1}), $(X_i,\bZ_i, Y_i) \perp_{(P,Q)} \bar{H}_{i-1}$ and $R_i \perp_{(P,Q)} Y_i \mid X_i, \bZ_i, \bar{H}_{i-1}$ formalize the assumptions on the data generating process for $R_i$. More precisely, $(X_i,\bZ_i, Y_i) \perp_{(P,Q)} \bar{H}_{i-1}$ is valid, since we generate $(R_1,...,R_{i-1})$ without using the future data $(X_i,\bZ_i, Y_i)$. In addition, the assumption $R_i \perp_{(P,Q)} Y_i \mid X_i, \bZ_i, \bar{H}_{i-1}$ is satisfied by the measurement-constrained sampling, that is $Y_i$ is not used to decide whether the $i$th data point is sampled or not. The assumption $\EE_P(\sum_{i=1}^n Q_i)\leq N$ is equivalent to $\EE_{P,Q}(\sum_{i=1}^n R_i)\leq N$, corresponding to our budget constraint. Similarly, the last assumption in (\ref{sampling1}) can be rewritten as 
\begin{align}\label{sampling2}
\sup _{\|\boldsymbol{v}\|_0 \leq s} \frac{\boldsymbol{v}^T \mathbb{E}_{P,Q}\left(\sum_{i=1}^n R_i\boldsymbol{Z}_i \boldsymbol{Z}_i^T\right) \boldsymbol{v}}{\|\boldsymbol{v}\|_2^2} \lesssim N,
\end{align}
which controls the maximal sparse eigenvalues of $\mathbb{E}_{P,Q}\left(\sum_{i=1}^n R_i\boldsymbol{Z}_i \boldsymbol{Z}_i^T\right)$. When $Z$ is univariate with bounded support, the condition (\ref{sampling2}) automatically holds, as $\sum_{i=1}^n\EE_{P,Q}\left(R_i Z_i^2\right)\lesssim \sum_{i=1}^n\EE_{P,Q}(R_i)\leq N$, where the last inequality follows from the budget constraint. When the support of $Z$ is unbounded, this assumption prevents sampling schemes that concentrate on extreme covariate values. They often correspond to high leverage points, that may indeed deteriorate the estimation accuracy.

Since the class $\mathcal{Q}_N(P)$ depends on the given distribution $P\in \mathcal{P}(\beta,s)$, we define $\mathcal{Q}_N(\mathcal{P}(\beta,s)):=\cap_{P \in \mathcal{P}(\beta,s)} \mathcal{Q}_N(P)$, as the set of sampling distributions that satisfy the conditions in (\ref{sampling1}) for all  $P\in \mathcal{P}(\beta,s)$.

\begin{remark}
    We give some examples of sampling distributions in $\mathcal{Q}_N(\mathcal{P}(\beta,s))$. 
    \begin{itemize}
    \item Bounded-probability sampling. Consider an arbitrary sampling distribution $Q$, which satisfies the independence assumptions, the budget constraint $\EE_P(\sum_{i=1}^n Q_i)\leq N$ in (\ref{sampling1}) and more importantly a bounded probability assumption $\max_{1\leq i\leq n} Q_i\leq \frac{CN}{n}$ for some constant $C>0$.  To show $Q\in \mathcal{Q}_N(P)$ for any $P\in \mathcal{P}(\beta,s)$, we note that for any $\|\bv\|_0\leq s$,
$$
\boldsymbol{v}^T \mathbb{E}_{P}\Big(\sum_{i=1}^n Q_i\boldsymbol{Z}_i \boldsymbol{Z}_i^T\Big)\boldsymbol{v}\leq \frac{CN}{n}\sum_{i=1}^n\EE_P(\bv^T\bZ_i)^2\leq CM_1N \|\bv\|_2^2,
$$
where we use the assumption $Q_i\leq \frac{CN}{n}$ in the first inequality, and the second inequality follows from $P\in \mathcal{P}(\beta,s)$ and the corresponding sparse eigenvalue assumption (\ref{sparseeigen2}). Thus, $Q\in \mathcal{Q}_N(\mathcal{P}(\beta,s))$ holds. The canonical example is the uniform sampling, where the data are sampled completely at random with $Q_i=\frac{N}{n}$. We note that, unlike the positivity assumption in the missing data literature, we do not impose any lower bound for $Q_i$, which allows $Q_i=0$ in our scenario. Intuitively, the bounded probability assumption rules out  extreme over-sampling of individual points relative to the uniform sampling.

\item Region-based sampling. Consider the following sampling distribution
\begin{equation}\label{eq_sampling_reg}
    Q_i = g_{3i}(\bar{H}_{i-1}) \ind\{f_i(\bZ_i, \bar{H}_{i-1}) - g_{1i}(\bar{H}_{i-1})\leq X_i \leq f_i(\bZ_i, \bar{H}_{i-1}) + g_{2i}(\bar{H}_{i-1})\} ,    
\end{equation}
where $ f_i: (\bZ_i, \bar{H}_{i-1}) \rightarrow \RR, g_{1i}, g_{2i}: \bar{H}_{i-1} \rightarrow \RR^+$, $g_{3i}: \bar{H}_{i-1} \rightarrow [0,1]$ are user specified functions and $\EE_P(\sum_{i=1}^n Q_i)\leq N$ holds. Apparently, the independence assumptions in (\ref{sampling1}) are  guaranteed by the data generating mechanism in (\ref{eq_sampling_reg}). Moreover, in Section \ref{sec_sampling} we show that (\ref{sampling2}) holds for any $P\in \mathcal{P}(\beta,s)$, which implies $Q\in \mathcal{Q}_N(\mathcal{P}(\beta,s))$. Recall that the proposed sampling mechanism in Section \ref{method} can be written as follows, for any $(X_i,\bZ_i)\in D_k$ with $k>1$,
    \begin{align*}
   Q_i= c_{n,k} \ind\{ 
    -b_{k-1} \sqrt{1+ \|\hat\btheta(\bar{H}_{i-1})\|_2^2} \leq 
    X_i - \hat\btheta(\bar{H}_{i-1})^T\bZ_i 
    \leq b_{k-1} \sqrt{1+ \|\hat\btheta(\bar{H}_{i-1})\|_2^2}
    \},
\end{align*}
where we write $\hat\btheta(\bar{H}_{i-1})$ for $\hat\btheta_{k-1}$ to emphasize that it is a function of the historical data. Hence, our sampling method in Section \ref{method} belongs to the class (\ref{eq_sampling_reg}). Indeed, the class (\ref{eq_sampling_reg}) covers the zoom-in approaches that oversample the data within a (small) region of $X$, complementing the bounded-probability sampling scheme. We also note that (\ref{eq_sampling_reg}) can be generalized to the sampling distributions within multiple regions of $X$, such as $Q_i= g^{(k)}_{3i}(\bar{H}_{i-1})$ if $X_i\in S_i^{(k)}$ and $Q_i=0$ otherwise, for some $k>1$, where $S_i^{(k)}=[f^{(k)}_i(\bZ_i, \bar{H}_{i-1}) - g^{(k)}_{1i}(\bar{H}_{i-1}),f^{(k)}_i(\bZ_i, \bar{H}_{i-1}) + g^{(k)}_{2i}(\bar{H}_{i-1})]$ are non-overlapping intervals.
 \end{itemize}

\end{remark}

Finally, we define the $N$-budget minimax risk for estimating $\btheta^*(P)$ as
\[
\mathcal{M}_n(\mathcal{P}(\beta,s), N) := \inf_{Q \in \mathcal{Q}_N(\mathcal{P}(\beta,s))}
\inf_{\hat{\btheta}} \sup_{P \in \mathcal{P}(\beta,s)} \mathbb{E}_{P,Q}\|\widehat{\boldsymbol{\theta}}(O_1,...,O_n)-\boldsymbol{\theta}^*(P)\|_q.
\]

\begin{theorem} \label{lowerbound1}
Assume that $s(\frac{\log(d/s) }{N})^{1/2}=o(1)$.  We have 
\[
\inf_{Q \in \mathcal{Q}_N(\mathcal{P}(\beta,s))}
\inf_{\hat{\btheta}} \sup_{P \in \mathcal{P}(\beta,s)}  \PP_{P,Q} \left[\Big\|\widehat{\boldsymbol{\theta}}(O_1,...,O_n)-\boldsymbol{\theta}^*(P)\Big\|_q \geq c s^{\frac{1}{q}-\frac{1}{2}}\left(\frac{s \log (d / s)}{N}\right)^{\frac{1}{2(\beta \wedge 1)}} \right]
    \geq c^\prime
\] 
    for $q = 1,2$, where  $c, c^\prime$ are positive constants.
\end{theorem}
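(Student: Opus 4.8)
The plan is to prove the lower bound by reducing estimation to multiple hypothesis testing over a finite subfamily of $\mathcal{P}(\beta,s)$ and then applying the generalized Fano inequality. Since the target rate $(s\log(d/s)/N)^{1/2}$ is the sparse \emph{parametric} rate and does not involve $\beta$, the construction only needs to exploit the local curvature of $R_P$ at $\btheta^*$ together with the budget constraint; the smoothness parameter $\beta$ enters solely through checking that the perturbed densities remain $\beta$-smooth, which is automatic for an infinitely differentiable construction.

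\textbf{Constructing the family.} First I would fix the marginal law of $(X,\bZ)$ once and for all, taking $\bZ$ to be a bounded non-degenerate sub-Gaussian vector and $X\mid\bZ$ to have a density $f_X(x\mid\bz)$ that is flat (locally uniform) near every candidate threshold, and push all parameter dependence into the conditional label probability $\eta_{\btheta}(x,\bz)=\PP(Y=1\mid X=x,\bZ=\bz)=\tfrac12+\tfrac12\psi\big((x-\btheta^T\bz)/h\big)$ for a fixed odd compactly supported $C^\infty$ profile $\psi$ with $\psi'(0)>0$, $|\psi|\le\tfrac12$, and a constant bandwidth $h$. Differentiating $R_P$ shows that the stationarity condition $\nabla R_{P_{\btheta}}(\btheta)=0$ reduces to $\eta_{\btheta}(\btheta^T\bz,\bz)=\PP(Y=1)$, which holds by $\psi(0)=0$ and the odd symmetry giving $\PP(Y=1)=\tfrac12$; a second differentiation gives $\nabla^2 R_{P_{\btheta}}(\btheta)=c\,\EE[\bZ\bZ^T f_X(\btheta^T\bZ\mid\bZ)]$ with $c\asymp\psi'(0)/h$, whose eigenvalues lie in $[\rho_-,\rho_+]$ by non-degeneracy of $\bZ$ and flatness of $f_X$; uniqueness of the minimizer follows. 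Smoothness of $\psi$ and $f_X$ places $f(x\mid y,\bz)$ in $\mathcal{P}(\beta,L)$ for every $\beta\ge 1$ with a fixed $L$, while the density bounds $p_{\min},p_{\max}$ and (\ref{sparseeigen2}) hold by construction, so $P_{\btheta}\in\mathcal{P}(\beta,s)$ whenever $\|\btheta\|_0\le s$, $\|\btheta\|_2\le C$. I then instantiate the family by a Varshamov--Gilbert packing: there exist $\btheta^{(1)},\dots,\btheta^{(M)}$ with $\log M\gtrsim s\log(d/s)$, each of the form $\rho\,\omega^{(a)}$ with $\omega^{(a)}\in\{0,1\}^d$, $\|\omega^{(a)}\|_0=s$ and pairwise Hamming distance $\ge s/2$, so that $\|\btheta^{(a)}-\btheta^{(b)}\|_2\asymp\sqrt{s}\,\rho$ and $\|\btheta^{(a)}-\btheta^{(b)}\|_1\asymp s\rho$.

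\textbf{Bounding the information under adaptive sampling.} The crux is to bound $\mathrm{KL}(P^{(n)}_a\,\|\,P^{(n)}_b)$ for the observed-data laws, uniformly over every admissible $Q\in\mathcal{Q}_N(\mathcal{P}(\beta,s))$, showing it is governed by the budget $N$ rather than the pool size $n$. Fixing such a $Q$, I apply the KL chain rule along $O_1,\dots,O_n$. Since the conditional law of $(X_i,\bZ_i)$ and the sampling rule $Q_i$ (a function of $X_i,\bZ_i,\bar{H}_{i-1}$ only) are common to $P_a$ and $P_b$, and since $R_i\perp Y_i\mid X_i,\bZ_i,\bar{H}_{i-1}$, the per-step divergence collapses to the contribution of $Y_i$ on $\{R_i=1\}$, giving
\begin{equation*}
\mathrm{KL}(P^{(n)}_a\,\|\,P^{(n)}_b)=\EE_{P_a}\Big[\sum_{i=1}^n Q_i\,\mathrm{KL}\big(\mathrm{Bern}(\eta_a(X_i,\bZ_i))\,\|\,\mathrm{Bern}(\eta_b(X_i,\bZ_i))\big)\Big].
\end{equation*}
Because $\eta_a,\eta_b\in[\tfrac14,\tfrac34]$ and $\psi$ is Lipschitz, the Bernoulli divergence is at most $c_0\,(\Delta_{ab}^T\bZ_i)^2$ with $\Delta_{ab}=\btheta^{(a)}-\btheta^{(b)}$ and $c_0$ a constant depending only on $h,\psi$. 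Invoking the defining constraint (\ref{sampling2}) of $\mathcal{Q}_N(P_a)$ with the ($\lesssim s$-sparse) vector $\Delta_{ab}$ yields $\mathrm{KL}(P^{(n)}_a\,\|\,P^{(n)}_b)\lesssim N\,\|\Delta_{ab}\|_2^2\asymp N s\rho^2$, with a constant uniform in $Q$. This is precisely the step that replaces $n$ by the budget $N$, and it is where the measurement-constrained independence assumptions and the weighted second-moment bound are indispensable.

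\textbf{Concluding via Fano.} Choosing $\rho\asymp\sqrt{\log(d/s)/N}$ makes $\max_{a\neq b}\mathrm{KL}(P^{(n)}_a\|P^{(n)}_b)$ a sufficiently small multiple of $\log M\asymp s\log(d/s)$, so the generalized Fano inequality gives $\inf_{\hat\btheta}\max_a\PP_{P_a,Q}\big[\|\hat\btheta-\btheta^{(a)}\|_2\ge\tfrac12\sqrt{s/2}\,\rho\big]\ge c'$. As $\sup_{P\in\mathcal{P}(\beta,s)}\ge\max_a$ and this holds for every $Q\in\mathcal{Q}_N(\mathcal{P}(\beta,s))$, the outer infimum over $Q$ is bounded below as well; substituting $\sqrt{s}\rho\asymp(s\log(d/s)/N)^{1/2}$ (for $q=2$) and $s\rho\asymp s^{1/2}(s\log(d/s)/N)^{1/2}$ (for $q=1$) gives the claim in both norms. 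The main obstacle I anticipate is not the Fano step but the two verifications that surround it: making the KL tensorization rigorous under genuinely adaptive $Q$ (the conditioning on $\bar{H}_{i-1}$ whose law depends on the hypothesis) while keeping the constant uniform over $\mathcal{Q}_N$; and exhibiting a single perturbation family that simultaneously satisfies the first-order optimality condition (so $\btheta^{(a)}=\btheta^*(P_a)$), the Hessian eigenvalue bounds, the density bounds, and membership in the Hölder class. The former is the genuinely new difficulty relative to i.i.d.\ sparse lower bounds.
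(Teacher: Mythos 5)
Your proposal is correct and follows essentially the same route as the paper's proof: a Varshamov--Gilbert packing of $s$-sparse supports, a perturbation of $\eta(x,\bz)=\PP(Y=1\mid x,\bz)$ that is (locally) linear in $x-\btheta^T\bz$ so that $\btheta^*(P_{\btheta})=\btheta$ with nondegenerate Hessian, a KL tensorization that uses $(X_i,\bZ_i,Y_i)\perp\bar H_{i-1}$ and $R_i\perp Y_i\mid X_i,\bZ_i,\bar H_{i-1}$ to cancel the sampling-rule factors and reduce the divergence to $\EE[\sum_i Q_i(\Delta^T\bZ_i)^2]$, the sparse-eigenvalue budget constraint (\ref{sampling2}) to replace $n$ by $N$, and Fano. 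The only cosmetic difference is that you bound pairwise divergences $\mathrm{KL}(P_a^{(n)}\|P_b^{(n)})$ with $2s$-sparse differences $\Delta_{ab}$, whereas the paper compares each hypothesis to a null with $\bomega_0=\mathbf{0}$ so that the constraint at sparsity $s$ applies directly; your version needs the constraint at sparsity $2s$ (or a split of $\Delta_{ab}$ into two $s$-sparse pieces), which is a trivial adjustment.
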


Compared with the upper bounds in Section \ref{sec_rate}, this theorem shows that our proposed estimator via the active subsampling algorithm is minimax rate optimal up to some logarithmic factors.  By Markov inequality,  we can also obtain the lower bound for the $N$-budget minimax risk 
$$\mathcal{M}_n(\mathcal{P}(\beta,s), N)\geq c^\prime cs^{\frac{1}{q}-\frac{1}{2}}\left(\frac{s \log (d / s)}{N}\right)^{\frac{1}{2(\beta \wedge 1)}}.
$$ 

As a final remark, the same lower bound in Theorem \ref{lowerbound1} holds if we restrict the infimum to estimators $\hat\btheta(\{O_i\}_{i:R_i=1})$ that use only the labeled data $\{O_i\}_{i:R_i=1}$. Indeed, this lower bound over $\hat\btheta(\{O_i\}_{i:R_i=1})$ is tight, as it is attained by our estimators in Section \ref{sec_rate} that only use the labeled data as well. Thus, the unlabeled samples $\{X_i,\bZ_i\}_{i: R_i=0}$ offer no rate improvement for estimating $\boldsymbol{\theta}^*(P)$ in our model.

\section{Practical Considerations}\label{sec_practice}

In this section, we discuss several implementation issues in our Algorithm \ref{algoK}. 

First, we discuss the computational challenge for solving the optimization problem (\ref{estimatork}). Despite addressing the discontinuity of the 0-1 loss, the smoothed empirical risk function $R^{D_k}_{\delta_{k}}(\btheta)$ remains non-convex. Consequently, obtaining the global solution to (\ref{estimatork}) presents computational challenges. To overcome this issue, we focus on the path-following algorithm \citep{feng2022nonregular}. For any $1\leq k\leq K$, this algorithm computes approximate local solutions to (\ref{estimatork}) corresponding to a sequence of decreasing regularization parameters $\lambda$ until the desired regularization parameter is reached. We set $\hat{\btheta}_k= \tilde{\btheta}_{k,tgt}$, where $\tilde{\btheta}_{k,tgt}$ represents the final approximate local solution with the desired regularization parameter $\lambda_{k,tgt}$.  By employing this path-following approach, we can efficiently compute the entire solution path for $\hat\btheta_k$ while preserving sparsity across the sequence. For $k\geq 2$, to speed up the computation of $\hat\btheta_k$,  we can use the estimator $\hat\btheta_{k-1}$ obtained from the previous iteration as a warm start in the path-following approach. The detailed algorithm for solving (\ref{estimatork}) is provided in Appendix~\ref{path-following}.


Second, we consider how to choose $K$. Our results in Section \ref{sec_rate} reveal that the choice of $K$ may depend on the smoothness parameter $\beta$.  In particular, when $\beta$ is greater than $(1+\sqrt{3})/2\approx 1.37$, the two-step algorithm with $K=2$ yields the estimator with the optimal rate. In practice, we recommend choosing $K=2$ for the following reasons.  First, in many applications, it is often reasonable to assume that the conditional density of $X$ given $Y$ and $\bZ$ is  sufficiently smooth (e.g., twice differentiable), implying that $\beta>1.37$. Even if $1\leq \beta\leq (1+\sqrt{3})/2$, ignoring all logarithmic factors, the results (\ref{eq_beta^{**}< beta < beta^*_rate_sub}) and (\ref{eq_beta < beta^**_rate_sub}) show that the slowest rate of $\hat\btheta_2$ in the $l_2$ norm is  $O_p((s/N)^{4/9})$, attained when $\beta=1$. In this case, the improvement of $\hat\btheta_2$ over $\hat\btheta_1$ (which converges with rate $O_p((s/N)^{1/3})$) is much more substantial than the improvement of $\hat\btheta_3$ (with rate $O_p((s/N)^{13/27})$) over $\hat\btheta_2$. Second, compared with $\hat\btheta_k$ with $k>2$, the estimator $\hat\btheta_2$ depends on fewer tuning parameters, making it easier to implement and numerically more stable. 

Finally, we discuss how to choose the tuning parameters $\lambda_1,\lambda_2$ and $b_1$ in our active subsampling algorithm with $K=2$. Empirically, the algorithm is insensitive to the choice of bandwidth parameters $\delta_1$ and $\delta_2$. Thus, we set $\delta_1=\delta_2=1$ in both simulation and real data analysis. 
Indeed, following \cite{feng2022nonregular}, we can easily modify our cross-validation algorithm to choose $\lambda_1,\lambda_2$, $b_1$ and $\delta_1, \delta_2$ simultaneously. However, such an approach tends to be computationally far more expensive and may require a very large $N$ to obtain stable results. We do not pursue it here. 

Instead, we apply the one standard error rule to choose $\lambda$ in Algorithm \ref{alg_cv}. Built on this algorithm, the data-driven two-step active subsampling with cross-validation is shown in Algorithm \ref{algodatadriven}. In this algorithm, $\tilde N_1$, $\tilde N_{cv}$ and $\tilde N_{2}$ stand for the budget for obtaining $\hat\btheta_1$ in step 1, the budget for selecting $b_1$ in step 2, and the budget for obtaining the final estimator $\hat\btheta_2$ in step 3, respectively. In general, we recommend choosing the budget $\tilde N_{2}$ in the final step relatively large, as $\tilde N_{2}$ plays the role of effective sample size for the final estimator $\hat\btheta_2$. 

\begin{algorithm}
    \caption{$M$-fold Cross-Validation for $\lambda$ with one standard error rule}
    \label{alg_cv}
 \begin{algorithmic}
     \STATE \textbf{Input:} Data $D = \{X_i,\bZ_i,Y_i\}, \text{a grid for } \lambda$. 
     \STATE \textbf{Parameters:} $\delta$
     \STATE \hspace*{5mm} Randomly split $D$ into $M$ folds, $D_1,...,D_M$ with equal size.         
     \STATE \hspace*{5mm} For each $\lambda$ in the grid, compute
$$
\hat{CV}_{\lambda}^k=\frac{1}{|D_k|}\sum_{a\in D_k} \gamma(Y_a) L_\delta(Y_a(X_a-\bZ^T_a\hat\btheta^{(-k)}_\lambda)),
$$
where $\hat\btheta^{(-k)}_\lambda$ is our estimator with the tuning parameters $\delta$ and $\lambda$ using the data excluding $D_k$. 
\STATE \hspace*{5mm} Compute the cross-validation error $\hat{CV}_{\lambda}=\frac{1}{M}\sum_{k=1}^M\hat{CV}_{\lambda}^k$. 
\STATE \hspace*{5mm} Define $\hat{CV}_{\min}=\min_{\lambda}\hat{CV}_{\lambda}$ as the minimum cross-validation error over this grid, and $\bar\lambda=\argmin_{\lambda}\hat{CV}_{\lambda}$. Define $\hat{SE}_{\min}$ as the standard error of $\{\hat{CV}_{\bar \lambda}^k\}_{k=1}^M$ over these M folds.
\STATE \hspace*{5mm} Find $\hat\lambda_{CV}$ via the following ``one standard error rule'', 
$$\hat\lambda_{CV} = \max~\lambda\in\textrm{grid}~ s.t.~\hat{CV}_{\lambda} \leq \hat{CV}_{\min} + \hat{SE}_{\min}.$$ 
     \RETURN $\hat\lambda_{CV}$ and $\hat{CV}_{\hat\lambda_{CV}}$.
 \end{algorithmic}
\end{algorithm}

\begin{algorithm}
    \caption{Data-Driven Two-Step Active Subsampling with Cross-Validation}
    \label{algodatadriven}
 \begin{algorithmic}
     \STATE \textbf{Input:} $D = \{X_i,\bZ_i\}_{i=1}^n, \text{a grid $\Delta$ for } b$ and $\tilde N_1, \tilde N_{cv}, \tilde N_2$ with $\tilde N_1+\tilde N_{cv}+\tilde N_2=N$.
     \STATE \textbf{Parameters:} $\delta_1, \delta_2$
     \STATE \hspace*{5mm} Randomly split $D$ into 3 batches with $|D_1|=\frac{\tilde N_1}{N}n$, $|D_{cv}|=\frac{\tilde N_{cv}}{N}n$ and $|D_2|=\frac{\tilde N_2}{N}n$.
     \STATE \textbf{Step 1: Obtain Initial Estimator $\hat{\btheta}_1$}
     \STATE \hspace*{5mm} Draw data from $D_1$ with probability $c_{n,1} = \frac{N}{n}$. Acquire the label $Y_i$ for each sampled data and form the dataset $D_1^*=\{X_{i}, \bZ_{i}, Y_{i}\}_{R_i=1}$, where $(X_i,\bZ_i)\in D_1$. 
     \STATE \hspace*{5mm} Apply the 5-fold cross-validation Algorithm \ref{alg_cv} to $D_1^*$. Return the optimal parameter $\lambda_{1,opt}$.
     \STATE \hspace*{5mm} Compute the initial estimator: $\hat{\btheta}_1 \leftarrow \argmin_{\btheta} \{R^{D_1}_{\delta_1}(\btheta) + \lambda_{1,opt} \|\btheta\|_1\}$.

     \STATE \textbf{Step 2: Select Optimal $b_1$}
     \STATE \hspace*{5mm} For each candidate $b\in\Delta$:
     \STATE \hspace*{10mm} Define the active set: $S_2 \leftarrow \left\{(X, \bZ): -b \leq \frac{X - \hat{\btheta}_1^T \bZ}{\sqrt{1 + \|\hat{\btheta}_1\|_2^2}} \leq b\right\}$.
     \STATE \hspace*{10mm} Given $(X_i, \bZ_i) \in S_2$, draw data $(X_i, \bZ_i)$ from $D_{cv}$ with probability $\frac{N}{n|\Delta|\PP\left((X, \bZ) \in S_2\right)}$. Acquire the label $Y_i$ for each sampled data and form $D_{cv,b}^*=\{X_{i}, \bZ_{i}, Y_{i}\}_{R_i=1}$, where $(X_i,\bZ_i)\in D_{cv}$. 
     \STATE \hspace*{10mm} Apply the 5-fold cross-validation Algorithm \ref{alg_cv} to $D_{cv,b}^*$. Return the optimal parameter $\lambda_{b,opt}$ and the minimum CV error $\hat{CV}_b$.
     \STATE \hspace*{5mm} Compute $\hat b_1=\argmin_{b\in\Delta} \hat{CV}_b$ that minimizes the cross-validation error.
     
     \STATE \textbf{Step 3: Obtain Final Estimator $\hat{\btheta}_2$}
     \STATE \hspace*{5mm} Define the active set: $S_2 \leftarrow \left\{(X, \bZ): -
     \hat b_1 \leq \frac{X - \hat{\btheta}_1^T \bZ}{\sqrt{1 + \|\hat{\btheta}_1\|_2^2}} \leq \hat b_1\right\}$.
     \STATE \hspace*{5mm} Given $(X_i, \bZ_i) \in S_2$, draw data $(X_i, \bZ_i)$ from   $D_2$ with probability $c_{n,2} = \frac{N}{n\PP\left((X, \bZ) \in S_2\right)}$. Acquire the label $Y_i$ for each sampled data and form $D_{2}^*=\{X_{i}, \bZ_{i}, Y_{i}\}_{R_i=1}$, where $(X_i,\bZ_i)\in D_{2}$.
     \STATE \hspace*{5mm} Apply the 5-fold cross-validation Algorithm \ref{alg_cv} to $D_2^*$. Return the optimal parameter $\lambda_{2,opt}$.
     \STATE \hspace*{5mm} Compute the final estimator: $\hat{\btheta}_2 \leftarrow \argmin_{\btheta} \{R^{D_2}_{\delta_2}(\btheta) + \lambda_{2,opt} \|\btheta\|_1\}$.
     
     \RETURN $\hat{\btheta}_2$
 \end{algorithmic}
\end{algorithm}

\section{Simulation Studies}\label{sec_sim}
We conduct simulations to evaluate the performance of our proposed method. We consider the following two classes of models, and for both models, it can be shown that the parameter $\btheta$ coincides with the estimand in (\ref{objective}).
\begin{itemize}
    \item \textbf{Binary response model:} We consider $Y = \operatorname{sign}(\tilde{Y})$, where 
    \[
    \tilde{Y} = X - \btheta^T\bZ + \epsilon,
    \]
    $X \in \RR, \bZ \in \RR^d$, and $\epsilon$ is a random noise such that $\operatorname{Median}(\tilde{Y} \mid X, \bZ)=X-\btheta^T \bZ$.  Logistic regression belongs to the class of binary response models by setting $\epsilon$ to follow the logistic distribution independent with $(X,\bZ)$. We conduct simulations for both logistic regression and a more general case where we allow $\epsilon$ to depend on $(X, \bZ)$.
    \item \textbf{Conditional mean model:} We consider $Y \in \{-1,1\}, \bZ \in \RR^d$, and 
    \[
    X = \mu Y + \btheta^T\bZ + \epsilon,
    \]
    where $\epsilon \perp Y,\bZ$ is a random noise and $\mu>0$ is a constant.
\end{itemize}

Under each model, we simulate i.i.d. samples with $n = 20000$ and dimension  $d = 200$. We refer to the collection of these samples as the full dataset. We set sparsity $s = 10$, and generate the nonzero elements of  $\btheta^*$ from $\text{Uniform}(1,2)$. We then normalize $\btheta^*$ such that $\|\btheta^*\|_2 = 1$. For the logistic regression, we generate $X \sim N(0,1)$, $\bZ \sim N_d(0,1)$. For the binary response model where $\epsilon$ can depend on $(X,\bZ)$, we generate $X \sim N(0,1)$, $\bZ \sim N_d(0,1)$, and $\epsilon \sim N(0, \sigma^2(1+2(X-\btheta^T\bZ)^2))$ with $\sigma = 0.5$. We refer to this case as binary response model in the following discussion. For the conditional mean model, we generate $Y \sim \text{Uniform}\{-1,1\}$, $\bZ \sim N_d(0,1)$, $\epsilon \sim N(0, (0.1)^2)$ and set $\mu = 2$. 

We fix the label budget at $N = 2000$ in all scenarios. For our proposed method, we implement the two-step active subsampling algorithm outlined in Algorithm~\ref{algoK} with $N_1=N/8$ and $N_2=7N/8$. Empirically, we find that assigning a larger proportion of the label budget to the second step leads to a more stable final estimator $\hat\btheta_2$. The numerical results for our algorithm with $N_1=N/5$ and $N_2=4N/5$ are quite similar and are deferred to the Appendix Section \ref{sec_additional_sim}. We compare the performance of our proposed method with \cite{feng2022nonregular}, where
the path-following algorithm 
is applied to a dataset with size $N = 2000$ uniformly sampled from the full dataset. We refer to this method as ``passive PF'', while our proposed method is denoted as ``two-step sampling with PF". In the application of the path-following algorithm, we set the bandwidth parameter $\delta = 1$ and use the standard Gaussian density as the kernel function. The number of regularization stages is fixed at $T=20$, and we choose $\nu = 1/4$, $\phi = (\lambda_{tgt}/\lambda_0)^{1/T}$ and $\eta = 1$ in the path-following Algorithm~\ref{pathfollow}. The tuning parameter $\lambda_{tgt}$ is chosen by the 5-fold cross-validation via Algorithm \ref{alg_cv}. 

In our comparison, we also include two simple benchmarks ``passive LR'' and ``two-step sampling with LR''. The former refers to the $\ell_1$-penalized logistic regression applied to the dataset with size $N = 2000$ uniformly sampled from the full dataset. For the ``two-step sampling with LR'', we employ a similar two-step approach as in Algorithm~\ref{algoK}. However,  instead of solving (\ref{penalized}) using the path-following algorithm, we estimate $\btheta$ by the $\ell_1$-penalized logistic regression. Similar to our proposed method, the active set in second step has the same form as (\ref{activeset}), which depends on the choice of $b_1$.

Figures~\ref{log_8}- \ref{manski_8} plot the statistical errors $\|\hat{\btheta} - \btheta^*\|$ in $\ell_1$ and $\ell_2$ norms as the size of the active set $b_1$ increases under three models. The smallest $b_1$ value is chosen such that approximately 10\% of the full dataset fall into the active set, while the largest $b_1$ value corresponds to the case that the active set covers nearly 100\% of the full dataset. The simulation is repeated 50 times.

Since the passive LR and passive PF methods are independent of $b_1$, their estimation errors correspond to two horizontal lines. In Figure \ref{log_8}, the passive LR can be viewed as the benchmark estimator, since the data are generated under the logistic regression. We can see that applying the two-step sampling idea to logistic regression generally does not improve the performance of the passive LR. This may be due to the fact that the logistic regression is a regular model and the parameter estimation may not be improvable. However, our proposed two-step sampling with PF method can significantly improve upon the passive PF method, especially for small or moderate values of $b_1$. This is consistent with our theoretical results on the convergence rate of the estimators. Similar patterns hold under the conditional mean model in Figure \ref{condi_8}. In particular, our proposed method yields the smallest $\ell_1$ and $\ell_2$ estimation errors among all the competing methods, for small or moderate values of $b_1$. For the binary response model in Figure~\ref{manski_8}, when $b_1$ is small, our proposed method has larger errors than the passive PF method. However, as $b_1$ keeps increasing, our proposed method outperforms the passive PF.

We also compare the four methods with respect to  $\|\hat{\btheta} - \btheta^*\|_{\infty}$ and prediction errors. The results exhibit similar patterns as $\|\hat{\btheta} - \btheta^*\|_1$ and $\|\hat{\btheta} - \btheta^*\|_2$; see Section \ref{sec_additional_sim}. 

Since the optimal choice of $b_1$ is data-dependent, we evaluate the fully data-driven procedure in Algorithm~\ref{algodatadriven}. Table~\ref{comparisoncv} compares our proposed method with the passive path-following method across the three models. The simulation is repeated 50 times. From the result in Table~\ref{comparisoncv} we can see that our proposed method uniformly dominates the passive path-following method across all three models. For example, the $\ell_1$ estimation error of our proposed method is around 45\% smaller than the passive path-following method under the conditional mean model.

\begin{table}
\caption{Comparison of the two-step path-following method  with the passive path-following algorithm. The number in the parentheses are standard deviations.}
\label{comparisoncv}
\begin{tabular}{ *{7}{c}} 
 \toprule
 Error & \multicolumn{2}{c}{Conditional mean model}
            & \multicolumn{2}{c}{Logistic model} 
                        & \multicolumn{2}{c} {Binary response  model}\\ 
\cmidrule(lr){2-3} \cmidrule(lr){4-5} \cmidrule(lr){6-7}
& Two-step PF & Passive PF & Two-step PF &  Passive PF & Two-step PF &  Passive PF \\
\midrule
  $\ell_1$   & 0.918(0.101)  & 1.648(0.165) 
  & 1.514(0.319)  & 1.625(0.416) 
  & 0.835(0.201) & 0.937(0.200) \\ 
  \hline
  $\ell_2$ & 0.313(0.036) & 0.525(0.052) 
  & 0.525(0.093) & 0.559(0.129)  
  & 0.319(0.068) & 0.341(0.066) \\
  \hline
  $\ell_{\infty}$ 
  & 0.150(0.026) & 0.196(0.022) 
  & 0.270(0.047) & 0.275(0.050) 
  & 0.192(0.044) & 0.187(0.038)  \\
 \bottomrule
\end{tabular}
\end{table}

\section{Real Data Analysis}\label{sec_data}
In this section, we apply our proposed method to a dataset of hospitalized patient diagnosed with diabetes, obtained from the UCI Machine Learning Repository \citep{misc_diabetes_130-us_hospitals_for_years_1999-2008_296}. This dataset contains 101,766 hospital records collected over a decade (1999-2008) from 130 US hospitals. The data includes various attributes such as patient ID, race, gender, age, admission type, readmission status, length of hospital stay, medical specialty of the admitting physician, number of lab tests performed, Hemoglobin A1c (HbA1c) test results, diagnosis, and more. Among these attributes, the HbA1c test result is a key indicator of glucose control and is widely used to evaluate the quality of diabetes care \citep{baldwin2005eliminating}.
In an existing work \citep{https://doi.org/10.1155/2014/781670}, researchers  were interested in identifying important risk factors that lead to early readmission. They defined the readmission attribute as a binary outcome: ``readmitted" if the patient was readmitted within 30 days of discharge, and ``otherwise," which includes both readmission after 30 days and no readmission.

In our analysis, we adopt the same definition of response variable $Y$ as in \cite{https://doi.org/10.1155/2014/781670}. Specifically, $Y_i = 1$ if a patient was readmitted within 30 days of discharge and $Y_i = -1$ otherwise. We chose the patient's HbA1c test result as the primary measurement $X_i$, with $\bZ_i$ representing additional patient demographic statistics and clinical biomarkers.   The primary goal of our analysis is to determine the optimal individualized threshold, $\btheta^T\bZ_i$, such that a patient's early readmission can be predicted based on whether the HbA1c test result exceeds this threshold ($X_i \geq \btheta^T\bZ_i$) or falls below it ($X_i < \btheta^T\bZ_i$).  This problem can be formulated as an estimation task of the form (\ref{objective0}) or (\ref{objective}), with weights $\gamma(y) = 1/\PP(Y = y)$.

The original dataset includes multiple inpatient visits from the same patients, making the observations statistically dependent. To address this, as suggested by \cite{https://doi.org/10.1155/2014/781670}, we used only the first encounter per patient as the primary admission and determined whether they were readmitted within 30 days. Next, we removed redundant features and those with a high percentage of missing values. Following the discussion in \cite{https://doi.org/10.1155/2014/781670}, we also added pairwise interactions among features as new variables. After these preprocessing steps, the dataset contains 69,984 observations and $d = 60$ variables, excluding $X$. Furthermore, we observed significant imbalance in the dataset, with only 6,293 positive instances ($Y = 1$) compared to 63,691 negative instances ($Y = -1$). To address this issue, we randomly selected 6,293 samples from the negative class to match the number of positive cases and then combined these with the positive instances. This resulted in a final dataset containing $n= 12,586$ observations.

Since the true value $\btheta^*$ is unknown, we first applied the path-following algorithm to the full dataset ($n= 12,586$) to derive an estimator, which is used as the benchmark or equivalently treated as $\btheta^*$ when evaluating $\|\hat\btheta - \btheta^*\|$ for different methods. Suppose that there is a budget constraint due to study design or administration cost, which prevents us from using the benchmark estimator. Our goal is to illustrate the efficacy of our active subsampling methods under the budget constraint. For our proposed method, we implement the data-driven two-step active subsampling algorithm outlined in Algorithm~\ref{algodatadriven} with label budgets of $N = 3000$, 4000, and 5000, respectively. We compare the performance of our method with “passive path-following,” where the budgeted data is uniformly sampled from the full dataset to derive the estimator of $\btheta^*$. The result is shown in Table~\ref{realdata}. It is seen that, as the label budget increases, the estimation errors for both methods decrease, with the two-step path-following method consistently outperforming the passive path-following method across all three settings. From a scientific point of view, our two-step path-following method identified a few important covariates in our model, including `time in hospital' and `change of medications'. These findings are consistent with the previous work \citep{https://doi.org/10.1155/2014/781670}, and seem to be clinically meaningful.

\begin{table}
\caption{Comparison of the active path-following method  with the passive path-following algorithm.}
\label{realdata}
\begin{tabular}{ *{7}{c}} 
 \toprule
 $N$ & \multicolumn{2}{c}{3000}
            & \multicolumn{2}{c}{4000} 
                        & \multicolumn{2}{c} {5000}\\ 
\cmidrule(lr){2-3} \cmidrule(lr){4-5} \cmidrule(lr){6-7}
& Two-step PF & Passive PF & Two-step PF &  Passive PF & Two-step PF &  Passive PF \\
\midrule
  $\ell_1$   & 2.422  & 3.192
  & 0.968  & 1.276
  & 0.379 & 0.651 \\ 
  \hline
  $\ell_2$ & 1.403 & 2.478 
  & 1.649 & 2.048 
  & 0.269 & 0.543 \\
  \hline
  $\ell_{\infty}$ 
  & 0.869 & 2.369 
  & 0.824 & 2.048 
  & 0.203 & 0.529 \\
 \bottomrule
\end{tabular}
\end{table}

\begin{figure}
\centering
\subfigcapskip -5pt
\subfigure[]{
\includegraphics[width=0.48\textwidth]{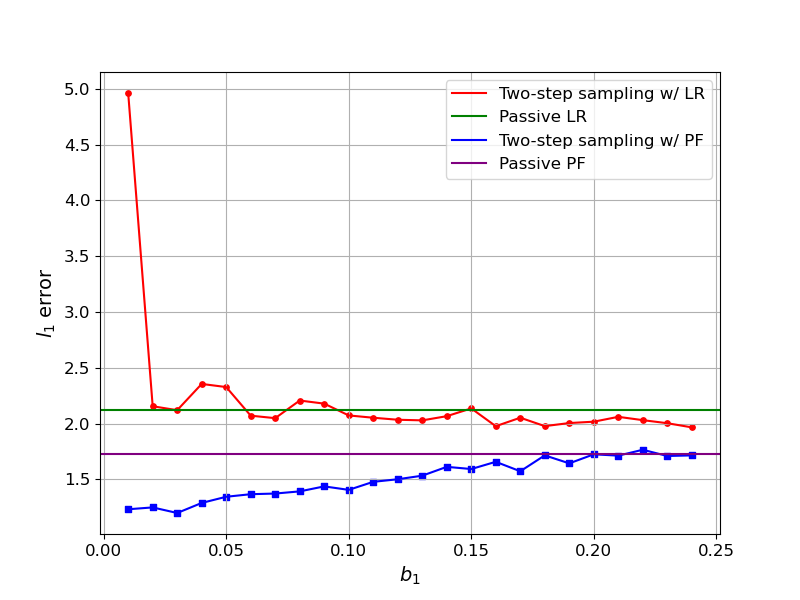}}
\subfigure[]{
\includegraphics[width=0.48\textwidth]{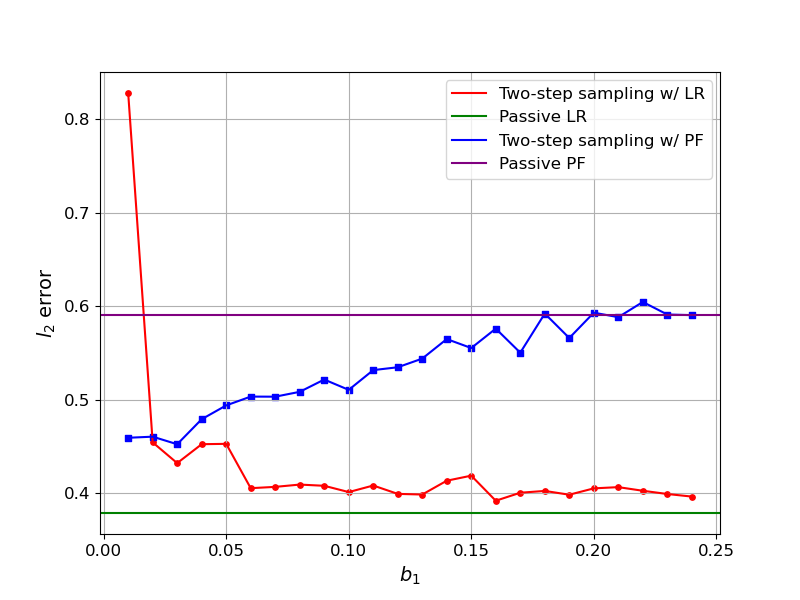}} \vskip -5pt 
\caption{$\|\hat{\btheta} - \btheta^*\|$ in $\ell_1$ and $\ell_2$ norms under the logistic regression. LR: $\ell_1$ penalized logistic regression; PF: path-following algorithm. 1/8 of the label budget is used in the first step for both two-step sampling methods.}
\label{log_8}
\end{figure}

\begin{figure}
\centering
\subfigcapskip -5pt
\subfigure[]{
\includegraphics[width=0.48\textwidth]{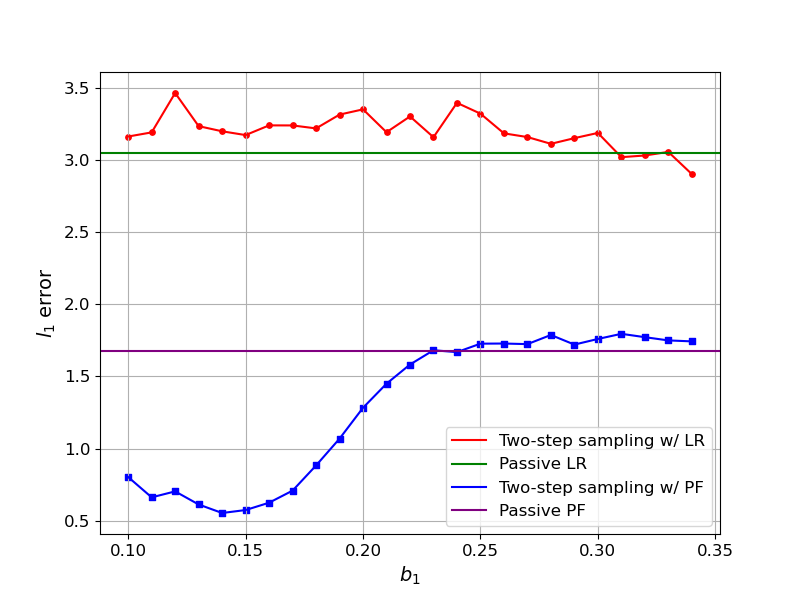}}
\subfigure[]{
\includegraphics[width=0.48\textwidth]{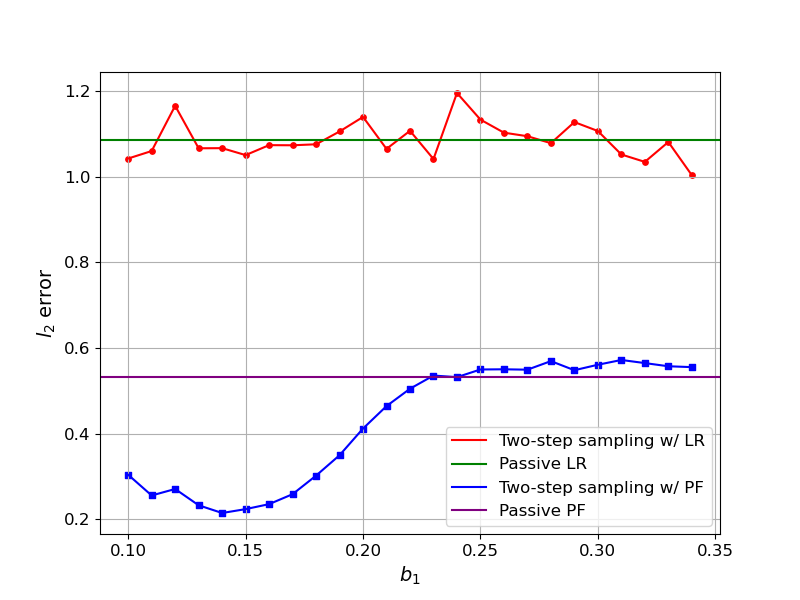}} \vskip -5pt 
\caption{$\|\hat{\btheta} - \btheta^*\|$ in $\ell_1$ and $\ell_2$ norms under the conditional mean model. LR: $\ell_1$ penalized logistic regression; PF: path-following algorithm. 1/8 of the label budget is used in the first step for both two-step sampling methods.}
\label{condi_8}
\end{figure}

\begin{figure}
\centering
\subfigcapskip -5pt
\subfigure[]{
\includegraphics[width=0.48\textwidth]{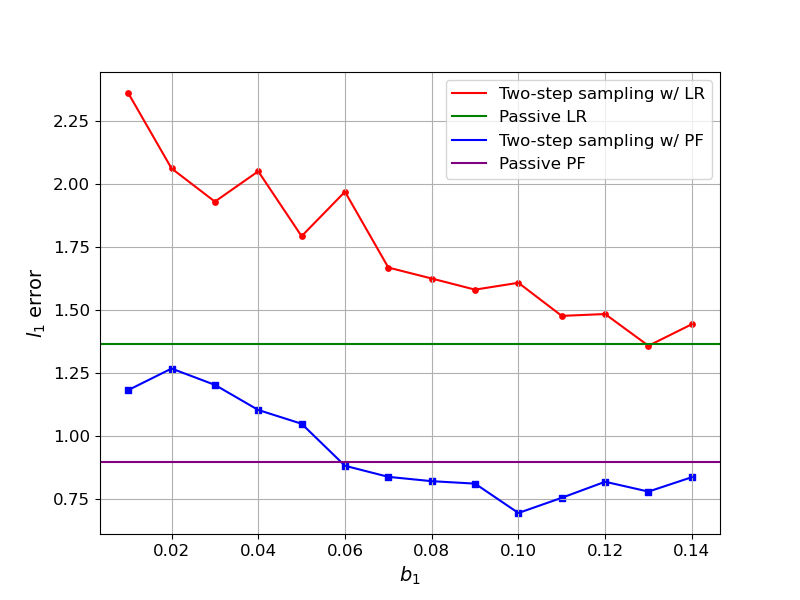}}
\subfigure[]{
\includegraphics[width=0.48\textwidth]{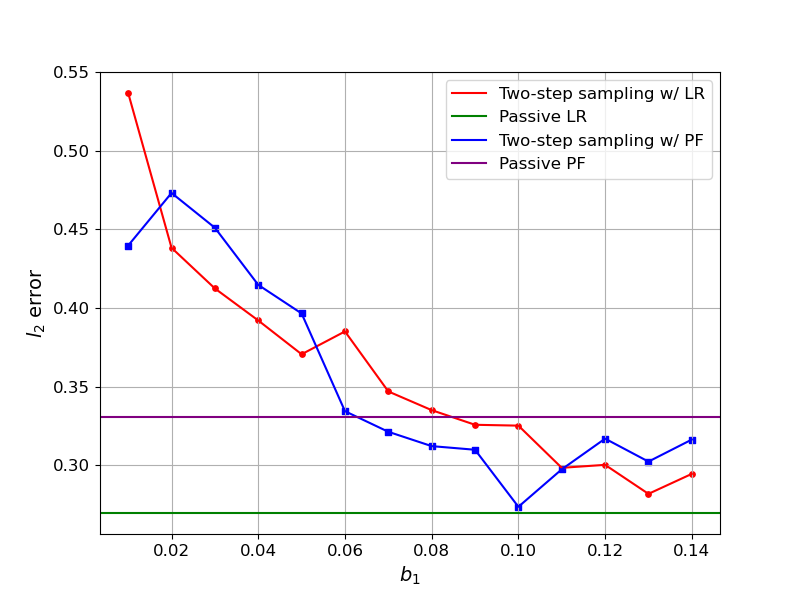}} \vskip -5pt 
\caption{$\|\hat{\btheta} - \btheta^*\|$ in $\ell_1$ and $\ell_2$ norms under the binary response model. LR: $\ell_1$ penalized logistic regression; PF: path-following algorithm. 1/8 of the label budget is used in the first step for both two-step sampling methods.}
\label{manski_8}
\end{figure}

\section*{Acknowledgment}

Yang is supported by the NSF grant DMS-1941945 and DMS-2311291 and NIH 1RF1AG077820-01A1.

\bibliographystyle{ims}
\bibliography{refer}

\clearpage

\appendix

\section{Appendix}

\subsection{Further Examples}\label{sec_application}

\textbf{Example 1: Covariate-adjusted Youden index.}\label{exp_2}
The Youden's J statistic is a fundamental metric for evaluating the performance of a binary diagnostic test based on its Receiver Operating Characteristic (ROC) curve. Consider a continuous biomarker $X$ and a binary disease status $Y$ ($Y=1$ if a patient is diseased and $Y=-1$ if a patient is healthy). A common diagnostic rule classifies a patient as diseased if $X$ exceeds a threshold $c$. Recent work, including \cite{https://doi.org/10.1002/sim.6290}, has shown that diagnostic accuracy can be improved by tailoring this threshold to patient-specific covariates $\bZ$. This leads to a covariate-adjusted threshold of the form $c_{\btheta}(\bZ)$, where for simplicity we assume the threshold $c_{\btheta}(\bZ)= \btheta^T\bZ$ is linear in $\bZ$. The covariate-adjusted Youden index is
	\[
	J = \max_{\btheta} \{ \text{sen}(c_{\btheta}(\bZ)) + \text{spe}(c_{\btheta}(\bZ)) -1 \},
	\]
	where $\text{sen}(c_{\btheta}(\bZ))=\PP(X\geq c_{\btheta}(\bZ)|Y =1)$ and $\text{spe}(c_{\btheta}(\bZ)) = \PP(X < c_{\btheta}(\bZ)|Y =-1)$ are the sensitivity and specificity of the test, respectively. Thus, estimating the covariate-adjusted Youden index $J$ is equivalent to solving the problem (\ref{objective0}).

\textbf{Example 2: One-bit compressed sensing.}\label{exp_1} Classical compressed sensing aims to recover a high-dimensional signal $(\alpha^*,\eta^*) \in \RR^{d+1}$ from a small number of linear measurements $\tilde{y}_i = x_i\alpha^*+\bz_i^T\eta^*$,  where $(x_i, \bz_i)$ is a measurement vector. In practice, these measurements $\tilde{y}_i$ are often quantized. One-bit compressed sensing represents an extreme case, where each measurement is quantized to a single bit based on the sign of $x_i \alpha^*+ \bz_i^T\eta^*$ \citep{boufounos20081}.  Assume that $\alpha^*>0$. The goal of one-bit compressed sensing is to recover the signal $\btheta^*$ from the measurements
\[y_i = \sign(x_i - \bz_i^T\btheta^*),\]
where we can only identify $\btheta^*=-\eta^*/\alpha^*$. We can easily verify that 
$$
\btheta^*=\argmin_{\btheta}\Big\{\pi\PP\left(X <  \bZ^T\btheta \mid Y= 1\right) + (1-\pi)\PP\left(X > \bZ^T\btheta \mid Y= -1\right)\Big\},
$$
where $\pi=\PP(Y=1)$. Therefore, the one-bit compressed sensing can be solved by minimizing a weighted version of (\ref{objective0}).

\textbf{Example 3: Semiparametric binary response model and maximum score estimator.}\label{exp_3}
The binary response model plays a fundamental role in economic analysis, as many economic decisions and outcomes are inherently discrete, such as labor force participation, market entry, and credit default. Conventional parametric approaches, such as logistic regression, rely on stringent distributional assumptions for the response variable conditional on covariates. Once the distributional assumptions are violated, parameter estimation and inference may become unreliable. To mitigate potential model misspecification, the following well-known semiparametric binary response model is often preferred 
 \begin{equation}\label{eqbinary}
 Y = \sign(X\alpha^*+\bZ^T\eta^*+ \epsilon),
 \end{equation}
where $(X,\bZ)\in \RR^{d+1}$ are covariates and the random noise $\epsilon$ is not required to be independent of $(X,\bZ)$.  Similar to Example 2, we assume that $\alpha^*>0$. The model (\ref{eqbinary}) can be equivalently written as $Y = \sign(X-\bZ^T\btheta^*+ \epsilon)$, where  only $\btheta^*=-\eta^*/\alpha^*$ is identifiable. Under the assumption that the conditional median of $\epsilon$ given $X$ and $\bZ$ is 0, we can similarly verify that 
$$
\btheta^*=\argmin_{\btheta}\Big\{\pi\PP\left(X <  \bZ^T\btheta \mid Y= 1\right) + (1-\pi)\PP\left(X > \bZ^T\btheta \mid Y= -1\right)\Big\},
$$
where $\pi=\PP(Y=1)$. As a seminal work, \cite{manski1975maximum} proposed the maximum score estimator for $\btheta^*$ by minimizing a weighted empirical version of (\ref{objective0}). Thus, our framework includes the semiparametric binary response model as a special case.

\textbf{Example 4:  Optimal individualized treatment rules and policy learning.} Many clinical studies have demonstrated that patient responses to the same treatment can be highly heterogeneous. Motivated by this heterogeneity, determining the optimal treatment strategy for each individual has become a central problem in personalized medicine. Similarly, decision making problems are ubiquitous  in economics and marketing. For instance, a firm must decide which customers will receive targeted offers. The government may need to decide which groups of participants to prioritize in a welfare-to-work program. In the statistics and economics literature, extensive research has focused on developing methods to estimate optimal individualized treatment rules (ITRs) or optimal policy using high-dimensional covariates.

Let $R$ denote a continuous outcome variable coded such that higher values indicate better conditions, $(X,\bZ) \in \mathbb{R}^{d+1}$ represent baseline  features, and $A \in \{-1, 1\}$ denote the assigned treatment. An ITR, $D(\cdot)$, is a mapping from $\mathbb{R}^{d+1}$ to $\{-1, 1\}$, prescribing treatment $D(X,\bZ)$ for a patient with covariates $(X,\bZ)$. \citet{zhao2012estimating} showed that the optimal ITR $D^*$ minimizes the  weighted misclassification risk
$$
\EE\Big[\frac{R}{\pi(A|X,\bZ)}\mathds{1}(A\neq D(X,\bZ))\Big],
$$
where $\pi(a|X,\bZ)=\PP(A=a|X,\bZ)$ is the propensity score. 

In practice, simple ITRs, such as linear decision rules, are often preferred for interpretability and clinical applicability. When some prior domain knowledge can identify the direction of association for certain baseline variables (e.g., $X$ has a positive effect), the linear ITR can be expressed as $D_{\btheta}(X,\bZ)=\sign(X - \btheta^T \bZ)$. Consequently, estimating the optimal ITR reduces to solving 
$$
\btheta^*=\argmin_{\btheta}\Big\{\gamma_1(X,\bZ)\PP\left(X <  \bZ^T\btheta \mid A= 1\right) + \gamma_{-1}(X,\bZ)\PP\left(X > \bZ^T\btheta \mid A= -1\right)\Big\},
$$
where $\gamma_a(X,\bZ)=\mathbb{E}(R \mid A=a, X, \bZ)\PP(A=a) / \pi(a \mid X,\bZ)$ for $a \in \{-1, 1\}$. When the weight function $\gamma_a(X,\bZ)$ is known or can be consistently estimated, our framework can be used to  estimate the optimal ITR.

\textbf{Example 5: High-dimensional linear classifiers.} The framework introduced in Example 4 can be further extended to a general binary classification setting. In particular, let $(X,\bZ)\in\RR^{d+1}$ denote the feature vector and $Y\in\{-1,1\}$ be the binary label. In the high-dimensional setting, where $d$ is much larger than the sample size, to avoid the curse of dimensionality, a linear classifier $D_{\btheta}(X,\bZ)=\sign(X - \btheta^T \bZ)$ is often desirable, where we also assume that $X$ has a positive effect. In the model-agnostic setting, the optimal linear classifier $D_{\btheta^*}(X,\bZ)$ can be defined as the minimizer of a weighted misclassification risk
$$
\btheta^*=\argmin_{\btheta}\Big\{\gamma_1(X,\bZ)\PP\left(X <  \bZ^T\btheta \mid Y= 1\right) + \gamma_{-1}(X,\bZ)\PP\left(X > \bZ^T\btheta \mid Y= -1\right)\Big\},
$$
where $\gamma_1(X,\bZ)$ and $\gamma_{-1}(X,\bZ)$ are nonnegative weight functions, which are known or can be estimated from external information. Hence, our framework naturally extends to the estimation of high-dimensional linear classifiers, providing a flexible approach for classification problems under minimal distributional assumptions.

\subsection{Comments on Estimating $\mathbb{P}\left((X, Z) \in S_k\right)$}\label{sec_est_active_set}

When $p_k:=\mathbb{P}\left((X, Z) \in S_k\right)$ is unknown, we can use the following sample splitting approach to modify our method. We randomly divide the dataset $D$ into $2K-1$ batches $D_{1}, D_{20}, D_{21}, \cdots, D_{K0}, D_{K1}$.  In the first iteration, we uniformly sample data from $D_1$ with probability $0<c_{n,1}<1$,  
$$
     \PP(R_i=1) = c_{n,1}=\frac{N_1K}{n},
$$
where $N_1$ is the expected number of data points sampled in the first iteration. We compute the estimator 
$$
    \hat{\btheta}_1 := \argmin_{\btheta}\{R^{D_1}_{\delta_1}(\btheta) + \lambda_1\|\btheta\|_1\}.
$$
Starting from the second iteration (i.e., $k\geq 2$), we use the dataset $D_{k0}$ to estimate $p_k=\mathbb{P}\left((X, Z) \in S_k\right)$, 
$$
\hat{p}_{k} = \frac{1}{|D_{k0}|} \sum_{(X_i, \bZ_i) \in D_{k0}} \ind \left\{\left(X_i, \boldsymbol{Z}_i\right) \in S_k\right\}.
$$
Then, given $(X_i, \boldsymbol{Z}_i)\in D_{k1}$, $\hat{p}_{k}$ and $\hat\btheta_{k-1}$, we generate $R_i$ from a Bernoulli distribution with
\[
\mathbb{P}\left(R_i=1 \mid X_i, \boldsymbol{Z}_i, \hat{p}_{k}, \widehat{\boldsymbol{\theta}}_{k-1}\right)=\hat{c}_{n, k} \cdot \ind\left\{\left(X_i, \boldsymbol{Z}_i\right) \in S_k\right\},
\]
where $\hat{c}_{n, k}=N_k K /\left(n \widehat{p}_k\right)$. Finally, we compute the estimator from the dataset $D_{k1}$,
$$
    \hat{\btheta}_k:=\argmin_{\btheta}\{R^{D_{k1}}_{\delta_{k}}(\btheta) + \lambda_k\|\btheta\|_1\}.
$$
By splitting the data into $2K-1$ batches $D_{1}, D_{20}, D_{21}, \cdots, D_{K0}, D_{K1}$, we can now easily establish the concentration inequality for $\hat{p}_{k}$. In this case, conditioning on $\hat\btheta_{k-1}$, the terms $\ind\left\{\left(X_i, \boldsymbol{Z}_i\right) \in S_k\right\}$ are independent, and thus by Bernstein inequality, 
$$
\PP(|\hat{p}_{k}-{p}_{k}|\geq t| \hat\btheta_{k-1})\leq 2 \exp \left(-\frac{|D_{k0}| t^2}{2 p_k(1-p_k)+\frac{2}{3}t} \right).
$$
Since the right side doesn't contain $\hat{\btheta}_{k-1}$, we obtain that
$$
\PP(|\hat{p}_{k}-{p}_{k}|\geq t)\leq 2 \exp \left(-\frac{|D_{k0}| t^2}{2 p_k(1-p_k)+\frac{2}{3}t} \right).
$$
Define the event $E = \{|\hat p_k - p_k| \leq u\}$, and thus $P(E^c) \leq 2 \exp \left(-\frac{|D_{k0}| u^2}{2 p_k(1-p_k)+\frac{2}{3}u}\right)$. On the event $E$, we can bound $\hat{c}_{n,k} = \frac{N_k K}{n \hat p_k}$ by $\frac{c_{n,k}}{1+u/p_k} \leq \hat c_{n,k} \leq \frac{c_{n,k}}{1-u/p_k}$.
Finally, we can bound the  unconditional probability
\begin{align*}
 \PP(\|T^k\|_{\infty}>t ) &\leq \PP(\|T^k\|_{\infty} > t \mid E) \PP(E) + \PP(\|T^k\|_{\infty} > t \mid E^c) \PP(E^c)\\
 &\leq \PP(\|T^k\|_{\infty} > t , E) + \PP(E^c)\\
 &\leq 2 d \exp \left(-\frac{\frac{1}{2} t^2 n/K}{  C_2 \frac{c_{n,k}}{(1-u/p_k)\delta_k}+\frac{t}{3} C_1 M_n K_{\max} / \delta_k}\right) + 2 \exp \left(-\frac{|D_{k0}| u^2}{2 p_k(1-p_k)+\frac{2}{3}u}\right)\\
 &\leq 2 d \exp \left(-\frac{\frac{1}{2} t^2 n/K}{  2 C_2 \frac{c_{n,k}}{\delta_k}+\frac{t}{3} C_1 M_n K_{\max} / \delta_k}\right) + 2 \exp \left(-\frac{|D_{k0}| p_k^2/4}{2 p_k(1-p_k)+\frac{1}{3}p_k}\right)
\end{align*}
by taking $u = p_k/2$. By the proof of Theorem \ref{beta>beta_*}, we have shown that $p_k\asymp b_k$. Then, with $t = C \sqrt{\frac{c_{n,k} K \log d}{n \delta_k}}$ for some sufficiently large constant $C$, 
$$
 \PP(\|T^k\|_{\infty}>C \sqrt{\frac{c_{n,k} K \log d}{n \delta_k}}) \leq 2d^{-1}+2d^{-1},
$$
where we assume $b_k\geq C'\frac{\log d}{|D_{k0}|}$ in order to bound the second term. Since we have a very large unlabeled dataset, we can take $|D_{k0}|=c'n/K$ for some constant $0<c'<1$. Then, the same conclusion as in Proposition \ref{E_1} holds, under a very mild assumption $b_k\gtrsim \frac{K\log d}{n}$. By carefully examining the rest of the proof, we can conclude that the convergence rate of the proposed estimator remains the same.

\subsection{Path-following Algorithm}\label{path-following}
We employ the path-following algorithm introduced in \cite{feng2022nonregular} to solve the optimization problem $  \hat{\btheta}_k:=\argmin_{\btheta}\{R^{D_k}_{\delta_{k}}(\btheta) + \lambda_k\|\btheta\|\}$ for each iteration $k$. 
The idea is to compute approximate local solutions  corresponding to a sequence of decreasing regularization parameters $\lambda$, until the target regularization parameter is reached. To be specific, we firstly choose a sequence of $\lambda_{k,0}>\lambda_{k,1}>\ldots>\lambda_{k,T}=\lambda_{k,tgt}$, where $\lambda_{k,t}=\phi^t \lambda_{k,0}, t=0,1, \ldots$ for some constant $\phi \in(0,1)$ and $\lambda_{k,tgt}$ is the target regularization parameter to be specified later. Let $T$ denote the total number of the path-following
stages and we set $T=\frac{\log \left(\lambda_{k,t g t} / \lambda_{k,0}\right)}{\log \phi}$. 
At each stage $t=1, \cdots, T$, the goal is to  approximately compute the exact local solution $\hat{\btheta}_{k,t}$ corresponding to $\lambda_{k,t}$,
\begin{equation}\label{thetahat}
\hat{\boldsymbol{\theta}}_{k,t}=\underset{\boldsymbol{\theta}}{\operatorname{argmin}} R_{\delta_k}^{D_k}(\boldsymbol{\theta})+\lambda_{k,t}\|\boldsymbol{\theta}\|_1.
\end{equation}
To this end, we apply the proximal-gradient method to iteratively approximate $\hat{\boldsymbol{\theta}}_{k,t}$ by minimizing a sequence of quadratic approximations of $R_{\delta_k}^{D_k}(\boldsymbol{\theta})$ over a convex constraint set $\Omega$:
\begin{equation}
\begin{aligned}
\boldsymbol{\theta}_{k,t}^{j+1} & =\underset{\boldsymbol{\theta} \in \Omega}{\operatorname{argmin}}\left\{R_{\delta_k}^{D_k}\left(\boldsymbol{\theta}_{k,t}^j\right)+\left\langle\nabla R_{\delta_k}^{D_k}\left(\boldsymbol{\theta}_{k,t}^j\right),\left(\boldsymbol{\theta}-\boldsymbol{\theta}_{k,t}^j\right)\right\rangle+\frac{1}{2 \eta}\left\|\boldsymbol{\theta}-\boldsymbol{\theta}_{k,t}^j\right\|_2^2+\lambda_{k,t}\|\boldsymbol{\theta}\|_1\right\} \\
& =\underset{\boldsymbol{\theta} \in \Omega}{\operatorname{argmin}}\left\{\frac{1}{2 \eta}\left\|\boldsymbol{\theta}-\boldsymbol{\theta}_{k,t}^j+\eta \nabla R_{\delta_k}^{D_k}\left(\boldsymbol{\theta}_{k,t}^j\right)\right\|_2^2+\lambda_{k,t}\|\boldsymbol{\theta}\|_1\right\} \\
& :=\mathcal{S}_{\lambda_{k,t} \eta}\left(\boldsymbol{\theta}^j_k, \Omega\right),
\end{aligned}
\end{equation}
where $\eta$ is the step size to be specified later.  
The proximal-gradient algorithm is described in Algorithm~\ref{proximal}.

In the algorithm, we use the stopping criteria $w_\lambda(\boldsymbol{\theta})$ defined as
\[
\omega_\lambda(\boldsymbol{\theta})=\min _{\boldsymbol{\xi} \in \partial\|\boldsymbol{\theta}\|_1} \max _{\boldsymbol{\theta}^{\prime} \in \Omega}\left\{\frac{\left(\boldsymbol{\theta}-\boldsymbol{\theta}^{\prime}\right)^T}{\left\|\boldsymbol{\theta}-\boldsymbol{\theta}^{\prime}\right\|_1}\left(\nabla R_{\delta_k}^{D_k}(\boldsymbol{\theta})+\lambda \boldsymbol{\xi}\right)\right\}.
\]
At stage $t$, the proximal-gradient algorithm returns an approximate solution
$\tilde{\boldsymbol{\theta}}_{k,t}$ with precision $\epsilon_{k,t}=\nu \lambda_{k,t}, \nu \in(0,1)$ corresponding to $\lambda_{k,t}$. Then we use $\tilde{\btheta}_{k,t}$ as a warm start for stage $t+1$ and repeat this process. At the final stage, we would compute the
approximate solution $\tilde{\btheta}_{k,T} = \tilde{\btheta}_{k,tgt}$
corresponding to $\lambda_{k,tgt}$ using a high precision $\epsilon_{k,tgt}$. The detail of the path-following algorithm is described in Algorithm~\ref{pathfollow}.

We note that at iteration $k$, we can use  the estimator $\hat\btheta_{k-1}$ obtained from iteration $k-1$ as the initial value for $\btheta$ in Algorithm~\ref{pathfollow}.

\begin{algorithm} 
    \caption{$\btheta \leftarrow$ Path-Following $\left(\lambda_{k,0}, \lambda_{k,t g t}, \nu, T, \epsilon_{k,t g t}, \Omega\right)$}
    \label{pathfollow}
 \begin{algorithmic}
     \STATE \textbf{input:} $\lambda_{k,t g t}>0, \nu>0, \phi>0, \epsilon_{k,t g t}>0, \Omega$
      \STATE \textbf{parameter:} $\eta > 0$
       \STATE \textbf{initialize:} $\tilde{\boldsymbol{\theta}}_{k,0} \leftarrow \mathbf{0}, \lambda_{k,0} \leftarrow\left\|\nabla R_{\delta_k}^{D_k}(\mathbf{0})\right\|_{\infty}, T \leftarrow \frac{\log \left(\lambda_{k, t g t} / \lambda_{k,0}\right)}{\log \phi}$
       \FOR{$t=1, \ldots, T-1$}
        \STATE \hspace*{5mm} $\lambda_{k,t} \leftarrow \phi^t \lambda_{k,0} \quad \epsilon_{k,t} \leftarrow \nu \lambda_{k,t } \quad \tilde{\boldsymbol{\theta}}_{k,t} \leftarrow \text { Proximal-Gradient }\left(\lambda_t, \epsilon_{k,t}, \tilde{\boldsymbol{\theta}}_{k,t-1}, \Omega\right)$
     \ENDFOR
  \STATE     $\tilde{\boldsymbol{\theta}}_{k,T} \leftarrow$ Proximal-Gradient $\left(\lambda_{k,t g t}, \epsilon_{k,t g t}, \tilde{\boldsymbol{\theta}}_{k,T-1}, \Omega\right)$
     \RETURN $\tilde{\boldsymbol{\theta}}_{k,t g t}=\tilde{\boldsymbol{\theta}}_{k,T}$
 \end{algorithmic}
    \end{algorithm}

\begin{algorithm} 
    \caption{$\btheta \leftarrow$ Proximal-Gradient $\left(\lambda, \epsilon, \boldsymbol{\theta}^0, \Omega\right)$}
    \label{proximal}
 \begin{algorithmic}
     \STATE \textbf{input:} $\lambda>0, \epsilon>0, \boldsymbol{\theta}^0 \in \mathbb{R}^d, \Omega$
      \STATE \textbf{parameter:} $\eta > 0$
       \STATE \textbf{initialize:} $j \leftarrow 0$
\WHILE{$w_\lambda\left(\boldsymbol{\theta}^j\right)>\epsilon$}
        \STATE \hspace*{5mm} $j \leftarrow j+1 \quad \boldsymbol{\theta}^{j+1} \leftarrow \mathcal{S}_{\lambda \eta}\left(\boldsymbol{\theta}^j, \Omega\right)$
     \ENDWHILE
     \RETURN $\btheta^{j+1}$
 \end{algorithmic}
    \end{algorithm}

\subsection{Preliminary Results}\label{sec_preliminary}

Recall that we consider the case $\gamma(y)=1/\PP(Y=y)$ and for simplicity denote $\|\hat{\bomega}_k\|_2 = \sqrt{1+\|\hat{\btheta}_{k}\|_2^2}$ in the rest of the proof.

We also allow the kernel with unbounded support which satisfies the following tail condition. Assume that there exits a sequence $C_N>0$ that may depend on $N$ such that
\begin{equation}\label{kerneltailbound}
     \int_{C_{N}/2}^\infty |K(t)|dt \leq C\delta_k^\beta,
\end{equation}
holds for any $2 \leq k \leq K$, where $C$ is a constant that does not depend on $k$ and $\delta_k$ is the bandwidth parameter in the $k$th iteration. 

For example, the Gaussian kernel satisfies  
 $\int_{C_{N}/2}^\infty |K(t)|dt \leq  \frac{C}{C_N} e^{-C_N^2/8}$.
 Note that in Theorem~\ref{Kiteration}, we choose $\delta_k \asymp \left(\frac{C_N K s \log d}{N}\right)^{1/(2\beta)}$, for $2\leq k \leq K$. In this case, (\ref{kerneltailbound}) requires $ \frac{ e^{-C_N^2/8}}{C_N} \leq C \sqrt{\frac{C_N K s \log d}{N}}$, which holds with $C_N \asymp \sqrt{\log N}$. For the kernel with bounded support, we can simply set $C_N=O(1)$ in the rest of the proof. 

\begin{lemma}\label{var1}
   Under Assumptions \ref{asp3}, \ref{asp4} (i) and (ii), \ref{asp5} and \ref{asp1}, we have for any $(X_i, \bZ_i, Y_i) \in D_k$, $2 \leq k \leq K$, and for all $j=1 \cdots, d$, 
   \[
   \mathbb{E}\left[\left(\gamma(Y_i)\frac{Y_i Z_{ij}}{\delta_k} K\left(\frac{Y_i\left(X_i-\boldsymbol{\theta}^{*T} \boldsymbol{Z}_i\right)}{\delta_k}\right) R_i\right)^2 \mid \hat{\btheta}_{k-1}\right] \leq C \frac{c_{n,k}}{\delta_k},
   \]
   for some constant $C>0$.
\end{lemma}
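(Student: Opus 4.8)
The plan is to bound the conditional second moment directly by integrating out the randomness of $R_i$, then of $X_i$, and finally of $\bZ_i$. First I would expand the square and simplify the elementary factors: since $Y_i\in\{-1,1\}$ we have $Y_i^2=1$, since $R_i$ is a Bernoulli indicator $R_i^2=R_i$, and by Assumption~\ref{asp4}(i) the weight satisfies $\gamma(Y_i)^2\leq c^{-2}$. Thus the quantity to bound is at most
\[
\frac{1}{c^2}\,\EE\!\left[\frac{Z_{ij}^2}{\delta_k^2}\,K^2\!\Big(\frac{Y_i(X_i-\btheta^{*T}\bZ_i)}{\delta_k}\Big)R_i\,\Big|\,\hat{\btheta}_{k-1}\right].
\]
Next I would peel off the sampling indicator via the tower property. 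Since $(X_i,\bZ_i,Y_i)\in D_k$ is independent of $\hat{\btheta}_{k-1}$ (which is built from earlier batches), conditioning on $(X_i,\bZ_i,Y_i,\hat{\btheta}_{k-1})$ and using the sampling rule (\ref{sampling}) together with $R_i\perp Y_i\mid X_i,\bZ_i,\hat{\btheta}_{k-1}$ gives $\EE[R_i\mid X_i,\bZ_i,Y_i,\hat{\btheta}_{k-1}]=c_{n,k}\ind\{(X_i,\bZ_i)\in S_k\}$. I would then factor out $c_{n,k}$ and crudely bound the indicator by $1$.

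The key step is the change of variables in the $X_i$ integral, which is exactly where the power of $\delta_k$ drops from $-2$ to $-1$. Conditioning further on $(Y_i,\bZ_i)=(y,\bz)$ and writing $f(x\mid y,\bz)$ for the conditional density of $X_i$, the substitution $t=y(x-\btheta^{*T}\bz)/\delta_k$ (so that $|dx|=\delta_k\,|dt|$, using $y^2=1$) yields
\[
\frac{z_j^2}{\delta_k^2}\int K^2\!\Big(\frac{y(x-\btheta^{*T}\bz)}{\delta_k}\Big)f(x\mid y,\bz)\,dx
=\frac{z_j^2}{\delta_k}\int K^2(t)\,f(\btheta^{*T}\bz+y\delta_k t\mid y,\bz)\,dt
\leq \frac{z_j^2}{\delta_k}\,p_{\max}\int K^2(t)\,dt,
\]
where the last inequality uses the density upper bound in Assumption~\ref{asp5} and the integrability $\int K^2(t)\,dt<\infty$ from Assumption~\ref{asp1}.

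Finally I would take expectation over $\bZ_i$. Applying the sparse eigenvalue bound (\ref{sparseeigen2}) with the single-coordinate vector $\boldsymbol{v}=\boldsymbol{e}_j$, which trivially satisfies $\|\boldsymbol{e}_j\|_0=1\leq s'$, gives $\EE[Z_{ij}^2\mid Y_i=y]=\boldsymbol{e}_j^T\EE(\bZ\bZ^T\mid Y=y)\boldsymbol{e}_j\leq M_1$ uniformly in $y$. Combining the three contributions produces the overall constant $C=c^{-2}p_{\max}M_1\int K^2(t)\,dt$ and the claimed bound $C\,c_{n,k}/\delta_k$. The main obstacle is the change-of-variables step: one must verify that the Jacobian contributes exactly one factor of $\delta_k$ and that the resulting integrand is controlled uniformly in $(y,\bz)$ by the density upper bound, independently of where the kernel sits relative to the active set $S_k$. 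This is precisely why discarding the indicator $\ind\{(X_i,\bZ_i)\in S_k\}$ by bounding it by $1$ is harmless for this variance estimate, even though the indicator is crucial for the bias analysis elsewhere.
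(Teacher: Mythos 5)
Your proof is correct and follows essentially the same route as the paper's: condition out $R_i$ to extract $c_{n,k}$, change variables in the $X_i$ integral to convert $\delta_k^{-2}$ into $\delta_k^{-1}$ via the density upper bound $p_{\max}$ and $\int K^2(t)\,dt<\infty$, and finish with $\EE[Z_{ij}^2\mid Y=y]\leq M_1$ from (\ref{sparseeigen2}). The only cosmetic differences are that you bound $\gamma(Y_i)^2\leq c^{-2}$ upfront and discard the indicator $\ind\{(X_i,\bZ_i)\in S_k\}$ immediately, whereas the paper splits by the value of $Y$ and keeps the active-set limits in the integral before bounding; both yield the same estimate.
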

\begin{proof}
For simplicity we omit the subscript $i$ in the proof.
Note that since $\hat{\btheta}_{k-1}$ is independent of $(X,\bZ,Y)$, we have
\begin{align*}
     &\EE\left[\left(\gamma(Y)\frac{Y Z_j}{\delta_k} K\left(\frac{Y\left(X-\boldsymbol{\theta}^{*T} \boldsymbol{Z}\right)}{\delta_k}\right) R\right)^2 \mid \hat{\btheta}_{k-1} \right]\\
     =& \EE\left[  \EE\left[\left(\gamma(Y)\frac{Y Z_j}{\delta_k} K\left(\frac{Y\left(X-\boldsymbol{\theta}^{*T} \boldsymbol{Z}\right)}{\delta_k}\right) R\right)^2 \mid \hat{\btheta}_{k-1}, Y \right] \mid \hat{\btheta}_{k-1}\right]\\
     =&  \EE \left[ \left(\frac{Z_j}{\delta_k} K\left(\frac{X-\boldsymbol{\theta}^{*T} \boldsymbol{Z}}{\delta_k}\right) R\right)^2   \mid \hat{\btheta}_{k-1}, Y=1\right] 
     + \EE \left[ \left(\frac{Z_j}{\delta_k} K\left(\frac{X-\boldsymbol{\theta}^{*T} \boldsymbol{Z}}{\delta_k}\right) R\right)^2   \mid \hat{\btheta}_{k-1}, Y=-1\right].
\end{align*}
 We bound the first term here and the second term follows similarly. Note that
\begin{align*}
   &\EE \left[ \left(\frac{Z_j}{\delta_k} K\left(\frac{X-\boldsymbol{\theta}^{*T} \boldsymbol{Z}}{\delta_k}\right) R\right)^2   \mid \hat{\btheta}_{k-1}, Y=1\right] \\
   =&  \EE\left[ \EE \left[ \left(\frac{Z_j}{\delta_k} K\left(\frac{X-\boldsymbol{\theta}^{*T} \boldsymbol{Z}}{\delta_k}\right) R\right)^2   \mid \hat{\btheta}_{k-1}, Y=1, X, \bZ\right] \mid \hat{\btheta}_{k-1}, Y=1\right]. 
\end{align*}
Recall that 
\[
S_k := \left\{ (X,\bZ) :-b_{k-1} \leq \frac{X-\hat{\btheta}_{k-1}^T\bZ}{\|\hat{\bomega}_{k-1}\|_2} \leq b_{k-1}\right\},
\]
and 
 \[
\PP(R=1 \mid  X, \bZ, \hat{\btheta}_{k-1})=c_{n,k}\cdot \ind\{(X,\bZ) \in S_k\},
\]
we have
\begin{align*}
    &\EE \left[ \left(\frac{Z_j}{\delta_k} K\left(\frac{X-\boldsymbol{\theta}^{*T} \boldsymbol{Z}}{\delta_k}\right) R\right)^2   \mid \hat{\btheta}_{k-1}, Y=1, X, \bZ\right] \\
    =& c_{n,k} \left(\frac{Z_j}{\delta_k} K\left(\frac{X-\boldsymbol{\theta}^{*T} \boldsymbol{Z}}{\delta_k}\right) \right)^2  \cdot \ind\{(X,\bZ) \in S_k\},
\end{align*}
where we use the fact that $R \perp Y \mid (X,\bZ, \hat{\btheta}_{k-1})$.
Hence
\begin{align*}
     &\EE \left[ \left(\frac{Z_j}{\delta_k} K\left(\frac{X-\boldsymbol{\theta}^{*T} \boldsymbol{Z}}{\delta_k}\right) R\right)^2   \mid \hat{\btheta}_{k-1}, Y=1\right] \\
   =& c_{n,k} \EE\left[ \left(\frac{Z_j}{\delta_k} K\left(\frac{X-\boldsymbol{\theta}^{*T} \boldsymbol{Z}}{\delta_k}\right) \right)^2  \cdot \ind\{(X,\bZ) \in S_k\} \mid \hat{\btheta}_{k-1}, Y=1\right] 
    \\
  =& c_{n,k} \int_{\bz} \frac{z_j^2}{\delta_k^2} \int_{-b_{k-1}\|\hat{\bw}_{k-1}\|_2 +\hat{\btheta}_{k-1}^T\bz}^{b_{k-1}\|\hat{\bw}_{k-1}\|_2+ \hat{\btheta}_{k-1}^T\bz } K^2\left(\frac{x-\boldsymbol{\theta}^{*T} \bz}{\delta_k}\right)f(x \mid \bz, Y=1)dx f(\bz \mid Y=1) d\bz\\
  =& \frac{c_{n,k}}{\delta_k} \int_{\bz} \int_{(-b_{k-1}\|\hat{\bw}_{k-1}\|_2 +(\hat{\btheta}_{k-1}-\btheta^*)^T\bz)/\delta_k}^{(b_{k-1}\|\hat{\bw}_{k-1}\|_2 +(\hat{\btheta}_{k-1}-\btheta^*)^T\bz)/\delta_k} z_j^2 K^2(u) f(u\delta_k+\btheta^{*T}\bz \mid \bz, Y=1)du f(\bz \mid Y=1)d\bz.
\end{align*}
 Since  $\sup _{x \in \mathbb{R}, y \in \{-1,1\} ,\boldsymbol{z} \in \mathbb{R}^d} f(x \mid y,  \boldsymbol{z})<p_{\max }<\infty$, we have
\begin{align*}
     \int_{(-b_{k-1}\|\hat{\bw}_{k-1}\|_2 +(\hat{\btheta}_{k-1}-\btheta^*)^T\bz)/\delta_k}^{(b_{k-1}\|\hat{\bw}_{k-1}\|_2 +(\hat{\btheta}_{k-1}-\btheta^*)^T\bz)/\delta_k} K^2(u) f(u\delta_k+\btheta^{*T}\bz \mid \bz, Y=1)du \leq p_{\max} \int K^2(u) du.
\end{align*}
Note that $\EE(Z_j^2|Y=1) \leq M_1 < \infty$, we obtain
\[
\EE \left[ \left(\frac{Z_j}{\delta_k} K\left(\frac{X-\boldsymbol{\theta}^{*T} \boldsymbol{Z}}{\delta_k}\right) R\right)^2   \mid \hat{\btheta}_{k-1}, Y=1\right]
 \leq \frac{c_{n,k}}{\delta_k} M_1 p_{\max} \int K^2(u) du,
\]
hence we finish the proof.
\end{proof}

\begin{proposition}\label{E_1}
Denote $\nabla  R_{\delta_k, \hat{\btheta}_{k-1}}(\btheta^*) = \EE\left(\nabla  R^{D_k}_{\delta_k}(\btheta^*) \mid \hat{\btheta}_{k-1}\right)$ for $2 \leq k \leq K$.
Under Assumptions \ref{asp3}, \ref{asp4} (i) and (ii), \ref{asp5} and \ref{asp1}, with probability greater than $1-2d^{-1}$, we have
\[
\|\nabla  R^{D_k}_{\delta_k}(\btheta^*)  - \nabla  R_{\delta_k, \hat{\btheta}_{k-1}}(\btheta^*) \|_\infty \leq 
C_1\sqrt{\frac{c_{n,k}K\log d }{n\delta_k}},
\]
where $C_1$ is a constant independent of $n$, $d$ and $k$.
\end{proposition}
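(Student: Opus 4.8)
The plan is to prove this coordinatewise deviation bound by applying Bernstein's inequality conditional on $\hat\btheta_{k-1}$ and then taking a union bound over the $d$ coordinates. First I would write the $j$th coordinate of the gradient as
$$[\nabla R^{D_k}_{\delta_k}(\btheta^*)]_j = \frac{K}{n}\sum_{(X_i,\bZ_i)\in D_k} W_{ij},\qquad W_{ij}:=\gamma(Y_i)\frac{Y_i Z_{ij}}{\delta_k} K\!\Big(\frac{Y_i(X_i-\btheta^{*T}\bZ_i)}{\delta_k}\Big) R_i.$$
Conditional on $\hat\btheta_{k-1}$, the summands $\{W_{ij}\}_{(X_i,\bZ_i)\in D_k}$ are i.i.d., since $(X_i,\bZ_i,Y_i)$ are i.i.d.\ and independent of $\hat\btheta_{k-1}$ (which is built from the earlier batches), while each $R_i$ is drawn via the mechanism (\ref{sampling}) using only $(X_i,\bZ_i)$ and the fixed $\hat\btheta_{k-1}$. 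Hence $[\nabla R^{D_k}_{\delta_k}(\btheta^*)]_j-[\nabla R_{\delta_k,\hat\btheta_{k-1}}(\btheta^*)]_j$ is an average of $m=n/K$ centered conditionally i.i.d.\ terms, to which Bernstein's inequality applies.

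The two ingredients Bernstein requires are a conditional variance proxy and an almost-sure envelope. For the variance, Lemma~\ref{var1} supplies $\EE[W_{ij}^2\mid\hat\btheta_{k-1}]\le C c_{n,k}/\delta_k$, uniformly in $j$ and in the value of $\hat\btheta_{k-1}$. For the envelope, combining $\gamma(\cdot)$ bounded (Assumption~\ref{asp4}(i)), $|K|\le K_{\max}$ (Assumption~\ref{asp1}), $R_i\in\{0,1\}$, and the deterministic bound $|Z_{ij}|\le M_n$ (Assumption~\ref{asp4}(ii)) gives $|W_{ij}|\le C M_n/\delta_k$. Applying the conditional Bernstein inequality to the average of $m=n/K$ terms and then union bounding over $j=1,\dots,d$, the deviation is controlled with probability at least $1-2d^{-1}$ once the exponent exceeds $2\log d$, producing a bound of order
$$\sqrt{\frac{c_{n,k}\log d}{\delta_k m}}+\frac{M_n\log d}{\delta_k m}=\sqrt{\frac{c_{n,k}K\log d}{n\delta_k}}+\frac{M_n K\log d}{n\delta_k},$$
i.e.\ a sub-Gaussian (variance) term plus a sub-exponential (envelope) term.

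It then remains to show the variance term dominates, so that the stated rate $\sqrt{c_{n,k}K\log d/(n\delta_k)}$ is attained. Requiring the envelope term to be no larger than the variance term reduces, after squaring, to $M_n^2 K\log d\lesssim c_{n,k}\delta_k n$, equivalently $M_n\lesssim \sqrt{n\delta_k c_{n,k}/(K\log d)}$, which is precisely the condition (\ref{eq_asp4}) in Assumption~\ref{asp4}(ii). Under that condition the envelope term is absorbed into the variance term, giving the claimed $\ell_\infty$ bound with a constant $C_1$ independent of $n$, $d$, and $k$.

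The main subtlety, as opposed to the routine Bernstein computation, is the correct handling of the conditioning on $\hat\btheta_{k-1}$. The active set $S_k$, hence $c_{n,k}=N_kK/(n\PP((X,\bZ)\in S_k))$ and the conditional variance proxy, all depend on the random $\hat\btheta_{k-1}$, so the target bound is itself a random quantity. The resolution is that Lemma~\ref{var1} yields the variance proxy $C c_{n,k}/\delta_k$ uniformly over every realization of $\hat\btheta_{k-1}$, while the envelope $CM_n/\delta_k$ is deterministic; consequently the conditional Bernstein bound holds with probability at least $1-2d^{-1}$ for each fixed value of $\hat\btheta_{k-1}$. Integrating this uniform conditional statement over the distribution of $\hat\btheta_{k-1}$ promotes it to the desired unconditional high-probability bound, and keeping all constants free of $k$ allows the result to be applied uniformly across the $K$ iterations later by a further union bound.
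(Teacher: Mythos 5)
Your proposal is correct and follows essentially the same route as the paper: the paper likewise applies a conditional Bernstein inequality to the centered summands with variance proxy $Cc_{n,k}/\delta_k$ from Lemma~\ref{var1} and envelope $CM_nK_{\max}/\delta_k$, takes a union bound over coordinates, notes that the resulting tail bound is free of $\hat\btheta_{k-1}$ so it integrates to an unconditional statement, and uses $M_n\lesssim\sqrt{c_{n,k}n\delta_k/(K\log d)}$ to let the variance term dominate at $t=C_3\sqrt{c_{n,k}K\log d/(n\delta_k)}$. No gaps.
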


 \begin{proof} 
 Denote $T^k=\nabla  R^{D_k}_{\delta_k}(\btheta^*)  - \nabla  R_{\delta_k, \hat{\btheta}_{k-1}}(\btheta^*) $. By definition
 \[
 \|T^k\|_\infty = \left\|  \frac{K}{n}\sum_{(x_i, \bz_i) \in D_k}
\left(  \gamma(y_i)\frac{y_i\bz_i}{\delta_k}K(\frac{y_i(x_i-\btheta^{*T}\bz_i)}{\delta_k})R_i\right)- \EE\left[\gamma(Y)\frac{Y\bZ}{\delta_k}K(\frac{Y(X-\btheta^{*T}\bZ)}{\delta_k})R \mid \hat{\btheta}_{k-1}\right] \right\|_\infty.
 \]
 Note that for some constant $C_1$, we have
\begin{align*}
    |T^k_{ij}| = &\left|\gamma(y_i)
  \frac{y_i\bz_{ij}}{\delta_k}K(\frac{y_i(x_i-\btheta^{*T}\bz_i)}{\delta_k})R_i
  - \mathbb{E}\left[\gamma(Y)\frac{Y Z_j}{\delta_k} K\left(\frac{Y\left(X-\boldsymbol{\theta}^{*T} \boldsymbol{Z}\right)}{\delta_k}\right)R \mid \hat{\btheta}_{k-1} \right]
  \right|\\
  \leq & C_1\frac{M_n K_{\max}}{\delta_k},
\end{align*}
and by Lemma~\ref{var1} we know that  for some constant $C_2$, we have
\[
\mathbb{E}((T_{ij}^k)^2 \mid \hat{\btheta}_{k-1}) \leq C_2 \frac{c_{n,k}}{\delta_k} .
\]
Then by Bernstein inequality we have
\begin{align*}
 \mathbb{P}\left(\|T^k\|_{\infty}>t \mid \hat{\btheta}_{k-1} \right) 
\leq& \sum_{j=1}^d \mathbb{P}\left(|T_{j}^k|>t \mid \hat{\btheta}_{k-1}\right) \\
 \leq& 2 d \exp \left(-\frac{\frac{1}{2} t^2 n/K}{  C_2 \frac{c_{n,k}}{\delta_k}+\frac{t}{3} C_1 M_n K_{\max} / \delta_k}\right).
\end{align*}
Since the right side doesn't contain $\hat{\btheta}_{k-1}$, we obtain that
\[
\mathbb{P}\left(
\|\nabla  R^{D_k}_{\delta_k}(\btheta^*)  - \nabla  R_{\delta_k, \hat{\btheta}_{k-1}}(\btheta^*) \|_\infty >t  \right) \leq 2 d \exp \left(-\frac{\frac{1}{2} t^2 n/K}{  C_2 \frac{c_{n,k}}{\delta_k}+\frac{t}{3} C_1 M_n K_{\max} / \delta_k}\right).
\]
Then note that $M_n \lesssim \sqrt{\frac{c_{n,k} n\delta_k}{K\log d}}$ and take $t=C_3\sqrt{\frac{c_{n,k}K\log d }{n\delta_k}}$ for some constant $C_3$ we finish the proof.`
 \end{proof}

\begin{proposition}\label{prop2}
Recall that $\btheta^* = \argmin_{\btheta} R(\btheta)$, where $R(\btheta) =\EE\left( \gamma(Y)L_{01}(Y(X-\btheta^T\bZ))\right)$, and $\nabla  R_{\delta_k, \hat{\btheta}_{k-1}}(\btheta^*) = \EE\left(\nabla  R^{D_k}_{\delta_k}(\btheta^*) \mid \hat{\btheta}_{k-1}\right)$ for $2 \leq k \leq K$. Consider the following two cases:
\begin{itemize}
    \item[(i)] Assumptions \ref{asp3}, \ref{asp4} (i) and (ii), \ref{asp5} and \ref{asp1} hold, and 
\begin{equation}\label{prop2condi1}
    b_{k-1} \geq C_{N} \delta_k  \text{\ and\ }
b_{k-1} \geq 2 \|\hat{\btheta}_{k-1}-\btheta^*\|_1 M_n.
\end{equation}
\item[(ii)] Assumptions \ref{asp3}, \ref{asp4} (i), (ii) and (iii), \ref{asp5} and \ref{asp1} hold, and 
\begin{equation} \label{prop2condi2}
    b_{k-1}\geq C_N\delta_k,~~\textrm{and}~~b_{k-1}\geq C\|\hat\btheta_{k-1}-\btheta^*\|_2 \sqrt{\log \frac{N}{K s \log d}},
\end{equation} 
where $C_{N}>0$ is defined in (\ref{kerneltailbound}) and there exists a large constant $\zeta$ such that $(\frac{Ks\log d}{N})^\zeta\lesssim \delta_k$ with $\delta_k=o(1)$. 
\end{itemize}
If either (i) or (ii) holds, we have for any $\boldsymbol{v} \in \mathbb{R}^d$ with $\|\boldsymbol{v}\|_0 \leq s^{\prime}$,
    \[
    \left|\bv^T \left( \nabla  R_{\delta_k, \hat{\btheta}_{k-1}}(\btheta^*) - \nabla R(\btheta^*)\right)\right| \leq   C_2c_{n,k}\delta_k^\beta \|\bv\|_2,
    \]
    where $s^\prime$ is defined in Assumption~\ref{asp4} and $C_2$ is a constant independent of $n$, $d$ and $k$.
\end{proposition}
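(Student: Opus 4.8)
The plan is to first make the left-hand side explicit. Since $|D_k|=n/K$ and the summands of $\nabla R^{D_k}_{\delta_k}(\btheta^*)$ are i.i.d.\ given $\hat\btheta_{k-1}$, taking the conditional expectation and using the sampling rule $\PP(R=1\mid X,\bZ,\hat\btheta_{k-1})=c_{n,k}\ind\{(X,\bZ)\in S_k\}$ together with $R\perp Y\mid(X,\bZ,\hat\btheta_{k-1})$ yields
\[
\bv^T\nabla R_{\delta_k,\hat\btheta_{k-1}}(\btheta^*)=c_{n,k}\,\EE\Big[\gamma(Y)\frac{Y(\bv^T\bZ)}{\delta_k}K\Big(\frac{Y(X-\btheta^{*T}\bZ)}{\delta_k}\Big)\ind\{(X,\bZ)\in S_k\}\,\Big|\,\hat\btheta_{k-1}\Big].
\]
Because $\nabla R(\btheta^*)=\EE[\gamma(Y)Y\bZ\,f(\btheta^{*T}\bZ\mid Y,\bZ)]=\mathbf 0$, it suffices to bound this expression by $C_2 c_{n,k}\delta_k^\beta\|\bv\|_2$. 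The guiding idea is to compare it with the active-set-free population smoothed gradient $\bv^T\nabla R_{\delta_k}(\btheta^*)=\EE[\gamma(Y)\frac{Y(\bv^T\bZ)}{\delta_k}K(\cdot)]$, whose smoothing bias I will control directly.

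Next I would split the domain of integration by the kernel's effective support, calling $\{|X-\btheta^{*T}\bZ|\le \tfrac{C_N}{2}\delta_k\}$ the main region and its complement the tail. On the main region I claim $\ind\{(X,\bZ)\in S_k\}=1$: by the triangle inequality $|X-\hat\btheta_{k-1}^T\bZ|\le|X-\btheta^{*T}\bZ|+|(\hat\btheta_{k-1}-\btheta^*)^T\bZ|$, where the first summand is at most $\tfrac{C_N}{2}\delta_k\le\tfrac12 b_{k-1}$ since $b_{k-1}\ge C_N\delta_k$, and under (\ref{prop2condi1}) the second is at most $\|\hat\btheta_{k-1}-\btheta^*\|_1 M_n\le\tfrac12 b_{k-1}$; hence $|X-\hat\btheta_{k-1}^T\bZ|\le b_{k-1}\le b_{k-1}\sqrt{1+\|\hat\btheta_{k-1}\|_2^2}$, i.e.\ $(X,\bZ)\in S_k$. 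Writing $\ind\{(X,\bZ)\in S_k\}=1-\ind\{(X,\bZ)\notin S_k\}$ on the main region, the whole expression becomes $c_{n,k}\bv^T\nabla R_{\delta_k}(\btheta^*)$ minus two tail-type terms: one from subtracting the main-region complement and one from the tail of the original integral.

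It then remains to bound each piece by $\delta_k^\beta\|\bv\|_2$ up to constants. For the leading term, I condition on $(Y,\bZ)$, substitute $w=(X-\btheta^{*T}\bZ)/\delta_k$, and Taylor-expand $f(\cdot\mid Y,\bZ)$ to order $l=\lfloor\beta\rfloor$ about $\btheta^{*T}\bZ$; the moment conditions $\int w^jK(w)\,dw=0$ for $j=1,\dots,l$ (Assumption \ref{asp1}) annihilate the intermediate terms, the degree-$0$ term reassembles exactly into $\bv^T\nabla R(\btheta^*)=0$, and the Hölder bound (\ref{tsybakov}) together with $\int|K(w)||w|^q\,dw<\infty$ for $q\in[l,l+1]$ controls the remainder by $\delta_k^\beta$, giving $|\bv^T\nabla R_{\delta_k}(\btheta^*)|\lesssim\delta_k^\beta\sqrt{\EE(\bv^T\bZ)^2}\lesssim\delta_k^\beta\|\bv\|_2$ via the sparse-eigenvalue bound (\ref{sparseeigen2}). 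For the two tail terms the same substitution cancels the $\delta_k^{-1}$ factor, and the tail condition (\ref{kerneltailbound}) with $\sup f\le p_{\max}$ bounds each by $\delta_k^\beta\,\EE|\bv^T\bZ|\lesssim\delta_k^\beta\|\bv\|_2$. Multiplying through by $c_{n,k}$ completes case (i).

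The main obstacle, and the only place where the two regimes differ, is case (ii), where the deterministic bound $|(\hat\btheta_{k-1}-\btheta^*)^T\bZ|\le\tfrac12 b_{k-1}$ is unavailable. I would instead use it only on the good event, so that on the bad event $\mathcal B=\{|(\hat\btheta_{k-1}-\btheta^*)^T\bZ|>\tfrac12 b_{k-1}\}$ the identity $\ind\{(X,\bZ)\in S_k\}=1$ fails, leaving the extra correction $c_{n,k}\EE[\gamma(Y)\frac{Y(\bv^T\bZ)}{\delta_k}K(\cdot)\ind\{\mathrm{main}\}\ind\{\mathcal B\}\ind\{(X,\bZ)\notin S_k\}]$. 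Integrating $X$ out over the main region contributes a factor $p_{\max}C_N\delta_k$ that cancels $\delta_k^{-1}$, so this is $\lesssim c_{n,k}\EE[|\bv^T\bZ|\ind\{\mathcal B\}]\le c_{n,k}\sqrt{\EE(\bv^T\bZ)^2}\sqrt{\PP(\mathcal B)}$ by Cauchy–Schwarz. Finally, Assumption \ref{asp4}(iii) makes $(\hat\btheta_{k-1}-\btheta^*)^T\bZ$ sub-Gaussian with scale $\asymp\|\hat\btheta_{k-1}-\btheta^*\|_2$, so the choice $b_{k-1}\ge C\|\hat\btheta_{k-1}-\btheta^*\|_2\sqrt{\log(N/(Ks\log d))}$ in (\ref{prop2condi2}) gives $\PP(\mathcal B)\le 2(Ks\log d/N)^{cC^2}$; taking $C$ large and invoking $(Ks\log d/N)^\zeta\lesssim\delta_k$ forces $\sqrt{\PP(\mathcal B)}\lesssim\delta_k^\beta$, so the correction is $\lesssim c_{n,k}\delta_k^\beta\|\bv\|_2$. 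The delicate bookkeeping throughout is to keep the kernel's effective support inside the active set, ensuring the active-set restriction never inflates the smoothing bias beyond order $\delta_k^\beta$.
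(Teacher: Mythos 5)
Your proof is correct and follows essentially the same route as the paper's: the leading bias is handled by a Taylor expansion to order $l=\lfloor\beta\rfloor$ killed by the kernel's vanishing moments and bounded via the H\"older condition and the sparse-eigenvalue bound, while the active-set truncation error is controlled by the kernel tail condition together with the deterministic containment $\|\hat\btheta_{k-1}-\btheta^*\|_1 M_n\le b_{k-1}/2$ in case (i) and the sub-Gaussian tail plus Cauchy--Schwarz in case (ii). The only difference is organizational: you split the $X$-integral by the kernel's effective support and show the active-set indicator equals one there, whereas the paper changes variables and shows the rescaled active set contains $[-C_N/2,C_N/2]$ --- the same estimate phrased two ways.
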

\begin{proof}
By definition we have
\begin{align*}
R(\btheta) = \EE\left( \ind(X<\btheta^T\bZ) \mid  Y=1\right)   +  \EE\left( \ind(X>\btheta^T\bZ) \mid  Y=-1\right),
\end{align*}
and 
\begin{align*}
   c_{n,k} \nabla R(\btheta^*) =&  c_{n,k} \int_{\boldsymbol{z}} \boldsymbol{z}  f\left(\boldsymbol{\theta}^{*T} \boldsymbol{z} \mid \boldsymbol{z}, Y=1\right) f(\boldsymbol{z}\mid Y=1) d \boldsymbol{z}\\
    -& c_{n,k} \int_{\boldsymbol{z}} \boldsymbol{z} f\left(\boldsymbol{\theta}^{*T} \boldsymbol{z} \mid \boldsymbol{z} , Y=-1\right) f(\boldsymbol{z} \mid Y=-1) d \boldsymbol{z}.
\end{align*}
 Note that
\begin{align*}
    \nabla  R_{\delta_k, \hat{\btheta}_{k-1}}(\btheta^*)=& \EE\left(\gamma(Y)\frac{Y\bZ}{\delta_k}K(\frac{Y(X-\btheta^{*T}\bZ)}{\delta_k})R    \mid \hat{\btheta}_{k-1} \right)\\
    =&  \EE\left(\frac{\bZ}{\delta_k}K(\frac{X-\btheta^{*T}\bZ}{\delta_k})R \mid Y=1,  \hat{\btheta}_{k-1}\right)
    -  \EE\left(\frac{\bZ}{\delta_k}K(\frac{X-\btheta^{*T}\bZ}{\delta_k})R \mid Y=-1,   \hat{\btheta}_{k-1}\right),
\end{align*}
hence 
\begin{align*}
    &\nabla  R_{\delta_k, \hat{\btheta}_{k-1}}(\btheta^*) - \nabla R(\btheta^*)\\
    =&
    \EE\left(\frac{\bZ}{\delta_k}K(\frac{X-\btheta^{*T}\bZ}{\delta_k})R \mid Y=1,   \hat{\btheta}_{k-1}\right)-
    c_{n,k}\int_{\boldsymbol{z}} \boldsymbol{z} f\left(\boldsymbol{\theta}^{*T} \boldsymbol{z} \mid \boldsymbol{z}, Y=1\right) f(\boldsymbol{z} \mid Y=1) d \boldsymbol{z}\\
    -& \EE\left(\frac{\bZ}{\delta_k}K(\frac{X-\btheta^{*T}\bZ}{\delta_k})R \mid Y=-1,   \hat{\btheta}_{k-1}\right) +c_{n,k}\int_{\boldsymbol{z}} \boldsymbol{z}  f\left(\boldsymbol{\theta}^{*T} \boldsymbol{z} \mid \boldsymbol{z}, Y=-1\right) f(\boldsymbol{z} \mid Y=-1) d \boldsymbol{z}. 
\end{align*}
We will bound the first term and the second term follows similarly. Denote 
\[
S_u = \{u:\frac{-b_{k-1}\|\hat{\bw}_{k-1}\|_2 +(\hat{\btheta}_{k-1}-\btheta^*)^T\bz}{\delta_k} \leq u \leq \frac{b_{k-1}\|\hat{\bw}_{k-1}\|_2 +(\hat{\btheta}_{k-1}-\btheta^*)^T\bz}{\delta_k}\}.
\]
We have
\begin{align*}
    &\bv^T \EE\left(\frac{\bZ}{\delta_k}K(\frac{X-\btheta^{*T}\bZ}{\delta_k})R \mid Y=1,  \hat{\btheta}_{k-1}\right)\\
    =& c_{n,k} \int_{\bz} \int_{-b_{k-1}\|\hat{\bw}_{k-1}\|_2 +\hat{\btheta}_{k-1}^T\bz}^{b_{k-1}\|\hat{\bw}_{k-1}\|_2+ \hat{\btheta}_{k-1}^T\bz } \frac{\bv^T\bz}{\delta_k}K\left(\frac{x-\btheta^{*T} \bz}{\delta_k}\right)f(x \mid \bz, Y=1)dx f(\bz \mid Y=1) d\bz \\
    =&     c_{n,k}\int_{\bz} \bv^T\bz
    \int_{S_u}K(u) f(u\delta_k+\btheta^{*T}\bz\mid\bz, Y=1)duf(\bz \mid Y=1)d\bz\\
    =&  c_{n,k}\int_{\bz} \bv^T\bz
    \int K(u) f(u\delta_k+\btheta^{*T}\bz\mid\bz, Y=1)duf(\bz \mid Y=1)d\bz\\
    & ~~~-c_{n,k}\int_{\bz} \bv^T\bz
    \int_{S_u^c} K(u) f(u\delta_k+\btheta^{*T}\bz\mid\bz, Y=1)duf(\bz \mid Y=1)d\bz.
\end{align*}
Therefore,
\begin{align}
    &\bv^T \left(
     \EE\left(\frac{\bZ}{\delta_k}K(\frac{X-\btheta^{*T}\bZ}{\delta_k})R \mid Y=1, \hat{\btheta}_{k-1}\right)-
    c_{n,k}\int_{\boldsymbol{z}} \boldsymbol{z} f\left(\boldsymbol{\theta}^{*T} \boldsymbol{z} \mid \boldsymbol{z}, Y=1\right) f(\boldsymbol{z}, Y=1) d \boldsymbol{z} \right)\nonumber\\
    =& c_{n,k}\int_{\bz} \bv^T\bz
    \underbrace{\int K(u) \left(f(u\delta_k+\btheta^{*T}\bz\mid\bz , Y=1) - f(\btheta^{*T}\bz\mid\bz , Y=1)\right) du}_{(A)}
    f(\bz \mid Y=1)d\bz\nonumber\\
    -& c_{n,k}\int_{\bz} \bv^T\bz
    \int_{S_u^c} K(u) f(u\delta_k+\btheta^{*T}\bz\mid\bz, Y=1)duf(\bz \mid Y=1)d\bz.\label{eq_prop2_1}
\end{align}
Now we look at the first term when $\beta > 1$. Since $f(x \mid  \boldsymbol{z},y)$ is $l$ times differentiable, by Taylor expansion we have
\begin{align*} 
 &f\left(u \delta_k+\boldsymbol{\theta}^{* T} \boldsymbol{z} \mid \boldsymbol{z}, Y=1\right)-f\left(\boldsymbol{\theta}^{* T} \boldsymbol{z} \mid \boldsymbol{z}, Y=1 \right)\\
 =&\sum_{i=1}^{l-1} \frac{f^{(i)}\left(\boldsymbol{\theta}^{* T} \boldsymbol{z} \mid \boldsymbol{z}, Y=1\right)}{i !}(u \delta_k)^i +\frac{(u \delta_k)^l}{l !} f^{(l)}\left(\boldsymbol{\theta}^{* T} \boldsymbol{z}+\tau u \delta_k \mid \boldsymbol{z}, Y=1\right)  \end{align*}
 for some $\tau\in[0,1]$ where $l=\lfloor\beta\rfloor$. 
By the definition of kernel of order $l$, we obtain
\begin{align*}
 (A) = \int K(u) \frac{(u \delta_k)^l}{l !}\left(f^{(l)}\left(\boldsymbol{\theta}^{* T} \boldsymbol{z}+\tau u \delta_k \mid \boldsymbol{z}, Y=1\right)-f^{(l)}\left(\boldsymbol{\theta}^{* T} \boldsymbol{z} \mid \boldsymbol{z}, Y=1\right)\right) d u,
\end{align*}
hence for any $\bv$ with $\|\bv\|_0 \leq s^\prime$,
\begin{align*}
   &\left| c_{n,k}\int_{\bz} \bv^T\bz
    \int K(u) \left(f(u\delta_k+\btheta^{*T}\bz\mid\bz , Y=1) - f(\btheta^{*T}\bz\mid\bz , Y=1)\right) du
    f(\bz \mid Y=1)d\bz\right|\\
    =& \left| c_{n,k}  \int K(u)  \frac{(u \delta_k)^l}{l !}\int_{\bz} \bv^T\bz
   \left(f^{(l)}\left(\boldsymbol{\theta}^{* T} \boldsymbol{z}+\tau u \delta_k \mid \boldsymbol{z}, Y=1\right)-f^{(l)}\left(\boldsymbol{\theta}^{* T} \boldsymbol{z} \mid \boldsymbol{z}, Y=1\right)\right) 
    f(\bz \mid Y=1)d\bz du\right|\\
    \leq & c_{n,k} \int  |K(u)| \frac{|u \delta_k|^l}{l !}L\|\bv\|_2 |u\delta_k|^{\beta-l}du\\
    \leq &  c_{n,k} L\|\bv\|_2 \int |K(u)|  \frac{|u \delta_k|^\beta}{l !}du.
\end{align*}
When $0 < \beta \leq 1$, by definition (\ref{densityclass}), we have
\[
|f\left(u \delta_k+\boldsymbol{\theta}^{* T} \boldsymbol{z} \mid \boldsymbol{z}, Y=1\right)-f\left(\boldsymbol{\theta}^{* T} \boldsymbol{z} \mid \boldsymbol{z}, Y=1 \right)| \leq L \delta_k^\beta |u|^\beta,
\]
hence for any $\bv$ with $\|\bv\|_0 \leq s^\prime$,
\begin{align*}
   &\left| c_{n,k}\int_{\bz} \bv^T\bz
    \int K(u) \left(f(u\delta_k+\btheta^{*T}\bz\mid\bz , Y=1) - f(\btheta^{*T}\bz\mid\bz , Y=1)\right) du
    f(\bz \mid Y=1)d\bz\right|\\
    \leq& c_{n,k} \int |K(u)|  \int_{\bz} |\bv^T\bz| L \delta_k^\beta |u|^\beta f(\bz \mid Y=1)d\bz du\\
    \leq& c_{n,k}L \|\bv\|_2 \delta_k^\beta \int |K(u)| |u|^\beta du.
\end{align*}
For the second term on the right hand side of (\ref{eq_prop2_1}), we first show the result under (\ref{prop2condi1}), i.e., 
$b_{k-1} \geq C_{N} \delta_k  \text{\ and\ }
b_{k-1} \geq 2 \|\hat{\btheta}_{k-1}-\btheta^*\|_1 M_n$. Then  
we have
\[
\frac{b_{k-1}\|\hat{\bw}_{k-1}\|_2 +(\hat{\btheta}_{k-1}-\btheta^*)^T\bz}{\delta_k} \geq \frac{b_{k-1}\|\hat{\bw}_{k-1}\|_2 -\|\hat{\btheta}_{k-1}-\btheta^*\|_1 M_n}{\delta_k},
\]
and
\[
\frac{-b_{k-1}\|\hat{\bw}_{k-1}\|_2 +(\hat{\btheta}_{k-1}-\btheta^*)^T\bz}{\delta_k} \leq \frac{-b_{k-1}\|\hat{\bw}_{k-1}\|_2 +\|\hat{\btheta}_{k-1}-\btheta^*\|_1 M_n}{\delta_k}.
\]
Therefore, by (\ref{kerneltailbound}) we have
\begin{align*}
\int_{S_u^c} |K(u)| du     =& \int_{\frac{b_{k-1}\|\hat{\bw}_{k-1}\|_2 +(\hat{\btheta}_{k-1}-\btheta^*)^T\bz}{\delta_k}}^\infty |K(u)| du + 
     \int_{-\infty}^{\frac{-b_{k-1}\|\hat{\bw}_{k-1}\|_2 +(\hat{\btheta}_{k-1}-\btheta^*)^T\bz}{\delta_k}} |K(u)| du\\
     \leq &  \int_{\frac{b_{k-1}\|\hat{\bw}_{k-1}\|_2 -\|\hat{\btheta}_{k-1}-\btheta^*\|_1 M_n}{\delta_k}}^\infty |K(u)|du + 
      \int_{-\infty}^{\frac{-b_{k-1}\|\hat{\bw}_{k-1}\|_2 +\|\hat{\btheta}_{k-1}-\btheta^*\|_1 M_n}{\delta_k}} |K(u)| du \\
      = & 2 \int_{\frac{b_{k-1}\|\hat{\bw}_{k-1}\|_2 -\|\hat{\btheta}_{k-1}-\btheta^*\|_1 M_n}{\delta_k}}^\infty |K(u)| du \\
      \leq & 2 \int_{C_{N}/2}^\infty |K(u)|du = O(\delta_k^\beta),
\end{align*}
where the last inequality follows that $\|\hat{\bomega}_{k-1}\|_2 = \sqrt{1+\|\hat{\btheta}_{k-1}\|_2^2} \geq 1$, hence
\begin{align*}
    \frac{b_{k-1}\|\hat{\bw}_{k-1}\|_2 -\|\hat{\btheta}_{k-1}-\btheta^*\|_1 M_n}{\delta_k} \geq& 
    \frac{b_{k-1}\|\hat{\bw}_{k-1}\|_2 - b_{k-1}/2}{\delta_k}
    \geq \frac{b_{k-1}}{2\delta_k} \geq C_{N}/2.
\end{align*}
Finally, since  $\sup _{x \in \mathbb{R}, y \in \{-1,1\} ,\boldsymbol{z} \in \mathbb{R}^d} f(x \mid y,  \boldsymbol{z})<p_{\max }$, and $\sup _{\|\boldsymbol{v}\|_0 \leq s^{\prime}} \frac{\boldsymbol{v}^T \mathbb{E}\left(\boldsymbol{Z} \boldsymbol{Z}^T \mid Y=y\right) \boldsymbol{v}}{\|\boldsymbol{v}\|_2^2} \leq L^2$, we have
\begin{align*}
    &|c_{n,k}\int_{\bz} \bv^T\bz
    \int_{S_u^c} K(u) f(u\delta_k+\btheta^{*T}\bz\mid\bz, Y=1)duf(\bz \mid Y=1)d\bz |\\
    \lesssim & c_{n,k} p_{\max} |\EE(|\bv^T \bz| \mid Y=1)| \delta_k^\beta\\
    \lesssim &  c_{n,k} p_{\max} \sqrt{\EE( (\bv^T \bz)^2 \mid Y=1 )} \delta_k^\beta\\
    \lesssim &  c_{n,k} p_{\max} \|\bv\|_2 L \delta_k^\beta.
\end{align*}
Combining the bound for the two terms in (\ref{eq_prop2_1}) we finish the proof.

Now we show  the result also holds under (\ref{prop2condi2}), i.e., 
$   b_{k-1}\geq C_N\delta_k,~~\textrm{and}~~b_{k-1}\geq C\|\hat\btheta_{k-1}-\btheta^*\|_2 \sqrt{\log \frac{N}{K s \log d}}$. For the second term on the right hand side of (\ref{eq_prop2_1}),  we only need to check that
\[
\int_{\bz} \bv^T\bz
    \int_{S_u^c} K(u) f(u\delta_k+\btheta^{*T}\bz\mid\bz, Y=1)duf(\bz \mid Y=1)d\bz = O(\delta_k^\beta).
\]
Since  $\sup _{x \in \mathbb{R}, y \in \{-1,1\} ,\boldsymbol{z} \in \mathbb{R}^d} f(x \mid y,  \boldsymbol{z})<p_{\max }$,
    we have
    \begin{align*}
        &\int_{\bz} \bv^T\bz
    \int_{S_u^c} K(u) f(u\delta_k+\btheta^{*T}\bz\mid\bz, Y=1)duf(\bz \mid Y=1)d\bz \\
    \leq & p_{\max } \int_{\bz} \bv^T\bz
    \int_{S_u^c} K(u) duf(\bz \mid Y=1)d\bz \\
    \lesssim& \underbrace{\int_{|(\hat{\btheta}_{k-1} -\btheta^*)^T \bz| \leq C \|\hat{\btheta}_{k-1}-\btheta^*\|_2 \sqrt{\log (\frac{N}{K s \log d})}} \bv^T\bz \int_{S_u^c} K(u) duf(\bz \mid Y=1)d\bz}_{\text{A}}\\
    +&
     \underbrace{\int_{|(\hat{\btheta}_{k-1} -\btheta^*)^T \bz| > C \|\hat{\btheta}_{k-1}-\btheta^*\|_2 \sqrt{\log (\frac{N}{K s \log d})}} \bv^T\bz \int_{S_u^c} K(u) duf(\bz \mid Y=1)d\bz}_{\text{B}}.
    \end{align*}
    For the term $\text{A}$,  we have
\begin{align*}
\int_{S_u^c} |K(u)| du     =& \int_{\frac{b_{k-1}\|\hat{\bw}_{k-1}\|_2 +(\hat{\btheta}_{k-1}-\btheta^*)^T\bz}{\delta_k}}^\infty |K(u)| du + 
     \int_{-\infty}^{\frac{-b_{k-1}\|\hat{\bw}_{k-1}\|_2 +(\hat{\btheta}_{k-1}-\btheta^*)^T\bz}{\delta_k}} |K(u)| du\\
     \leq &  \int_{\frac{b_{k-1}\|\hat{\bw}_{k-1}\|_2 -C\|\hat{\btheta}_{k-1}-\btheta^*\|_2 \sqrt{\log (\frac{N}{K s \log d})}/2}{\delta_k}}^\infty |K(u)|du + 
      \int_{-\infty}^{\frac{-b_{k-1}\|\hat{\bw}_{k-1}\|_2 +C\|\hat{\btheta}_{k-1}-\btheta^*\|_2 \sqrt{\log (\frac{N}{K s \log d})}/2}{\delta_k}} |K(u)| du \\
      = & 2 \int_{\frac{b_{k-1}\|\hat{\bw}_{k-1}\|_2 -C\|\hat{\btheta}_{k-1}-\btheta^*\|_2 \sqrt{\log (\frac{N}{K s \log d})}/2}{\delta_k}}^\infty |K(u)| du \\
      \leq & 2 \int_{C_{N}/2}^\infty |K(u)|du = O(\delta_k^\beta),
\end{align*}
where the last inequality follows  that $\|\hat{\bomega}_{k-1}\|_2 = \sqrt{1+\|\hat{\btheta}_{k-1}\|_2^2} \geq 1$, (\ref{kerneltailbound}) and
\begin{align*}
    \frac{b_{k-1}\|\hat{\bw}_{k-1}\|_2 -C\|\hat{\btheta}_{k-1}-\btheta^*\|_2 \sqrt{\log (\frac{N}{K s \log d})}/2}{\delta_k} \geq& 
    \frac{b_{k-1}\|\hat{\bw}_{k-1}\|_2 - b_{k-1}/2}{\delta_k}
    \geq \frac{b_{k-1}}{2\delta_k} \geq C_{N}/2.
\end{align*}
For the term $\text{B}$, since $(\hat{\btheta}_{k-1} -\btheta^*)^T \bZ \mid Y=1$ is sub-Gaussian with sub-Gaussian norm that scales with $\|\hat{\btheta}_{k-1} -\btheta^*\|_2$, we have
\[
\PP\left(|(\hat{\btheta}_{k-1} -\btheta^*)^T \bZ| > C \|\hat{\btheta}_{k-1}-\btheta^*\|_2 \sqrt{\log (\frac{N}{ K s \log d})}
\mid Y=1
\right)
\leq 2 \left(\frac{K s \log d}{N}\right)^{C'},
\]
hence we can choose $C$ sufficiently large such that $C'/2\geq \zeta\beta$ and thus $\left(\frac{K s \log d}{N}\right)^{C^\prime/2}\lesssim \delta_k^\beta$. Therefore, 
\begin{align*}
    B \lesssim& \EE\left[\bv^T\bZ \ind\{|(\hat{\btheta}_{k-1} -\btheta^*)^T \bZ| > C \|\hat{\btheta}_{k-1}-\btheta^*\|_2 \sqrt{\log (\frac{N}{K s \log d})}\}  \mid Y=1\right] \\
    \leq& \sqrt{\EE( (\bv^T \bZ)^2 \mid Y=1 )} \sqrt{\PP\left(|(\hat{\btheta}_{k-1} -\btheta^*)^T \bZ| > C \|\hat{\btheta}_{k-1}-\btheta^*\|_2 \sqrt{\log (\frac{N}{K s \log d})} \mid Y=1\right)} \\
    \lesssim& \|\bv\|_2 \left(\frac{K s \log d}{N}\right)^{C^\prime/2} 
    \lesssim \|\bv\|_2 \delta_k^\beta.
\end{align*}
Combining the bound for the two terms in (\ref{eq_prop2_1}) we finish the proof.

\end{proof}

\subsection{Proof of the Main Results}

\begin{proof}[Proof of Theorem \ref{rate_k}]
Note that our estimator is defined as $\hat{\btheta}_k:= \tilde{\btheta}_{k,tgt}$, where $\tilde{\btheta}_{k,tgt}$ represents the approximate local solution from the path-following algorithm. Therefore, by Theorem~\ref{paththm1},  we have that with probability greater than $1-2d^{-1}$,
     \begin{equation}\label{2norm}
           \|\hat{\btheta}_k-\boldsymbol{\theta}^*\|_2 \lesssim 
           \left(\frac{K s \log d}{nc_{n,k}}\right)^{\frac{\beta \vee 1}{2\beta+1}},
     \end{equation}
    and
   \begin{equation}\label{1norm}
        \|\hat{\btheta}_k-\boldsymbol{\theta}^*\|_1 \lesssim 
         \sqrt{s}\left(\frac{K s \log d}{nc_{n,k}}\right)^{\frac{\beta \vee 1}{2\beta+1}}.
   \end{equation}
     When $k=1$, we have
   \[
    \PP(R_i=1) = c_{n,1}, \ \PP(R_i=0) = 1-c_{n,1},
   \] 
   hence $N_1 = n c_{n,1}/K$. Plugging this back to (\ref{2norm}) and (\ref{1norm}) we get the result for $\hat{\btheta}_1$. For $2 \leq k \leq K$, recall that   
 \[
 \PP(R_i=1 \mid  X_i, \bZ_i, \hat{\btheta}_{k-1})=c_{n,k}\cdot \ind\{(X_i,\bZ_i) \in S_k\}.
 \]
We have
\begin{align*}
    \EE(R_i) =& \EE\left[\EE[R_i \mid X_i, \bZ_i, \hat{\btheta}_{k-1} ] \right]\\
    =& \EE\left[c_{n,k}\cdot \ind\{(X_i, \bZ_i)\in S_k\}\right]\\
    =& c_{n,k} \PP\left((X,\bZ)\in S_k \right),
\end{align*}
hence $N_k =  n\EE(R_i)/K = n c_{n,k} \PP\left((X,\bZ)\in S_k \right) /K$. Plugging this back to (\ref{2norm}) and (\ref{1norm}) we finish the proof.
\end{proof}

\begin{proof}[Proof of Theorem \ref{beta>beta_*}]
First, let's consider the case when $k=1$. For each $(X_i, \bZ_i) \in D_1$, we have
 \[
 \PP(R_i=1) = c_{n,1}, \ \PP(R_i=0) = 1-c_{n,1},
 \]
and $ N/K=N_1= nc_{n,1}/K$. 
According to Theorem~\ref{rate_k},  by selecting 
    \[
    \delta_1 = c_{1}\left(\frac{Ks \log d}{nc_{n,1}}\right)^{1/(2\beta+1)} = c_{1}\left(\frac{K s \log d}{N}\right)^{1/(2\beta+1)}
    \]
    and $\lambda_1 = c_{2} \sqrt{\frac{c_{n,1} K\log d }{n\delta_1}} = c_{2} \sqrt{\frac{N K\log d }{n^2\delta_1}} $,  with probability greater than $1-2d^{-1}$, we obtain
\begin{equation}\label{equalsizestep1.}
    \|\hat{\btheta}_1 -\btheta^*\|_2 \lesssim  \left(\frac{K s \log d}{N}\right)^{\beta/(2\beta+1)},~~~
    \|\hat{\btheta}_1 -\btheta^*\|_1 \lesssim \sqrt{s} \left(\frac{K s \log d}{N}\right)^{\beta/(2\beta+1)}.
\end{equation}

Next, we will establish the bound for $\|\hat{\btheta}_k -\btheta^*\|_2$. The result for  $\|\hat{\btheta}_k -\btheta^*\|_1$ follows similarly. 
According to Assumption~\ref{asp5}, we have $ \sup _{x \in \mathbb{R}, \boldsymbol{z} \in \mathbb{R}^d} f(x \mid \boldsymbol{z})<p_{\max }<\infty$. Recall that
\[
 S_k := \left\{ (X,\bZ) :-b_{k-1} \leq \frac{X-\hat{\btheta}_{k-1}^T\bZ}{\sqrt{1+\|\hat{\btheta}_{k-1}\|_2^2}} \leq b_{k-1}\right\}.
\]
Let us start from $k=2$. 
Since  $\bZ|Y=y$ is sub-Gaussian with a bounded sub-Gaussian norm and independent of $\hat{\btheta}_{k-1}$, we have $(\hat{\btheta}_{k-1} -\btheta^*)^T \bZ |Y=y$ is also sub-Gaussian, with a sub-Gaussian norm that scales with $\|\hat{\btheta}_{k-1} -\btheta^*\|_2$, hence
\[
\PP\left(|(\hat{\btheta}_{k-1} -\btheta^*)^T \bZ| > c \|\hat{\btheta}_{k-1}-\btheta^*\|_2 \sqrt{\log \left(\frac{N}{K s \log d}\right)}
\right)
\leq 2 \left(\frac{K s \log d}{N}\right)^{c^\prime},
\]
where $c^\prime$ is a sufficiently large constant. Consider the following set 
\[
\cE=\Big\{(\btheta,\bZ): \|{\btheta}\|_2 \leq 2C, |({\btheta} -\btheta^*)^T \bZ| \leq c \|{\btheta}-\btheta^*\|_2 \sqrt{\log \left(\frac{N}{K s \log d}\right)}\Big\},
\]
where $C$ is the constant defined in Assumption \ref{asp3}. Note that $\|\btheta^* -\hat{\btheta}_{k-1}\|_2\leq C$ with probability greater than $1-2d^{-1}$. Since  $\|\hat{\btheta}_{k-1}\|_2 \leq \|\btheta^* -\hat{\btheta}_{k-1}\|_2 + \|\btheta^*\|_2$, the event $(\hat\btheta_{k-1},\bZ)\in \cE$ holds with probability greater than $1-2 \left(\frac{K s \log d}{N}\right)^{c^\prime}-2d^{-1}$. 
So we have
\begin{align}
     \PP\left((X,\bZ)\in S_{k} \mid \hat{\btheta}_{k-1},\bZ=\bz \right) &= \int_{-b_{k-1}\sqrt{1+\|\hat{\btheta}_{k-1}\|_2^2}+ \hat{\btheta}_{k-1}^T\bz}^{b_{k-1}\sqrt{1+\|\hat{\btheta}_{k-1}\|_2^2}+ \hat{\btheta}_{k-1}^T\bz} f(x \mid \bz) dx  \nonumber\\
    &\leq  2 b_{k-1} p_{\max } \sqrt{1+\|\hat{\btheta}_{k-1}\|_2^2}.\nonumber
\end{align}
As a result, we have
\begin{align}\label{pmaxbound.}
\PP\left((X,\bZ)\in S_k \right)&=\EE\left(\PP\left((X,\bZ)\in S_{k} \mid \hat{\btheta}_{k-1},\bZ\right) \ind\{(\hat\btheta_{k-1},\bZ)\in \cE\}\right)\nonumber\\
&~~~~+\EE\left(\PP\left((X,\bZ)\in S_{k} \mid \hat{\btheta}_{k-1},\bZ\right) \ind\{(\hat\btheta_{k-1},\bZ)\notin \cE\}\right)\nonumber\\
&\leq  2 b_{k-1} p_{\max } \sqrt{1+4C^2}+\PP((\hat\btheta_{k-1},\bZ)\notin \cE)\nonumber\\
&\leq 2 b_{k-1} p_{\max } \sqrt{1+4C^2}+2 \left(\frac{K s \log d}{N}\right)^{c^\prime}+2d^{-1}\nonumber\\
&\lesssim b_{k-1},
\end{align}
where $b_{k-1} = c_3 (\frac{ Ks\log d}{N})^{1/(2\beta)}$ and the last step follows from the fact that $d\gg N$ and $c^\prime$ is sufficiently large. In addition, it can be shown that 
\begin{align*}
    -b_{k-1}\sqrt{1+\|\hat{\btheta}_{k-1}\|_2^2}+ \hat{\btheta}_{k-1}^T\bz =& \btheta^{*T}\bz  -b_{k-1}\sqrt{1+\|\hat{\btheta}_{k-1}\|_2^2} + \left(\hat{\btheta}_{k-1} - \btheta^*\right)^T\bz \\
     \geq&  \btheta^{*T}\bz - C_1 b_{k-1},
\end{align*}
if the events $(\hat\btheta_{k-1},\bZ)\in \cE$ and
\begin{align*}
   c \|\hat{\btheta}_{k-1}-\btheta^*\|_2 \sqrt{\log \Big(\frac{N}{K s \log d}\Big)} \leq b_{k-1}/2,
\end{align*}
hold. By (\ref{equalsizestep1.}) and the choice of $b_{k-1}$, the two events hold with probability greater than $1-2 \left(\frac{K s \log d}{N}\right)^{c^\prime}-2d^{-1}$. By a similar proof,  we can show that
\[
b_{k-1}\sqrt{1+\|\hat{\btheta}_{k-1}\|_2^2}+ \hat{\btheta}_{k-1}^T\bz \leq \btheta^{*T}\bz + C_2 b_{k-1}.
\]
 Therefore, with probability greater than $1-2 \left(\frac{K s \log d}{N}\right)^{c^\prime}-2d^{-1}$, the event 
 \[
\cA_k=\Big\{ \Big[-b_{k-1}\sqrt{1+\|\hat{\btheta}_{k-1}\|_2^2}+ \hat{\btheta}_{k-1}^T\bz,b_{k-1}\sqrt{1+\|\hat{\btheta}_{k-1}\|_2^2}+ \hat{\btheta}_{k-1}^T\bz\Big] \subset  B(\btheta^{*T}\bz, \epsilon)\Big\},
 \]
holds, where $\epsilon = C_3 b_{k-1}$ for some constant $C_3$ large enough. Following the similar derivations in (\ref{pmaxbound.}), by Assumption~\ref{asp5}, we can show that
\begin{align}
        \PP\left((X,\bZ)\in S_{k} \right)&\geq \EE\Big(\PP\Big((X,\bZ)\in S_{k} \mid \hat{\btheta}_{k-1},\bZ\Big) \ind\{(\hat\btheta_{k-1},\bZ)\in \cE, \cA_k\}\Big) \nonumber\\
        &=\EE \Big\{ \ind\{(\hat\btheta_{k-1},\bZ)\in \cE, \cA_k\} \int_{-b_{k-1}\sqrt{1+\|\hat{\btheta}_{k-1}\|_2^2}+ \hat{\btheta}_{k-1}^T\bz}^{b_{k-1}\sqrt{1+\|\hat{\btheta}_{k-1}\|_2^2}+ \hat{\btheta}_{k-1}^T\bz} f(x \mid \bz) dx \Big\}\nonumber \\
        &\geq \EE \Big\{ \ind\{(\hat\btheta_{k-1},\bZ)\in \cE, \cA_k, \bZ\in\cG\} \int_{-b_{k-1}\sqrt{1+\|\hat{\btheta}_{k-1}\|_2^2}+ \hat{\btheta}_{k-1}^T\bz}^{b_{k-1}\sqrt{1+\|\hat{\btheta}_{k-1}\|_2^2}+ \hat{\btheta}_{k-1}^T\bz} f(x \mid \bz) dx \Big\}\nonumber\\
        & \geq 2b_{k-1} p_{\min }\PP\{(\hat\btheta_{k-1},\bZ)\in \cE, \cA_k, \bZ\in\cG\} \geq Cb_{k-1},\label{pminbound.}
\end{align}
for some constant $C$. Combining (\ref{pmaxbound.}) with (\ref{pminbound.}), we have $\PP\left((X,\bZ)\in S_k \right) \asymp b_{k-1} $.

To apply Theorem~\ref{rate_k}, we need to verify that $b_{k-1}$ satisfies
\begin{equation} \label{propb_k-1.}
    b_{k-1} \geq C \delta_k  ~~\text{\ and \ }~~
b_{k-1} \geq C\|\hat\btheta_{k-1}-\btheta^*\|_2 \sqrt{\log \left(\frac{N}{K s \log d}\right)}.
\end{equation}
Given $b_{k-1} = c_3 (\frac{ Ks\log d}{N})^{1/(2\beta)}$ and from Theorem~\ref{rate_k}  
\begin{equation}\label{deltakequalsize.}
    \delta_k \asymp \left(\frac{K s \log d}{nc_{n,k}}\right)^{1/(2\beta+1)} \asymp \left(\frac{s \log d ~~\PP\left((X,\bZ)\in S_{k} \right)}{N_k}\right)^{1/(2\beta+1)} \asymp \left(\frac{ b_{k-1} K s \log d }{N}\right)^{1/(2\beta+1)},
\end{equation}
where it follows from $N_k =  n\EE(R_i)/K = n c_{n,k} \PP\left((X,\bZ)\in S_k \right) /K$, we can verify that $b_{k-1} \geq C \delta_k$ holds.  In addition, by (\ref{equalsizestep1.}), $Ks\log d=o(N)$ and for any fixed $\beta > \frac{1+\sqrt{3}}{2}$ (which implies $\frac{\beta}{2\beta+1}> \frac{1}{2\beta}$), we conclude that $b_{k-1} \geq C\|\hat\btheta_{k-1}-\btheta^*\|_2 \sqrt{\log (\frac{N}{K s \log d})}$ holds with probability greater than $1-2d^{-1}$. Thus, applying Theorem~\ref{rate_k} with $\delta_k$ in (\ref{deltakequalsize.}) and  $\lambda_k = c_{2} \sqrt{\frac{N K \log d}{n^2 b_{k-1}\delta_k}}$, we obtain that with probability greater than $1-4d^{-1}$  
 \begin{equation}\label{equalsizerate.}
     \|\hat{\btheta}_k-\boldsymbol{\theta}^*\|_2 \lesssim 
         \left(\frac{\PP\left((X,\bZ)\in S_k \right)s \log d }{N_k}\right)^{\beta/(2\beta+1)}
         \lesssim \left(\frac{ b_{k-1} K s \log d  }{N}\right)^{\beta/(2\beta+1)}\lesssim \left(\frac{ K s \log d }{N}\right)^{1/2},
 \end{equation} 
where we plug in $b_{k-1} = c_3 (\frac{ Ks\log d}{N})^{1/(2\beta)}$ in the last step. This completes the proof for $k=2$. 

By mathematical induction, assuming (\ref{equalsizerate.}) holds for $\hat{\btheta}_k$ with probability greater than $1-2kd^{-1}$,  we would like to prove (\ref{equalsizerate.}) holds for $\hat{\btheta}_{k+1}$ with probability greater than $1-2(k+1)d^{-1}$. Following the similar arguments, we can prove that $\PP\left((X,\bZ)\in S_{k+1} \right) \asymp b_k $. Note that $\delta_{k+1}\asymp (\frac{ b_{k} K s \log d }{N})^{1/(2\beta+1)}$ and $b_{k} = c_3 (\frac{ Ks\log d}{N})^{1/(2\beta)}$. As a result, $b_{k} \geq C \delta_{k+1}$, and by (\ref{equalsizerate.}) it holds that $b_{k} \geq C\|\hat\btheta_{k}-\btheta^*\|_2 \sqrt{\log (\frac{N}{K s \log d})}$ with  probability greater than $1-2kd^{-1}$. Finally, as shown in (\ref{equalsizerate.}), we obtain that with probability greater than $1-2(k+1)d^{-1}$,  
$$
     \|\hat{\btheta}_{k+1}-\boldsymbol{\theta}^*\|_2 
         \lesssim \left(\frac{ b_{k} K s \log d  }{N}\right)^{\beta/(2\beta+1)}\lesssim \left(\frac{ K s \log d }{N}\right)^{1/2}.
$$
This completes the proof for $k+1$. It is easily seen that, by the union bound argument, the event $\cap_{2\leq k\leq K}\{\|\hat{\btheta}_{k}-\boldsymbol{\theta}^*\|_2 \lesssim (\frac{ K s \log d }{N})^{1/2}\}$ holds with probability greater than $1-2Kd^{-1}$.

Finally let's consider the assumption (\ref{Nncondition.}).
 By definition, we have $N_k = \sum_{(X_i, \bZ_i) \in D_k}\EE\left( R_i\right)= n\EE(R_i)/K = n c_{n,k} \PP\left((X,\bZ)\in S_k \right) /K$. To ensure that $0 < c_{n,k} \leq 1$, we require
\[
N_k K \leq n  \PP\left((X,\bZ)\in S_k \right), \ 2 \leq k \leq K.
\]
Note that $N_k K = N$,
with (\ref{pminbound.}), it suffices to ensure that $N \leq  Cb_{k-1}n$ for some constant $C$ and for all $2\leq k\leq K$. Some calculation yields (\ref{Nncondition.}).
\end{proof}

\begin{proof}[Proof of Theorem \ref{beta^{**}< beta < beta^*}]
By Theorem~\ref{rate_k}, choosing $\delta_1 = c_1 \left(\frac{K s \log d}{N}\right)^{1/(2\beta+1)}$ and 
    $\lambda_1 = c_{2} \sqrt{\frac{N K\log d }{n^2\delta_1}}$, yields that with probability greater than $1-2d^{-1}$,
\begin{align} \label{step1rate.}
   \|\hat{\btheta}_1 -\btheta^*\|_2 \lesssim  \left(\frac{K s \log d}{N}\right)^{\beta/(2\beta+1)}, \    \|\hat{\btheta}_1 -\btheta^*\|_1 \lesssim \sqrt{s} \left(\frac{K s \log d}{N}\right)^{\beta/(2\beta+1)}.
\end{align}
Similar as in Theorem~\ref{beta>beta_*}, we find $\PP\left((X,\bZ)\in S_2 \right) \asymp b_1$, leading to
\begin{equation}\label{deltachoice.}
    \delta_2 \asymp  \left(\frac{K s \log d}{nc_{n,2}}\right)^{1/(2\beta+1)} \asymp \left(\frac{s \log d ~\PP\left((X,\bZ)\in S_2 \right)}{N_2}\right)^{1/(2\beta+1)} \asymp \left(\frac{b_{1} K s \log d }{N}\right)^{1/(2\beta+1)}.
\end{equation}
To invoke  Theorem~\ref{rate_k} for $\hat\btheta_2$, we need to verify
    \[
    b_1 \geq C \delta_2  ~~\text{\ and\ }~~
b_1 \geq C\|\hat\btheta_{1}-\btheta^*\|_2 \sqrt{\log \left(\frac{N}{K s \log d}\right)}.
    \]
For simplicity, we denote $\Delta = \frac{K s \log d}{N}$ and $\alpha = \frac{\beta}{2\beta+1}$. By (\ref{deltachoice.}) and (\ref{step1rate.}), it suffices to verify 
  \begin{equation} \label{b_1choice.}
   b_1 \geq C^\prime   \Delta^{1/(2\beta)} \text{\ and\ }
b_1 \geq C\Delta^\alpha \sqrt{\log \Big(\frac{1}{\Delta}\Big)}
  \end{equation}
for some constant $C, C^\prime$. For any fixed $1<\beta \leq \frac{1+\sqrt{3}}{2}$, we have $\alpha\leq \frac{1}{2\beta}$. To satisfy (\ref{b_1choice.}), we choose $b_1= C\Delta^\alpha \sqrt{\log (\frac{1}{\Delta})} $ for some constant $C$. With $\lambda_2 = c_{2} \sqrt{\frac{N K \log d}{n^2 b_{1}\delta_2}}$, Theorem~\ref{rate_k} implies, with probability greater than $1-4d^{-1}$,
 \begin{align}\label{beta1step2*}
     \|\hat{\btheta}_2-\boldsymbol{\theta}^*\|_2 &\lesssim 
        \left(\frac{\PP\left((X,\bZ)\in S_2 \right)Ks \log d }{N}\right)^{\beta/(2\beta+1)}\nonumber\\        
         &\lesssim \left(\frac{ b_{1} Ks \log d }{N}\right)^{\beta/(2\beta+1)} \lesssim \left(\log(\frac{1}{\Delta})\right)^{\frac{\alpha}{2}} \Delta^{\alpha^2+\alpha}.
 \end{align}
In the following, we will show that for any $2\leq k\leq \lceil  \log_{\frac{\beta}{2\beta+1}}\left(1-\frac{\beta+1}{2\beta^2}\right) \rceil$ 
  \begin{align}\label{beta1finalrate*}
        \|\hat{\btheta}_k - \btheta^*\|_2 &\lesssim \left(\log(\frac{1}{\Delta})\right)^{\frac{\alpha-\alpha^k}{2(1-\alpha)}} \Delta^{(1-\alpha^k)\frac{\alpha}{1-\alpha}}:=r_{k},
   \end{align}
holds  with probability greater than $1-2kd^{-1}$. Note that $\log_{\frac{\beta}{2\beta+1}}(1-\frac{\beta+1}{2\beta^2}) $ is well defined for $1<\beta \leq \frac{1+\sqrt{3}}{2}$. Clearly, (\ref{beta1finalrate*}) holds for $k=2$. Assuming (\ref{beta1finalrate*}) holds for $k-1$, it suffices to show (\ref{beta1finalrate*}) holds for $k$. Following the same argument above for $k=2$, $b_{k-1}$ needs to satisfy
$$
   b_{k-1} \geq C^\prime   \Delta^{1/(2\beta)} \text{\ and\ }
b_{k-1}\geq C r_{k-1} \sqrt{\log \Big(\frac{1}{\Delta}\Big)},
$$
where $r_{k-1}$ is given by (\ref{beta1finalrate*}). We note that for any $ k\leq \lceil  \log_{\frac{\beta}{2\beta+1}}\left(1-\frac{\beta+1}{2\beta^2}\right) \rceil$ and for any fixed $1<\beta \leq \frac{1+\sqrt{3}}{2}$, 
$$
(1-\alpha^{k-1})\frac{\alpha}{1-\alpha} = \left(1- (\frac{\beta}{2\beta+1})^{k-1}\right) \frac{\beta}{\beta+1}< \frac{1}{2\beta},
$$
which implies 
  \begin{equation} \label{eq_delta_vs_error}
 \Delta^{1/(2\beta)}  = O\left(r_{k-1} \sqrt{ \log(\frac{1}{\Delta}) } \right),
  \end{equation}
and therefore we can choose
   \begin{align*}
        b_{k-1}  &\asymp r_{k-1}  \sqrt{ \log(\frac{1}{\Delta}) } \\
        &\asymp\left(\log(\frac{1}{\Delta})\right)^{\frac{\alpha-\alpha^{k-1}}{2(1-\alpha)}+\frac{1}{2}} \Delta^{(1-\alpha^{k-1})\frac{\alpha}{1-\alpha}} \\
        &\asymp \left(\log(\frac{N}{K s \log d}) \right)^{\frac{(2\beta+1)(1-(\frac{\beta}{2\beta+1})^{k-1})}{2(\beta+1)}} \left(\frac{K s \log d}{N}\right)^{\frac{\beta}{\beta+1}(1-(\frac{\beta}{2\beta+1})^{k-1})}.
   \end{align*}
Similar to (\ref{beta1step2*}),  we have  with probability greater than $1-2kd^{-1}$, 
  \begin{align*}
        \|\hat{\btheta}_k - \btheta^*\|_2  \lesssim \left( b_{k-1} \Delta\right)^{\alpha} \lesssim r_{k-1}^\alpha \Big\{\log(\frac{1}{\Delta})\Big\}^{\alpha/2}\Delta^\alpha=r_k, 
   \end{align*}
which completes the proof of (\ref{beta1finalrate*}).

For $k = K=\lceil  \log_{\frac{\beta}{2\beta+1}}\left(1-\frac{\beta+1}{2\beta^2}\right) \rceil+1$, to satisfy
$$
   b_{K-1} \geq C^\prime   \Delta^{1/(2\beta)} \text{\ and\ }
b_{K-1}\geq C r_{K-1} \sqrt{\log \Big(\frac{1}{\Delta}\Big)},
$$
we set $b_{K-1}=c\Delta^{1/(2\beta)}$. To see this, note that $\Delta =o(1)$, the bound (\ref{beta1finalrate*}) holds for $r_{K-1}$, and thus for any fixed $1<\beta \leq \frac{1+\sqrt{3}}{2}$, some calculation shows that 
\begin{equation}\label{betacondition}
(1-\alpha^{K-1})\frac{\alpha}{1-\alpha} = \left(1- (\frac{\beta}{2\beta+1})^{K-1}\right) \frac{\beta}{\beta+1} > \frac{1}{2\beta},
\end{equation}
which implies
$$
r_{K-1} \sqrt{ \log(\frac{1}{\Delta}) } =O(\Delta^{1/(2\beta)}).
$$
Applying Theorem~\ref{rate_k}, we select
\begin{equation*}
    \delta_K  \asymp \left(\frac{K b_{K-1} s \log d }{N}\right)^{1/(2\beta+1)}  \asymp \left(\frac{Ks\log d}{N}\right)^{1/(2\beta)},
\end{equation*}
and $\lambda_K = c_{2} \sqrt{\frac{N K\log d}{n^2 b_{K-1}\delta_K}}$,
to ensure, with probability greater than $1-2Kd^{-1}$  
\begin{align*}
     \|\hat{\btheta}_K-\boldsymbol{\theta}^*\|_2 \lesssim& 
         \left(\frac{\PP\left((X,\bZ)\in S_K \right)s \log d }{N_K}\right)^{\beta/(2\beta+1)}
         \lesssim \sqrt{s}\left(\frac{K b_{K-1} s \log d }{N}\right)^{\beta/(2\beta+1)}\\
         \lesssim& \left(\frac{ K s \log d }{N}\right)^{1/2}.
\end{align*}
The result for $ \|\hat{\btheta}_k-\boldsymbol{\theta}^*\|_1$ follows similarly.

Finally,  to ensure that $0 < c_{n,k} \leq 1$ for $2\leq k\leq K$, we require
\[
N_k K \leq n  \PP\left((X,\bZ)\in S_k \right), \ 2 \leq k \leq K.
\]
Note that $N_k K = N$. It suffices to ensure that $N \leq  Cb_{k-1}n$ for some constant $C$, which is implied by (\ref{Nncondition1}) for $2 \leq k \leq K-1$ and (\ref{Nncondition.}) for $k=K$.
    
\end{proof}

\begin{proof}[Proof of Theorem \ref{beta < beta^**}]
Following the proof of Theorem~\ref{beta>beta_*} we have  by choosing $\delta_1 = c_1\left(\frac{K s \log d}{N}\right)^{1/(2\beta+1)}$ and $\lambda_1 = c_{2} \sqrt{\frac{c_{n,1} K\log d }{n\delta_1}}= c_{2} \sqrt{\frac{N K\log d }{n^2\delta_1}}$,  with probability greater than $1-2d^{-1}$,
\begin{align} \label{step1rate..}
   \|\hat{\btheta}_1 -\btheta^*\|_2 \lesssim  \left(\frac{K s \log d}{N}\right)^{1/(2\beta+1)}, ~~\    \|\hat{\btheta}_1 -\btheta^*\|_1 \lesssim \sqrt{s} \left(\frac{K s \log d}{N}\right)^{1/(2\beta+1)}.
\end{align}
Similar as in Theorem~\ref{beta>beta_*}, we find $\PP\left((X,\bZ)\in S_2 \right) \asymp b_1$. Applying  Theorem~\ref{rate_k} with 
\begin{equation}\label{deltachoice..}
    \delta_2 \asymp  \left(\frac{K s \log d}{nc_{n,2}}\right)^{1/(2\beta+1)} \asymp \left(\frac{s \log d \PP\left((X,\bZ)\in S_2 \right)}{N_2}\right)^{1/(2\beta+1)} \asymp \left(\frac{b_{1} K s \log d }{N}\right)^{1/(2\beta+1)},
\end{equation}
and $\lambda_2 = c_{2} \sqrt{\frac{N K \log d}{n^2 b_{1}\delta_2}}$,
 we have with probability greater than $1-4d^{-1}$,
 \begin{equation}\label{beta1step2**}
     \|\hat{\btheta}_2-\boldsymbol{\theta}^*\|_2 \lesssim 
         \left(\frac{K \PP\left((X,\bZ)\in S_2 \right)s \log d }{N}\right)^{1/(2\beta+1)}
         \lesssim \left(\frac{ Kb_{1} s \log d }{N}\right)^{1/(2\beta+1)}.
 \end{equation} 
  To  satisfy 
    \[
    b_1 \geq C \delta_2  \text{\ and\ }
b_1 \geq C\|\hat\btheta_{1}-\btheta^*\|_2 \sqrt{\log \left(\frac{N}{K s \log d}\right)},
    \]
  by (\ref{deltachoice..})  it suffices to verify 
  \begin{equation} \label{b_1choice..}
   b_1 \geq C^\prime   \left(\frac{ Ks\log d}{N}\right)^{1/(2\beta)} \text{\ and\ }
b_1 \geq C \left(\frac{K s \log d}{N}\right)^{1/(2\beta+1)} \sqrt{\log \left(\frac{N}{K s \log d}\right)}
  \end{equation}
for some constants $C, C^\prime$. Since ${Ks\log d}=o(N)$, clearly we have 
$$
\left(\frac{Ks\log d}{N}\right)^{1/(2\beta)}  = O\left(\left(\frac{K s \log d}{N}\right)^{1/(2\beta+1)} \sqrt{\log \left(\frac{N}{K s \log d}\right)}\right).
$$ 
Therefore, to satisfy (\ref{b_1choice..}), we choose $b_1 =  C_1  \left(\frac{K s \log d}{N}\right)^{1/(2\beta+1)} \sqrt{\log \left(\frac{N}{K s \log d}\right)} $ for some constant $C_1$. Then by 
(\ref{beta1step2**}) and (\ref{step1rate..}) we have
\[
\|\hat{\btheta}_2-\boldsymbol{\theta}^*\|_2 \lesssim \left(\log(\frac{1}{\Delta})\right)^{\frac{1}{2(2\beta+1)}} \Delta^{\frac{2(\beta+1)}{(2\beta+1)^2}},
\]
where $\Delta = \frac{K s \log d}{N}$. Using a similar mathematical induction argument, we can show that  with probability greater than $1-2kd^{-1}$, 
  \begin{align}\label{kstepiterationrate}
              \|\hat{\btheta}_k - \btheta^*\|_2 \lesssim& \left( 
        \Delta \sqrt{\log(\frac{1}{\Delta})}
        \right)^{\sum_{i=1}^{k-1} 1/(2\beta+1)^i}
        \|\hat{\btheta}_1 - \btheta^*\|_2^{1/(2\beta+1)^{k-1}} \nonumber \\
        \lesssim& 
    \left( 
        \Delta \sqrt{\log(\frac{1}{\Delta})}
        \right)^{\frac{1}{2}(1-1/(2\beta+1)^{k-1})}
        \Delta^{1/(2\beta+1)^k} \nonumber\\
        \lesssim & \left(\log(\frac{1}{\Delta})\right)^{\frac{1}{4\beta}(1-1/(2\beta+1)^{k-1})}
        \Delta^{\frac{1}{2\beta}(1-1/(2\beta+1)^k)},
   \end{align}
for any $2\leq k\leq K$.  Given $K = \lceil \log_{2\beta+1}(\log N) \rceil$, we have
 $\left(\frac{N}{K s \log d}\right)^{\frac{1}{2\beta\cdot(2\beta+1)^K}}\leq C$ for some constant $C>0$, hence
(\ref{kstepiterationrate}) with $k=K$ can reduce to 
\[
   \|\hat{\btheta}_K - \btheta^*\|_2 \lesssim 
   \left(\log(\frac{N}{K s \log d})\right)^{\frac{1}{4\beta}}
   \left(\frac{K s \log d}{N}\right)^{\frac{1}{2\beta}},
 \]
   and the result for $ \|\hat{\btheta}_k-\boldsymbol{\theta}^*\|_1$ follows similarly.
Since $ \frac{1}{2}(1-1/(2\beta+1)^{k-1}) < \frac{1}{2},$
we can verify that 
 $\Delta^{1/(2\beta)}  \lesssim \left(\log(\frac{1}{\Delta})\right)^{\frac{1}{2}+\frac{1}{4\beta}(1-1/(2\beta+1)^{k-2})}\Delta^{\frac{1}{2\beta}(1-1/(2\beta+1)^{k-1})}$ for all $2\leq k\leq K$.
Therefore, we choose
\[
b_{k-1} = C_1 \left(\log (\frac{N}{K s \log d})\right)^{\frac{1}{2}+\frac{1-1/(2\beta+1)^{k-2}}{4\beta}} \left(\frac{K s \log d}{N}\right)^{\frac{1-1/(2\beta+1)^{k-1}}{2\beta}},
\]
and   $ \delta_k = C_2 \left(\frac{b_{k-1}K s \log d}{N}\right)^{1/(2\beta+1)}$
   where $C_1, C_2>0$ are some constants.  
Finally, to ensure that $0 < c_{n,k} \leq 1$, we require
\[
N_k K \leq n  \PP\left((X,\bZ)\in S_k \right), \ 2 \leq k \leq K-1.
\]
Note that $N_k K = N$. It suffices to ensure that $N \leq  Cb_{k-1}n$ for some constant $C$, which is provided in (\ref{Nncondition1.}).
\end{proof}

\begin{proof}[Proof of Theorem \ref{lowerbound1}]

We first consider the case when $\beta > 1$. The proof consists of the following two steps.
\begin{itemize}
    \item For any sampling method $Q \in \mathcal{Q}_N(\mathcal{P}(\beta,s))$,
    construct a set of hypotheses $\mathcal{H}=\left\{P_j(X, Y, \boldsymbol{Z})\right\} \subset \mathcal{P}(\beta,s)$.
    \item Apply Theorem 2.7 in \cite{10.5555/1522486} by checking the following two conditions:
    \begin{enumerate}
        \item $ \mathrm{KL}\left(f^{j} \| f^{0}\right) \leq \gamma \log |\mathcal{H}|$ for some $\gamma \in(0,1 / 8)$, where $ \mathrm{KL}\left(f^{j} \| f^{0}\right)$ is the K-L divergence between probability measures $f^j$ and $f^0$, and $f^j$ is the probability measure of the random variables $\{O_i\}_{i=1}^n$ under hypothesis $j$.
        \item For all $j \neq k$ and $q=1,2,\left\|\boldsymbol{\theta}_j-\boldsymbol{\theta}_k\right\|_q \geq 2 t$, where $t \asymp s^{\frac{1}{q}-\frac{1}{2}}\left(\frac{s \log (d / s)}{N}\right)^{1/2} .$
    \end{enumerate}
\end{itemize}

Given the set
\[
\mathcal{M}=\left\{x \in\{0,1\}^d:\|x\|_0=s\right\}, 
\]
there exists a subset $\mathcal{H}^{\prime}$ of $\mathcal{M}$ such that $\rho_H\left(x, x^{\prime}\right)>s / 16$ for $x, x^{\prime} \in \mathcal{H}^{\prime}, x \neq x^{\prime}$ and $\log \left|\mathcal{H}^{\prime}\right| \geq c^{\prime} s \log \left(\frac{d}{s}\right)$, where $\rho_H$ denotes the Hamming distance and $c^\prime$ is some absolute constant. We let $\boldsymbol{\omega}_0=\mathbf{0} \in \mathbb{R}^d$ and use $\bomega_j$ to denote the elements in $\mathcal{H}^\prime$ for $j=1, \ldots,\left|\mathcal{H}^{\prime}\right|$.

We construct $P_j(X,Y,\bZ)$ as follows. 
We choose weight functions such that  $\gamma(1) = \gamma(-1)$.
For all $j=0, \ldots,\left|\mathcal{H}^{\prime}\right|$, we assume $X$ and each of $Z_1, \cdots, Z_d$ follows a Uniform distribution on $[-1,1]$ independently. For each $j=0, \ldots,\left|\mathcal{H}^{\prime}\right|$,  let
\begin{align}\label{conditional_density}
    f_j( y=1 \mid x, \boldsymbol{z})= \frac{1}{2} + \frac{1}{2\sigma }\left(x - c\left(\frac{s \log(d/s) }{N}\right)^{1/2} \frac{\bomega_j^T\bz}{\sqrt{s}} \right), \\
     f_j( y=-1 \mid x, \boldsymbol{z})= \frac{1}{2} - \frac{1}{2\sigma }\left(x - c\left(\frac{s \log(d/s) }{N}\right)^{1/2} \frac{\bomega_j^T\bz}{\sqrt{s}} \right),
\end{align}
where $c$ is some sufficiently small constant and  $\sigma$ is some sufficiently large constant. Under the assumption that $s(\frac{\log(d/s) }{N})^{1/2}=o(1)$, we can guarantee that $f_j( y=1 \mid x, \boldsymbol{z})$ and $f_j( y=-1 \mid x, \boldsymbol{z})$ are within $[0,1]$ for any $(x,\bz)\in [-1,1]^{d+1}$, and thus are well defined.
By this construction we can conclude that $P_0, \ldots, P_{\mathcal{H}^{\prime}}$ are well defined probability measures. In the following we present
two lemmas which characterize two key properties of $P_j$.

\begin{lemma}
    Under the conditions of Theorem~\ref{lowerbound1} and the construction of $P_j = P_j\left(X,Y,\bZ\right)$ above, we have $P_j \in \mathcal{P}(\beta, L,p_{\min},p_{\max}), \forall j=0, \ldots,\left|\mathcal{H}^{\prime}\right|$ and (\ref{sparseeigen2}) holds.
\end{lemma}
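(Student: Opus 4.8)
The plan is to reduce every requirement in the definitions of $\mathcal{P}(\beta,L,p_{\min},p_{\max})$ and of the sparse eigenvalue bound (\ref{sparseeigen2}) to elementary computations with the explicitly constructed $P_j$, exploiting that $f_j(y\mid x,\bz)$ is affine in $x$ and that $X,Z_1,\dots,Z_d$ are independent and symmetric about $0$. First I would abbreviate $a_j(\bz):=c(s\log(d/s)/N)^{1/2}\boldsymbol{\omega}_j^T\bz/\sqrt{s}$ and recover the conditional density of $X\mid Y,\bZ$ by Bayes' rule. Since $X\perp\bZ$ with $X\sim\mathrm{Unif}[-1,1]$, we have $f(x\mid\bz)=\tfrac12\ind\{x\in[-1,1]\}$, and integrating the given $f_j(y\mid x,\bz)$ against this uniform density yields $\PP_j(Y=1\mid\bz)=\tfrac12-a_j(\bz)/(2\sigma)$; dividing then gives $f_j(x\mid y=\pm1,\bz)=\frac{1\pm\sigma^{-1}(x-a_j(\bz))}{2(1\mp a_j(\bz)/\sigma)}$ on $[-1,1]$, an affine function of $x$.

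Second, I would identify $\btheta^*(P_j)$. Writing $R_{P_j}(\btheta)=\EE_{\bZ}[h(\btheta^T\bZ;\bZ)]$ with $h(t;\bz)$ the pointwise weighted risk, the derivative $\partial_t h\propto f_j(y=1\mid t,\bz)-f_j(y=-1\mid t,\bz)=\sigma^{-1}(t-a_j(\bz))$ shows $h(\cdot;\bz)$ is strictly convex in $t$ with unique minimizer $t=a_j(\bz)$; because $a_j(\bz)$ is itself linear in $\bz$, this pointwise optimum is realized by the linear rule, so $\btheta^*(P_j)=c(s\log(d/s)/N)^{1/2}\boldsymbol{\omega}_j/\sqrt{s}$, with $\|\btheta^*(P_j)\|_0\le s$ and $\|\btheta^*(P_j)\|_2=c(s\log(d/s)/N)^{1/2}=o(1)$. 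The hypothesis $s(\log(d/s)/N)^{1/2}=o(1)$ then forces $|\btheta^{*T}\bz|\le cs(\log(d/s)/N)^{1/2}=o(1)$, so the threshold lies strictly inside $[-1,1]$, where $f_j(x\mid y,\bz)$ is genuinely smooth.

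Third, I would verify the three defining conditions. For the Hölder condition of Definition~\ref{densityclass}: since $f_j(x\mid y,\bz)$ is affine in $x$, all derivatives of order $\ge2$ vanish, so for $\beta>1$ (i.e. $l=\lfloor\beta\rfloor\ge1$) the left side of (\ref{tsybakov}) is identically zero; for $\beta=1$ (i.e. $l=0$) the bound reduces to Lipschitz continuity in $x$ with slope $(2\sigma(1\mp a_j(\bz)/\sigma))^{-1}$, which is bounded since $\sigma$ is large and $a_j(\bz)=o(1)$. Choosing $\sigma$ large also bounds the endpoint values, giving $\sup f_j(x\mid y,\bz)\le p_{\max}$, while $f(x\mid\bz)\equiv\tfrac12$ immediately yields the lower bound with $p_{\min}=1/2$. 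Finally, for (\ref{sparseeigen2}) I would use $\PP(Y=y)=1/2$ (which follows from $\EE[X]=\EE[a_j(\bZ)]=0$) to write $\EE(\bZ\bZ^T\mid Y=y)=2\,\EE[\bZ\bZ^T f_j(y\mid X,\bZ)]$; independence and $\EE[X]=0$ kill the $X$-term, $\EE[\bZ\bZ^T]=\tfrac13 I_d$, and the cubic term $\EE[\bZ\bZ^T(\boldsymbol{\omega}_j^T\bZ)]$ vanishes because every third moment of the symmetric independent coordinates is $0$, leaving $\EE(\bZ\bZ^T\mid Y=y)=\tfrac13 I_d$, so the sparse maximal eigenvalue equals $1/3\le M_1$.

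The moment evaluations are routine; the point that needs care is the bookkeeping that the Bayes-optimal threshold is exactly linear and sits strictly interior to the support, so that $f_j(x\mid y,\bz)$ is truly $l$-times differentiable at $x=\btheta^{*T}\bz$ and the Hölder bound is anchored at the correct point. This is precisely where the $o(1)$ scaling of $\btheta^*(P_j)$ and the large constant $\sigma$ enter, and it is the only place the assumption $s(\log(d/s)/N)^{1/2}=o(1)$ is used.
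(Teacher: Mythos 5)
Your proposal is correct and follows essentially the same route as the paper: compute $f_j(x\mid y,\bz)$ by Bayes' rule, observe it is affine in $x$ so the H\"older condition is trivial for $l\ge 1$ and reduces to a Lipschitz bound with constant $O(1/\sigma)$ for $\beta=1$, read off $p_{\min}=1/2$ and $p_{\max}=1/2+1/\sigma$, and expand $\mathbb{E}(\bZ\bZ^T\mid Y=y)$ under the tilted marginal $f_j(\bz\mid y)\propto 1-y\tilde c\,\bomega_j^T\bz/\sigma$. The only (harmless) deviations are that you additionally identify $\btheta^*(P_j)$ here (the paper defers that to the next lemma) and that you note the third-moment term $\mathbb{E}[(\bv^T\bZ)^2\bomega_j^T\bZ]$ vanishes exactly by symmetry, whereas the paper merely bounds it by $\tfrac{s}{3}\|\bv\|_2^2$ times the $o(1)$ factor; both yield (\ref{sparseeigen2}).
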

\begin{proof}
    By the construction of $P(X,Y,\bZ)$, denoting $\tilde{c} = \frac{c}{\sqrt{s}
    }\left(\frac{s \log(d/s) }{N}\right)^{1/2}$,
    we have 
    \begin{align}\label{yzjoint}
        f_j(y=1, \bz) =& \int_{-1}^1 f_j(x, y=1, \bz) dx\nonumber\\
        = & \int_{-1}^1 f_j(y=1 \mid x, \bz ) f(x) f(\bz) dx \nonumber\\
        =& \frac{1}{2^{d+1}}(1-\frac{\tilde{c}}{\sigma } \bomega_j^T \bz ),
    \end{align}
    and $f_j(y=-1, \bz) = \frac{1}{2^{d+1}}(1+\frac{\tilde{c}}{\sigma } \bomega_j^T \bz )$. Hence
    \begin{equation}\label{bayesrule}
         f_j(x \mid y, \bz) = \frac{f_j(y \mid x, \bz) (1/2)^{d+1} }{f(y, \bz)} = \frac{1+\frac{y}{\sigma }(x - \tilde{c} \bomega_j^T \bz )}{2(1-y \frac{\tilde{c}}{\sigma } \bomega_j^T \bz )} = \frac{1}{2} + \frac{y x}{2\sigma (1-y \frac{\tilde{c}}{\sigma  } \bomega_j^T \bz )}
    \end{equation}
    is $l = \lfloor\beta\rfloor$ times differentiable w.r.t. $x$ for any $y, \bz$.
Now we check the condition in Definition~\ref{densityclass}, i.e., $f_j(x \mid y, \boldsymbol{z}), j=0, \ldots,\left|\mathcal{H}^{\prime}\right|$ satisfies that
  \begin{equation}\label{lipshitz}
      \left|f_j^{(l)}\left(x_1 \mid y, \boldsymbol{z}\right)-f_j^{(l)}\left(x_2 \mid y, \boldsymbol{z}\right)\right| \leq L\left|x_1-x_2\right|^{\beta-l}
  \end{equation}
   for any $y \in\{-1,1\}, \boldsymbol{z} \in \mathbb{R}^d, x_1, x_2$,  and $L>0$ is some constant. When $\beta=1$, $l = 0$, by (\ref{bayesrule}) we have 
\[
  \left|f_j^{(0)}\left(x_1 \mid y, \boldsymbol{z}\right)-f_j^{(0)}\left(x_2 \mid y, \boldsymbol{z}\right)\right|  = \frac{ |x_1 - x_2| }{2\sigma (1-y \frac{\tilde{c}}{\sigma } \bomega_j^T \bz )} < \frac{|x_1 - x_2|}{\sigma },
   \]
   given $\sigma$ sufficiently large. Then note that $|x_1 - x_2| \leq 2$, therefore choosing $L = \frac{1}{\sigma }$ we ensure that (\ref{lipshitz}) is satisfied. For $\beta > 1$, $l \geq 1$, we have $\left|f_j^{(l)}\left(x_1 \mid y, \boldsymbol{z}\right)-f_j^{(l)}\left(x_2 \mid y, \boldsymbol{z}\right)\right| = 0$, hence (\ref{lipshitz}) holds trivially. This means $P_j\in\mathcal{P}(\beta, L)$. Clearly, $f_j(x|\bz)=f_j(x)=1/2=p_{\min}$, and $f_j(x \mid y, \boldsymbol{z})\leq 1/2+1/\sigma=p_{\max}$. Thus, $P_j \in \mathcal{P}(\beta, L,p_{\min},p_{\max})$ holds.

Now we check the condition (\ref{sparseeigen2}), i.e.,
\begin{equation} \label{holder3}
    \sup_{\|\boldsymbol{v}\|_0 \leq s^{\prime}} \frac{\boldsymbol{v}^T \mathbb{E}\left(\boldsymbol{Z} \boldsymbol{Z}^T \mid Y=y\right) \boldsymbol{v}}{\|\boldsymbol{v}\|_2^2} \leq M_1.
\end{equation}
By (\ref{yzjoint}) we have
\[
P_j(Y=1) = \int_{\bZ} \frac{1}{2^{d+1}}(1-\frac{\tilde{c}}{\sigma } \bomega_j^T \bz )d\bz = \frac{1}{2},
\]
hence $f_j(z \mid Y=y) = \frac{1}{2^{d}}(1-y\frac{\tilde{c}}{\sigma } \bomega_j^T \bz )$.
We have
 \begin{align*}
\bv^T \mathbb{E}\left(\bZ\bZ^T \mid Y=y\right) \bv =&  \int (\bv^T \bz)^2  \frac{1}{2^{d}}(1-y\frac{\tilde{c}}{\sigma } \bomega_j^T \bz ) d\bz  \\ 
=& \bv^T \EE(\bZ \bZ^T) \bv - \frac{y \tilde{c}}{\sigma} \int \frac{1}{2^d}(\bv^T \bz)^2 \bomega_j^T \bz d\bz.
 \end{align*}
 Note that $\mathbb{E}\left(\boldsymbol{Z} \boldsymbol{Z}^T \right) = 1/3 \mathbb{I}_d$ and $\|\bomega_j\|_0 = s$, hence
\begin{align*}
    \Big|\int \frac{1}{2^d}(\bv^T \bz)^2 \bomega_j^T \bz d\bz\Big| \leq s \EE((\bv^T \bZ)^2)
    \leq \frac{s}{3} \|\bv\|_2^2,
\end{align*}
and
\begin{align*}
    |\bv^T \mathbb{E}\left(\bZ\bZ^T \mid Y=y\right) \bv | \leq \frac{1}{3}\|\bv\|_2^2
    + \frac{c}{3\sigma}s\sqrt{\frac{\log(d/s)}{N}}\|\bv\|_2^2.
\end{align*}
Since $s \sqrt{\frac{\log(d/s)}{N}} =o(1)$, (\ref{holder3}) holds.
\end{proof}

\begin{lemma}\label{minimizer}
    Under the conditions of Theorem~\ref{lowerbound1} and the construction of $P_j = P_j\left(X,Y,\bZ\right)$ above, the unique minimizer $\boldsymbol{\theta}_j \in \mathbb{R}^d$ of the risk $R_{P_j}(\boldsymbol{\theta})$ is 
    \begin{equation*}
\boldsymbol{\theta}_j= \begin{cases}0 & \text { if } j=0, \\ 
\frac{c}{\sqrt{s}}\left(\frac{s \log(d/s) }{N}\right)^{1/2} \bomega_j & \text { otherwise,}\end{cases}
\end{equation*}
where $c$ is defined in (\ref{conditional_density}).
In addition, $\|\btheta_j\|_2 \leq C$ for some constant $C>0$, and $\rho_-\leq \lambda_{\min }\left(\nabla^2 R_j\left(\boldsymbol{\theta}_j\right)\right) \leq \lambda_{\max }\left(\nabla^2 R_j\left(\boldsymbol{\theta}_j\right)\right) \leq \rho_+$ for some constants $\rho_+ \geq \rho_- >0$.
\end{lemma}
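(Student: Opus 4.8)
The plan is to exploit that under $P_j$ the conditional probability $\eta_j(x,\bz):=f_j(y=1\mid x,\bz)$ is \emph{affine} in $x$ and $X\mid\bZ$ is uniform on $[-1,1]$, so that the population risk decouples into a pointwise (per-$\bz$) one-dimensional threshold problem whose Bayes solution is the level set $\{\eta_j=1/2\}$. Writing $\gamma_0=\gamma(1)=\gamma(-1)$ and using $L_{01}(u)=\tfrac12(1-\sgn(u))$, I would first record, by conditioning on $(X,\bZ)$ and using $f(x)=\tfrac12\ind\{x\in[-1,1]\}$,
\[
R_{P_j}(\btheta)=\gamma_0\,\EE_{\bZ}\big[g(\btheta^T\bZ;\bZ)\big],\qquad
g(t;\bz)=\tfrac12\!\int_{-1}^{t}\!\eta_j(x,\bz)\,dx+\tfrac12\!\int_{t}^{1}\!\big(1-\eta_j(x,\bz)\big)\,dx .
\]

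Next I would minimize $g(\cdot;\bz)$ for each fixed $\bz$. Differentiating gives $g'(t;\bz)=\tfrac12\big(2\eta_j(t,\bz)-1\big)=\tfrac1{2\sigma}\big(t-\tilde c\,\bomega_j^T\bz\big)$ and $g''(t;\bz)=\tfrac1{2\sigma}>0$ for $t\in(-1,1)$, while $g(\cdot;\bz)$ is constant outside $[-1,1]$; hence $g(\cdot;\bz)$ is strictly convex on the support and has the unique global minimizer $t^*(\bz)=\tilde c\,\bomega_j^T\bz$, \emph{provided} $t^*(\bz)\in(-1,1)$. The latter follows from $|\tilde c\,\bomega_j^T\bz|\le\tilde c\,s=c\,s\sqrt{\log(d/s)/N}=o(1)$ under the standing assumption $s(\log(d/s)/N)^{1/2}=o(1)$, so $\btheta_j^T\bZ$ stays in the interior a.s. Since $t^*(\bz)=\btheta_j^T\bz$ is \emph{linear} in $\bz$ with $\btheta_j=\tilde c\,\bomega_j$, the predictor $\btheta=\btheta_j$ realizes the pointwise minimum for every $\bz$ simultaneously, giving $R_{P_j}(\btheta)\ge R_{P_j}(\btheta_j)$ for all $\btheta$, with equality iff $\btheta^T\bZ=\btheta_j^T\bZ$ a.s. For $j=0$ we have $\bomega_0=\mathbf 0$, hence $\btheta_0=\mathbf 0$.

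Uniqueness reduces to a non-degeneracy argument: $\EE[(\btheta-\btheta_j)^T\bZ]^2=(\btheta-\btheta_j)^T\EE[\bZ\bZ^T](\btheta-\btheta_j)=\tfrac13\|\btheta-\btheta_j\|_2^2$ since $\EE[\bZ\bZ^T]=\tfrac13 I_d$, so equality in the risk forces $\btheta=\btheta_j$, identifying the unique minimizer. The norm bound is immediate: $\|\btheta_j\|_2=\tilde c\,\|\bomega_j\|_2=\tilde c\sqrt s=c\,\sqrt{s\log(d/s)/N}=o(1)\le C$. For the Hessian, since $\btheta_j^T\bZ$ is bounded by $o(1)$ away from $\pm1$ and $\|\bZ\|_\infty\le1$, for $\btheta$ in a small neighborhood of $\btheta_j$ the threshold $\btheta^T\bZ$ remains in $(-1,1)$, so differentiation under the expectation is justified and $\nabla^2R_{P_j}(\btheta_j)=\gamma_0\,\EE_{\bZ}\big[g''(\btheta_j^T\bZ;\bZ)\,\bZ\bZ^T\big]=\tfrac{\gamma_0}{2\sigma}\EE[\bZ\bZ^T]=\tfrac{\gamma_0}{6\sigma}I_d$. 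Its eigenvalues are all equal to the constant $\gamma_0/(6\sigma)$, so the claim holds with $\rho_-=\rho_+=\gamma_0/(6\sigma)$ (or any enclosing constants).

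The main obstacle is boundary bookkeeping rather than any deep idea: I must confirm that $t^*(\bz)$, and a full neighborhood of $\btheta_j^T\bZ$, stay strictly inside $(-1,1)$ — which is exactly where the scaling $s(\log(d/s)/N)^{1/2}=o(1)$ enters — so that (i) the interior quadratic minimizer is the global one despite $g$ being flat outside $[-1,1]$, (ii) equality in the risk genuinely forces $\btheta^T\bZ=\btheta_j^T\bZ$, and (iii) the Hessian is a clean constant multiple of $\EE[\bZ\bZ^T]$ with no contribution from the kinks of $g$ at $\pm1$.
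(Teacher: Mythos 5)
Your proposal is correct, and it reaches the same three conclusions as the paper (the linear Bayes boundary identifies $\btheta_j$, uniqueness, and $\nabla^2 R_j(\btheta_j)=\frac{\gamma_0}{6\sigma}I_d$), but the packaging is genuinely different. The paper works with the risk \emph{difference}: it writes $R_j(\btheta)-R_j(\btheta_j)$ as $2C\int_{\mathcal G}|\EE_j[Y\mid x,\bz]|\,dP_{j;X,\bZ}$ over the disagreement region $\mathcal G=\{\sgn(x-\btheta^T\bz)\neq\sgn(x-\btheta_j^T\bz)\}$, using the key identity $\sgn(\EE_j[Y\mid x,\bz])=\sgn(x-\btheta_j^T\bz)$, and then proves uniqueness by exhibiting an explicit open subset $\bar{\mathcal G}$ of positive measure on which the integrand is positive. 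You instead condition on $\bz$ and minimize the one-dimensional threshold risk $g(\cdot;\bz)$, which is strictly convex on $[-1,1]$ with interior minimizer $t^*(\bz)=\btheta_j^T\bz$ linear in $\bz$; this is the same Bayes-boundary fact in disguise, but it buys you a cleaner uniqueness step (equality forces $\btheta^T\bZ=\btheta_j^T\bZ$ a.s., hence $\tfrac13\|\btheta-\btheta_j\|_2^2=\EE[((\btheta-\btheta_j)^T\bZ)^2]=0$, avoiding the paper's measure-of-an-open-set construction) and makes the Hessian fall out immediately as $\gamma_0\,\EE[g''\,\bZ\bZ^T]=\tfrac{\gamma_0}{2\sigma}\cdot\tfrac13 I_d$, matching the paper's $\tfrac{C}{6\sigma}\mathbb{I}_d$. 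Your boundary bookkeeping is also handled correctly: $|\btheta_j^T\bz|\le \tilde c\,s=c\,s\sqrt{\log(d/s)/N}=o(1)$ is exactly where the standing assumption enters, ensuring the interior minimizer is global despite $g$ being flat outside $[-1,1]$ and that the kinks at $\pm1$ do not contribute to the Hessian. No gaps.
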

\begin{proof}
Recall that $R_j(\boldsymbol{\theta})=\mathbb{E}_j\left[\gamma(Y)\left(1-\operatorname{sign}\left(Y\left(X-\boldsymbol{\theta}^T \boldsymbol{Z}\right)\right)\right)\right]$, and $\gamma(1) = \gamma(-1)=C$ for some constant $C>0$, 
    the difference between $R_j(\boldsymbol{\theta})$ and $R_j(\boldsymbol{\theta}_j)$ can be written as
    \begin{align*}
        R_j(\boldsymbol{\theta})-R_j\left(\boldsymbol{\theta}_j\right)=&\mathbb{E}_j\left[\gamma(Y) Y\left(\operatorname{sign}\left(X-\boldsymbol{\theta}_j^T \boldsymbol{Z}\right)-\operatorname{sign}\left(X-\boldsymbol{\theta}^T \boldsymbol{Z}\right)\right)\right] \\
        =& 2 \int_{\mathcal{G}} \operatorname{sign}\left(x-\boldsymbol{\theta}_j^T \boldsymbol{z}\right) \mathbb{E}_j[\gamma(Y)Y \mid x, \boldsymbol{z}] d P_{j ; X, \boldsymbol{Z}}\\
        =& 2C \int_{\mathcal{G}} \operatorname{sign}\left(x-\boldsymbol{\theta}_j^T \boldsymbol{z}\right) \mathbb{E}_j[Y \mid x, \boldsymbol{z}] d P_{j ; X, \boldsymbol{Z}},
    \end{align*}
where 
\[
\mathcal{G}=\left\{(x, \boldsymbol{z}) \mid \operatorname{sign}\left(x-\boldsymbol{\theta}^T \boldsymbol{z}\right) \neq \operatorname{sign}\left(x-\boldsymbol{\theta}_j^T \boldsymbol{z}\right)\right\},
\]
$P_{j ; X, \boldsymbol{Z}}$ is the joint distribution of $(X,\bZ)$ under $P_j$, and
\begin{align}\label{conditionex}
   \mathbb{E}_j[Y \mid x, \boldsymbol{z}]=& f_j(Y=1 \mid x, \boldsymbol{z})-f_j(Y=-1 \mid x, \boldsymbol{z}) \nonumber\\
    =& \frac{1}{\sigma }\left( x - c\left(\frac{s \log(d/s) }{N}\right)^{1/2} \frac{\bomega_j^T\bz}{\sqrt{s}}\right) \nonumber\\
    =& \frac{1}{\sigma }\left(x - \btheta_j^T\bz\right) ,
\end{align}
\[
\operatorname{sign}\left(\mathbb{E}_j[Y \mid x, \boldsymbol{z}]\right) = \operatorname{sign} \left(x - \btheta_j^T\bz \right).
\]
Therefore, 
\begin{equation}\label{signj0}
    R_j(\boldsymbol{\theta})-R_j\left(\boldsymbol{\theta}_j\right)=2C \int_{\mathcal{G}}\left|\mathbb{E}_j[Y \mid x, \boldsymbol{z}]\right| d P_{j ; X, \boldsymbol{Z}} \geq 0,
\end{equation}
hence $\btheta_j$ is a minimizer of $R_j(\btheta)$. In addition, $\|\btheta_j\|_2 = c\sqrt{\frac{\log(d/s)}{N}} \|\bomega_j\|_2 = c\sqrt{\frac{s\log(d/s)}{N}} = O(1)$.

Now we check the uniqueness. By (\ref{conditionex}) we have 
\begin{equation}\label{unique}
     R_j(\boldsymbol{\theta})-R_j\left(\boldsymbol{\theta}_j\right)=\frac{2C}{\sigma } \int_{\mathcal{G}}\left|x - \btheta_j^T\bz\right| d P_{j ; X, \boldsymbol{Z}}.
\end{equation}
For any $\boldsymbol{\theta} \neq \boldsymbol{\theta}_j$, consider the set 
\[
\mathcal{G}_{\boldsymbol{z}}=\left\{\boldsymbol{z}:\left(\boldsymbol{\theta}-\boldsymbol{\theta}_j\right)^T \boldsymbol{z} \neq 0,\left|\boldsymbol{\theta}^T \boldsymbol{z}\right| \leq 1,\left|\boldsymbol{\theta}_j^T \boldsymbol{z}\right| \leq 1\right\}.
\]
Note that there exists an open neighborhood in $\mathcal{G}_{\boldsymbol{z}}$, hence $\mathcal{G}_{\boldsymbol{z}}$ has nonzero measure. Then
define 
$\bar{\mathcal{G}}=\left\{(x, \boldsymbol{z}): \boldsymbol{\theta}^T \boldsymbol{z}<x<\boldsymbol{\theta}_j^T \boldsymbol{z}\right.$ or $\left.\boldsymbol{\theta}_j^T \boldsymbol{z}<x<\boldsymbol{\theta}^T \boldsymbol{z}, \boldsymbol{z} \in \mathcal{G}_{\boldsymbol{z}}\right\}$, we have $\bar{\mathcal{G}} \subset \mathcal{G}_{\boldsymbol{z}}$ and $\bar{\mathcal{G}}$ has nonzero measure as well.  Therefore, (\ref{unique}) implies that $ R_j(\boldsymbol{\theta})-R_j\left(\boldsymbol{\theta}_j\right) >0$, which completes the proof of the uniqueness.
Note that 
\begin{align*}
    \nabla R_j(\boldsymbol{\theta})=& \sum_{y= \pm 1} \gamma(y) \int_{\boldsymbol{Z}} \boldsymbol{z} y f\left(\boldsymbol{\theta}^T \boldsymbol{z} \mid \boldsymbol{z}, y\right) f_j(\boldsymbol{z}, y) d \boldsymbol{z} \\
    =& \sum_{y= \pm 1} \gamma(y) \int_{\boldsymbol{Z}} \boldsymbol{z} y f_j\left(y \mid \btheta^T\bz, \bz \right) f_{X,\bZ}(\btheta^T\bz, \bz) d \boldsymbol{z} \\
    =& \frac{1}{2} \sum_{y= \pm 1} \gamma(y) \int_{\bZ} \bz y \left( \frac{1}{2} + \frac{y}{2\sigma }\left(\btheta^T\bz - \btheta_j^T \bz\right) \right) f_{\bZ}(\bz)d\bz,
\end{align*}
hence
\begin{align*}
    \nabla^2 R_j(\boldsymbol{\theta}) = 
    \frac{1}{4\sigma } \sum_{y= \pm 1} \gamma(y)
    \int_{\bZ}  \bz \bz^T f_{\bZ}(\bz) d\bz =  \frac{C}{2\sigma }\EE(\bZ\bZ^T) = \frac{C}{6\sigma}\mathbb{I}_d,
\end{align*}
and $\lambda_{\min }\left(\nabla^2 R_j\left(\boldsymbol{\theta}_j\right)\right)= \lambda_{\max }\left(\nabla^2 R_j\left(\boldsymbol{\theta}_j\right)\right)  = \frac{C}{6\sigma }>0$. This completes the proof.
\end{proof}

For the second step of the proof, we check both the two conditions in the following.
Recall that  $f^j$ is the probability measure of the random variables $\{(X_i,\bZ_i, Y_i, R_i)^{\ind\{R_i = 1\}}, (X_i,\bZ_i, R_i)^{\ind\{R_i = 0\}} \}_{i=1}^n$ under hypothesis $j$, and $\bar{H}_{i-1} = 
\{(X_j,\bZ_j, Y_j, R_j)^{\ind\{R_j = 1\}}, (X_j,\bZ_j, R_j)^{\ind\{R_j = 0\}} \}_{j=1}^{i-1}$. We have
\begin{align*}
    f^j =& \prod_{i=1}^n \left\{f(R_i=1 \mid X_i, \bZ_i, Y_i, \bar{H}_{i-1}) f_j( X_i, \bZ_i, Y_i \mid \bar{H}_{i-1}) \right\}^{\ind\{R_i = 1\}}\\
    &\cdot 
    \left\{f(R_i=0 \mid X_i, \bZ_i, \bar{H}_{i-1}) f_j( X_i, \bZ_i \mid \bar{H}_{i-1}) \right\}^{\ind\{R_i = 0\}}\\
    =& \prod_{i=1}^n 
    \left\{f(R_i=1 \mid X_i, \bZ_i, \bar{H}_{i-1}) f_j( X_i, \bZ_i, Y_i ) \right\}^{\ind\{R_i = 1\}} \cdot
     \left\{f(R_i=0 \mid X_i, \bZ_i, \bar{H}_{i-1}) f_j( X_i, \bZ_i ) \right\}^{\ind\{R_i = 0\}}\\
     =& \prod_{i=1}^n 
    \left\{f(R_i=1 \mid X_i, \bZ_i, \bar{H}_{i-1})  \right\}^{\ind\{R_i = 1\}}
     \left\{f(R_i=0 \mid X_i, \bZ_i, \bar{H}_{i-1})  \right\}^{\ind\{R_i = 0\}}  \\
     & \ \ \ \ \cdot f_j( Y_i \mid X_i, \bZ_i )^{\ind\{R_i = 1\}}  f_j( X_i, \bZ_i ),
\end{align*}
where in the second last equation we used the fact that 
 $R_i \perp Y_i \mid X_i, \bZ_i, \bar{H}_{i-1}$ and $(X_i,\bZ_i, Y_i) \perp \bar{H}_{i-1}$ as specified in the class of sampling method in (\ref{sampling1}).
Since the sampling method $f(R_i \mid X_i, \bZ_i, \bar{H}_{i-1}) $ and the joint distribution of $(X,\bZ)$ keep invariant under different hypotheses, we have 
\begin{align}\label{klnew}
    &\mathrm{KL}\left(f^{j} \| f^{0}\right) \nonumber\\
    =& \EE \left[\log
    \frac{\prod_{i=1}^n 
    f_j( Y_i \mid X_i, \bZ_i )^{\ind\{R_i = 1\}}  f_j( X_i, \bZ_i )
    }{\prod_{i =1}^n
     f_0( Y_i \mid X_i, \bZ_i )^{\ind\{R_i = 1\}}  f_0( X_i, \bZ_i )
    }
    \right] \nonumber \\
    =& \EE \left[\log
    \frac{\prod_{i=1}^n 
    f_j( Y_i \mid X_i, \bZ_i )^{\ind\{R_i = 1\}}
    }{
    \prod_{i=1}^n 
    f_0( Y_i \mid X_i, \bZ_i )^{\ind\{R_i = 1\}}
    }
    \right] \nonumber\\
      =& \EE \left[\log \prod_{i=1}^n \left( \frac{f_{j}(Y_i \mid X_i,\bZ_i)}{f_{0}(Y_i \mid X_i,\bZ_i)} \right)^{\ind\{R_i=1\}}
    \right] \nonumber\\
    =&\sum_{i=1}^n \EE \left[\ind\{R_i=1\} \cdot \log \frac{f_{j}(Y_i \mid X_i, \bZ_i)}{f_{0}(Y_i \mid X_i, \bZ_i)}
    \right] \nonumber\\
    =& \sum_{i=1}^n \EE \left[
    \EE \left[\ind\{R_i=1\} \cdot \log \frac{f_{j}(Y_i \mid X_i, \bZ_i)}{f_{0}(Y_i \mid X_i, \bZ_i)} \mid X_i, \bZ_i, \bar{H}_{i-1}
    \right]
    \right] \nonumber\\
    =& \sum_{i=1}^n \EE \left[
    \EE\left[\log \frac{f_{j}(Y_i \mid X_i, \bZ_i)}{f_{0}(Y_i \mid X_i, \bZ_i)}  \mid X_i, \bZ_i\right] \cdot \PP\left(R_i = 1 \mid X_i, \bZ_i, \bar{H}_{i-1}\right)
    \right],
\end{align}
where in the last equation we used the fact that $R_i \perp Y_i \mid X_i, \bZ_i, \bar{H}_{i-1}$ and $(X_i,\bZ_i, Y_i) \perp \bar{H}_{i-1}$, and the expectation is with respect to $f^j$.
Note that $\EE\left[\log \frac{f_{j}(Y_i \mid X_i, \bZ_i)}{f_{0}(Y_i \mid X_i, \bZ_i)}  \mid X_i, \bZ_i\right]$ is the K-L divergence between two Bernoulli distributions $f_{j}(Y_i \mid X_i, \bZ_i)$ and $f_{0}(Y_i \mid X_i, \bZ_i)$. Since $\delta_j := c\left(\frac{s \log(d/s) }{N}\right)^{1/2} \frac{\bomega_j^T\bz}{\sqrt{s}}=o(1)$ uniformly over $\bz$, we have
\begin{align*}
    &\EE\left[\log \frac{f_{j}(Y_i \mid X_i, \bZ_i)}{f_{0}(Y_i \mid X_i, \bZ_i)}  \mid X_i = x, \bZ_i = \bz\right] \\
    =& 
    f_j( y=1 \mid x, \boldsymbol{z}) \log \frac{f_j( y=1 \mid x, \boldsymbol{z})}{f_0( y=1 \mid x, \boldsymbol{z})} + 
    f_j( y=-1 \mid x, \boldsymbol{z}) \log \frac{f_j( y=-1 \mid x, \boldsymbol{z})}{f_0( y=-1 \mid x, \boldsymbol{z})}\\
    =& \left(\frac{1}{2} + \frac{1}{2\sigma }\left(x - \delta_j \right)\right) \log\left(1 - \frac{\delta_j}{\sigma + x}\right) + \left(\frac{1}{2} - \frac{1}{2\sigma }\left(x - \delta_j \right)\right) \log\left(1 + \frac{\delta_j}{\sigma - x}\right)\\
    =& \frac{\delta_j^2}{2(\sigma+x)(\sigma-x)}+ o(\delta_j^2) \leq \delta_j^2.
\end{align*}
Plugging this back to (\ref{klnew}), we have
\begin{align}\label{klnew1}
    \mathrm{KL}\left(f^{j} \| f^{0}\right) \leq& 
     \sum_{i=1}^n \EE \left[
     \delta_j^2 \PP\left(R_i = 1 \mid X_i, \bZ_i, \bar{H}_{i-1}\right)
    \right] \nonumber\\
    =& c^2 \frac{s \log(d/s) }{N} \frac{\bomega_j^T}{s} \sum_{i=1}^n\EE\left[Q_i\bZ_i \bZ_i^T  \right]
    \bomega_j.
\end{align}
Since $Q \in \mathcal{Q}_N(\mathcal{P}(\beta,s))$, we have
 \[
 \bomega_j^T \sum_{i=1}^n\EE\left[\bZ_i \bZ_i^T \cdot \PP\left(R_i = 1 \mid X_i, \bZ_i, \bar{H}_{i-1}\right) \right] \bomega_j
 \leq C \| \bomega_j\|_2^2 N.
 \]
Giving this back to (\ref{klnew1}), and choosing $c$ in (\ref{conditional_density}) sufficiently small,  we can ensure 
$C_2$ small enough such that
\begin{align*}
    \mathrm{KL}\left(f^{j} \| f^{0}\right) \leq C_2 \frac{s \log(d/s) }{N}  
   \frac{ \| \bomega_j\|_2^2 N}{s}
    = C_2 s \log(d/s)
    \leq \gamma c^\prime s \log(d/s)
    \leq \gamma \log |\mathcal{H}^\prime|
\end{align*}
for some $\gamma \in(0,1 / 8)$, hence condition 1 is satisfied.

For condition 2, by Lemma~\ref{minimizer}, we have when $j \neq 0$, 
\[
\left\|\boldsymbol{\theta}_0-\boldsymbol{\theta}_j\right\|_2=c \left(\frac{s \log(d/s)}{N}\right)^{1/2}\left\|\boldsymbol{\omega}_j\right\|_2 / \sqrt{s}=c \left(\frac{s \log(d/s)}{N}\right)^{1/2},
\]
\[
\left\|\boldsymbol{\theta}_0-\boldsymbol{\theta}_j\right\|_1=c \left(\frac{s \log(d/s)}{N}\right)^{1/2}\left\|\boldsymbol{\omega}_j\right\|_1 / \sqrt{s}=c \sqrt{s} \left(\frac{s \log(d/s)}{N}\right)^{1/2}.
\]
For all $j,k \neq 0$, we have
\[
\left\|\boldsymbol{\theta}_j-\boldsymbol{\theta}_k\right\|_2=c \left(\frac{s \log(d/s)}{N}\right)^{1/2}\left\|\boldsymbol{\omega}_j-\boldsymbol{\omega}_k\right\|_2 / \sqrt{s} \geq \frac{c}{4} \left(\frac{s \log(d/s)}{N}\right)^{1/2},
\]
\[
\left\|\boldsymbol{\theta}_j-\boldsymbol{\theta}_k\right\|_1=c \left(\frac{s \log(d/s)}{N}\right)^{1/2}\left\|\boldsymbol{\omega}_j-\boldsymbol{\omega}_k\right\|_1 / \sqrt{s} \geq \frac{c \sqrt{s} }{16}\left(\frac{s \log(d/s)}{N}\right)^{1/2} .
\]
Therefore condition 2 holds. Then we apply Theorem 2.7 in \cite{10.5555/1522486} and finish the proof for the case $\beta > 1$.

Next we consider $0 < \beta \leq 1$. Define $\phi(x) = \operatorname{sign}(x) |x|^\beta$ for $x \in \R$, which is an odd function. Let $\tilde{\bomega}_j = s^{\frac{1}{2}(\frac{1}{\beta}-1)} \bw_j$, where $\bw_j$ is the element in $\cH'$. Then, $\norm{\tilde{\bomega}_j}_0 = s, \norm{\tilde{\bomega}_j}_1 = s^{\frac{1}{2}(\frac{1}{\beta}+1)}$ and $\norm{\tilde{\bomega}_j}_2 = s^{\frac{1}{2\beta}}$. We construct $P_j(X,Y,\bZ)$ similarly to the case when $\beta>1$ using $\tilde{\bomega}_j$ and function $\phi$. That is, we choose weight functions $\gamma(1)=\gamma(-1)$. For $j=0,\dots,|\mathcal{H}'|$, assume $X, \bZ_1, \dots, \bZ_d \sim i.i.d. \ U[-1,1]$. For each $j=0, \dots, |\mathcal{H}'|$, let 
\begin{align}\label{conditional_density: beta<1}
    f_j( y=1 \mid x, \boldsymbol{z})= \frac{1}{2} + \frac{1}{2\sigma }\phi(x - \btheta_j^T\bz), \\
     f_j( y=-1 \mid x, \boldsymbol{z})= \frac{1}{2} - \frac{1}{2\sigma }\phi(x - \btheta_j^T\bz),
\end{align}
where $\btheta_j = c \left(\frac{\log(d/s) }{N}\right)^{1/(2\beta)} \tilde \bomega_j$ for $j \neq 0$, $\btheta_0 = \boldsymbol{0}$, $c$ is some sufficiently small constant and $\sigma$ is some sufficiently large constant. Under the assumption that $s(\frac{\log(d/s) }{N})^{1/2}=o(1)$, we can guarantee that $P_0, \ldots, P_{\mathcal{H}^{\prime}}$ are well defined probability measures.
Similar to the argument for the case $\beta>1$, we prove the minimax lower bound by verifying the following three conditions:
\begin{itemize}
    \item For any sampling method $Q \in \mathcal{Q}_N(\mathcal{P}(\beta,s))$ and $j=1, \dots, |\mathcal{H}'|$, $P_j(X, Y, \boldsymbol{Z}) \subset \mathcal{P}(\beta,s)$.
    \item $ \mathrm{KL}\left(f^{j} \| f^{0}\right) \leq \gamma \log |\mathcal{H}|$ for some $\gamma \in(0,1 / 8)$, where $ \mathrm{KL}\left(f^{j} \| f^{0}\right)$ is the K-L divergence between probability measures $f^j$ and $f^0$, and $f^j$ is the probability measure of the random variables $\{O_i\}_{i=1}^n$ under hypothesis $j$.
    \item For all $j \neq k$ and $q=1,2,\left\|\boldsymbol{\theta}^*_j-\boldsymbol{\theta}^*_k\right\|_q \geq 2 t$, where $t \asymp s^{\frac{1}{q}-\frac{1}{2}}\left(\frac{s \log (d / s)}{N}\right)^{1/(2\beta)} .$
\end{itemize}
By the construction of $P(X,Y,\bZ)$, we have 
    \begin{align}\label{yzjoint: beta<1}
        f_j(y, \bz) =& \int_{-1}^1 f_j(x, y, \bz) dx\nonumber\\
        = & \int_{-1}^1 f_j(y \mid x, \bz ) f(x) f(\bz) dx \nonumber\\
        = & \int_{-1}^1 \left( \frac{1}{2} + \frac{y}{2\sigma }\phi(x - \btheta_j^T\bz) \right) \frac{1}{2^{d+1}}\, dx \\
        =& \frac{1}{2^{d+1}} \left(1 + \frac{y}{2\sigma} G(\btheta_j^T\bz)\right),
    \end{align}
    where $G(\delta) = \int_{-1}^1 \phi(x - \delta) \, dx$. Since $G(-\delta) = \int_{-1}^1 \phi(x+\delta) \,dx = \int_1^{-1} \phi(-u+\delta) \, d(-u) = -\int_{-1}^1 \phi(u - \delta) \, du = -G(\delta)$, $G(\cdot)$ is also an odd function. For marginal distribution, we get
    \begin{equation}\label{bayesrule: beta<1}
         f_j(x \mid y, \bz) = \frac{f_j(y \mid x, \bz) f(x)f(\bz)}{f(y, \bz)} = \frac{\sigma+y\phi(x-\btheta_j^T\bz)}{2\sigma+ yG(\btheta_j^T \bz)}
    \end{equation}
    belongs to the H\"older class $\mathcal{P}(\beta, L)$. This is because \[
    \left|f_j^{(0)}\left(x_1 \mid y, \boldsymbol{z}\right)-f_j^{(0)}\left(x_2 \mid y, \boldsymbol{z}\right)\right|  = \frac{ |\phi(x_1 - \btheta_j^T\bz) - \phi(x_2-\btheta_j^T\bz)| }{2\sigma+ yG(\btheta_j^T \bz)} \leq C\frac{|x_1 - x_2|^\beta}{\sigma },
    \]
    given $\sigma$ sufficiently large and $G(\btheta_j^T\bz) = o(1)$ which will be shown later, and the inequality is by Lemma \ref{lem: phi's holder}. Clearly, $p_{\min} = 1/2 - 1/\sigma \leq f_j(x\mid y, \bz) \leq p_{\max} = 1/2+1/\sigma$. Thus, $P_j \in \mathcal{P}(\beta, L,p_{\min},p_{\max})$ holds.
    Now we check the condition (\ref{sparseeigen2}), i.e.,
    \begin{equation} \label{holder3: beta<1}
    \sup_{\|\boldsymbol{v}\|_0 \leq s^{\prime}} \frac{\boldsymbol{v}^T \mathbb{E}\left(\boldsymbol{Z} \boldsymbol{Z}^T \mid Y=y\right) \boldsymbol{v}}{\|\boldsymbol{v}\|_2^2} \leq M_1.
    \end{equation}
    By (\ref{yzjoint: beta<1}) we have
    \[
    P_j(Y=1) = \int_{\bZ} \frac{1}{2^{d+1}} \left(1 + \frac{1}{2\sigma} G(\btheta_j^T\bz)\right)d\bz = \frac{1}{2} = P_j(Y=-1),
    \]
    hence $f_j(\bz \mid Y=y) = \frac{1}{2^{d}}(1 + \frac{y}{2\sigma} G(\btheta_j^T\bz))$.
    We have
    \begin{align*}
    \bv^T \mathbb{E}\left(\bZ\bZ^T \mid Y=y\right) \bv
    =&  \int (\bv^T \bz)^2  \frac{1}{2^{d}}(1 + \frac{y}{2\sigma} G(\btheta_j^T\bz)) d\bz  \\ 
    =& \bv^T \EE(\bZ \bZ^T) \bv + \frac{y}{2\sigma} \int \frac{1}{2^d}(\bv^T \bz)^2 G(\btheta_j^T\bz)) d\bz.
    \end{align*}
    Note that $\mathbb{E}\left(\boldsymbol{Z} \boldsymbol{Z}^T \right) = 1/3 \mathbb{I}_d$, $|\btheta_j^T\bz| \leq \norm{\btheta_j}_1 = c \left(\frac{\log(d/s)}{N} \right)^{\frac{1}{2\beta}} s^{\frac{1}{2}(\frac{1}{\beta} + 1)} \lesssim \left(\frac{\log(d/s)}{N} \right)^{\frac{1}{2\beta}} s^{\frac{1}{\beta}} = o(1)$ and
    \begin{align*}
        |G(\delta)| &\leq \left|\int_{-1}^1 \phi(x-\delta) \,dx \right|\\
        &= \left|\int_{-1-\delta}^{1-\delta} \phi(u) \, du \right|\\
        &= \left|\int_{-1-\delta}^{-1} \phi(u)\,du + \int_1^{1-\delta} \phi(u) \, du \right|\\
        &\leq \int_{\min\{-1-\delta,-1\}}^{\max\{-1-\delta,-1\}} |\phi(u)|\,du + \int_{\min\{1,1-\delta\}}^{\max\{1,1-\delta\}} |\phi(u)|\,du \leq C |\delta|
    \end{align*}
    for some constant $C$ if $\delta = o(1)$. Hence, we get
    \begin{align*}
    \Big|\int \frac{1}{2^d}(\bv^T \bz)^2 G(\btheta_j^T\bz)) d\bz d\bz\Big| \leq C \|\btheta_j\|_1 \EE((\bv^T \bZ)^2)
    \leq C \left(\frac{\log(d/s)}{N} \right)^{\frac{1}{2\beta}} s^{\frac{1}{\beta}} \frac{\|\bv\|_2^2}{3},
    \end{align*}
    and
    \begin{align*}
        |\bv^T \mathbb{E}\left(\bZ\bZ^T \mid Y=y\right) \bv | \leq \frac{1}{3}\|\bv\|_2^2
    + \frac{1}{6\sigma} s^{\frac{1}{\beta}} \left(\frac{\log(d/s)}{N} \right)^{\frac{1}{2\beta}} \|\bv\|_2^2.
    \end{align*}
    Since $s^{\frac{1}{\beta}} \left(\frac{\log(d/s)}{N} \right)^{\frac{1}{2\beta}} =o(1)$, (\ref{holder3: beta<1}) holds.
    Similar to Lemma \ref{minimizer}, we then show the following results: (1)$\boldsymbol{\theta}_j$ is the unique minimizer of $R_{P_j}(\btheta)$,
    (2) $\|\btheta_j\|_2 \leq C$ for some constant $C>0$, and (3) $\rho_- \norm{\btheta - \btheta_j}_2^{1+\beta} \leq R(\btheta) - R(\btheta_j) - \nabla R(\btheta)^T (\btheta - \btheta_j) \leq \rho_+ \norm{\btheta - \btheta_j}_2^{1+\beta}$ for some positive constants $\rho_-$ and $\rho_+$. 
    
    In fact, recall that $R_j(\boldsymbol{\theta})=\mathbb{E}_j\left[\gamma(Y)\left(1-\operatorname{sign}\left(Y\left(X-\boldsymbol{\theta}^T \boldsymbol{Z}\right)\right)\right)\right]$, and $\gamma(1) = \gamma(-1)=C$ for some constant $C>0$, 
    the difference between $R_j(\boldsymbol{\theta})$ and $R_j(\boldsymbol{\theta}_j)$ can be written as
    \begin{align*}
        R_j(\boldsymbol{\theta})-R_j\left(\boldsymbol{\theta}_j\right)
        =& 2C \int_{\mathcal{G}} \operatorname{sign}\left(x-\boldsymbol{\theta}_j^T \boldsymbol{z}\right) \mathbb{E}_j[Y \mid x, \boldsymbol{z}] d P_{j ; X, \boldsymbol{Z}},
    \end{align*}
where 
\[
\mathcal{G}=\left\{(x, \boldsymbol{z}) \mid \operatorname{sign}\left(x-\boldsymbol{\theta}^T \boldsymbol{z}\right) \neq \operatorname{sign}\left(x-\boldsymbol{\theta}_j^T \boldsymbol{z}\right)\right\},
\]
$P_{j ; X, \boldsymbol{Z}}$ is the joint distribution of $(X,\bZ)$ under $P_j$, and
\begin{align}\label{conditionex: beta<1}
   \mathbb{E}_j[Y \mid x, \boldsymbol{z}]=& f_j(Y=1 \mid x, \boldsymbol{z})-f_j(Y=-1 \mid x, \boldsymbol{z}) \nonumber\\
    =& \frac{1}{\sigma }\phi(x-\btheta_j^T\bz),
\end{align}
\[
\operatorname{sign}\left(\mathbb{E}_j[Y \mid x, \boldsymbol{z}]\right) = \operatorname{sign} \left(\phi(x - \btheta_j^T\bz) \right) = \operatorname{sign} \left(x - \btheta_j^T\bz \right).
\]
Therefore, 
\begin{equation}\label{signj0: beta<1}
    R_j(\boldsymbol{\theta})-R_j\left(\boldsymbol{\theta}_j\right)=2C \int_{\mathcal{G}}\left|\mathbb{E}_j[Y \mid x, \boldsymbol{z}]\right| d P_{j ; X, \boldsymbol{Z}} \geq 0,
\end{equation}
hence $\btheta_j$ is a minimizer of $R_j(\btheta)$. In addition, $\|\btheta_j\|_2 = c(\frac{\log(d/s)}{N})^{\frac{1}{2\beta}} \|\tilde\bomega_j\|_2 = c(\frac{s\log(d/s)}{N})^{\frac{1}{2\beta}} = O(1)$.

Now we check the uniqueness. By (\ref{conditionex: beta<1}) we have 
\begin{equation}\label{unique: beta<1}
     R_j(\boldsymbol{\theta})-R_j\left(\boldsymbol{\theta}_j\right)=\frac{2C}{\sigma } \int_{\mathcal{G}}\left|\phi(x - \btheta_j^T\bz)\right| d P_{j ; X, \boldsymbol{Z}}.
\end{equation}
For any $\boldsymbol{\theta} \neq \boldsymbol{\theta}_j$, consider the set 
\[
\mathcal{G}_{\boldsymbol{z}}=\left\{\boldsymbol{z}:\left(\boldsymbol{\theta}-\boldsymbol{\theta}_j\right)^T \boldsymbol{z} \neq 0,\left|\boldsymbol{\theta}^T \boldsymbol{z}\right| \leq 1,\left|\boldsymbol{\theta}_j^T \boldsymbol{z}\right| \leq 1\right\}.
\]
Note that there exists an open neighborhood in $\mathcal{G}_{\boldsymbol{z}}$, hence $\mathcal{G}_{\boldsymbol{z}}$ has nonzero measure. Then
define 
$\bar{\mathcal{G}}=\left\{(x, \boldsymbol{z}): \boldsymbol{\theta}^T \boldsymbol{z}<x<\boldsymbol{\theta}_j^T \boldsymbol{z}\right.$ or $\left.\boldsymbol{\theta}_j^T \boldsymbol{z}<x<\boldsymbol{\theta}^T \boldsymbol{z}, \boldsymbol{z} \in \mathcal{G}_{\boldsymbol{z}}\right\}$, we have $\bar{\mathcal{G}} \subset \mathcal{G}_{\boldsymbol{z}}$ and $\bar{\mathcal{G}}$ has nonzero measure as well.  Therefore, (\ref{unique: beta<1}) implies that $ R_j(\boldsymbol{\theta})-R_j\left(\boldsymbol{\theta}_j\right) >0$ for $\btheta \neq \btheta_j$, which completes the proof of the uniqueness.

Finally, Lemma \ref{lem: beta-smooth class} and Lemma \ref{lem: Taylor remainder lower con for uni} in below implies that $\rho_- \norm{\btheta - \btheta^*}_2^{1+\beta} \leq R(\btheta) - R(\btheta^*) - \nabla R(\btheta^*)^T (\btheta - \btheta^*) \leq \rho_+ \norm{\btheta - \btheta^*}_2^{1+\beta}$ for some positive constants $\rho_-$ and $\rho_+$.

    Next, we verify the KL condition using the similar argument in the proof for the case $\beta>1$. For $f^j$, the probability measure of the random variables $\{(X_i,\bZ_i, Y_i, R_i)^{\ind\{R_i = 1\}}, (X_i,\bZ_i, R_i)^{\ind\{R_i = 0\}} \}_{i=1}^n$ under hypothesis $j$, and $\bar{H}_{i-1} = \{(X_j,\bZ_j, Y_j, R_j)^{\ind\{R_j = 1\}}, (X_j,\bZ_j, R_j)^{\ind\{R_j = 0\}} \}_{j=1}^{i-1}$, we can still get
\begin{align*}
    f^j=& \prod_{i=1}^n 
    \left\{f(R_i=1 \mid X_i, \bZ_i, \bar{H}_{i-1})  \right\}^{\ind\{R_i = 1\}}
     \left\{f(R_i=0 \mid X_i, \bZ_i, \bar{H}_{i-1})  \right\}^{\ind\{R_i = 0\}}  \\
     & \ \ \ \ \cdot f_j( Y_i \mid X_i, \bZ_i )^{\ind\{R_i = 1\}}  f_j( X_i, \bZ_i ),
\end{align*}
and 
\begin{equation}\label{klnew: beta<1}
    \mathrm{KL}\left(f^{j} \| f^{0}\right)= \sum_{i=1}^n \EE \left[\EE\left[\log \frac{f_{j}(Y_i \mid X_i, \bZ_i)}{f_{0}(Y_i \mid X_i, \bZ_i)}  \mid X_i, \bZ_i\right] \cdot \PP\left(R_i = 1 \mid X_i, \bZ_i, \bar{H}_{i-1}\right)
    \right].
\end{equation}
Note that $\EE\left[\log \frac{f_{j}(Y_i \mid X_i, \bZ_i)}{f_{0}(Y_i \mid X_i, \bZ_i)}  \mid X_i, \bZ_i\right]$ is the K-L divergence between two Bernoulli distributions $f_{j}(Y_i \mid X_i, \bZ_i)$ and $f_{0}(Y_i \mid X_i, \bZ_i)$. Denote $p_j = f_j(y=1 \mid x, \bz), \Delta_j = p_j - p_0 = \frac{1}{2\sigma}\left(\phi(x-\btheta_j^T\bz) - \phi(x)\right)$, then we have
\begin{equation}\label{eq: KL beta<1}
\begin{aligned}
    &\EE\left[\log \frac{f_{j}(Y_i \mid X_i, \bZ_i)}{f_{0}(Y_i \mid X_i, \bZ_i)}  \mid X_i = x, \bZ_i = \bz\right] \\
    =& 
    p_j \log \frac{p_j}{p_0} + (1-p_j) \log \frac{1-p_j}{1-p_0}\\
    =& (p_0 + \Delta_j)\left(\frac{\Delta_j}{p_0} - \frac{1}{2} \left(\frac{\Delta_j}{p_0}\right)^2 + o(\Delta_j^2) \right) + (1-p_0-\Delta_j)\left(-\frac{\Delta_j}{1-p_0} - \frac{1}{2} \left(\frac{\Delta_j}{1-p_0}\right)^2 + o(\Delta_j^2) \right)\\
    =& \frac{\Delta_j^2}{2p_0(1-p_0)}+ o(\Delta_j^2) = \frac{\left(\phi(x-\btheta_j^T\bz) - \phi(x) \right)^2}{2(\sigma^2 - \phi(x)^2)} + o((\phi(x-\btheta_j^T\bz) - \phi(x) )^2).
\end{aligned}
\end{equation}
Since $|\phi(x-\btheta_j^T\bz) - \phi(x)| \leq C |\btheta_j^T\bz|^\beta$ by Lemma \ref{lem: phi's holder}, (\ref{eq: KL beta<1}) $\leq C' |\btheta_j^T\bz|^{2\beta}$ for some constant $C' >0$. Plugging this back to (\ref{klnew: beta<1}), we have
\begin{equation}\label{klnew1: beta<1}
\begin{aligned}
    \mathrm{KL}\left(f^{j} \| f^{0}\right) \leq& 
    C' \sum_{i=1}^n \EE \left[|\btheta_j^T\bz|^{2\beta}\PP\left(R_i = 1 \mid X_i, \bZ_i, \bar{H}_{i-1}\right)
    \right] \nonumber\\
    =&C' \frac{\log(d/s)}{N} \sum_{i=1}^n\EE\left[Q_i |\tilde\bomega_j^T\bZ_i|^{2\beta}  \right]\\
    =&C' \frac{\log(d/s)}{N} \sum_{i=1}^n\EE\left[Q_i^{1-\beta} Q_i^\beta |\tilde\bomega_j^T\bZ_i|^{2\beta}\right]\\
    \leq&C' \frac{\log(d/s)}{N} \left(\sum_{i=1}^n\EE Q_i\right)^{1-\beta} \left(\tilde\bomega_j^T \sum_{i=1}^n \EE(Q_i \bZ_i \bZ_i^T) \tilde\bomega_j \right)^\beta  &\text{(By H\"older's Inequality)}\\
    \leq&C' \frac{\log(d/s)}{N} N^{1-\beta} \left(C_2 \norm{\tilde\bomega_j}_2^2 N \right)^\beta &(Q \in \mathcal{Q}_N(\mathcal{P}(\beta,s)))\\
    =& C' \frac{\log(d/s)}{N} N s = C' s \log(d/s) \leq \gamma \log|\mathcal{H}'|
\end{aligned}
\end{equation}
for some $\gamma \in (0, 1/8)$, hence the KL condition is satisfied.
Finally, we verify the last condition that for all $j \neq k$ and $q=1,2,\left\|\boldsymbol{\theta}_j-\boldsymbol{\theta}_k\right\|_q \gtrsim s^{\frac{1}{q}-\frac{1}{2}}\left(\frac{s \log (d / s)}{N}\right)^{1/(2\beta)}$. By definition of $\btheta_j$, we have when $j \neq 0$,
\[
\left\|\boldsymbol{\theta}_0-\boldsymbol{\theta}_j\right\|_2 \gtrsim \left(\frac{\log(d/s)}{N}\right)^{\frac{1}{2\beta}} \norm{\tilde\bomega_j}_2 = \left(\frac{\log(d/s)}{N}\right)^{\frac{1}{2\beta}} s^{\frac{1}{2\beta}} = \left(\frac{s\log(d/s)}{N}\right)^{\frac{1}{2\beta}},
\]
\[
\left\|\boldsymbol{\theta}_0-\boldsymbol{\theta}_j\right\|_1 \gtrsim \left(\frac{\log(d/s)}{N}\right)^{\frac{1}{2\beta}} \norm{\tilde\bomega_j}_1 = \left(\frac{\log(d/s)}{N}\right)^{\frac{1}{2\beta}} s^{\frac{1}{2}(\frac{1}{\beta}+1)} = \sqrt{s} \left(\frac{s\log(d/s)}{N}\right)^{\frac{1}{2\beta}}.
\]
For all $j,k \neq 0$, we have
\begin{equation} 
\begin{aligned}
\left\|\boldsymbol{\theta}_j-\boldsymbol{\theta}_k\right\|_2 &\gtrsim \left(\frac{\log(d/s)}{N}\right)^{\frac{1}{2\beta}} \norm{\tilde\bomega_j - \tilde\bomega_k}_2\\
&= \left(\frac{\log(d/s)}{N}\right)^{\frac{1}{2\beta}} s^{\frac{1}{2}(\frac{1}{\beta} - 1)} \norm{\bomega_j - \bomega_k}_2\\
&\geq \left(\frac{\log(d/s)}{N}\right)^{\frac{1}{2\beta}} s^{\frac{1}{2}(\frac{1}{\beta} - 1)} \frac{\sqrt{s}}{4}\gtrsim \left(\frac{s\log(d/s)}{N}\right)^{\frac{1}{2\beta}},
\end{aligned}
\end{equation}
\begin{equation} 
\begin{aligned}
\left\|\boldsymbol{\theta}_j-\boldsymbol{\theta}_k\right\|_1 &\gtrsim \left(\frac{\log(d/s)}{N}\right)^{\frac{1}{2\beta}} \norm{\tilde\bomega_j - \tilde\bomega_k}_1\\
&= \left(\frac{\log(d/s)}{N}\right)^{\frac{1}{2\beta}} s^{\frac{1}{2}(\frac{1}{\beta} - 1)} \norm{\bomega_j - \bomega_k}_1\\
&\geq \left(\frac{\log(d/s)}{N}\right)^{\frac{1}{2\beta}} s^{\frac{1}{2}(\frac{1}{\beta} - 1)} \frac{s}{16} \gtrsim \sqrt{s}\left(\frac{s\log(d/s)}{N}\right)^{\frac{1}{2\beta}},
\end{aligned}
\end{equation}
which completes the proof for the case $0 < \beta \leq 1$. 
\end{proof}

\begin{lemma}\label{lem: beta-smooth class}
Under Assumption \ref{asp4} (iii), if we further assume $\inf \limits_{\norm{\bu}_2 = 1} \EE[|\bu^T \bZ|^{1+\beta} \mid Y=y] \geq c$ for some constant $c > 0$, 
\begin{equation}\label{eq_smooth_lower_upper_1}
L_1|\Delta|^\beta \leq \operatorname{sign}(\Delta) \left(f(\btheta^{*T}\bz + \Delta \mid \bz, y=1) - f(\btheta^{*T}\bz \mid \bz, y=1)\right) \leq L_2 |\Delta|^\beta
\end{equation}
and
\begin{equation}\label{eq_smooth_lower_upper_2}
L_1|\Delta|^\beta \leq -\operatorname{sign}(\Delta) \left(f(\btheta^{*T}\bz + \Delta \mid \bz, y=-1) - f(\btheta^{*T}\bz \mid \bz, y=-1)\right) \leq L_2 |\Delta|^\beta
\end{equation}
for any $|\Delta| \leq \sqrt{s}M_nr_n$, then we have \[
    \rho_- \norm{\btheta - \btheta^*}_2^{1+\beta} \leq R(\btheta) - R(\btheta^*) - \nabla R(\btheta^*)^T (\btheta - \btheta^*) \leq \rho_+ \norm{\btheta - \btheta^*}_2^{1+\beta}
    \]
    for $\norm{\theta - \theta^*}_2 \leq r_n$, where $\rho_-$ and $\rho_+$ are positive constants and $r_n$ goes to 0 fast enough as (\ref{def: prob measure class}).
\end{lemma}
\begin{proof}
    We take $\norm{\btheta - \btheta^*}_2 \leq r_n$, then $|(\btheta - \btheta^*)^T \bz| \leq \norm{\btheta-\btheta^*}_1 M_n \leq \sqrt{s}\norm{\btheta-\btheta^*}_2 M_n \leq \sqrt{s} M_n r_n$.
    By definition
    \begin{equation}
    \begin{aligned}
        R(\btheta) &= \EE\left[\gamma(Y) L_{01}\{Y(X-\btheta^T\bZ)\}\right]\\
        &= \EE\left[L_{01}\{X-\btheta^T\bZ\} \mid Y=1 \right] + \EE\left[L_{01}\{-X+\btheta^T\bZ\} \mid Y=-1 \right]\\
        &= \PP(X < \btheta^T\bZ \mid Y=1) + \PP(X > \btheta^T\bZ \mid Y=-1)\\
        &= \int \int_{-\infty}^{\btheta^T\bz} f(x\mid \bz, y=1)\ dx f(\bz|y=1)\ d\bz + \int \int_{\btheta^T\bz}^{\infty} f(x\mid \bz, y=-1)\ dx f(\bz|y=-1)\ d\bz.
    \end{aligned}
    \end{equation}
    Hence, \[
    \nabla R(\btheta) = \int \bz f(\btheta^T\bz \mid \bz, y=1) f(\bz \mid y=1) \ d\bz - \int \bz f(\btheta^T\bz \mid \bz, y=-1) f(\bz \mid y=-1) \ d\bz,
    \]
    \begin{equation}\label{eq: Taylor remainder}
    \begin{aligned}
        &R(\btheta) - R(\btheta^*) - \nabla R(\btheta^*)^T(\btheta - \btheta^*) \\
        &=\int\int_{0}^\Delta f(\btheta^{*T}\bz+t \mid \bz, y=1) \ dt f(\bz\mid y=1)\ d\bz - \int\int_0^{\Delta} f(\btheta^{*T}\bz + t \mid\bz, y=-1) \ dt f(\bz\mid y=-1)\ d\bz\\
        &~~- \int (\btheta - \btheta^*)^T \bz f(\btheta^{*T}\bz \mid \bz, y=1) f(\bz \mid y=1) \ d\bz + \int (\btheta - \btheta^*)^T \bz f(\btheta^{*T}\bz \mid \bz, y=-1) f(\bz \mid y=-1) \ d\bz\\
        &=\int\int_{0}^\Delta \left[f(\btheta^{*T}\bz+t \mid \bz, y=1) - f(\btheta^{*T}\bz \mid \bz, y=1)\right] \ dt f(\bz\mid y=1)\ d\bz\\
        &~~- \int\int_0^{\Delta} \left[f(\btheta^{*T}\bz + t \mid\bz, y=-1) - f(\btheta^{*T}\bz \mid \bz, y=-1)\right] \ dt f(\bz\mid y=-1)\ d\bz\\
        &=\int\int_{0}^\Delta \left[f(\btheta^{*T}\bz+t \mid \bz, y=1) - f(\btheta^{*T}\bz \mid \bz, y=1)\right]  (I(\Delta>0)+I(\Delta\leq 0)) \ dt f(\bz\mid y=1)\ d\bz\\
        &~~- \int\int_0^{\Delta} \left[f(\btheta^{*T}\bz + t \mid\bz, y=-1) - f(\btheta^{*T}\bz \mid \bz, y=-1)\right]  (I(\Delta>0)+I(\Delta\leq 0)) \ dt f(\bz\mid y=-1)\ d\bz,
    \end{aligned}
    \end{equation}
    where $\Delta = (\btheta - \btheta^*)^T \bz$. By (\ref{eq_smooth_lower_upper_1}) and (\ref{eq_smooth_lower_upper_2}), 
    \begin{equation*}
    \begin{aligned}
        \text{(\ref{eq: Taylor remainder})} &\leq  \frac{L_2}{1+\beta} \EE\left[\Delta^{1+\beta}I(\Delta>0)+(-\Delta)^{1+\beta}I(\Delta\leq 0)\mid Y=1\right] \\
        &~~+ \frac{L_2}{1+\beta} \EE\left[\Delta^{1+\beta}I(\Delta>0)+(-\Delta)^{1+\beta}I(\Delta\leq 0)\mid Y=-1\right]\\
        &=\frac{L_2}{1+\beta} \EE\left[|\Delta|^{1+\beta}\mid Y=1\right]+ \frac{L_2}{1+\beta} \EE\left[|\Delta|^{1+\beta}\mid Y=-1\right]. 
    \end{aligned}
    \end{equation*}
    Since $\bZ\mid Y=y$ is a sub-Gaussian vector with a bounded sub-Gaussian norm, denoted as $K_y$, $(\btheta - \btheta^*)^T \bZ \mid Y = y$ is also a sub-Gaussian random variable with the sub-Gaussian norm $\norm{\btheta - \btheta^*}_2 K_y$. Then $\EE\left[|(\btheta - \btheta^*)^T \bZ|^{1+\beta} \mid Y=y\right] \leq C_\beta K_y^{1+\beta} \norm{\btheta - \btheta^*}_2^{1+\beta}$ for some constant $C_\beta > 0$. Hence $(\text{\ref{eq: Taylor remainder}}) \leq \rho_+ \norm{\btheta - \btheta^*}_2^{1+\beta}$ for some constant $\rho_+$.
    Similarly, we can get the lower bound
    \[
        \text{(\ref{eq: Taylor remainder})} \geq
        \frac{L_1}{1+\beta} \EE\left[|\Delta|^{1+\beta}\mid Y=1\right]+ \frac{L_1}{1+\beta} \EE\left[|\Delta|^{1+\beta}\mid Y=-1\right]. 
    \]
    Notice that
    \begin{equation}
    \begin{aligned}
        \EE\left[|\Delta|^{1+\beta}\mid Y=y\right] &= \EE[|(\btheta - \btheta^*)^T \bz|^{1+\beta} \mid Y=y]\\
        & = \norm{\btheta - \btheta^*}_2^{1+\beta} \EE[|\bu^T \bz|^{1+\beta} \mid Y=y] \\
        & \geq c \norm{\btheta - \btheta^*}_2^{1+\beta},
    \end{aligned}
    \end{equation}
    where $\bu = \frac{\btheta - \btheta^*}{\norm{\btheta - \btheta^*}_2}$. We also have $(\text{\ref{eq: Taylor remainder}}) \geq \rho_- \norm{\btheta - \btheta^*}_2^{1+\beta}$ for some constant $\rho_-$.
\end{proof}

\begin{lemma} \label{lem: Taylor remainder lower con for uni}
    Suppose $0 < \beta \leq 1$. Let $\bZ = (Z_1, \dots, Z_d)$ where $\{Z_i\}_{i=1}^d$ are i.i.d. and uniformly distributed on $[-1,1]$. $f_j(\bz \mid Y=y) = \frac{1}{2^{d}}(1 + \frac{y}{2\sigma} G(\btheta_j^T\bz))$, where the $G(\delta) = \int_{-1}^1 \sign(x-\delta) |x-\delta|^\beta \ dx$ and $\sigma$, $\btheta_j$ are defined in (\ref{conditional_density: beta<1}). Then $\inf \limits_{\norm{\bu}_2 = 1} \EE[|\bu^T \bZ|^{1+\beta} \mid Y=y] \geq c$ for some constant $c>0$.
\end{lemma}
\begin{proof}
    We first prove that $\inf \limits_{\norm{\bu}_2=1} \EE[|\bu \bZ|^{1+\beta}] \geq \frac{1}{3}$. Denote $S = \bu^T\bZ = \sum_{i=1}^d u_i Z_i$ for $\norm{\bu}_2 = 1$. If $\beta = 1$, then
    \[
        \EE[|\bu^T \bZ|^2] = \EE(\sum_{i=1}^d u_i^2Z_i^2 + \sum_{i\neq j}u_iu_j Z_iZ_j) = \sum_{i=1}^d u_i^2 \EE Z_i^2 = \frac{1}{3} \sum_{i=1}^d u_i^2 = \frac{1}{3}.
    \]
    For $0 < \beta < 1$,
    \begin{align*}
        \EE S^4 &= \EE(\sum_{i=1}^d u_i Z_i)^4 = \sum_{i=1}^d u_i^4 \EE Z_i^4 + 3 \sum_{i \neq j} u_i^2 u_j^2\EE Z_i^2 \EE Z_j^2\\
        &= \frac{1}{5} \sum_{i=1}^d u_i^4 + \frac{1}{3} \sum_{i \neq j} u_i^2 u_j^2\\
        &= \frac{1}{5} \sum_{i=1}^d u_i^4 + \frac{1}{3} \left((\sum_{i=1}^d u_i^2)^2 - \sum_{i=1}^d u_i^4 \right)\\
        &= \frac{1}{3} (\sum_{i=1}^d u_i^2)^2 - \frac{2}{15} \sum_{i=1}^d u_i^4 \leq \frac{1}{3} (\sum_{i=1}^d u_i^2)^2 = \frac{1}{3}.
    \end{align*}
    Since $1+\beta < 2 < 4$, we can write $2 = p(1+\beta)+ (1-p)4$ for some $0 < p < 1$. Then by H\"older's inequality,
    \[
    \EE S^2 = \EE\left[|S|^{p(1+\beta)} |S|^{4(1-p)} \right] \leq (\EE |S|^{1+\beta})^p (\EE S^4)^{1-p}.
    \]
    Hence,
    \[
    (\EE |S|^{1+\beta})^p \geq \frac{\EE S^2}{(\EE S^4)^{1-p}} \geq \frac{\frac{1}{3}}{(\frac{1}{3})^{1-p}} = (\frac{1}{3})^p,
    \]
    which implies $\inf \limits_{\norm{\bu}_2 = 1} \EE[|\bu^T \bZ|^{1+\beta}] \geq \frac{1}{3}$ for $0 < \beta <1$. Now we prove the lower bound of conditional expectation.
    \begin{align*}
        \EE[|\bu^T\bZ|^{1+\beta} \mid Y=y] &= \int |\bu^T \bz|^{1+\beta} f_j(\bz\mid Y=y) \ d\bz\\
        &=\int |\bu^T \bz|^{1+\beta} f(\bz) \left[1+\frac{y}{2\sigma}G(\btheta_j^T\bz) \right] \ d\bz\\
        &= \EE[|\bu^T \bZ|^{1+\beta}] + \frac{y}{2\sigma}\EE\left[|\bu^T \bZ|^{1+\beta}G(\btheta_j^T\bZ) \right].
    \end{align*}
    We have shown the first term $\EE[|\bu^T \bZ|^{1+\beta}] \geq \frac{1}{3}$. Then it is enough to show the second term is $o(1)$. Note that we proved $|\btheta_j^T\bz| \lesssim \left(\frac{\log(d/s)}{N} \right)^{\frac{1}{2\beta}} s^{\frac{1}{\beta}} = o(1)$ and $|G(\delta)| \leq C |\delta|$ for some constant $C$ if $\delta = o(1)$ in the proof of Theorem \ref{lowerbound1}. Hence, $|G(\btheta_j^T\bZ)| \leq C |\btheta_j^T\bZ| \leq C' \left(\frac{\log(d/s)}{N} \right)^{\frac{1}{2\beta}} s^{\frac{1}{\beta}}$, and
    \begin{align*}
        \big|\frac{y}{2\sigma}\EE\left[|\bu^T \bZ|^{1+\beta} G(\btheta_j^T\bZ) \right] \big| &\leq \frac{1}{2\sigma} \EE \left[|\bu^T \bZ|^{1+\beta} |G(\btheta_j^T\bZ)| \right]\\
        &\leq \frac{C'}{2\sigma} \left(\frac{\log(d/s)}{N} \right)^{\frac{1}{2\beta}} s^{\frac{1}{\beta}} \EE\left[|\bu^T \bZ|^{1+\beta} \right] \\
        &\leq \frac{C'}{2\sigma} \left(\frac{\log(d/s)}{N} \right)^{\frac{1}{2\beta}} s^{\frac{1}{\beta}} \left(\EE |\bu^T\bZ|^2\right)^{\frac{1+\beta}{2}}\\
        &= \frac{C'}{2\sigma} \left(\frac{\log(d/s)}{N} \right)^{\frac{1}{2\beta}} s^{\frac{1}{\beta}} (\frac{1}{3})^{\frac{1+\beta}{2}} = o(1),
    \end{align*}
    where the third inequality is due to H\"older's inequality. This completes the proof.
\end{proof}

\begin{lemma}\label{lem: phi's holder}
    Suppose $0 < \beta < 1$. For $\phi(x) = \sign(x) |x|^\beta$ for $-1 \leq x \leq 1$, we have the H\"older condition:
    \begin{equation}\label{eq: holder ineq}
        |\phi(x) - \phi(y)| \leq c |x - y|^\beta
    \end{equation}
    for any $x, y \in [-1, 1]$, and $c$ is a positive constant. Furthermore, let $g(x) = \phi(x-a)$ for some $a \in \RR$. Then $|g(a+\Delta) - g(a)| \geq |\Delta|^\beta$. 
\end{lemma}
\begin{proof}
    For $0<\beta<1$, we first recall that the map $t\mapsto t^\beta$ is concave on $[0,\infty)$ and satisfies $t^\beta \ge t$ for all $t\in[0,1]$. And by Jensen's inequality, for any $a,b\in[0,1]$, $\frac{a^\beta + b^\beta}{2} \le \Big(\frac{a+b}{2}\Big)^\beta$, which leads to
    \begin{equation}\label{ineq: Jensen's ineq}
        a^\beta + b^\beta \leq 2^{1-\beta}(a+b)^\beta.
    \end{equation}
    Now fix $x,y\in(-1,1)$ and set $\phi(x)=\operatorname{sign}(x)|x|^\beta$.
    We distinguish three cases.
    
    First we consider $x,y \geq 0$.
    Without loss of generality, we assume $x \geq y \geq 0$. Let $y = t x$ for some $t \in [0,1]$. Then (\ref{eq: holder ineq}) is equivalent to $1 - t^\beta \leq c(1-t)^\beta$. Since $t^\beta \geq t$ and $(1-t)^\beta \geq 1-t$, this holds for $c = 1$.

    Then if $x,y \leq 0$, we have
    \begin{equation}
    \begin{aligned}
        |\phi(x) - \phi(y)| &= \big|\sign(x) |x|^\beta - \sign(y) |y|^\beta \big|\\
        &= \big| |x|^\beta - |y|^\beta \big|\\ 
        &\leq \big||x| - |y| \big|^\beta ~~(\text{By conclusion in first case})\\
        &\leq |x-y|^\beta.
    \end{aligned}
    \end{equation}
    Finally, if $\sign(x) \neq \sign(y)$, we assume $x \geq 0 \geq y$ without loss of generality. Then
    \[
        |\phi(x) - \phi(y)| = x^\beta + (-y)^\beta \leq 2^{1-\beta} (x-y)^\beta,
    \]
    where the last inequality is from (\ref{ineq: Jensen's ineq}). Combining the three cases, we obtain
    \[
        |\phi(x)-\phi(y)| \le 2^{1-\beta}|x-y|^\beta, \qquad \forall x,y\in(-1,1),
    \]
    which completes the proof. And the conclusion for function $g(\cdot)$ is trivial.
\end{proof}

\subsection{Supplementary Theoretical Results}\label{sec_all_sup}

\subsubsection{Extension to unbounded kernel and relaxed Assumption \ref{asp4} }\label{sec_theory_weaker}

In this section, we assume that there exits a sequence $C_N>0$ that may depend on $N$ such that
\begin{equation}
     \int_{C_{N}/2}^\infty |K(t)|dt \leq C\delta_k^\beta,
\end{equation}
holds for any $2 \leq k \leq K$, where $C$ is a constant that does not depend on $k$ and $\delta_k$ is the bandwidth parameter in the $k$th iteration. In addition, we also remove the sub-Gaussian vector assumption for $\bZ$ in Assumption \ref{asp4} in this section. However, in this case we need the following stronger version of the smoothness condition, 
  \begin{equation}
      \left|f^{(l)}\left(\btheta^{*T}
      \bz +\Delta \mid y, \boldsymbol{z}\right)-f^{(l)}\left( \btheta^{*T}
      \bz\mid y, \boldsymbol{z}\right)\right| \leq L\left|\Delta\right|^{\beta-l},
  \end{equation}
for any $\Delta \in \RR, y \in\{-1,1\}, \boldsymbol{z} \in \mathbb{R}^d$ and some constant $L>0$. Compared to (\ref{tsybakov}), the above inequality holds for any $\Delta \in \RR$.

The proof of the following theorems is similar to the proof of the main results in the previous section, and is omitted to avoid repetition.  We note that without the  sub-Gaussian vector assumption, it becomes complicated to pinpoint the critical points for $\beta$ at which the transition of the property of the algorithm occurs. Indeed, the conditions (\ref{KNcondition}), (\ref{secondstage}) and (\ref{probeta2.}) in the following three theorems correspond to the cases (i) $\beta\in ((1+\sqrt{3})/2, +\infty)$; (ii) $\beta\in (1, (1+\sqrt{3})/2]$; and (iii) $\beta=1$ considered in the main paper.

\begin{theorem}\label{Kiteration}
Assume that Assumptions \ref{asp3}, \ref{asp4} (i) and (ii), \ref{asp5}-\ref{asp6} hold, and $K \geq 2$. We set  $ N_k= N/K$ for  $1 \leq k \leq K$, and
\[
\delta_1 = c_{1}\left(\frac{K s \log d}{N}\right)^{1/(2\beta+1)}, \ \lambda_1 =c_{2} \sqrt{\frac{N K\log d }{n^2\delta_1}},
\]
\[
 \delta_k = c_{1}\left(\frac{C_N K s \log d}{N}\right)^{1/(2\beta)},
 \ \lambda_k = c_{2} \sqrt{\frac{N K\log d}{n^2 b_{k-1}\delta_k}},
 \ b_{k-1} = c_{3}\left(\frac{C_{N}^{2\beta+1}K s \log d}{N}\right)^{1/(2\beta)},     2\leq k \leq K,
\]
for some constants $c_{1}, c_{2}, c_{3}>0$ and $c_{3} \geq c_{1}$.  If
\begin{align} \label{KNcondition}
 \sqrt{s} M_n \left(\left(\frac{K s \log d}{N}\right)^{\beta/(2\beta+1)} \vee 
 \left(\frac{C_{N} K s \log d }{N}\right)^{1/2}\right)
 =O\left(\left(\frac{C_{N}^{2\beta+1}K s \log d}{N}\right)^{1/(2\beta)} \right), 
\end{align}
and
\begin{equation}\label{Nncondition}
    N\leq C n^{2\beta/(2\beta+1)}(Ks\log d)^{1/(2\beta+1)}C_N
\end{equation}
hold for some constant $C$, then with probability greater than $1-2K/d$, we have
\begin{equation}\label{eq_Kiteration_rate}
   \|\hat{\btheta}_K-\boldsymbol{\theta}^*\|_2 \lesssim \left(\frac{C_N K s \log d }{N}\right)^{1/2},\  \|\hat{\btheta}_K-\boldsymbol{\theta}^*\|_1 \lesssim \sqrt{s}\left(\frac{C_N K s \log d }{N}\right)^{1/2},
\end{equation}
where $N$ is the given label budget. 
\end{theorem}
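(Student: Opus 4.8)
The plan is to replicate the argument for Theorem~\ref{beta>beta_*} almost verbatim, making only the two adjustments that the relaxed hypotheses require: the kernel-tail constant $C_N$ from (\ref{kerneltailbound}) must be propagated through the smoothing bias, and, in the absence of Assumption~\ref{asp4}(iii), the sub-Gaussian control of $(\hat\btheta_{k-1}-\btheta^*)^T\bZ$ is replaced throughout by the deterministic H\"older bound $|(\hat\btheta_{k-1}-\btheta^*)^T\bz|\le\|\hat\btheta_{k-1}-\btheta^*\|_1 M_n$ supplied by Assumption~\ref{asp4}(ii). A preliminary observation is that the conclusion of Theorem~\ref{rate_k} remains valid once its condition (\ref{eq_rate_k_3}) is replaced by (\ref{prop2condi1}): condition (\ref{eq_rate_k_3}) enters the master argument only to invoke the bias bound of Proposition~\ref{prop2}, and part~(i) of that proposition yields the identical bound $\|\EE(\nabla R^{D_k}_{\delta_k}(\btheta^*)\mid\hat\btheta_{k-1})-\nabla R(\btheta^*)\|_\infty\lesssim c_{n,k}\delta_k^\beta$ under (\ref{prop2condi1}), while the stochastic-error estimate of Proposition~\ref{E_1}, the RSC/RSM conditions of Assumption~\ref{asp6}, and the path-following guarantee are all unchanged. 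For $k=1$ the sampling is uniform, so $N_1=nc_{n,1}/K$, and the stated $\delta_1,\lambda_1$ deliver $\|\hat\btheta_1-\btheta^*\|_2\lesssim(Ks\log d/N)^{\beta/(2\beta+1)}$ together with the matching $\ell_1$ bound.

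The core of the work lies in the step $2\le k\le K$, where I would establish $\PP((X,\bZ)\in S_k)\asymp b_{k-1}$ and verify (\ref{prop2condi1}). Conditionally on $\hat\btheta_{k-1}$ (independent of the fresh batch $D_k$), the active set $S_k$ is an interval in $X$ of length $2b_{k-1}\|\hat\bomega_{k-1}\|_2$, so $f(x\mid\bz)\le p_{\max}$ and the bound $\|\hat\btheta_{k-1}\|_2\le 2C$ --- which holds with high probability by Assumption~\ref{asp3} and the $\ell_2$ rate --- give the upper estimate $\PP((X,\bZ)\in S_k)\lesssim b_{k-1}$; this half needs no tail control. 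For the lower estimate I would invoke $|(\hat\btheta_{k-1}-\btheta^*)^T\bz|\le\|\hat\btheta_{k-1}-\btheta^*\|_1 M_n$ to show that once $b_{k-1}\ge 2\|\hat\btheta_{k-1}-\btheta^*\|_1 M_n$ the whole interval lies inside $B(\btheta^{*T}\bz,\epsilon_n)$, whence the local lower bound $p_{\min}$ on $\mathcal{G}$ in Assumption~\ref{asp5} gives $\PP((X,\bZ)\in S_k)\gtrsim b_{k-1}$. The first requirement of (\ref{prop2condi1}), $b_{k-1}\ge C_N\delta_k$, is immediate from $b_{k-1}=c_3(C_N^{2\beta+1}Ks\log d/N)^{1/(2\beta)}$ and $\delta_k=c_1(C_N Ks\log d/N)^{1/(2\beta)}$ with $c_3\ge c_1$, since their ratio is $(c_3/c_1)C_N$; the second requirement is exactly what condition (\ref{KNcondition}) enforces.

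With $\PP((X,\bZ)\in S_k)\asymp b_{k-1}$ in hand, I would substitute $c_{n,k}=KN_k/(n\PP((X,\bZ)\in S_k))$ into the prescriptions of Theorem~\ref{rate_k}, checking that the required bandwidth $(Ks\log d/(nc_{n,k}))^{1/(2\beta+1)}\asymp(b_{k-1}Ks\log d/N)^{1/(2\beta+1)}$ coincides with the stated $\delta_k$ up to constants, and similarly for $\lambda_k$. This produces $\|\hat\btheta_k-\btheta^*\|_2\lesssim(\PP((X,\bZ)\in S_k)\,s\log d/N_k)^{\beta/(2\beta+1)}\lesssim(b_{k-1}Ks\log d/N)^{\beta/(2\beta+1)}$, and inserting $b_{k-1}$ collapses this to the claimed $(C_N Ks\log d/N)^{1/2}$. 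An induction on $k$ then finishes: the base case $k=2$ feeds in the slow rate $(Ks\log d/N)^{\beta/(2\beta+1)}$ of $\hat\btheta_1$, whereas for $k\ge 3$ the input $\hat\btheta_{k-1}$ already carries the fast rate $(C_N Ks\log d/N)^{1/2}$, and the maximum in condition (\ref{KNcondition}) is calibrated precisely so that $b_{k-1}\ge 2\|\hat\btheta_{k-1}-\btheta^*\|_1 M_n$ holds in both regimes. Requiring the sampling probabilities to satisfy $0<c_{n,k}\le 1$ reduces to $N\lesssim b_{k-1}n$, which rearranges into (\ref{Nncondition}), and a union bound over the $K$ iterations yields the stated probability $1-2K/d$.

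The main obstacle is precisely the lower-bound/containment step once sub-Gaussianity is dropped. The deterministic bound $\|\hat\btheta_{k-1}-\btheta^*\|_1 M_n$ is markedly coarser than the sub-Gaussian bound $\|\hat\btheta_{k-1}-\btheta^*\|_2\sqrt{\log(\cdot)}$ exploited in Theorem~\ref{beta>beta_*}, and it is this coarseness that forces both the enlarged active set (the extra $C_N$ and $M_n$ factors appearing in $b_{k-1}$ and in the final rate) and the more restrictive sample-size requirement (\ref{KNcondition}). Keeping the bias--variance balance internally consistent after the bandwidth absorbs the $C_N$ factor, and confirming that the single condition (\ref{KNcondition}) simultaneously dominates the slow first-step error and the fast later-step errors at every stage of the induction, are the points that demand the most care.
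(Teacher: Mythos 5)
Your proposal is correct and follows essentially the same route as the paper's own proof: the same $k=1$ base step via Theorem~\ref{rate_k}, the same two-sided estimate $\PP((X,\bZ)\in S_k)\asymp b_{k-1}$ (upper bound from $p_{\max}$ and $\|\hat\btheta_{k-1}\|_2\le 2C$, lower bound from the deterministic containment $|(\hat\btheta_{k-1}-\btheta^*)^T\bz|\le\|\hat\btheta_{k-1}-\btheta^*\|_1 M_n\le b_{k-1}/2$ plus the $p_{\min}$ bound on $\mathcal G$), the same verification of (\ref{prop2condi1}) with the two branches of (\ref{KNcondition}) covering the slow first-step and fast later-step errors, and the same derivation of (\ref{Nncondition}) from $c_{n,k}\le 1$. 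Your preliminary observation that Theorem~\ref{rate_k} only needs Proposition~\ref{prop2} case (i) in place of condition (\ref{eq_rate_k_3}) is exactly the substitution the paper makes via its condition (\ref{propb_k-1}).
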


\begin{theorem} 
\label{cor_betabetween}
Assume that Assumptions \ref{asp3}, \ref{asp4} (i) and (ii), \ref{asp5}-\ref{asp6} hold, and 
suppose there exists an integer $K^*$ such that 
\begin{equation}
    \label{secondstage}
\frac{
    M_n^{\frac{2\beta+1}{\beta+1} - \frac{\beta^{K^*-1}}{(\beta+1)(2\beta+1)^{K^*-2}}}    s^{\frac{2\beta+1}{2(\beta+1)} -\frac{\beta^{K^*-1}}{2(\beta+1)(2\beta+1)^{K^*-2}}}
   }{C_N^{(2\beta+1)/(2\beta)}}
= O\left(
 \left(\frac{K  \log d}{N}\right)^{\frac{1}{2\beta}-\frac{\beta}{\beta+1}+ \frac{\beta^{K^*}}{(\beta+1)(2\beta+1)^{K^*-1}}}
   \right). 
\end{equation}
We set $K=K^*$,  $ N_k= N/K$ for  $1 \leq k \leq K$. We set
 \[
  \delta_1 = c_1\left(\frac{sK \log d }{N}\right)^{1/(2\beta+1)}, \ \lambda_1 = c_2 \sqrt{\frac{N K\log d }{n^2\delta_1}},
  \]
for $2\leq k \leq K-1$,   
   \[
   \delta_k =  c_1 \left(\frac{K s \log d b_{k-1}}{N}\right)^{1/(2\beta+1)}, ~\lambda_k = c_{2} \sqrt{\frac{N K\log d}{n^2 b_{k-1}\delta_k}},
   \]
    \[
    b_{k-1}= c_3 
  M_n^{\frac{2\beta+1}{\beta+1} - \frac{\beta^{k-1}}{(\beta+1)(2\beta+1)^{k-2}}}    s^{\frac{2\beta+1}{2(\beta+1)} -\frac{\beta^{k-1}}{2(\beta+1)(2\beta+1)^{k-2}}}\left(\frac{K s \log d  }{N}\right)^{\frac{\beta}{\beta+1} 
        -\frac{\beta^{k}}{(\beta+1)(2\beta+1)^{k-1}}}, 
   \]
    for some constants $c_1, c_2, c_3> 0$, and 
\[
 \delta_K = c_{1}^\prime\left(\frac{C_N K s \log d}{N}\right)^{1/(2\beta)},
 \ \lambda_K = c_{2}^\prime \sqrt{\frac{N K\log d}{n^2 b_{K-1}\delta_K}},
 \ b_{K-1} = c_{3}^\prime\left(\frac{C_{N}^{2\beta+1}K s \log d}{N}\right)^{1/(2\beta)},    
\]
for some constants $c_{1}^\prime, c_{2}^\prime, c_{3}^\prime>0$ and $c_{3}^\prime \geq c_{1}^\prime$. If
    \begin{equation}\label{firststage}
                \left(\frac{K  \log d}{N}\right)^{\frac{1}{2\beta}-\frac{\beta}{\beta+1}+ \frac{\beta^k}{(\beta+1)(2\beta+1)^{k-1}}} = O\left(\frac{
    M_n^{\frac{2\beta+1}{\beta+1} - \frac{\beta^{k-1}}{(\beta+1)(2\beta+1)^{k-2}}}    s^{\frac{2\beta+1}{2(\beta+1)} -\frac{\beta^{k-1}}{2(\beta+1)(2\beta+1)^{k-2}}}
   }{C_N^{(2\beta+1)/(2\beta)}} \right)
    \end{equation}
    for $2 \leq k \leq K-1$, 
and
\begin{equation}\label{probfirststage}
      N \leq C_1 M_n^{\frac{\beta+1}{2\beta+1}} s^{\frac{\beta+1}{2(2\beta+1)}}(Ks \log d)^{\frac{\beta}{2\beta+1}},
\end{equation}
\begin{equation} \label{probsecondstage}
     N\leq C_2 n^{2\beta/(2\beta+1)}(Ks\log d)^{1/(2\beta+1)}C_N
\end{equation}
hold for some constants $C_1, C_2$, then with probability greater than $1-2 Kd^{-1}$,
\begin{equation*}
   \|\hat{\btheta}_K-\boldsymbol{\theta}^*\|_2 \lesssim \left(\frac{C_N K s \log d }{N}\right)^{1/2},\  \|\hat{\btheta}_K-\boldsymbol{\theta}^*\|_1 \lesssim \sqrt{s}\left(\frac{C_N K s \log d }{N}\right)^{1/2}.
\end{equation*}
\end{theorem}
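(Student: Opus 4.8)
The plan is to follow the structure of the proof of Theorem~\ref{beta^{**}< beta < beta^*}, replacing every use of the sub-Gaussian tail bound for $(\hat\btheta_{k-1}-\btheta^*)^T\bZ$ by the cruder deterministic estimate $|(\hat\btheta_{k-1}-\btheta^*)^T\bZ|\le \|\hat\btheta_{k-1}-\btheta^*\|_1 M_n$, which is all that is available under Assumption~\ref{asp4}(ii) once the sub-Gaussian vector assumption is dropped. Concretely, I would first treat $k=1$ exactly as before: under uniform sampling, Theorem~\ref{rate_k} with $\delta_1\asymp (Ks\log d/N)^{1/(2\beta+1)}$ yields $\|\hat\btheta_1-\btheta^*\|_2\lesssim (Ks\log d/N)^{\beta/(2\beta+1)}$ together with the companion $\ell_1$ bound. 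For each $k\ge 2$ the engine is again the identity $\PP((X,\bZ)\in S_k)\asymp b_{k-1}$ (upper bound from $p_{\max}$ and $\|\hat\btheta_{k-1}\|_2\lesssim 1$; lower bound from the density floor $p_{\min}$ on $\mathcal G$ in Assumption~\ref{asp5}, using $b_{k-1}\ge 2\|\hat\btheta_{k-1}-\btheta^*\|_1 M_n$ so that the active interval in (\ref{activeset}) traps a neighbourhood of $\btheta^{*T}\bz$), followed by Theorem~\ref{rate_k} with $c_{n,k}=KN_k/(n\PP((X,\bZ)\in S_k))$, giving $\|\hat\btheta_k-\btheta^*\|_2\lesssim (b_{k-1}Ks\log d/N)^{\beta/(2\beta+1)}$. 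The only structural change is that the smoothing bias of $\EE(\nabla R^{D_k}_{\delta_k}(\btheta^*)\mid\hat\btheta_{k-1})$ is now controlled through part (i) of Proposition~\ref{prop2}, so the two admissibility conditions I must carry are exactly those in (\ref{prop2condi1}): $b_{k-1}\ge C_N\delta_k$ and $b_{k-1}\ge 2\|\hat\btheta_{k-1}-\btheta^*\|_1 M_n$.

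Writing $\alpha=\beta/(2\beta+1)$ and $r_k=\|\hat\btheta_k-\btheta^*\|_2$, in the ``slow'' regime the binding condition is the second one, so I would set $b_{k-1}\asymp \|\hat\btheta_{k-1}-\btheta^*\|_1 M_n\asymp \sqrt{s}\,M_n\,r_{k-1}$. Substituting into $r_k\asymp (b_{k-1}Ks\log d/N)^\alpha$ produces the recursion $r_k\asymp(\sqrt{s}\,M_n\,r_{k-1}\cdot Ks\log d/N)^{\alpha}$ seeded at $r_1\asymp(Ks\log d/N)^\alpha$. Solving the three scalar linear recursions for the exponents of $M_n$, $s$ and $Ks\log d/N$ (each of the form $x_k=\alpha(x_{k-1}+\text{const})$, with geometric solution $x_k=x^\ast+\alpha^{k-1}(x_1-x^\ast)$ and fixed point $x^\ast$ built from $\alpha/(1-\alpha)=\beta/(\beta+1)$) reproduces exactly the displayed closed form for $b_{k-1}$ and the intermediate rates $r_k$. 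To stay in this regime for $2\le k\le K-1$ I must verify the first condition $b_{k-1}\ge C_N\delta_k$; since $\delta_k\asymp(b_{k-1}Ks\log d/N)^{1/(2\beta+1)}$, this rearranges to $b_{k-1}\gtrsim C_N^{(2\beta+1)/(2\beta)}(Ks\log d/N)^{1/(2\beta)}$, and inserting the closed form of $b_{k-1}$ turns it into precisely hypothesis (\ref{firststage}).

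The termination at $k=K=K^\ast$ is governed by the reverse inequality: hypothesis (\ref{secondstage}) is exactly the assertion that at index $K^\ast$ the $M_n$-driven choice of $b_{K-1}$ no longer dominates $C_N\delta_{K^\ast}$, i.e.\ the iterate $\hat\btheta_{K-1}$ has reached the fast-convergence region. At this last step I would abandon the $\ell_1$-driven choice and instead take $b_{K-1}\asymp(C_N^{2\beta+1}Ks\log d/N)^{1/(2\beta)}$ and $\delta_K\asymp(C_N Ks\log d/N)^{1/(2\beta)}$, exactly as in the final step of Theorem~\ref{Kiteration}. One checks $b_{K-1}\ge C_N\delta_K$ (using $c_3'\ge c_1'$) and $b_{K-1}\ge 2\|\hat\btheta_{K-1}-\btheta^*\|_1 M_n$ (again (\ref{secondstage})), so both conditions of Proposition~\ref{prop2}(i) hold, and Theorem~\ref{rate_k} then gives $r_K\asymp(b_{K-1}Ks\log d/N)^{\alpha}=(C_N Ks\log d/N)^{1/2}$, with the matching $\ell_1$ rate; a union bound over the $K$ iterations supplies the $1-2K/d$ probability.

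Finally, the feasibility of the sampling probabilities, $0<c_{n,k}\le1$, reduces as before to $N\lesssim n\,\PP((X,\bZ)\in S_k)\asymp n\,b_{k-1}$, and imposing this across the slow steps and the terminal fast step produces the two budget constraints (\ref{probfirststage}) and (\ref{probsecondstage}). I expect the main obstacle to be the exponent bookkeeping in the induction: one must confirm that the closed-form $b_{k-1}$ is the exact fixed point of the coupled recursion, that the resulting $r_k$ is genuinely slower than the parametric rate for every $k\le K-1$ (so no earlier stopping is admissible), and above all that (\ref{firststage}) and (\ref{secondstage}) really do partition the iterations into a slow phase in which the $M_n\|\cdot\|_1$ constraint binds and a single terminal fast step in which the $C_N\delta_k$ constraint binds, so that $K^\ast$ is well defined as the first index at which the regime switch occurs. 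Pinning down this switch is the delicate part, since, absent the sub-Gaussian assumption, the extra $\sqrt{s}\,M_n$ inflation of $b_{k-1}$ (replacing the $\sqrt{\log(N/(Ks\log d))}$ factor of Theorem~\ref{beta^{**}< beta < beta^*}) shifts the critical points and can force a larger $K^\ast$.
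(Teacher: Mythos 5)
Your proposal is correct and follows essentially the same route as the paper's own proof: run Theorem~\ref{rate_k} with $\PP((X,\bZ)\in S_k)\asymp b_{k-1}$, invoke Proposition~\ref{prop2}(i) so that the two admissibility conditions are $b_{k-1}\geq C_N\delta_k$ and $b_{k-1}\geq 2\|\hat\btheta_{k-1}-\btheta^*\|_1M_n$, take $b_{k-1}\asymp\|\hat\btheta_{k-1}-\btheta^*\|_1M_n$ in the slow phase (with (\ref{firststage}) guaranteeing $b_{k-1}\geq C_N\delta_k$ there), switch at $k=K^*$ via (\ref{secondstage}) to $b_{K-1}\asymp(C_N^{2\beta+1}Ks\log d/N)^{1/(2\beta)}$, and recover the budget constraints from $N\lesssim nb_{k-1}$. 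The exponent recursion you describe is exactly the one the paper solves to obtain the closed form for $b_{k-1}$ and the intermediate rates, so no further changes are needed.
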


\begin{theorem}
\label{cor_beta<1}
Assume that Assumptions \ref{asp3}, \ref{asp4} (i) and (ii), \ref{asp5}-\ref{asp6} hold.  We set  $ N_k= N/K$ for  $1 \leq k \leq K$, and  
  \[
  \delta_1 = c_1\left(\frac{K s \log d }{N}\right)^{1/(2\beta+1)}, \ \lambda_1 = c_2 \sqrt{\frac{N K\log d }{n^2\delta_1}},
  \]
   \[
   \ 2\leq k \leq K: \ 
   \delta_k =  c_1 \left(\frac{K s \log d b_{k-1}}{N}\right)^{1/(2\beta+1)}, ~\lambda_k = c_{2} \sqrt{\frac{N K\log d}{n^2 b_{k-1}\delta_k}},
   \]
    \[
    b_{k-1}= c_3 
  M_n^{\frac{2\beta+1}{\beta+1} - \frac{\beta^{k-1}}{(\beta+1)(2\beta+1)^{k-2}}}    s^{\frac{2\beta+1}{2(\beta+1)} -\frac{\beta^{k-1}}{2(\beta+1)(2\beta+1)^{k-2}}}
  \left(\frac{K s \log d  }{N}\right)^{\frac{\beta}{\beta+1} 
        -\frac{\beta^{k}}{(\beta+1)(2\beta+1)^{k-1}}}, 
   \]
    for some constants $c_1, c_2, c_3> 0$. If
 \begin{equation}\label{probeta2.}
   \left(\frac{K  \log d}{N}\right)^{\frac{1}{2\beta}-\frac{\beta}{\beta+1}+ \frac{\beta^k}{(\beta+1)(2\beta+1)^{k-1}}} = O\left(\frac{
    M_n^{\frac{2\beta+1}{\beta+1} - \frac{\beta^{k-1}}{(\beta+1)(2\beta+1)^{k-2}}}    s^{\frac{2\beta+1}{2(\beta+1)} -\frac{\beta^{k-1}}{2(\beta+1)(2\beta+1)^{k-2}}}
   }{C_N^{(2\beta+1)/(2\beta)}} \right)
 \end{equation}
 for all $2 \leq k \leq K$,
 and
  \begin{equation} \label{probbeta1.}
    N \leq C M_n^{\frac{\beta+1}{2\beta+1}} s^{\frac{\beta+1}{2(2\beta+1)}}(Ks \log d)^{\frac{\beta}{2\beta+1}},
\end{equation}
hold for some constant $C$, then with probability greater than $1-2 Kd^{-1}$,
 \[
 \|\hat{\btheta}_K - \btheta^*\|_2 \lesssim M_n^{\frac{\beta}{\beta+1} - \frac{\beta^K}{(\beta+1)(2\beta+1)^{K-1}}}    s^{\frac{\beta}{2(\beta+1)} -\frac{\beta^K}{2(\beta+1)(2\beta+1)^{K-1}}}\left(\frac{K s \log d  }{N}\right)^{\frac{\beta}{\beta+1} 
        -\frac{\beta^{K+1}}{(\beta+1)(2\beta+1)^K}},
\]
 \[
 \|\hat{\btheta}_K - \btheta^*\|_1 \lesssim 
M_n^{\frac{\beta}{\beta+1} - \frac{\beta^K}{(\beta+1)(2\beta+1)^{K-1}}}    s^{\frac{2\beta+1}{2(\beta+1)} -\frac{\beta^K}{2(\beta+1)(2\beta+1)^{K-1}}}\left(\frac{K s \log d  }{N}\right)^{\frac{\beta}{\beta+1} 
        -\frac{\beta^{K+1}}{(\beta+1)(2\beta+1)^K}}.
\]
\end{theorem}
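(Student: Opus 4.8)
The plan is to prove the bound by induction on the iteration index $k$, treating Theorem~\ref{rate_k} as a black box that converts a bound on $\PP\left((X,\bZ)\in S_k\right)$ into a rate for $\hat\btheta_k$, and using Proposition~\ref{prop2}(i) (the case that does \emph{not} invoke the sub-Gaussian vector assumption) to control the smoothing bias. Write $\Delta=\frac{Ks\log d}{N}$ and $\alpha=\frac{\beta}{2\beta+1}$, and let $r_k$ denote the target $l_2$ rate of $\hat\btheta_k$. The base case $k=1$ is uniform subsampling, which by Theorem~\ref{rate_k} gives $r_1\asymp\Delta^{\alpha}$ and the matching $l_1$ bound $\sqrt{s}\,\Delta^\alpha$, exactly as in the proofs of Theorems~\ref{beta>beta_*}--\ref{beta < beta^**}. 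The heart of the argument is to show that the choice $b_{k-1}\asymp M_n\|\hat\btheta_{k-1}-\btheta^*\|_1\asymp \sqrt{s}\,M_n\,r_{k-1}$ (the binding constraint in this regime) feeds through Theorem~\ref{rate_k} to produce the recursion $r_k\asymp\bigl(b_{k-1}\Delta\bigr)^{\alpha}\asymp\bigl(\sqrt{s}\,M_n\,\Delta\bigr)^{\alpha}\,r_{k-1}^{\alpha}$.

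For the inductive step at a fixed $k\geq2$, I would first establish $\PP\left((X,\bZ)\in S_k\right)\asymp b_{k-1}$. Unlike in Theorem~\ref{beta>beta_*}, where the sub-Gaussian tail bound on $(\hat\btheta_{k-1}-\btheta^*)^T\bZ$ is used, here I rely only on the deterministic inequality $|(\hat\btheta_{k-1}-\btheta^*)^T\bz|\leq M_n\|\hat\btheta_{k-1}-\btheta^*\|_1\leq b_{k-1}/2$, which holds by the choice of $b_{k-1}$ and the induction hypothesis. This places the center $\hat\btheta_{k-1}^T\bz$ of the sampling interval within $b_{k-1}/2$ of $\btheta^{*T}\bz$, while its half-width is $b_{k-1}\sqrt{1+\|\hat\btheta_{k-1}\|_2^2}\geq b_{k-1}$; so the interval lies inside $B(\btheta^{*T}\bz,\epsilon_n)$ yet contains $[\btheta^{*T}\bz-b_{k-1}/2,\btheta^{*T}\bz+b_{k-1}/2]$. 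Together with the upper bound $p_{\max}$ and the lower bound $p_{\min}$ over $\bz\in\mathcal{G}$ from Assumption~\ref{asp5}, this pins down $\PP\left((X,\bZ)\in S_k\right)\asymp b_{k-1}$. I then verify the two hypotheses of Proposition~\ref{prop2}(i): the condition $b_{k-1}\geq 2\|\hat\btheta_{k-1}-\btheta^*\|_1 M_n$ holds by construction, and $b_{k-1}\geq C_N\delta_k$ is supplied by hypothesis~(\ref{probeta2.}), since with $\delta_k\asymp(b_{k-1}\Delta)^{1/(2\beta+1)}$ that condition is precisely the self-consistency statement that the plug-in error term $\sqrt{s}M_n r_{k-1}$ dominates the kernel-smoothing term $C_N\delta_k$ at every iteration.

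With both conditions verified, Theorem~\ref{rate_k} yields $r_k\asymp(b_{k-1}\Delta)^\alpha$, closing the recursion. Unfolding it from $r_1=\Delta^{\alpha}$ gives $r_k=A^{(1-\alpha^{k-1})/(1-\alpha)}\Delta^{\alpha^k}$ with $A=(\sqrt{s}M_n\Delta)^{\alpha}$; substituting $\frac{\alpha}{1-\alpha}=\frac{\beta}{\beta+1}$ and $\alpha^{k-1}=\frac{\beta^{k-1}}{(2\beta+1)^{k-1}}$ and setting $k=K$ reproduces the stated exponents of $M_n$, $s$, and $\Delta$ for the $l_2$ bound, and the $l_1$ bound follows by the extra factor $\sqrt{s}$ carried throughout. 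Finally, the budget feasibility $0<c_{n,k}\leq1$ reduces, via $N_kK=N$ and $\PP\left((X,\bZ)\in S_k\right)\asymp b_{k-1}$, to $N\lesssim b_{k-1}n$, which is exactly condition~(\ref{probbeta1.}). The main obstacle I anticipate is \emph{not} the algebra of solving the recursion but the joint verification at every $k$ that the single choice $b_{k-1}\asymp\sqrt{s}M_n r_{k-1}$ satisfies both Proposition~\ref{prop2}(i) constraints: one must check that the plug-in error remains the binding term and does not cross over to the kernel-dominated regime, a persistence property that is precisely what distinguishes this ``always slow'' case from Theorems~\ref{Kiteration} and~\ref{cor_betabetween}, and which hypothesis~(\ref{probeta2.}) is designed to encode.
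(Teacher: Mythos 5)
Your proposal is correct and follows essentially the same route as the paper's proof: the same base case from Theorem~\ref{rate_k}, the same binding choice $b_{k-1}\asymp M_n\|\hat\btheta_{k-1}-\btheta^*\|_1$, verification of Proposition~\ref{prop2}(i) via the deterministic bound $|(\hat\btheta_{k-1}-\btheta^*)^T\bz|\le M_n\|\hat\btheta_{k-1}-\btheta^*\|_1\le b_{k-1}/2$ together with hypothesis~(\ref{probeta2.}), and the same recursion $r_k\asymp(b_{k-1}\Delta)^{\alpha}$ whose unfolding reproduces the stated exponents. Your framing of (\ref{probeta2.}) as the persistence condition keeping the plug-in error the binding constraint is exactly the role it plays in the paper's case comparison.
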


\subsubsection{Supplementary Results for Path-following Algorithm}\label{sec_path_theory}
The analysis of the path-following algorithm follows the same line as \cite{feng2022nonregular} and the references therein. We only provide a sketch of the proof and refer the details to the original paper. In fact, the key difference between our proof and \cite{feng2022nonregular} is established in Proposition~\ref{prop2} and Proposition~\ref{E_1}. 

\begin{lemma}\label{algolemma1}
    Assume the conditions of Proposition~\ref{prop2} and Assumption~\ref{asp6} hold. For $\lambda \geq \lambda_{k,tgt}$, if $\boldsymbol{\theta} \in \Omega,\left\|\boldsymbol{\theta}_{S^{* c}}\right\|_0 \leq \widetilde{s}, \omega_\lambda(\boldsymbol{\theta}) \leq \frac{1}{2} \lambda$, and $\left\|\nabla R_{\delta_k}^{D_k}\left(\boldsymbol{\theta}^*\right)-
    \nabla  R_{\delta_k, \hat{\btheta}_{k-1}}(\btheta^*)
    \right\|_{\infty} \leq \lambda / 8$, we have
   \[
   \left\|\boldsymbol{\theta}-\boldsymbol{\theta}^*\right\|_2 \leq \frac{\bar{C}_1}{\rho_{-}}\left( c_{n,k}\delta_k^\beta \vee \sqrt{s} \lambda\right),
   \]
   \[
   \left\|\boldsymbol{\theta}-\boldsymbol{\theta}^*\right\|_1 \leq \frac{\bar{C}_2}{\rho_{-}}\left(\frac{c_{n,k}^2\delta_k^{2 \beta}}{\lambda} \vee \sqrt{s} c_{n,k}\delta_k^\beta \vee s \lambda\right),
   \]
  \[
  f_\lambda(\boldsymbol{\theta})-f_\lambda\left(\boldsymbol{\theta}^*\right) \leq \frac{\bar{C}_2}{2 \rho_{-}}\left( c_{n,k}^2\delta_k^{2 \beta} \vee \sqrt{s} c_{n,k}\delta_k^\beta \lambda \vee s \lambda^2\right),
  \]
  where $ f_\lambda(\boldsymbol{\theta})$ denotes the objective function $R_{\delta_k}^{D_k}(\boldsymbol{\theta})+\lambda\|\boldsymbol{\theta}\|_1$ and $\bar{C}_1, \bar{C}_2 > 0$ are constants that depend on $C_2$ in Proposition~\ref{prop2}.
\end{lemma}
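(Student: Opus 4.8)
The plan is to prove Lemma~\ref{algolemma1} as a purely deterministic consequence of three inputs valid on the stated event: the approximate stationarity $\omega_\lambda(\boldsymbol{\theta})\le\frac12\lambda$, the restricted strong convexity of Assumption~\ref{asp6}, and a gradient decomposition at $\boldsymbol{\theta}^*$. Writing $\boldsymbol{\Delta}=\boldsymbol{\theta}-\boldsymbol{\theta}^*$ and $S^*=\supp(\boldsymbol{\theta}^*)$, I would first unpack the definition of $\omega_\lambda$: the bound $\omega_\lambda(\boldsymbol{\theta})\le\frac12\lambda$ produces a subgradient $\boldsymbol{\xi}\in\partial\|\boldsymbol{\theta}\|_1$ with $(\boldsymbol{\theta}-\boldsymbol{\theta}')^T(\nabla R_{\delta_k}^{D_k}(\boldsymbol{\theta})+\lambda\boldsymbol{\xi})\le\frac12\lambda\|\boldsymbol{\theta}-\boldsymbol{\theta}'\|_1$ for every $\boldsymbol{\theta}'\in\Omega$. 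Taking $\boldsymbol{\theta}'=\boldsymbol{\theta}^*$ (admissible since $\boldsymbol{\theta}^*\in\Omega$ under Assumption~\ref{asp6}) gives the basic variational inequality. I would then add the RSC inequality (\ref{lowerrho}) applied with the roles of $\boldsymbol{\theta}$ and $\boldsymbol{\theta}^*$ interchanged to obtain $(\nabla R_{\delta_k}^{D_k}(\boldsymbol{\theta})-\nabla R_{\delta_k}^{D_k}(\boldsymbol{\theta}^*))^T\boldsymbol{\Delta}\ge\rho_-\|\boldsymbol{\Delta}\|_2^2$, and combine it with the subgradient inequality $\boldsymbol{\xi}^T\boldsymbol{\Delta}\ge\|\boldsymbol{\theta}\|_1-\|\boldsymbol{\theta}^*\|_1\ge\|\boldsymbol{\Delta}_{S^{*c}}\|_1-\|\boldsymbol{\Delta}_{S^*}\|_1$ to reach
\[
\rho_-\|\boldsymbol{\Delta}\|_2^2+\lambda\|\boldsymbol{\Delta}_{S^{*c}}\|_1-\lambda\|\boldsymbol{\Delta}_{S^*}\|_1\le\tfrac12\lambda\|\boldsymbol{\Delta}\|_1+\big|\nabla R_{\delta_k}^{D_k}(\boldsymbol{\theta}^*)^T\boldsymbol{\Delta}\big|.
\]

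The crux is the residual term, and this is where I would split the gradient as $\nabla R_{\delta_k}^{D_k}(\boldsymbol{\theta}^*)=E_1+E_2$ with $E_1=\nabla R_{\delta_k}^{D_k}(\boldsymbol{\theta}^*)-\nabla R_{\delta_k,\hat\btheta_{k-1}}(\boldsymbol{\theta}^*)$ and $E_2=\nabla R_{\delta_k,\hat\btheta_{k-1}}(\boldsymbol{\theta}^*)-\nabla R(\boldsymbol{\theta}^*)$, using $\nabla R(\boldsymbol{\theta}^*)=0$. The stochastic part is handled crudely in $\ell_\infty$: the hypothesis $\|E_1\|_\infty\le\lambda/8$ gives $|E_1^T\boldsymbol{\Delta}|\le\frac{\lambda}{8}\|\boldsymbol{\Delta}\|_1$. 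The bias part is the delicate one, and I would invoke Proposition~\ref{prop2} in its sparse bilinear form $|E_2^T\boldsymbol{v}|\le C_2c_{n,k}\delta_k^\beta\|\boldsymbol{v}\|_2$ valid for $\|\boldsymbol{v}\|_0\le s'$; since $\boldsymbol{\theta}^*$ is $s$-sparse and $\|\boldsymbol{\theta}_{S^{*c}}\|_0\le\widetilde s$, the vector $\boldsymbol{\Delta}$ has support of size $\lesssim s\le s'$, so $|E_2^T\boldsymbol{\Delta}|\le C_2c_{n,k}\delta_k^\beta\|\boldsymbol{\Delta}\|_2$. Substituting both and absorbing the $\frac{\lambda}{8}\|\boldsymbol{\Delta}\|_1$ together with the $\frac12\lambda\|\boldsymbol{\Delta}\|_1$ slack (whose sum $\frac58\lambda$ leaves a strictly positive coefficient $\frac38\lambda$ on $\|\boldsymbol{\Delta}_{S^{*c}}\|_1$) yields the cone bound
\[
\rho_-\|\boldsymbol{\Delta}\|_2^2+\tfrac38\lambda\|\boldsymbol{\Delta}_{S^{*c}}\|_1\le\tfrac{13}{8}\lambda\|\boldsymbol{\Delta}_{S^*}\|_1+C_2c_{n,k}\delta_k^\beta\|\boldsymbol{\Delta}\|_2.
\]

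From here the three conclusions follow by routine manipulation. Dropping the nonnegative $\ell_1$ term and using $\|\boldsymbol{\Delta}_{S^*}\|_1\le\sqrt{s}\,\|\boldsymbol{\Delta}\|_2$ gives $\rho_-\|\boldsymbol{\Delta}\|_2\lesssim\sqrt{s}\lambda\vee c_{n,k}\delta_k^\beta$, the $\ell_2$ bound. Feeding this back into the cone inequality bounds $\|\boldsymbol{\Delta}_{S^{*c}}\|_1$ by $\lambda^{-1}(\sqrt{s}\lambda\vee c_{n,k}\delta_k^\beta)^2/\rho_-$, and adding $\|\boldsymbol{\Delta}_{S^*}\|_1\le\sqrt{s}\,\|\boldsymbol{\Delta}\|_2$ reproduces the $\ell_1$ bound with the three terms $s\lambda$, $\sqrt{s}c_{n,k}\delta_k^\beta$, $c_{n,k}^2\delta_k^{2\beta}/\lambda$. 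For the objective gap I would instead use the RSM upper bound (\ref{upperrho}) to write $R_{\delta_k}^{D_k}(\boldsymbol{\theta})-R_{\delta_k}^{D_k}(\boldsymbol{\theta}^*)\le\nabla R_{\delta_k}^{D_k}(\boldsymbol{\theta}^*)^T\boldsymbol{\Delta}+\frac12\rho_+\|\boldsymbol{\Delta}\|_2^2$, add $\lambda(\|\boldsymbol{\theta}\|_1-\|\boldsymbol{\theta}^*\|_1)\le\lambda\|\boldsymbol{\Delta}\|_1$, bound the gradient term by $E_1,E_2$ as above, and substitute the already-derived $\ell_1$ and $\ell_2$ estimates (noting $\rho_+\asymp c_{n,k}\asymp\rho_-$, so $\rho_+\|\boldsymbol{\Delta}\|_2^2\lesssim\rho_-^{-1}(\sqrt{s}\lambda\vee c_{n,k}\delta_k^\beta)^2$); each resulting term is dominated by $\rho_-^{-1}(c_{n,k}^2\delta_k^{2\beta}\vee\sqrt{s}c_{n,k}\delta_k^\beta\lambda\vee s\lambda^2)$. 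The main obstacle I anticipate is precisely the bias term $E_2$: because Proposition~\ref{prop2} only controls it against sparse directions rather than in $\ell_\infty$, the argument must keep $\boldsymbol{\Delta}$ within an (approximately) $s$-sparse cone throughout, which is exactly what the hypotheses $\|\boldsymbol{\theta}_{S^{*c}}\|_0\le\widetilde s$ and $\omega_\lambda(\boldsymbol{\theta})\le\frac12\lambda$, together with the constant choice $\lambda/8$, are engineered to guarantee.
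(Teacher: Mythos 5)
Your proposal follows essentially the same route as the paper's proof: the same variational inequality extracted from $\omega_\lambda(\boldsymbol{\theta})\leq\frac12\lambda$ combined with the RSC condition, the same decomposition $\nabla R_{\delta_k}^{D_k}(\boldsymbol{\theta}^*)=E_1+E_2$ with $\|E_1\|_\infty\leq\lambda/8$ handled in $\ell_\infty$ and $E_2$ via the sparse bilinear bound of Proposition~\ref{prop2}, and the same cone inequality (with identical constants $\frac38\lambda$ and $\frac{13}{8}\lambda$) yielding the $\ell_2$ and then $\ell_1$ bounds. The only divergence is the objective-gap step, where you invoke the RSM upper bound plus $\rho_+\asymp\rho_-\asymp c_{n,k}$, whereas the paper more directly bounds $f_\lambda(\boldsymbol{\theta})-f_\lambda(\boldsymbol{\theta}^*)\leq\frac12\lambda\|\boldsymbol{\theta}-\boldsymbol{\theta}^*\|_1$ using the subgradient inequality, the RSC lower bound, and the stationarity measure; both work.
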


\begin{proof}
Combining (\ref{lowerrho}) in Assumption~\ref{asp6} with the definition of $w_{\lambda}(\btheta)$, we can derive 
\begin{equation}\label{lemma1s1}
    \frac{3}{2} \lambda\left\|\left(\boldsymbol{\theta}-\boldsymbol{\theta}^*\right)_{S^*}\right\|_1-\left(\boldsymbol{\theta}-\boldsymbol{\theta}^*\right)^T \nabla R_{\delta_k}^{D_k}\left(\boldsymbol{\theta}^*\right) \geq \frac{1}{2} \lambda\left\|\left(\boldsymbol{\theta}-\boldsymbol{\theta}^*\right)_{S^{* c}}\right\|_1+\rho_{-}\left\|\boldsymbol{\theta}-\boldsymbol{\theta}^*\right\|_2^2 .
\end{equation}
By Proposition~\ref{prop2}, Proposition~\ref{E_1}, and notice the sparsity of $\btheta, \btheta^*$, we have
\begin{equation}\label{diff}
\begin{aligned}
\left|\left(\boldsymbol{\theta}-\boldsymbol{\theta}^*\right)^T \nabla R_{\delta_k}^{D_k}\left(\boldsymbol{\theta}^*\right)\right| & =\mid\left(\boldsymbol{\theta}-\boldsymbol{\theta}^*\right)^T (\underbrace{\nabla  R^{D_k}_{\delta_k}(\btheta^*)  - \nabla  R_{\delta_k, \hat{\btheta}_{k-1}}(\btheta^*)}_{E_1}+\underbrace{\nabla  R_{\delta_k, \hat{\btheta}_{k-1}}(\btheta^*) - \nabla R(\btheta^*)}_{E_2} ) \mid) \\
& \leq\left\|\boldsymbol{\theta}-\boldsymbol{\theta}^*\right\|_1\left\|E_1\right\|_{\infty}+
 C_2c_{n,k}\delta_k^\beta 
\left\|\boldsymbol{\theta}-\boldsymbol{\theta}^*\right\|_2,
\end{aligned}
\end{equation}
where $C_2$ is the constant defined in  Proposition~\ref{prop2}. Combining (\ref{diff}) with (\ref{lemma1s1}) we have 
\begin{equation}\label{inequalgeneral}
\begin{aligned}
\rho_{-}\left\|\boldsymbol{\theta}-\boldsymbol{\theta}^*\right\|_2^2 & \leq  C_2c_{n,k}\delta_k^\beta \left\|\boldsymbol{\theta}-\boldsymbol{\theta}^*\right\|_2+\left(\frac{3}{2} \lambda+\left\|E_1\right\|_{\infty}\right)\left\|\left(\boldsymbol{\theta}-\boldsymbol{\theta}^*\right)_{S^*}\right\|_1 \\
& -\left(\frac{1}{2} \lambda-\left\|E_1\right\|_{\infty}\right)\left\|\left(\boldsymbol{\theta}-\boldsymbol{\theta}^*\right)_{S^{* c}}\right\|_1.
\end{aligned}
\end{equation}
Now we discuss two cases. If $\rho_{-}\left\|\boldsymbol{\theta}-\boldsymbol{\theta}^*\right\|_2^2  \leq  3 C_2c_{n,k}\delta_k^\beta \left\|\boldsymbol{\theta}-\boldsymbol{\theta}^*\right\|_2$, then 
\begin{equation}
    \left\|\boldsymbol{\theta}-\boldsymbol{\theta}^*\right\|_2 \leq \frac{3}{\rho_-}
      C_2c_{n,k}\delta_k^\beta.
\end{equation}
If $\rho_{-}\left\|\boldsymbol{\theta}-\boldsymbol{\theta}^*\right\|_2^2  >  3 C_2c_{n,k}\delta_k^\beta \left\|\boldsymbol{\theta}-\boldsymbol{\theta}^*\right\|_2$, then we have
\begin{equation}
\begin{aligned}
\frac{2 \rho_{-}}{3}\left\|\boldsymbol{\theta}-\boldsymbol{\theta}^*\right\|_2^2 & \leq\left(\frac{3}{2} \lambda+\left\|E_1\right\|_{\infty}\right)\left\|\left(\boldsymbol{\theta}-\boldsymbol{\theta}^*\right)_{S^*}\right\|_1 \\
& -\left(\frac{1}{2} \lambda-\left\|E_1\right\|_{\infty}\right)\left\|\left(\boldsymbol{\theta}-\boldsymbol{\theta}^*\right)_{S^{* c}}\right\|_1 .
\end{aligned}
\end{equation}
Note that the condition of $\lambda$ ensures that $\frac{1}{2} \lambda-\left\|E_1\right\|_{\infty} \geq 0$, hence we have
\begin{equation}
\begin{aligned}
\left\|\boldsymbol{\theta}-\boldsymbol{\theta}^*\right\|_2 & \leq \frac{3 \sqrt{s}\left(\frac{3}{2} \lambda+\left\|E_1\right\|_{\infty}\right)}{2 \rho_{-}} \leq \frac{3}{\rho_{-}} \sqrt{s} \lambda.
\end{aligned}
\end{equation}
Combining the above two cases, we conclude that
\begin{equation}\label{algolemma12norm}
\left\|\boldsymbol{\theta}-\boldsymbol{\theta}^*\right\|_2 \leq \frac{3}{\rho_{-}}\left(  C_2c_{n,k}\delta_k^\beta \vee \sqrt{s} \lambda\right).
\end{equation}

For $\left\|\boldsymbol{\theta}-\boldsymbol{\theta}^*\right\|_1$, define $\gamma=\frac{\frac{3}{2} \lambda+\left\|E_1\right\|_{\infty}}{\frac{1}{2} \lambda-\left\|E_1\right\|_{\infty}} \leq \frac{13}{3}$, and we consider two cases below.

If $\left\|\left(\boldsymbol{\theta}-\boldsymbol{\theta}^*\right)_{S^{* c}}\right\|_1>2 \gamma\left\|\left(\boldsymbol{\theta}-\boldsymbol{\theta}^*\right)_{S^*}\right\|_1$, by (\ref{algolemma12norm}) we obtain 
\begin{equation}
\begin{aligned}
\left\|\boldsymbol{\theta}-\boldsymbol{\theta}^*\right\|_1 & \leq \sqrt{s}(1+2 \gamma)\left\|\boldsymbol{\theta}-\boldsymbol{\theta}^*\right\|_2 \\
& \leq \frac{29}{3} \sqrt{s}\left\|\boldsymbol{\theta}-\boldsymbol{\theta}^*\right\|_2 \\
& \leq \frac{29}{\rho_{-}}\left(C_2 \sqrt{s} c_{n,k} \delta_k^\beta \vee s \lambda\right) .
\end{aligned}
\end{equation}

If $\left\|\left(\boldsymbol{\theta}-\boldsymbol{\theta}^*\right)_{S^{* c}}\right\|_1>2 \gamma\left\|\left(\boldsymbol{\theta}-\boldsymbol{\theta}^*\right)_{S^*}\right\|_1$, we have 
\begin{equation}\label{norm1case2}
    \left\|\boldsymbol{\theta}-\boldsymbol{\theta}^*\right\|_1 \leq\left(1+\frac{1}{2 \gamma}\right)\left\|\left(\boldsymbol{\theta}-\boldsymbol{\theta}^*\right)_{S^{* c}}\right\|_1.
\end{equation}
By (\ref{inequalgeneral}) we have
\begin{align*}
    \left(\frac{1}{2} \lambda-\left\|E_1\right\|_{\infty}\right)\left\|\left(\boldsymbol{\theta}-\boldsymbol{\theta}^*\right)_{S^{* c}}\right\|_1  &\leq  C_2c_{n,k}\delta_k^\beta \left\|\boldsymbol{\theta}-\boldsymbol{\theta}^*\right\|_2
-\rho_{-}\left\|\boldsymbol{\theta}-\boldsymbol{\theta}^*\right\|_2^2
+\left(\frac{3}{2} \lambda+\left\|E_1\right\|_{\infty}\right)\left\|\left(\boldsymbol{\theta}-\boldsymbol{\theta}^*\right)_{S^*}\right\|_1 \\
 &\leq  C_2c_{n,k}\delta_k^\beta \left\|\boldsymbol{\theta}-\boldsymbol{\theta}^*\right\|_2
-\rho_{-}\left\|\boldsymbol{\theta}-\boldsymbol{\theta}^*\right\|_2^2
+\left(\frac{3}{2} \lambda+\left\|E_1\right\|_{\infty}\right)  
\frac{1}{2\gamma}
\left\|\left(\boldsymbol{\theta}-\boldsymbol{\theta}^*\right)_{S^*}\right\|_1.
\end{align*}
By the definition of $\gamma$ we have
\[
\left(\frac{1}{2} \lambda-\left\|E_1\right\|_{\infty}\right)\left\|\left(\boldsymbol{\theta}-\boldsymbol{\theta}^*\right)_{S^{* c}}\right\|_1 \leq 2\left(C_2 c_{n,k}\delta_k^\beta\left\|\boldsymbol{\theta}-\boldsymbol{\theta}^*\right\|_2-\rho_{-}\left\|\boldsymbol{\theta}-\boldsymbol{\theta}^*\right\|_2^2\right) .
\]
Combine this with (\ref{norm1case2}) and note that $\|E_1\|_\infty \leq \lambda/8$ we have
\begin{equation}
\begin{aligned}
\left\|\boldsymbol{\theta}-\boldsymbol{\theta}^*\right\|_1 & \leq \frac{\left(2+\frac{1}{\gamma}\right)}{\frac{1}{2} \lambda-\left\|E_1\right\|_{\infty}} C_2 c_{n,k}\delta_k^\beta\left\|\boldsymbol{\theta}-\boldsymbol{\theta}^*\right\|_2 \\
& \leq \frac{56 C_2 c_{n,k}\delta_k^\beta}{3 \rho_{-}}\left(\frac{C_2 c_{n,k}\delta_k^\beta}{\lambda} \vee \sqrt{s}\right) .
\end{aligned}
\end{equation}
Conclude the two cases we have
\begin{equation}
\left\|\boldsymbol{\theta}-\boldsymbol{\theta}^*\right\|_1 \leq \frac{1}{\rho_{-}}\left(29 \sqrt{s} C_2 c_{n,k}\delta_k^\beta \vee 29 s \lambda \vee \frac{56 C_2^2 c_{n,k}\delta_k^{2 \beta}}{3 \lambda}\right).
\end{equation}

Finally, note that $\boldsymbol{\xi} \in \partial\|\boldsymbol{\theta}\|_1$ implies $\bxi^T(\btheta - \btheta^*) \geq \|\btheta\|_1 - \|\btheta^*\|_1$, and 
Assumption~\ref{asp6} gives 
$
\nabla R_{\delta_k}^{D_k}(\boldsymbol{\theta})^T\left(\boldsymbol{\theta}-\boldsymbol{\theta}^*\right) \geq 
R_{\delta_k}^{D_k}\left(\boldsymbol{\theta}\right) -R_{\delta_k}^{D_k}(\boldsymbol{\theta}^*)$, therefore 
\begin{align*}
f_\lambda(\boldsymbol{\theta})-f_\lambda\left(\boldsymbol{\theta}^*\right) 
&= R_{\delta_k}^{D_k}(\boldsymbol{\theta}) - R_{\delta_k}^{D_k}(\boldsymbol{\theta}^*)+\lambda\|\boldsymbol{\theta}\|_1 - \lambda\|\boldsymbol{\theta}^*\|_1\\
& \leq\left(\boldsymbol{\theta}-\boldsymbol{\theta}^*\right)^T\left(\nabla R_{\delta_k}^{D_k}(\boldsymbol{\theta})+\lambda \boldsymbol{\xi}\right) \\
& \leq \frac{1}{2} \lambda\left\|\boldsymbol{\theta}-\boldsymbol{\theta}^*\right\|_1 \\
& \leq \frac{1}{2 \rho_{-}}\left(29 \sqrt{s} C_2 c_{n,k}\delta_k^\beta \lambda \vee 29 s \lambda^2 \vee \frac{56 C_2^2 c_{n,k}\delta_k^{2 \beta}}{3}\right).
\end{align*}
Rewriting the constants we finish the proof.
\end{proof}

The next two lemmas characterize the properties of the iterates $\btheta_{k,t}^1, \cdots$ at stage $t$.

\begin{lemma}[Lemma 2 in \cite{feng2022nonregular}]\label{algolemma2}
    Assume the conditions of Proposition~\ref{prop2} and Assumption~\ref{asp6} hold. For $\lambda \geq \lambda_{k,tgt}$, if $\boldsymbol{\theta} \in \Omega,\left\|\boldsymbol{\theta}_{S^{* c}}\right\|_0 \leq \widetilde{s}$, and $\left\|\nabla R_{\delta_k}^{D_k}\left(\boldsymbol{\theta}^*\right)-
    \nabla  R_{\delta_k, \hat{\btheta}_{k-1}}(\btheta^*)
    \right\|_{\infty} \leq \lambda / 8$, and $ f_\lambda(\boldsymbol{\theta})-f_\lambda\left(\boldsymbol{\theta}^*\right) \leq \frac{\bar{C}_2}{2 \rho_{-}}\left( c_{n,k}^2\delta_k^{2 \beta} \vee \sqrt{s} c_{n,k}\delta_k^\beta \lambda \vee s \lambda^2\right)$,
    then we have 
    \[
   \left\|\boldsymbol{\theta}-\boldsymbol{\theta}^*\right\|_2 \leq \frac{\bar{C}_1^\prime}{\rho_{-}}\left( c_{n,k}\delta_k^\beta \vee
    s^{1/4}\sqrt{ c_{n,k}\delta_k \lambda}
    \vee
   \sqrt{s} \lambda\right),
   \]
   \[
   \left\|\boldsymbol{\theta}-\boldsymbol{\theta}^*\right\|_1 \leq \frac{2\bar{C}_2}{\rho_{-}}\left(\frac{c_{n,k}^2\delta_k^{2 \beta}}{\lambda} \vee \sqrt{s} c_{n,k}\delta_k^\beta \vee s \lambda\right),
   \]
   where $\bar{C}_1^\prime$ depends on $\bar{C}_1, \bar{C}_2$, which are constants defined in Lemma~\ref{algolemma1}.
\end{lemma}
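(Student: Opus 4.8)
The plan is to rerun the restricted-strong-convexity argument from the proof of Lemma~\ref{algolemma1}, but starting from the function-value gap hypothesis rather than the approximate stationarity condition $\omega_\lambda(\btheta)\le\frac12\lambda$. Write $\boldsymbol{\Delta}:=\btheta-\btheta^*$, and note that since $\btheta,\btheta^*$ are both sparse we have $\|\boldsymbol{\Delta}\|_0\lesssim s$. Applying the RSC inequality (\ref{lowerrho}) with base point $\btheta^*$ gives $R^{D_k}_{\delta_k}(\btheta)-R^{D_k}_{\delta_k}(\btheta^*)\ge \nabla R^{D_k}_{\delta_k}(\btheta^*)^T\boldsymbol{\Delta}+\frac12\rho_-\|\boldsymbol{\Delta}\|_2^2$. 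Adding $\lambda(\|\btheta\|_1-\|\btheta^*\|_1)$ to both sides turns the left-hand side into $f_\lambda(\btheta)-f_\lambda(\btheta^*)$, which by hypothesis is at most $G:=\frac{\bar C_2}{2\rho_-}\bigl(c_{n,k}^2\delta_k^{2\beta}\vee\sqrt s\,c_{n,k}\delta_k^\beta\lambda\vee s\lambda^2\bigr)$. Thus the quadratic term $\frac12\rho_-\|\boldsymbol{\Delta}\|_2^2$ is sandwiched between $G$ and the gradient plus penalty terms.

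The next step is to control the gradient term exactly as in (\ref{diff}): decompose $\nabla R^{D_k}_{\delta_k}(\btheta^*)=E_1+E_2+\nabla R(\btheta^*)$ with $\nabla R(\btheta^*)=0$, bound $|E_2^T\boldsymbol{\Delta}|\le C_2c_{n,k}\delta_k^\beta\|\boldsymbol{\Delta}\|_2$ by Proposition~\ref{prop2}, and $|E_1^T\boldsymbol{\Delta}|\le\|E_1\|_\infty\|\boldsymbol{\Delta}\|_1\le\frac{\lambda}{8}\|\boldsymbol{\Delta}\|_1$ using the hypothesis $\|E_1\|_\infty\le\lambda/8$ together with Proposition~\ref{E_1}. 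Combining with the decomposability bound $\|\btheta\|_1-\|\btheta^*\|_1\ge\|\boldsymbol{\Delta}_{S^{*c}}\|_1-\|\boldsymbol{\Delta}_{S^*}\|_1$ and splitting $\|\boldsymbol{\Delta}\|_1=\|\boldsymbol{\Delta}_{S^*}\|_1+\|\boldsymbol{\Delta}_{S^{*c}}\|_1$ yields the master inequality $\frac12\rho_-\|\boldsymbol{\Delta}\|_2^2\le G+C_2c_{n,k}\delta_k^\beta\|\boldsymbol{\Delta}\|_2+\tfrac{9}{8}\lambda\|\boldsymbol{\Delta}_{S^*}\|_1-\tfrac{7}{8}\lambda\|\boldsymbol{\Delta}_{S^{*c}}\|_1$.

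For the $\ell_2$ bound I would drop the nonpositive term $-\tfrac{7}{8}\lambda\|\boldsymbol{\Delta}_{S^{*c}}\|_1$ and use $\|\boldsymbol{\Delta}_{S^*}\|_1\le\sqrt s\,\|\boldsymbol{\Delta}\|_2$, reducing the master inequality to a scalar quadratic $a x^2\le G+Bx$ in $x=\|\boldsymbol{\Delta}\|_2$, with $a\asymp\rho_-$ and $B\asymp c_{n,k}\delta_k^\beta+\sqrt s\lambda$. Solving gives $\|\boldsymbol{\Delta}\|_2\lesssim B/a+\sqrt{G/a}$; the first summand contributes $\rho_-^{-1}(c_{n,k}\delta_k^\beta\vee\sqrt s\lambda)$ while $\sqrt{G/a}=\rho_-^{-1}\bigl(c_{n,k}\delta_k^\beta\vee s^{1/4}\sqrt{c_{n,k}\delta_k^\beta\lambda}\vee\sqrt s\lambda\bigr)$, whose middle term is precisely the cross term claimed in the statement. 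For the $\ell_1$ bound I would instead retain the negative term: since the left-hand side is nonnegative, the master inequality forces $\tfrac{7}{8}\lambda\|\boldsymbol{\Delta}_{S^{*c}}\|_1\le G+C_2c_{n,k}\delta_k^\beta\|\boldsymbol{\Delta}\|_2+\tfrac{9}{8}\lambda\sqrt s\,\|\boldsymbol{\Delta}\|_2$; substituting the just-derived $\ell_2$ bound and adding $\|\boldsymbol{\Delta}_{S^*}\|_1\le\sqrt s\,\|\boldsymbol{\Delta}\|_2$ reproduces the stated $\ell_1$ bound after collecting the three maxima and renaming constants.

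The main obstacle is the appearance of the extra cross term $s^{1/4}\sqrt{c_{n,k}\delta_k^\beta\lambda}$, which is absent from Lemma~\ref{algolemma1}: it arises precisely because here we only assume the weaker function-value hypothesis, so the square root of the three-way maximum defining $G$ does not collapse, and one must verify that in every parameter regime this middle term genuinely survives the quadratic solve rather than being dominated by $c_{n,k}\delta_k^\beta$ or $\sqrt s\lambda$. The remainder is bookkeeping — tracking the absolute constants $\bar C_1,\bar C_2$ through the quadratic formula and the two-case split for the $\ell_1$ bound — which is routine once the master inequality above is established. (I note that the statement's middle term reads $s^{1/4}\sqrt{c_{n,k}\delta_k\lambda}$; the derivation shows it should carry $\delta_k^\beta$, consistent with the middle entry of $G$.)
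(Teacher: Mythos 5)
Your argument is correct and follows exactly the template the paper itself uses for Lemma~\ref{algolemma1} (RSC at the base point $\btheta^*$, the bias--variance split of $\nabla R^{D_k}_{\delta_k}(\btheta^*)$ via Propositions~\ref{prop2} and~\ref{E_1}, and $\ell_1$-decomposability), merely replacing the stationarity hypothesis $\omega_\lambda(\btheta)\le\frac12\lambda$ with the function-value gap; the paper does not reproduce a proof of this lemma, deferring to Lemma~2 of \cite{feng2022nonregular}, and your derivation is the expected one. Your closing observation is also right: the square root of the middle entry of $G$ yields $s^{1/4}\sqrt{c_{n,k}\delta_k^{\beta}\lambda}$, so the $\delta_k$ in the statement is a typo for $\delta_k^{\beta}$ (note that under the tuning $c_{n,k}\delta_k^{\beta}\asymp\sqrt{s}\lambda$ all three terms coincide in order, so nothing downstream is affected). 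Two trivial remarks: the bound $\|E_1\|_\infty\le\lambda/8$ is a direct hypothesis of the lemma, so Proposition~\ref{E_1} need not be invoked here; and in the $\ell_1$ bookkeeping the cross terms such as $s^{1/4}(c_{n,k}\delta_k^\beta)^{3/2}\lambda^{-1/2}$ are absorbed by the stated maximum via the geometric-mean inequality, which is worth one explicit line.
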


\begin{lemma}[Lemma 3 in \cite{feng2022nonregular}]\label{lemma3}
    Under the same conditions of Lemma~\ref{algolemma2}, if we choose $\lambda_{k,tgt} = C\sqrt{\frac{c_{n,k}K\log d}{n\delta_k}}$ for some large enough constant $C$, and $\delta_k = c\left( \frac{s K \log d}{n c_{n,k}}\right)^{1/(2\beta+1)} $ for some constant $c>0$, then
    \begin{equation}
\left\|\mathcal{S}_{\lambda_{k,t} \eta}\left(\boldsymbol{\theta}, \mathbb{R}^d\right)_{S^{* c}}\right\|_0 \leq \widetilde{s},
\end{equation}
where $\widetilde{s}=8\left(\frac{\bar{C}_2}{\eta \rho_{-}}+\frac{2 \bar{C}_1^{\prime 2} \rho_{+}^2}{\rho_{-}^2}+2 C_2^2\right) \cdot s$, and $\bar{C_1}^\prime, \bar{C}_2, C_2$ are constants defined in Lemma~\ref{algolemma2}, Lemma~\ref{algolemma1} and Proposition~\ref{prop2}, respectively.
\end{lemma}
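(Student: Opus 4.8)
The plan is to exploit the fact that, because the inner constraint set is all of $\RR^d$, the operator $\mathcal{S}_{\lambda_{k,t}\eta}(\cdot,\RR^d)$ is coordinatewise soft-thresholding: writing $\btheta^+=\mathcal{S}_{\lambda_{k,t}\eta}(\btheta,\RR^d)$ and $g_j=[\nabla R_{\delta_k}^{D_k}(\btheta)]_j$, we have $\theta^+_j=\sgn(\theta_j-\eta g_j)\,(|\theta_j-\eta g_j|-\lambda_{k,t}\eta)_+$. Thus a coordinate $j\in S^{*c}$ (so $\theta^*_j=0$) is active in $\btheta^+$ only if $|\theta_j-\eta g_j|>\lambda_{k,t}\eta$, i.e. $|g_j|>\lambda_{k,t}-|\theta_j-\theta^*_j|/\eta$. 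Setting $\mathcal{A}=\{j\in S^{*c}:\theta^+_j\neq 0\}$, the goal reduces to showing $|\mathcal{A}|\le\widetilde s$.

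First I would split $\mathcal{A}$ according to which of the two competing terms carries at least half of $\lambda_{k,t}$, so that $\mathcal{A}\subseteq\mathcal{A}_{\mathrm{iter}}\cup\mathcal{A}_{\mathrm{grad}}$ with $\mathcal{A}_{\mathrm{iter}}=\{j:|\theta_j-\theta^*_j|\ge\eta\lambda_{k,t}/2\}$ and $\mathcal{A}_{\mathrm{grad}}=\{j:|g_j|\ge\lambda_{k,t}/2\}$. The set $\mathcal{A}_{\mathrm{iter}}$ is handled by a pigeonhole bound against the $\ell_1$ error, $|\mathcal{A}_{\mathrm{iter}}|\cdot\eta\lambda_{k,t}/2\le\|\btheta-\btheta^*\|_1$. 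Here the choices $\lambda_{k,tgt}\asymp\sqrt{c_{n,k}K\log d/(n\delta_k)}$ and $\delta_k\asymp(sK\log d/(nc_{n,k}))^{1/(2\beta+1)}$ enforce the bias–variance balance $c_{n,k}\delta_k^\beta\asymp\sqrt s\,\lambda_{k,t}$, which collapses all three branches of the maxima in Lemma~\ref{algolemma2} so that $\|\btheta-\btheta^*\|_1\lesssim(\bar C_2/\rho_-)s\lambda_{k,t}$; this yields the first term $\bar C_2/(\eta\rho_-)\cdot s$ of $\widetilde s$.

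Next, on $\mathcal{A}_{\mathrm{grad}}$ I would decompose $g_j=[\nabla R_{\delta_k}^{D_k}(\btheta)-\nabla R_{\delta_k}^{D_k}(\btheta^*)]_j+e_j$, where $e=\nabla R_{\delta_k}^{D_k}(\btheta^*)$ further splits (using $\nabla R(\btheta^*)=0$) into a stochastic part $e^{\mathrm{stoch}}=\nabla R_{\delta_k}^{D_k}(\btheta^*)-\nabla R_{\delta_k,\hat\btheta_{k-1}}(\btheta^*)$ with $\|e^{\mathrm{stoch}}\|_\infty\le\lambda_{k,t}/8$ (the standing hypothesis, supplied by Proposition~\ref{E_1}) and a smoothing-bias part $e^{\mathrm{bias}}=\nabla R_{\delta_k,\hat\btheta_{k-1}}(\btheta^*)-\nabla R(\btheta^*)$ controlled on sparse directions by Proposition~\ref{prop2}. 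Since $\|e^{\mathrm{stoch}}\|_\infty\le\lambda_{k,t}/8$, every $j\in\mathcal{A}_{\mathrm{grad}}$ must have either $|[\nabla R_{\delta_k}^{D_k}(\btheta)-\nabla R_{\delta_k}^{D_k}(\btheta^*)]_j|\gtrsim\lambda_{k,t}$ or $|e^{\mathrm{bias}}_j|\gtrsim\lambda_{k,t}$. For the first group I would invoke the restricted strong smoothness of Assumption~\ref{asp6}, which (via the standard quadratic-bound argument) gives $\|P_T(\nabla R_{\delta_k}^{D_k}(\btheta)-\nabla R_{\delta_k}^{D_k}(\btheta^*))\|_2\le\rho_+\|\btheta-\btheta^*\|_2$ for sparse $T$; combined with the $\ell_2$ bound of Lemma~\ref{algolemma2} this produces a count $\lesssim(\rho_+^2\bar C_1'^2/\rho_-^2)s$, the middle term. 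For the second group, Proposition~\ref{prop2} gives $\|P_T e^{\mathrm{bias}}\|_2\le C_2c_{n,k}\delta_k^\beta\asymp C_2\sqrt s\,\lambda_{k,t}$, whence a count $\lesssim C_2^2 s$, the last term. Summing the three counts and absorbing constants into the factor $8$ reproduces $\widetilde s$.

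The main obstacle is the apparent circularity: both the restricted smoothness bound and Proposition~\ref{prop2} are only valid on index sets of size $\lesssim s$, yet they are being used precisely to bound the size of $\mathcal{A}$. I would resolve this by observing that these restricted inequalities hold \emph{uniformly} over all subsets $T$ with $|T|\le s'=C's$, so applying them to any truncation of $\mathcal{A}_{\mathrm{grad}}$ of size $\min(|\mathcal{A}_{\mathrm{grad}}|,s')$ yields a per-subset count bound whose right-hand side is a fixed constant multiple of $s$. Choosing the constant $C'$ in Assumption~\ref{asp4}(ii)/Assumption~\ref{asp6} larger than $8\big(\bar C_2/(\eta\rho_-)+2\bar C_1'^2\rho_+^2/\rho_-^2+2C_2^2\big)$ makes the argument self-consistent: the derived count lies strictly below $s'$, so it bounds all of $\mathcal{A}_{\mathrm{grad}}$ rather than just a truncation. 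This is exactly why Lemma~\ref{algolemma2} carries the invariant $\|\btheta_{S^{*c}}\|_0\le\widetilde s$, which the path-following iterations then preserve inductively.
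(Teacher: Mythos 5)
The paper does not actually prove this lemma — it is imported verbatim as ``Lemma 3 in \cite{feng2022nonregular}'', and the appendix explicitly defers the path-following analysis to that reference, noting only that Propositions~\ref{prop2} and \ref{E_1} supply the new ingredients. So there is no in-paper proof to compare against; judged on its own, your reconstruction is essentially correct and almost certainly coincides with the intended argument: the three summands of $\widetilde{s}$ line up exactly with your three counts (pigeonhole against $\|\btheta-\btheta^*\|_1$ for the iterate part, a restricted-$\ell_2$ count against $\rho_+\|\btheta-\btheta^*\|_2$ for the gradient-increment part, and a restricted-$\ell_2$ count against the Proposition~\ref{prop2} bias bound), the bias--variance balance $c_{n,k}\delta_k^\beta\asymp\sqrt{s}\,\lambda_{k,tgt}$ under the stated choices of $\lambda_{k,tgt}$ and $\delta_k$ does collapse all branches of the maxima in Lemma~\ref{algolemma2} to $\sqrt{s}\lambda$ and $s\lambda$ respectively, and the truncation-to-$s'$ device is the standard and correct way to break the circularity.

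The one step you assert rather than derive is the restricted gradient-Lipschitz bound $\|P_T(\nabla R_{\delta_k}^{D_k}(\btheta)-\nabla R_{\delta_k}^{D_k}(\btheta^*))\|_2\lesssim\rho_+\|\btheta-\btheta^*\|_2$. This is not a verbatim consequence of the two-point inequality (\ref{upperrho}); you need to apply (\ref{lowerrho})--(\ref{upperrho}) at auxiliary points of the form $\btheta^*+t\bv$ for $s'$-sparse unit $\bv$ and optimize over $t$, which yields the bound with constant $2\rho_+$ rather than $\rho_+$ (consistent with the factor $2\bar C_1^{\prime 2}\rho_+^2/\rho_-^2$ in $\widetilde{s}$) and additionally requires checking that $\btheta^*+t\bv$ stays inside $\Omega_{k-1}$ and within the sparsity level $\lesssim s$ of Assumption~\ref{asp6} — a radius/sparsity bookkeeping point that should be stated. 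Similarly, your thresholds ($\lambda/2$, then $3\lambda/16$ after peeling off $\|e^{\mathrm{stoch}}\|_\infty\le\lambda/8$) will not reproduce the exact constant $8(\cdot)$ without more careful tracking, though the order is right and the lemma's constant has slack. Neither issue is a wrong idea; both are details the cited reference works out, and your proof is complete once they are filled in.
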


Lemma~\ref{algolemma2} and Lemma~\ref{lemma3} together imply that if the initialization at stage $t$ is sparse and satisfies $\omega_{\lambda_{k,t}}\left(\boldsymbol{\theta}_{k,t}^0\right) \leq \frac{1}{2} \lambda_{k,t}$, then the subsequent iterate should also retain sparsity and exhibit favorable properties. Furthermore, under Assumption~\ref{asp6},  Lemma~\ref{lemma7} ensures that $f_\lambda(\btheta_{k,t}^0), f_\lambda(\btheta_{k,t}^1), \cdots$ are decreasing.
Consequently, the conditions delineated in Lemma~\ref{algolemma2} and Lemma~\ref{lemma3} persist throughout the entire path $\btheta_{k,t}^1, \cdots$, guaranteeing both sparsity and convergence towards a local solution. This proposition is formally articulated as follows.

\begin{proposition}[Proposition 3 in \cite{feng2022nonregular}]\label{proposition3}
    Under Assumptions~\ref{asp3}-\ref{asp6}, suppose $\delta_k = c\left( \frac{s K \log d}{n c_{n,k}}\right)^{1/(2\beta+1)} $ for some constant $c>0$. If $\left\|\nabla R_{\delta_k}^{D_k}\left(\boldsymbol{\theta}^*\right)-
    \nabla  R_{\delta_k, \hat{\btheta}_{k-1}}(\btheta^*)
    \right\|_{\infty} \leq \lambda_{k,tgt} / 8 \leq \lambda_{k,t} / 8$ and at stage $t$, the proximal gradient method is initialized with $\boldsymbol{\theta}_{k,t}^0 \in \Omega$ satisfying 
\begin{equation}
\left\|\left(\boldsymbol{\theta}_{k,t}^0\right)_{S^{* c}}\right\|_0 \leq \widetilde{s} \text { and } \omega_{\lambda_{k,t}}\left(\boldsymbol{\theta}_{k,t}^0\right) \leq \frac{1}{2} \lambda_{k,t},
\end{equation}
then for $j = 1,2, \cdots$, we have
\begin{itemize}
    \item $\left\|\left(\boldsymbol{\theta}_{k,t}^j\right)_{S^{* c}}\right\|_0 \leq \widetilde{s}$
    \item  The sequence $\left\{\boldsymbol{\theta}_{k,t}^j\right\}_{j=0}^{\infty}$ converges towards a unique local solution $\hat{\btheta}_{k,t}$ satisfying the first-order optimality $\omega_{\lambda_{k,t}}\left(\hat{\boldsymbol{\theta}}_{k,t}\right) \leq 0$ with 
     $\left\|\left(\boldsymbol{\theta}_{k,t}^j\right)_{S^{* c}}\right\|_0 \leq \widetilde{s}$
     \item $f_{\lambda_{k,t}}\left(\boldsymbol{\theta}_{k,t}^j\right)-f_{\lambda_{k,t}}\left(\widehat{\boldsymbol{\theta}}_{k,t}\right) \leq\left(1-\frac{\eta \rho_{-}}{4}\right)^j\left(f_{\lambda_{k,t}}\left(\boldsymbol{\theta}_{k,t}^0\right)-f_{\lambda_{k,t}}\left(\widehat{\boldsymbol{\theta}}_{k,t}\right)\right)$.
\end{itemize}
\end{proposition}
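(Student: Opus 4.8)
The plan is to run a single induction over the proximal-gradient iteration index $j$ at a fixed stage $t$, propagating all three conclusions from $j$ to $j+1$, with the backbone being the restricted strong convexity and smoothness of $R_{\delta_k}^{D_k}$ over sparse vectors in $\Omega_{k-1}$ (Assumption~\ref{asp6}), the sparsity-preservation property of the proximal operator (Lemma~\ref{lemma3}), and the localized error bounds of Lemma~\ref{algolemma2}. The induction hypothesis at $\boldsymbol{\theta}_{k,t}^j$ asserts three things simultaneously: the restricted sparsity $\|(\boldsymbol{\theta}_{k,t}^j)_{S^{*c}}\|_0 \leq \widetilde{s}$, membership $\boldsymbol{\theta}_{k,t}^j \in \Omega$, and the objective-value bound $f_{\lambda_{k,t}}(\boldsymbol{\theta}_{k,t}^j) - f_{\lambda_{k,t}}(\boldsymbol{\theta}^*) \leq \frac{\bar{C}_2}{2\rho_-}(c_{n,k}^2\delta_k^{2\beta} \vee \sqrt{s}\,c_{n,k}\delta_k^\beta \lambda_{k,t} \vee s\lambda_{k,t}^2)$. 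The base case $j=0$ is exactly the assumed initialization $\omega_{\lambda_{k,t}}(\boldsymbol{\theta}_{k,t}^0) \leq \tfrac12\lambda_{k,t}$ together with the initial sparsity, fed into Lemma~\ref{algolemma1} to convert the stationarity gap into the objective-value bound.

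First I would handle the sparsity propagation. Given the induction hypothesis, Lemma~\ref{lemma3} applies to $\boldsymbol{\theta}_{k,t}^j$ and gives $\|(\mathcal{S}_{\lambda_{k,t}\eta}(\boldsymbol{\theta}_{k,t}^j,\mathbb{R}^d))_{S^{*c}}\|_0 \leq \widetilde{s}$, i.e.\ $\|(\boldsymbol{\theta}_{k,t}^{j+1})_{S^{*c}}\|_0 \leq \widetilde{s}$, which preserves the first conclusion. Next, for the objective-value bound I would invoke the restricted smoothness inequality (\ref{upperrho}) with the step size $\eta \leq 1/\rho^+_{n,k}$: the standard proximal-gradient descent argument then yields $f_{\lambda_{k,t}}(\boldsymbol{\theta}_{k,t}^{j+1}) \leq f_{\lambda_{k,t}}(\boldsymbol{\theta}_{k,t}^j)$, so the monotone decrease keeps the objective-value bound of the hypothesis intact. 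Because the sparsity and objective-value bounds persist, Lemma~\ref{algolemma2} applies uniformly along the whole path and confines every iterate to a shrinking neighborhood of $\boldsymbol{\theta}^*$, hence inside $\Omega$; this closes the induction and also establishes that the stationarity gap $\omega_{\lambda_{k,t}}(\boldsymbol{\theta}_{k,t}^j)$ remains below $\tfrac12\lambda_{k,t}$ at every step.

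For the convergence and linear rate I would use the restricted strong convexity (\ref{lowerrho}): on the sparse face the objective behaves like a $\rho^-_{n,k}$-strongly-convex function, so the proximal-gradient map is a contraction in objective value, delivering the geometric decay $f_{\lambda_{k,t}}(\boldsymbol{\theta}_{k,t}^j) - f_{\lambda_{k,t}}(\widehat{\boldsymbol{\theta}}_{k,t}) \leq (1-\tfrac{\eta\rho_-}{4})^j(f_{\lambda_{k,t}}(\boldsymbol{\theta}_{k,t}^0) - f_{\lambda_{k,t}}(\widehat{\boldsymbol{\theta}}_{k,t}))$. Passing to the limit and using the first-order optimality of the proximal map together with strong convexity over the sparse set identifies the limit $\widehat{\boldsymbol{\theta}}_{k,t}$ as the unique restricted local minimizer with $\omega_{\lambda_{k,t}}(\widehat{\boldsymbol{\theta}}_{k,t}) \leq 0$, which inherits the sparsity bound. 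This portion is a verbatim adaptation of the path-following analysis in \cite{feng2022nonregular}.

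The hard part, and the only place genuinely new work is required rather than importing \cite{feng2022nonregular}, is verifying the two structural inputs that drive the entire argument in the active-subsampling regime. These are (a) that the restricted strong-convexity and smoothness constants scale as $\rho^{\pm}_{n,k} \asymp c_{n,k}$ over the localized set $\Omega_{k-1}$ for the subsampled risk $R_{\delta_k}^{D_k}$, which is the content of Assumption~\ref{asp6} together with the induction-based localization sketched after it; and (b) that the gradient-deviation bound $\|\nabla R_{\delta_k}^{D_k}(\boldsymbol{\theta}^*) - \nabla R_{\delta_k,\hat{\boldsymbol{\theta}}_{k-1}}(\boldsymbol{\theta}^*)\|_\infty \leq \lambda_{k,tgt}/8$ holds, which follows from Proposition~\ref{E_1} under the stated choice $\lambda_{k,tgt} \asymp \sqrt{c_{n,k}K\log d/(n\delta_k)}$, while the controlled bias $\|\nabla R_{\delta_k,\hat{\boldsymbol{\theta}}_{k-1}}(\boldsymbol{\theta}^*) - \nabla R(\boldsymbol{\theta}^*)\|$ enters through Proposition~\ref{prop2} in Lemmas~\ref{algolemma1}--\ref{algolemma2}. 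Once these two conditions are secured, the descent-plus-sparsity induction above runs unchanged and yields all three claims.
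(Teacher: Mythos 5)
Your proposal is correct and follows essentially the same route as the paper, which itself only sketches this argument (induction over the proximal-gradient iterates combining Lemma~\ref{lemma3} for sparsity preservation, the descent property of Lemma~\ref{lemma7} to keep the objective-value bound of Lemma~\ref{algolemma1} intact, Lemma~\ref{algolemma2} for localization, and the RSC/RSM conditions for the geometric rate) and defers the details to \cite{feng2022nonregular}. You also correctly identify, as the paper does, that the only genuinely new inputs in the active-subsampling setting are the gradient deviation and bias controls of Propositions~\ref{E_1} and~\ref{prop2} together with the $c_{n,k}$-scaled RSC/RSM constants of Assumption~\ref{asp6}.
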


\begin{lemma}(\cite{nesterov2013gradient}, Theorem 1)\label{lemma7}
    Under the same conditions of Lemma~\ref{lemma3}, we have
   \[
   f_\lambda\left(\mathcal{S}_{\lambda \eta}(\boldsymbol{\theta}, \Omega)\right) \leq f_\lambda(\boldsymbol{\theta})-\frac{1}{2 \eta}\left\|\mathcal{S}_{\lambda \eta}(\boldsymbol{\theta}, \Omega)-\boldsymbol{\theta}\right\|_2^2.
   \]
\end{lemma}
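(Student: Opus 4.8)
The plan is to establish this as a standard sufficient-decrease (majorization) inequality, driven by the strong convexity of the quadratic surrogate minimized by $\mathcal{S}_{\lambda\eta}$ together with the restricted smoothness condition (\ref{upperrho}) in Assumption~\ref{asp6}. Write $\btheta^+ := \mathcal{S}_{\lambda\eta}(\btheta,\Omega)$ and recall that, by construction, $\btheta^+$ is the minimizer over $\btheta'\in\Omega$ of
\begin{equation*}
g(\btheta') := R_{\delta_k}^{D_k}(\btheta) + \langle\nabla R_{\delta_k}^{D_k}(\btheta),\btheta'-\btheta\rangle + \frac{1}{2\eta}\|\btheta'-\btheta\|_2^2 + \lambda\|\btheta'\|_1 .
\end{equation*}
Since $g(\btheta) = R_{\delta_k}^{D_k}(\btheta) + \lambda\|\btheta\|_1 = f_\lambda(\btheta)$, the task reduces to controlling $g$ at its constrained minimizer relative to $g(\btheta)$.

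The first step is to exploit that $g$ is $\tfrac{1}{\eta}$-strongly convex, the modulus coming entirely from the quadratic term (the linear and $\ell_1$ parts contribute nonnegative curvature). Because $\btheta^+$ minimizes $g$ over the convex set $\Omega$ and $\btheta\in\Omega$, first-order optimality supplies a subgradient $\boldsymbol{\xi}\in\partial g(\btheta^+)$ with $\langle\boldsymbol{\xi},\btheta-\btheta^+\rangle\ge 0$; combining this with the strong-convexity lower bound yields
\begin{equation*}
f_\lambda(\btheta) = g(\btheta) \ge g(\btheta^+) + \langle\boldsymbol{\xi},\btheta-\btheta^+\rangle + \frac{1}{2\eta}\|\btheta-\btheta^+\|_2^2 \ge g(\btheta^+) + \frac{1}{2\eta}\|\btheta-\btheta^+\|_2^2 .
\end{equation*}
The second step is to replace $g(\btheta^+)$ by $f_\lambda(\btheta^+)$. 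Applying the RSM bound (\ref{upperrho}) to the sparse pair $\btheta,\btheta^+\in\Omega_{k-1}$ (sparsity of $\btheta^+$ being furnished by Lemma~\ref{lemma3}) and using the step-size requirement $\eta\le 1/\rho^+_{n,k}$, one obtains $R_{\delta_k}^{D_k}(\btheta^+)\le R_{\delta_k}^{D_k}(\btheta)+\langle\nabla R_{\delta_k}^{D_k}(\btheta),\btheta^+-\btheta\rangle+\tfrac{1}{2\eta}\|\btheta^+-\btheta\|_2^2$; adding $\lambda\|\btheta^+\|_1$ to both sides shows $f_\lambda(\btheta^+)\le g(\btheta^+)$. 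Chaining the two displays gives $f_\lambda(\btheta^+)\le f_\lambda(\btheta)-\tfrac{1}{2\eta}\|\btheta^+-\btheta\|_2^2$, which is the assertion.

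The main obstacle---really the only nonroutine point---is justifying that the RSM inequality (\ref{upperrho}) legitimately applies to the pair $(\btheta,\btheta^+)$: this requires both iterates to be sparse, so that they lie in the restricted set on which (\ref{upperrho}) is assumed, and to remain inside the localizing ball $\Omega_{k-1}$. These invariants are precisely what Lemma~\ref{lemma3} and the running guarantees of Proposition~\ref{proposition3} maintain along the path. With sparsity secured, the step size $\eta\le 1/\rho^+_{n,k}$ makes the quadratic surrogate a genuine majorant of $R_{\delta_k}^{D_k}$ along the update direction, and the remainder is the standard majorization--minimization calculation.
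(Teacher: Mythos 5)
Your proof is correct. Note that the paper itself gives no argument for this lemma; it simply cites Nesterov (2013), Theorem 1, whose proof is exactly the majorization--minimization plus strong-convexity calculation you carry out: the $\tfrac{1}{\eta}$-strong convexity of the prox subproblem $g$ together with constrained first-order optimality gives $g(\btheta)\ge g(\btheta^+)+\tfrac{1}{2\eta}\|\btheta-\btheta^+\|_2^2$, and the step-size condition $\eta\le 1/\rho^+_{n,k}$ combined with (\ref{upperrho}) gives $f_\lambda(\btheta^+)\le g(\btheta^+)$, which chain to the claim. What your write-up adds beyond the citation is the one genuinely nonroutine point in transplanting Nesterov's result to this setting: his theorem is stated for a globally smooth loss, whereas here (\ref{upperrho}) holds only for sparse vectors inside the localizing set $\Omega_{k-1}$, so the argument is legitimate only because Lemma~\ref{lemma3} and the invariants of Proposition~\ref{proposition3} guarantee that both $\btheta$ and $\mathcal{S}_{\lambda\eta}(\btheta,\Omega)$ remain sparse and feasible along the path. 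Making that dependence explicit is a worthwhile sharpening of what the paper leaves implicit in the citation.
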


\begin{lemma}(\cite{nesterov2013gradient}, Corollary 1)\label{lemma8}
    Under the same conditions of Lemma~\ref{lemma3}, we have
   \[
   \omega_{\lambda_t}\left(\mathcal{S}_{\lambda \eta}(\boldsymbol{\theta}, \Omega)\right) \leq\left(\frac{1}{\eta}+\rho_{+}\right)\left\|\mathcal{S}_{\lambda \eta}(\boldsymbol{\theta}, \Omega)-\boldsymbol{\theta}\right\|_2.
   \]
\end{lemma}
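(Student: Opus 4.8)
The plan is to analyze the single proximal-gradient step directly through its first-order optimality condition, and then feed the resulting subgradient into the definition of the stopping criterion $\omega_\lambda$. Write $\btheta^+=\mathcal{S}_{\lambda\eta}(\btheta,\Omega)$ for the prox iterate. Since $\btheta^+$ minimizes the strongly convex objective $\frac{1}{2\eta}\|\btheta'-\btheta+\eta\nabla R^{D_k}_{\delta_k}(\btheta)\|_2^2+\lambda\|\btheta'\|_1$ over the convex set $\Omega$, the first-order optimality condition guarantees a subgradient $\bxi^+\in\partial\|\btheta^+\|_1$ such that $\langle\tfrac{1}{\eta}(\btheta^+-\btheta)+\nabla R^{D_k}_{\delta_k}(\btheta)+\lambda\bxi^+,\,\btheta'-\btheta^+\rangle\ge 0$ for every $\btheta'\in\Omega$. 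This variational inequality is the only structural input I need about the prox map.

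Next I would exploit this inequality by choosing $\bxi=\bxi^+$ in the inner minimization defining $\omega_\lambda(\btheta^+)$, which immediately yields $\omega_\lambda(\btheta^+)\le\max_{\btheta'\in\Omega}\|\btheta^+-\btheta'\|_1^{-1}\langle\nabla R^{D_k}_{\delta_k}(\btheta^+)+\lambda\bxi^+,\,\btheta^+-\btheta'\rangle$. I then add and subtract $\nabla R^{D_k}_{\delta_k}(\btheta)$ inside the numerator, splitting it into a gradient-difference piece $\langle\nabla R^{D_k}_{\delta_k}(\btheta^+)-\nabla R^{D_k}_{\delta_k}(\btheta),\,\btheta^+-\btheta'\rangle$ and a piece $\langle\nabla R^{D_k}_{\delta_k}(\btheta)+\lambda\bxi^+,\,\btheta^+-\btheta'\rangle$; the latter is $\le-\tfrac{1}{\eta}\langle\btheta^+-\btheta,\,\btheta^+-\btheta'\rangle$ by the optimality inequality above.

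The remaining work is to bound both pieces by a multiple of $\|\btheta^+-\btheta'\|_1$ so that the $\ell_1$ normalization in $\omega_\lambda$ cancels. For the gradient-difference term I apply H\"older's inequality $\langle a,b\rangle\le\|a\|_\infty\|b\|_1$ together with the restricted-smoothness estimate $\|\nabla R^{D_k}_{\delta_k}(\btheta^+)-\nabla R^{D_k}_{\delta_k}(\btheta)\|_\infty\le\|\nabla R^{D_k}_{\delta_k}(\btheta^+)-\nabla R^{D_k}_{\delta_k}(\btheta)\|_2\le\rho_+\|\btheta^+-\btheta\|_2$ coming from the RSM condition in Assumption~\ref{asp6}; for the optimality term I use $\langle\btheta^+-\btheta,\,\btheta^+-\btheta'\rangle\le\|\btheta^+-\btheta\|_\infty\|\btheta^+-\btheta'\|_1\le\|\btheta^+-\btheta\|_2\|\btheta^+-\btheta'\|_1$. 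Dividing through by $\|\btheta^+-\btheta'\|_1$ and taking the maximum over $\btheta'\in\Omega$ then gives exactly $\omega_\lambda(\btheta^+)\le(\tfrac{1}{\eta}+\rho_+)\|\btheta^+-\btheta\|_2$.

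The main obstacle I anticipate is the norm mismatch: $\omega_\lambda$ is normalized in $\ell_1$, whereas the smoothness constant $\rho_+$ governs $\ell_2$ behaviour, so the argument hinges on converting the $\ell_2$ gradient-Lipschitz estimate into an $\ell_\infty$ bound before invoking H\"older. This conversion is legitimate only because the RSM inequality of Assumption~\ref{asp6} is restricted to \emph{sparse} vectors, and one must therefore invoke the conditions of Lemma~\ref{lemma3} (equivalently Proposition~\ref{proposition3}) to ensure that both $\btheta$ and the prox iterate $\btheta^+$ have support of size $O(\widetilde{s})$, so that the restricted smoothness constant genuinely controls $\nabla R^{D_k}_{\delta_k}(\btheta^+)-\nabla R^{D_k}_{\delta_k}(\btheta)$ along the relevant coordinates. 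Once the sparsity of the iterate is in hand, the rest of the argument is routine convex-analytic manipulation.
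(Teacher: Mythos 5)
The paper does not prove this lemma at all; it simply cites it as Corollary~1 of \cite{nesterov2013gradient}, where the relevant hypothesis is a \emph{globally} Lipschitz gradient. Your skeleton is exactly the right one and is the standard route to this bound: the variational inequality for the constrained prox step, plugging the optimality subgradient $\bxi^+$ into the $\min$ in $\omega_\lambda$, adding and subtracting $\nabla R^{D_k}_{\delta_k}(\btheta)$, and H\"older with $\ell_\infty/\ell_1$ duality. The second piece (the $\tfrac{1}{\eta}$ term) is handled correctly, and the two pieces combine to the stated constant $\tfrac{1}{\eta}+\rho_+$.

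The gap is the step
$\|\nabla R^{D_k}_{\delta_k}(\btheta^+)-\nabla R^{D_k}_{\delta_k}(\btheta)\|_\infty \leq \|\nabla R^{D_k}_{\delta_k}(\btheta^+)-\nabla R^{D_k}_{\delta_k}(\btheta)\|_2 \leq \rho_+\|\btheta^+-\btheta\|_2$.
The RSM condition (\ref{upperrho}) in Assumption~\ref{asp6} is a \emph{function-value} quadratic upper bound along sparse directions; for a non-convex risk it does not imply $\ell_2$-Lipschitzness of the full gradient vector. Symmetrizing (\ref{upperrho}) only yields $\langle\nabla R^{D_k}_{\delta_k}(\btheta^+)-\nabla R^{D_k}_{\delta_k}(\btheta),\,\btheta^+-\btheta\rangle \leq \rho_+\|\btheta^+-\btheta\|_2^2$, i.e.\ control of the component of the gradient difference \emph{along} $\btheta^+-\btheta$. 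But the $\max$ over $\btheta'\in\Omega$ in the definition of $\omega_\lambda$ is over a full-dimensional set, so it effectively extracts $\|\nabla R^{D_k}_{\delta_k}(\btheta^+)+\lambda\bxi^+\|_\infty$, which involves every coordinate of the gradient difference, including those outside $\supp(\btheta)\cup\supp(\btheta^+)$; sparsity of the iterates does not restrict which coordinates of the gradient difference matter. To close this you would need a two-sided sparse-eigenvalue condition on the Hessian together with a polarization argument on $e_j^T\nabla^2 R^{D_k}_{\delta_k}(\cdot)(\btheta^+-\btheta)$ (which costs extra constants beyond $\rho_+$), or a direct coordinate-wise gradient-Lipschitz assumption; the crude global Lipschitz constant of $\nabla R^{D_k}_{\delta_k}$ is of order $M_n^2/\delta_k^2$ times sampling factors and is far larger than $\rho_+\asymp c_{n,k}$, so it cannot substitute. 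This is presumably why the paper defers to Nesterov's smooth setting rather than deriving the bound from Assumption~\ref{asp6}; as written, your derivation asserts an implication of Assumption~\ref{asp6} that it does not deliver.
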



\begin{lemma}[Lemma 10 in \cite{feng2022nonregular}] \label{lemma10}
    Suppose Assumption~\ref{asp6} holds. If $\lambda \geq \lambda_{k,t g t}$, $\|\nabla  R^{D_k}_{\delta_k}(\btheta^*)  - \nabla  R_{\delta_k, \hat{\btheta}_{k-1}}(\btheta^*) \|_\infty \leq 
\lambda_{k,t g t}/8$, $\omega_{\lambda}\left(\btheta\right) \leq \frac{1}{2} \lambda$, $\left\|\boldsymbol{\theta}_{S^*}\right\|_0 \leq \widetilde{s}$ and $\widehat{\boldsymbol{\theta}}_\lambda \in \Omega$ is a minimizer of $f_{\lambda}$ satisfying $\|(\widehat{\boldsymbol{\theta}}_\lambda)_{S^{* c}}\|_0 \leq \tilde{s}$, then we have
\[
f_\lambda(\boldsymbol{\theta})-f_\lambda\left(\widehat{\boldsymbol{\theta}}_\lambda\right) \leq \frac{\bar{C}_2}{\rho_{-}}\left(\delta^{2 \beta} \vee \sqrt{s} \delta^\beta \lambda \vee s \lambda^2\right).
\]
\end{lemma}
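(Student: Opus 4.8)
The plan is to pivot through the sparse optimum $\boldsymbol{\theta}^*$ and write
\[
f_\lambda(\boldsymbol{\theta})-f_\lambda(\widehat{\boldsymbol{\theta}}_\lambda)
=\big[f_\lambda(\boldsymbol{\theta})-f_\lambda(\boldsymbol{\theta}^*)\big]
+\big[f_\lambda(\boldsymbol{\theta}^*)-f_\lambda(\widehat{\boldsymbol{\theta}}_\lambda)\big],
\]
bounding the two brackets separately. The first bracket is immediate: the hypotheses on $\boldsymbol{\theta}$ (that $\boldsymbol{\theta}\in\Omega$, $\|\boldsymbol{\theta}_{S^{*c}}\|_0\le\widetilde s$, $\omega_\lambda(\boldsymbol{\theta})\le\tfrac12\lambda$, and $\|\nabla R^{D_k}_{\delta_k}(\boldsymbol{\theta}^*)-\nabla R_{\delta_k,\widehat{\boldsymbol{\theta}}_{k-1}}(\boldsymbol{\theta}^*)\|_\infty\le\lambda/8$) are exactly those of Lemma~\ref{algolemma1}, whose third conclusion gives $f_\lambda(\boldsymbol{\theta})-f_\lambda(\boldsymbol{\theta}^*)\le\frac{\bar C_2}{2\rho_-}(\delta^{2\beta}\vee\sqrt s\,\delta^\beta\lambda\vee s\lambda^2)$ (here and below $\delta^\beta$ abbreviates the effective smoothing-bias scale $c_{n,k}\delta_k^\beta$ and $\rho_-\asymp c_{n,k}$, as in the statement's normalization).

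All the work is in the second bracket, and this is where I expect the main obstacle. One cannot simply re-apply the objective estimate of Lemma~\ref{algolemma1} to $\widehat{\boldsymbol{\theta}}_\lambda$, because that estimate only controls $f_\lambda(\cdot)-f_\lambda(\boldsymbol{\theta}^*)$ from above, whereas the minimizer satisfies $f_\lambda(\widehat{\boldsymbol{\theta}}_\lambda)\le f_\lambda(\boldsymbol{\theta}^*)$, so $f_\lambda(\boldsymbol{\theta}^*)-f_\lambda(\widehat{\boldsymbol{\theta}}_\lambda)\ge0$ points the wrong way and must be bounded by a genuinely different argument. The first step is to note that, being a minimizer of $f_\lambda$ over $\Omega$ with $\|(\widehat{\boldsymbol{\theta}}_\lambda)_{S^{*c}}\|_0\le\widetilde s$, the point $\widehat{\boldsymbol{\theta}}_\lambda$ obeys the first-order optimality $\omega_\lambda(\widehat{\boldsymbol{\theta}}_\lambda)\le0\le\tfrac12\lambda$, so Lemma~\ref{algolemma1} applies to $\widehat{\boldsymbol{\theta}}_\lambda$ as well and yields the estimation-error bounds $\|\widehat{\boldsymbol{\theta}}_\lambda-\boldsymbol{\theta}^*\|_2\le\frac{\bar C_1}{\rho_-}(c_{n,k}\delta_k^\beta\vee\sqrt s\,\lambda)$ and $\|\widehat{\boldsymbol{\theta}}_\lambda-\boldsymbol{\theta}^*\|_1\le\frac{\bar C_2}{\rho_-}(\frac{c_{n,k}^2\delta_k^{2\beta}}{\lambda}\vee\sqrt s\,c_{n,k}\delta_k^\beta\vee s\lambda)$; these themselves rest on Propositions~\ref{prop2} and \ref{E_1}.

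Next I would convert $f_\lambda(\boldsymbol{\theta}^*)-f_\lambda(\widehat{\boldsymbol{\theta}}_\lambda)=[R^{D_k}_{\delta_k}(\boldsymbol{\theta}^*)-R^{D_k}_{\delta_k}(\widehat{\boldsymbol{\theta}}_\lambda)]+\lambda(\|\boldsymbol{\theta}^*\|_1-\|\widehat{\boldsymbol{\theta}}_\lambda\|_1)$ into these norms. Applying the RSM inequality (\ref{upperrho}) with base point $\widehat{\boldsymbol{\theta}}_\lambda$ and comparison point $\boldsymbol{\theta}^*$ (legitimate since both are $O(s)$-sparse and lie in $\Omega_{k-1}$, so Assumption~\ref{asp6} applies despite the non-convexity of $R^{D_k}_{\delta_k}$) bounds the risk difference by $\nabla R^{D_k}_{\delta_k}(\widehat{\boldsymbol{\theta}}_\lambda)^T(\boldsymbol{\theta}^*-\widehat{\boldsymbol{\theta}}_\lambda)+\tfrac{\rho^+_{n,k}}2\|\boldsymbol{\theta}^*-\widehat{\boldsymbol{\theta}}_\lambda\|_2^2$. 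For the gradient term I would split $\nabla R^{D_k}_{\delta_k}(\widehat{\boldsymbol{\theta}}_\lambda)^T(\boldsymbol{\theta}^*-\widehat{\boldsymbol{\theta}}_\lambda)=\nabla R^{D_k}_{\delta_k}(\boldsymbol{\theta}^*)^T(\boldsymbol{\theta}^*-\widehat{\boldsymbol{\theta}}_\lambda)+[\nabla R^{D_k}_{\delta_k}(\widehat{\boldsymbol{\theta}}_\lambda)-\nabla R^{D_k}_{\delta_k}(\boldsymbol{\theta}^*)]^T(\boldsymbol{\theta}^*-\widehat{\boldsymbol{\theta}}_\lambda)$. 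The second piece is $\le-\rho^-_{n,k}\|\widehat{\boldsymbol{\theta}}_\lambda-\boldsymbol{\theta}^*\|_2^2\le0$ by the gradient monotonicity obtained from adding two copies of the RSC inequality (\ref{lowerrho}) with the roles of $\boldsymbol{\theta}^*$ and $\widehat{\boldsymbol{\theta}}_\lambda$ swapped; the first piece is handled exactly as in the decomposition (\ref{diff}) of Lemma~\ref{algolemma1}, writing $\nabla R^{D_k}_{\delta_k}(\boldsymbol{\theta}^*)=E_1+E_2$ with $\|E_1\|_\infty\le\lambda/8$ and $|E_2^T v|\le C_2 c_{n,k}\delta_k^\beta\|v\|_2$ (Proposition~\ref{prop2}), which gives $\le\frac\lambda8\|\boldsymbol{\theta}^*-\widehat{\boldsymbol{\theta}}_\lambda\|_1+C_2 c_{n,k}\delta_k^\beta\|\boldsymbol{\theta}^*-\widehat{\boldsymbol{\theta}}_\lambda\|_2$. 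Combined with the trivial $\lambda(\|\boldsymbol{\theta}^*\|_1-\|\widehat{\boldsymbol{\theta}}_\lambda\|_1)\le\lambda\|\boldsymbol{\theta}^*-\widehat{\boldsymbol{\theta}}_\lambda\|_1$, the second bracket reduces to a multiple of $\lambda\|\widehat{\boldsymbol{\theta}}_\lambda-\boldsymbol{\theta}^*\|_1$, $c_{n,k}\delta_k^\beta\|\widehat{\boldsymbol{\theta}}_\lambda-\boldsymbol{\theta}^*\|_2$, and $\|\widehat{\boldsymbol{\theta}}_\lambda-\boldsymbol{\theta}^*\|_2^2$.

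Finally I would substitute the two distance bounds: the $\ell_1$ contribution gives $\lambda\|\widehat{\boldsymbol{\theta}}_\lambda-\boldsymbol{\theta}^*\|_1\lesssim\frac1{\rho_-}(c_{n,k}^2\delta_k^{2\beta}\vee\sqrt s\,c_{n,k}\delta_k^\beta\lambda\vee s\lambda^2)$, the cross term gives $c_{n,k}\delta_k^\beta\|\widehat{\boldsymbol{\theta}}_\lambda-\boldsymbol{\theta}^*\|_2\lesssim\frac1{\rho_-}(c_{n,k}^2\delta_k^{2\beta}\vee\sqrt s\,c_{n,k}\delta_k^\beta\lambda)$, and the quadratic term gives $\frac{\rho^+_{n,k}}2\|\widehat{\boldsymbol{\theta}}_\lambda-\boldsymbol{\theta}^*\|_2^2\lesssim\frac{\rho^+_{n,k}}{\rho_-^2}(c_{n,k}^2\delta_k^{2\beta}\vee s\lambda^2)$, which is of the same order because $\rho^+_{n,k}$ and $\rho^-_{n,k}=\rho_-$ are both proportional to $c_{n,k}$ (Assumption~\ref{asp6}), so $\rho^+_{n,k}/\rho_-^2\asymp 1/\rho_-$. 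Adding the two brackets and absorbing constants into $\bar C_2$ yields the claimed $f_\lambda(\boldsymbol{\theta})-f_\lambda(\widehat{\boldsymbol{\theta}}_\lambda)\le\frac{\bar C_2}{\rho_-}(\delta^{2\beta}\vee\sqrt s\,\delta^\beta\lambda\vee s\lambda^2)$. The delicate point throughout is the second bracket: the estimate for a minimizer cannot be inherited from the generic bound of Lemma~\ref{algolemma1}, and must instead be rebuilt from RSM, the RSC-based gradient monotonicity, and the $E_1/E_2$ control, while verifying that $\boldsymbol{\theta}^*-\widehat{\boldsymbol{\theta}}_\lambda$ stays $O(s)$-sparse inside $\Omega_{k-1}$ so that Assumption~\ref{asp6} may be invoked.
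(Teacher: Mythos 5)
Your proof is correct, but note that the paper never proves this lemma itself: it is imported verbatim as ``Lemma 10 in \cite{feng2022nonregular}'', and the surrounding text in the appendix explicitly states that the path-following analysis is only sketched with details deferred to that reference. So what you have produced is a genuine, self-contained proof from the paper's own primitives, which is more than the paper offers. Your structure — pivot through $\boldsymbol{\theta}^*$, control $f_\lambda(\boldsymbol{\theta})-f_\lambda(\boldsymbol{\theta}^*)$ by the third conclusion of Lemma~\ref{algolemma1}, and rebuild a separate upper bound for $f_\lambda(\boldsymbol{\theta}^*)-f_\lambda(\widehat{\boldsymbol{\theta}}_\lambda)$ using the distance bounds for the stationary point $\widehat{\boldsymbol{\theta}}_\lambda$ (valid since $\omega_\lambda(\widehat{\boldsymbol{\theta}}_\lambda)\le 0\le\tfrac12\lambda$) together with the $E_1/E_2$ decomposition of $\nabla R^{D_k}_{\delta_k}(\boldsymbol{\theta}^*)$ — is exactly the standard route for this family of results, and every invocation of Assumption~\ref{asp6} and Proposition~\ref{prop2} is legitimate (both points are $O(s)$-sparse and lie in $\Omega_{k-1}$, so the restricted conditions apply). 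One simplification is worth recording: your detour through RSM at base point $\widehat{\boldsymbol{\theta}}_\lambda$ plus the gradient-monotonicity step (adding two copies of (\ref{lowerrho})) can be collapsed into a single application of RSC at base point $\boldsymbol{\theta}^*$, namely
\begin{equation*}
R^{D_k}_{\delta_k}(\boldsymbol{\theta}^*)-R^{D_k}_{\delta_k}(\widehat{\boldsymbol{\theta}}_\lambda)\;\le\;\nabla R^{D_k}_{\delta_k}(\boldsymbol{\theta}^*)^T(\boldsymbol{\theta}^*-\widehat{\boldsymbol{\theta}}_\lambda)-\tfrac{\rho_-}{2}\bigl\|\boldsymbol{\theta}^*-\widehat{\boldsymbol{\theta}}_\lambda\bigr\|_2^2,
\end{equation*}
after which the cross term $C_2c_{n,k}\delta_k^\beta\|\boldsymbol{\theta}^*-\widehat{\boldsymbol{\theta}}_\lambda\|_2$ from Proposition~\ref{prop2} is absorbed by the negative quadratic via Young's inequality, yielding the same bound without ever invoking (\ref{upperrho}) or the ratio $\rho^+_{n,k}/\rho_-^2\asymp 1/\rho_-$. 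Your handling of the statement's sloppy normalization (reading $\delta^{2\beta}$ as $c_{n,k}^2\delta_k^{2\beta}$, consistent with $\rho_-\asymp c_{n,k}$) and of the typo $\|\boldsymbol{\theta}_{S^*}\|_0\le\widetilde s$ (which should be $\|\boldsymbol{\theta}_{S^{*c}}\|_0\le\widetilde s$) is the right call in both cases.
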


\begin{theorem}\label{paththm1}
    Assume the conditions of Proposition~\ref{prop2} and Assumption~\ref{asp6} hold. By choosing $\nu=0.25, \phi=0.9, \eta \leq \frac{1}{\rho_{+}}$,  $ \lambda_{k,tgt} = 8 C_1\sqrt{\frac{c_{n,k}K\log d}{n\delta_k}}$, where $C_1$ is defined in Proposition~\ref{E_1} and $\delta_k = c\left( \frac{s K \log d}{n c_{n,k}}\right)^{1/(2\beta+1)} $ for some constant $c>0$, with probability greater than $1-2d^{-1}$, the final approximate local solution $\tilde{\btheta}_{k,tgt}$ from the path-following algorithm satisfies
     \begin{equation*}
\begin{aligned}
&\left\|\tilde{\boldsymbol{\theta}}_{k,t g t}-\boldsymbol{\theta}^*\right\|_2 \lesssim \left(\frac{K s \log d}{nc_{n,k}}\right)^{\frac{\beta \vee 1}{2\beta+1}}, \\
&\left\|\tilde{\boldsymbol{\theta}}_{k, t g t}-\boldsymbol{\theta}^*\right\|_1 \lesssim \sqrt{s} \left(\frac{K s \log d}{nc_{n,k}}\right)^{\frac{\beta \vee 1}{2\beta+1}}.
\end{aligned}
\end{equation*}
\end{theorem}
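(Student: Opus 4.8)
The plan is to follow the path-following analysis of \cite{feng2022nonregular}, the only new ingredients being the bias control from Proposition~\ref{prop2} and the stochastic-error bound from Proposition~\ref{E_1}, which replace their i.i.d.\ counterparts. First I would work on the high-probability event $\mathcal{E}_0=\{\|\nabla R^{D_k}_{\delta_k}(\btheta^*)-\nabla R_{\delta_k,\hat{\btheta}_{k-1}}(\btheta^*)\|_\infty\leq \lambda_{k,tgt}/8\}$, which holds with probability at least $1-2d^{-1}$ by Proposition~\ref{E_1} together with the choice $\lambda_{k,tgt}=8C_1\sqrt{c_{n,k}K\log d/(n\delta_k)}$. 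All subsequent deterministic arguments are carried out on $\mathcal{E}_0$, so the final probability statement is inherited directly.

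The core is an induction over the path-following stages $t=0,1,\dots,T$ establishing the invariant that the warm start $\tilde{\btheta}_{k,t-1}$ is $\tilde s$-sparse, lies in $\Omega$, and satisfies $\omega_{\lambda_{k,t}}(\tilde{\btheta}_{k,t-1})\leq \tfrac12\lambda_{k,t}$. The base case uses $\tilde{\btheta}_{k,0}=\mathbf{0}$ and the initialization $\lambda_{k,0}=\|\nabla R^{D_k}_{\delta_k}(\mathbf{0})\|_\infty$, which gives $\omega_{\lambda_{k,0}}(\mathbf{0})=0$. For the inductive step, Proposition~\ref{proposition3} (whose hypotheses are exactly the invariant plus $\mathcal{E}_0$) guarantees that the proximal-gradient iterates within stage $t$ stay $\tilde s$-sparse via Lemma~\ref{lemma3}, converge to a unique local solution, and decrease the objective gap geometrically by the factor $1-\eta\rho_{-}/4$; since $\rho_{-}\asymp c_{n,k}$ by Assumption~\ref{asp6}, the stopping criterion $\omega_{\lambda_{k,t}}(\tilde{\btheta}_{k,t})\leq \nu\lambda_{k,t}$ is reached in finitely many steps, where Lemma~\ref{lemma7} supplies the monotone descent and Lemma~\ref{lemma8} controls the stopping functional. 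To propagate the invariant to stage $t+1$ I would use $\lambda_{k,t+1}=\phi\lambda_{k,t}$ with $\phi=0.9$, $\nu=0.25$, and the elementary relation $\omega_{\lambda'}(\btheta)\leq \omega_\lambda(\btheta)+|\lambda-\lambda'|$, so that $\omega_{\lambda_{k,t+1}}(\tilde{\btheta}_{k,t})\leq \nu\lambda_{k,t}+(1-\phi)\lambda_{k,t}=0.35\lambda_{k,t}\leq 0.45\lambda_{k,t}=\tfrac12\lambda_{k,t+1}$, closing the induction.

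With the invariant in hand at the terminal stage $t=T$, the rate follows by applying Lemma~\ref{algolemma1} (or Lemma~\ref{algolemma2}, which replaces the $\omega_\lambda$ hypothesis by an objective-gap bound supplied by Lemma~\ref{lemma10}) with $\lambda=\lambda_{k,tgt}$. Dividing the bias term $c_{n,k}\delta_k^\beta$ by $\rho_{-}\asymp c_{n,k}$ cancels the $c_{n,k}$ and yields $\delta_k^\beta$; with $\delta_k\asymp (sK\log d/(nc_{n,k}))^{1/(2\beta+1)}$ this equals $(sK\log d/(nc_{n,k}))^{\beta/(2\beta+1)}$. A direct computation shows the variance term $\sqrt{s}\,\lambda_{k,tgt}/\rho_{-}$ balances to the same order, giving the $\ell_2$ bound; the $\ell_1$ bound comes analogously from the $\ell_1$ estimate in Lemma~\ref{algolemma1}, where $\sqrt{s}\,c_{n,k}\delta_k^\beta/\rho_{-}=\sqrt{s}\,\delta_k^\beta$ dominates the other two candidate terms under the stated choice of $\delta_k$.

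The main obstacle I anticipate is the warm-start propagation across stages — verifying $\omega_{\lambda_{k,t+1}}(\tilde{\btheta}_{k,t})\leq \tfrac12\lambda_{k,t+1}$ for every $t$ simultaneously with sparsity preservation. This requires interlocking the geometric decay of the objective gap from Proposition~\ref{proposition3}, the sparsity bound of Lemma~\ref{lemma3}, and the Lipschitz-type comparison of $\omega_\lambda$ across consecutive regularization parameters, while keeping all constants (which now depend on $C_1,C_2$ from Propositions~\ref{E_1} and \ref{prop2}) uniformly controlled so that $\tilde s$ remains $O(s)$ throughout the entire path.
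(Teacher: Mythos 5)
Your proposal is correct and follows essentially the same route as the paper: the high-probability event from Proposition~\ref{E_1} combined with the choice of $\lambda_{k,tgt}$, a stage-wise induction driven by Proposition~\ref{proposition3} together with Lemmas~\ref{lemma3}, \ref{lemma7}, \ref{lemma8} and \ref{lemma10}, and a final application of Lemma~\ref{algolemma1} with $\rho_-\asymp c_{n,k}$ to balance the bias and variance terms at the stated rate. Your explicit verification of the warm-start propagation $\omega_{\lambda_{k,t+1}}(\tilde{\btheta}_{k,t})\le 0.35\lambda_{k,t}\le \tfrac12\lambda_{k,t+1}$ is a detail the paper leaves implicit, and is a welcome addition; the only minor imprecision is that in the $\ell_1$ bound all three candidate terms are of the same order rather than one strictly dominating, which does not affect the conclusion.
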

\begin{proof}
    By Proposition~\ref{E_1} we have
    \[
\|\nabla  R^{D_k}_{\delta_k}(\btheta^*)  - \nabla  R_{\delta_k, \hat{\btheta}_{k-1}}(\btheta^*) \|_\infty \leq 
C_1\sqrt{\frac{c_{n,k}K\log d }{n\delta_k}},
\]
hence with probability greater than $1-2d^{-1}$, $\|\nabla  R^{D_k}_{\delta_k}(\btheta^*)  - \nabla  R_{\delta_k, \hat{\btheta}_{k-1}}(\btheta^*) \|_\infty \leq \lambda_{k,tgt}/8$ holds. We prove this theorem by induction. Note that the initialization in Algorithm~\ref{pathfollow} guarantees that
\begin{equation*}
\left\|\left(\boldsymbol{\theta}_{k,0}^0\right)_{S^{* c}}\right\|_0 \leq \widetilde{s} \text { and } \omega_{\lambda_{k,0}}\left(\boldsymbol{\theta}_{k,0}^0\right) \leq \frac{1}{2} \lambda_{k,0},
\end{equation*}
where $ \widetilde{s}$ is defined in Proposition~\ref{proposition3}. Suppose at stage $t=1, \cdots, T-1$, we have 
\begin{equation*}
\left\|\left(\boldsymbol{\theta}_{k,t}^0\right)_{S^{* c}}\right\|_0 \leq \widetilde{s} \text { and } \omega_{\lambda_{k,t}}\left(\boldsymbol{\theta}_{k,t}^0\right) \leq \frac{1}{2} \lambda_{k,t}.
\end{equation*}
By Proposition~\ref{proposition3}, we know $\left\|\left(\boldsymbol{\theta}_{k,t}^j\right)_{S^{* c}}\right\|_0 \leq \widetilde{s}$ for $j = 1,\cdots,$ which implies that $\left\|\left(\tilde{\btheta}_{k,t}\right)_{S^{* c}}\right\|_0 \leq \widetilde{s}$ if exists. Recall that at stage $t$, the stopping criteria requires $\omega_{\lambda_{k,t}}\left(\btheta\right) \leq \frac{1}{4} \lambda_{k,t}$, therefore it suffices to find $j$ such that $\omega_{\lambda_{k,t}}\left(\btheta_{k,t}^j\right) \leq \frac{1}{4} \lambda_{k,t}$ to finish stage $t$. By Lemma~\ref{lemma8}, we have
   \[
   \omega_{\lambda_{k,t}}\left( \btheta_{k,t}^j\right) \leq\left(\frac{1}{\eta}+\rho_{+}\right)\left\|\btheta_{k,t}^j-\btheta_{k,t}^{j-1}\right\|_2.
   \]
   Recall that $\hat{\boldsymbol{\theta}}_{k,t}$ is defined as (\ref{thetahat}),
from Lemma~\ref{lemma7} we obtain
\begin{equation}
\begin{aligned}
\frac{1}{2 \eta}\left\|\boldsymbol{\theta}_{k,t}^j-\boldsymbol{\theta}_{k,t}^{j-1}\right\|_2^2 & \leq f_{\lambda_{k,t}}\left(\boldsymbol{\theta}_{k,t}^{j-1}\right)-f_{\lambda_{k,t}}\left(\boldsymbol{\theta}_t^j\right) \\
& \leq f_{\lambda_{k,t}}\left(\boldsymbol{\theta}_{k,t}^{j-1}\right)-f_{\lambda_{k,t}}\left(\hat{\boldsymbol{\theta}}_{k,t}\right) \\
& \leq\left(1-\frac{\eta \rho_{-}}{4}\right)^{j-1}\left(f_{\lambda_{k,t}}\left(\boldsymbol{\theta}_{k,t}^0\right)-f_{\lambda_{k,t}}\left(\widehat{\boldsymbol{\theta}}_{k,t}\right)\right) \\
& \leq\left(1-\frac{\eta \rho_{-}}{4}\right)^{j-1} \frac{\bar{C}_2}{\rho_{-}}\left(\delta_k^{2 \beta} \vee \sqrt{s} \delta_k^\beta \lambda_{k,t} \vee s \lambda_{k,t}^2\right),
\end{aligned}
\end{equation}
where the second last inequality is from Proposition~\ref{proposition3} and the last inequality follows from Lemma~\ref{lemma10}. Now it suffices to guarantee that 
\[
\left(\frac{1}{\eta}+\rho_{+}\right) \sqrt{2 \eta\left(1-\frac{\eta \rho_{-}}{4}\right)^{j-1} \frac{\bar{C}_2}{\rho_{-}}\left(\delta^{2 \beta} \vee \sqrt{s} \delta^\beta \lambda_{k,t} \vee s \lambda_{k,t}^2\right)} \leq \frac{1}{4} \lambda_{k,t}.
\]
Recall that we choose $\delta_k = c\left( \frac{s K \log d}{n c_{n,k}}\right)^{1/(2\beta+1)} $, $\lambda_{k,t} > \lambda_{k,tgt}$ and $ \lambda_{k,tgt} = 8 C_1\sqrt{\frac{c_{n,k}K\log d}{n\delta_k}}$. With some algebra we can show that it suffices to guarantee that
\[
j \geq \log \left(\frac{32\left(\frac{1}{\eta}+\rho_{+}\right)^2 \eta \bar{C}_2 s}{\rho_{-}}\right) / \log \left(\frac{4}{4-\eta \rho_{-}}\right)+1,
\]
where the RHS is independent of $\lambda$. 
\end{proof}

\subsubsection{A Variation of Theorem \ref{Kiteration}}\label{iterationexample}
\begin{theorem}
Under Assumptions~\ref{asp3}-\ref{asp6}, assume that the number of iterations $K \geq 2$ and $\beta \geq 2$.
We set $N_K=N/2$,   $N_k = \left(t N_{k+1}\right)^{1/\beta}$ for $2 \leq k \leq K-1$, $N_1 = \left(\frac{tN_2}{(s\log d)^{\beta/(2\beta+1)}  C_N^{\beta}}\right)^{(2\beta+1)/(2\beta^2)}$, and
\[
\delta_1 = c_{1}\left(\frac{s \log d}{N_1}\right)^{1/(2\beta+1)}, \ \lambda_1 =c_{2} \sqrt{\frac{N_1 K^2\log d }{n^2\delta_1}},
\]
\[
 \delta_k = c_{1}\left(\frac{C_N s \log d}{N_k}\right)^{1/(2\beta)},
 \ \lambda_k = c_{2} \sqrt{\frac{N_k K^2\log d}{n^2 b_{k-1}\delta_k}},
 \ b_{k-1} = c_{3}\left(\frac{C_{N}^{2\beta+1}s \log d}{N_k}\right)^{1/(2\beta)},     2\leq k \leq K,
\]
for some constants $c_{1}, c_{2}, c_{3}$ and $c_{3} \geq c_{1}$.
If
\begin{equation}\label{prasp2}
        N \geq t^{\frac{1}{\beta-1}} 2^{K} \vee
\left(\frac{t^{\frac{\beta-1/\beta^{(K-2)}}{\beta-1}} }{(s\log d)^{\beta/(2\beta+1)}  C_N^{\beta}}\right)^{\frac{\beta^{K-2}(2\beta+1)}{2\beta^K-(2\beta+1)}}  2^{\frac{2(K-1)\beta^K - (2\beta+1)}{2\beta^K - (2\beta+1)}} ,
     \end{equation}
where $t = s^{2\beta-1}M_n^{2\beta}(\log d)^{\beta-1}/C_N^{\beta+1}$  and
\begin{equation}\label{Kprob<1}
  N \leq C\left(n/K \right)^{2\beta/(2\beta+1)}(s \log d)^{1/(2\beta+1)}C_N
\end{equation}
hold for some constant $C$, then  $\sum_{k=1}^K N_k \leq  N$ and with probability greater than $1-2K/d$, 
\[
   \left\|\hat{\btheta}_K-\boldsymbol{\theta}^*\right\|_2 \lesssim \left(\frac{C_N s \log d }{N}\right)^{1/2},\  \left\|\hat{\btheta}_K-\boldsymbol{\theta}^*\right\|_1 \lesssim \sqrt{s}\left(\frac{C_N s \log d }{N}\right)^{1/2}.
\]

\end{theorem}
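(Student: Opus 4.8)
The plan is to obtain the result as an iterated application of the master Theorem~\ref{rate_k}, following the template used in the proofs of Theorems~\ref{beta>beta_*} and~\ref{Kiteration}; the only genuinely new ingredient is the unequal budget schedule $\{N_k\}$ and the verification that it is admissible. Concretely I would proceed in four stages: (i) check the budget constraint $\sum_{k=1}^K N_k \leq N$ and that the sampling fractions $c_{n,k}$ lie in $(0,1]$; (ii) bound $\hat\btheta_1$; (iii) propagate the rate through $k=2,\dots,K$ by an induction in which, at each step, I verify the hypotheses of Theorem~\ref{rate_k} and then read off the improved rate; and (iv) specialize to $k=K$, where $N_K=N/2\asymp N$ delivers the stated bound. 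The union bound over the $K$ iterations will accumulate the probability to $1-2K/d$.

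For stage (i), I would use $N_K=N/2$ together with the recursion $N_{k-1}=(tN_k)^{1/\beta}$ for $3\leq k\leq K$: since each backward step replaces the budget by a $1/\beta$ power of a $t$-multiple of the next, the partial budgets decay very rapidly (each $N_{k-1}/N_k \asymp t^{1/\beta}N_k^{-(\beta-1)/\beta}=o(1)$ for large $N$), so the sum $\sum_{k=2}^{K-1}N_k$ is dominated by $N_{K-1}=(tN/2)^{1/\beta}$. The first bound in (\ref{prasp2}), $N\geq t^{1/(\beta-1)}2^K$, is precisely what guarantees this geometric decay so that $\sum_{k<K}N_k\leq N/2$, while the second bound controls the single anomalous term $N_1$; condition (\ref{Kprob<1}) ensures $0<c_{n,k}\leq 1$ for every $k$, verified as in the closing paragraphs of the proofs of Theorems~\ref{beta>beta_*} and~\ref{Kiteration}. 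For stage (ii), Theorem~\ref{rate_k} with the stated $\delta_1,\lambda_1$ gives $\|\hat\btheta_1-\btheta^*\|_2\lesssim(s\log d/N_1)^{\beta/(2\beta+1)}$. For $k\geq 2$ I would first establish $\PP((X,\bZ)\in S_k)\asymp b_{k-1}$ exactly as in the proof of Theorem~\ref{beta>beta_*}: the upper bound from $f(x\mid\bz)<p_{\max}$ and boundedness of $\|\hat\btheta_{k-1}\|_2$, and the lower bound from Assumption~\ref{asp5} once the active interval is shown to contain a width-$\asymp b_{k-1}$ neighborhood of $\btheta^{*T}\bz$. Substituting $\PP(S_k)\asymp b_{k-1}$ into the conclusion of Theorem~\ref{rate_k} and plugging in $b_{k-1}=c_3(C_N^{2\beta+1}s\log d/N_k)^{1/(2\beta)}$ collapses the exponents to the parametric form $\|\hat\btheta_k-\btheta^*\|_2\lesssim(C_N s\log d/N_k)^{1/2}$.

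The crux, and the main obstacle, is verifying the two hypotheses of Theorem~\ref{rate_k} at every step, equivalently condition (\ref{prop2condi1}) of Proposition~\ref{prop2}: $b_{k-1}\geq C_N\delta_k$ and $b_{k-1}\geq 2\|\hat\btheta_{k-1}-\btheta^*\|_1 M_n$. The first is immediate, since $b_{k-1}/\delta_k=(c_3/c_1)C_N\geq C_N$ whenever $c_3\geq c_1$. The second is exactly where the schedule $\{N_k\}$ is engineered. Substituting the $\ell_1$ rate of $\hat\btheta_{k-1}$ into $b_{k-1}\gtrsim\|\hat\btheta_{k-1}-\btheta^*\|_1 M_n$ and solving for $N_{k-1}$ reproduces the recursion: for $k\geq 3$ the fast $\ell_1$ rate $\sqrt{s}(C_N s\log d/N_{k-1})^{1/2}$ yields precisely $N_{k-1}=(tN_k)^{1/\beta}$ with $t=s^{2\beta-1}M_n^{2\beta}(\log d)^{\beta-1}/C_N^{\beta+1}$, whereas for $k=2$ the slower first-step rate $\sqrt{s}(s\log d/N_1)^{\beta/(2\beta+1)}$ forces the different exponent in the special formula for $N_1$. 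Carefully tracking these exponents across the two regimes — the single step out of the slow first-stage rate ($k=2$) versus the self-similar fast-regime steps ($k\geq3$), and the exact powers of $s$, $M_n$, $\log d$ and $C_N$ — is the bookkeeping-heavy heart of the proof, and the point at which the hypothesis $\beta\geq 2$ is used to keep the recursion self-consistent and the budget decay valid.

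Once condition (\ref{prop2condi1}) is in force at step $k$, the induction closes: the $\ell_2$ and $\ell_1$ bounds of $\hat\btheta_k$ follow verbatim from Theorem~\ref{rate_k}, and the accompanying verifications (validity of $c_{n,k}$, the active-set stability, the high-probability events) carry over unchanged from the proofs of Theorems~\ref{beta>beta_*} and~\ref{Kiteration}. Finally, applying the established rate at $k=K$ with $N_K=N/2\asymp N$ gives $\|\hat\btheta_K-\btheta^*\|_2\lesssim(C_N s\log d/N)^{1/2}$ and, analogously, $\|\hat\btheta_K-\btheta^*\|_1\lesssim\sqrt{s}(C_N s\log d/N)^{1/2}$, completing the proof. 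I would expect the routine parts (the density bounds for $\PP(S_k)\asymp b_{k-1}$, the path-following guarantees, the probability accounting) to require no new ideas beyond the earlier proofs, so that essentially all of the novelty is concentrated in the exponent calculus verifying that the chosen schedule $\{N_k\}$ simultaneously respects the budget (\ref{prasp2}) and keeps condition (\ref{prop2condi1}) satisfied throughout.
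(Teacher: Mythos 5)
Your proposal follows essentially the same route as the paper's own proof: iterate the master Theorem~\ref{rate_k}, establish $\PP((X,\bZ)\in S_k)\asymp b_{k-1}$ as in Theorem~\ref{beta>beta_*}, verify $b_{k-1}\geq C_N\delta_k$ via $c_3\geq c_1$ and $b_{k-1}\geq 2\|\hat\btheta_{k-1}-\btheta^*\|_1 M_n$ by showing the stated schedule $N_{k-1}=(tN_k)^{1/\beta}$ (with the special exponent for $N_1$ coming from the slower first-stage rate) is exactly the solution of that inequality, and then close the budget check $\sum_k N_k\leq N$ by the geometric-decay argument driven by (\ref{prasp2}). The exponent bookkeeping you flag as the crux is indeed where the paper's proof spends its effort, and your identification of $t=s^{2\beta-1}M_n^{2\beta}(\log d)^{\beta-1}/C_N^{\beta+1}$ as the constant produced by solving the plug-in condition matches the paper's computation.
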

\begin{proof}
First, let's consider the case when $k=1$. For each $(X_i, \bZ_i) \in D_1$, we have
 \[
 \PP(R_i=1) = c_{n,1}, \ \PP(R_i=0) = 1-c_{n,1}.
 \]
and $N_1= nc_{n,1}/K$. 
According to Theorem~\ref{rate_k}, by selecting $\delta_1 = c_{1}\left(\frac{s \log d}{N_1}\right)^{1/(2\beta+1)}$ and $\lambda_1 = c_{2} \sqrt{\frac{N_1 K^2\log d }{n^2\delta_1}}$,  with probability greater than $1-2d^{-1}$, we obtain
\begin{equation}\label{kiterations2}
    \|\hat{\btheta}_1 -\btheta^*\|_2 \lesssim  \left(\frac{s \log d}{N_1}\right)^{\beta/(2\beta+1)},
\end{equation}
\begin{equation}\label{kiterations1}
    \|\hat{\btheta}_1 -\btheta^*\|_1 \lesssim \sqrt{s} \left(\frac{s \log d}{N_1}\right)^{\beta/(2\beta+1)}.
\end{equation}

Next, we will establish the bound for $\|\hat{\btheta}_K -\btheta^*\|_1$. The result for  $\|\hat{\btheta}_K -\btheta^*\|_2$ follows similarly. 
Following the same proof as in Theorem~\ref{Kiteration}, we can show that $\PP\left((X,\bZ)\in S_k \right) \asymp b_{k-1} $.
Note that
$N_k =  n\EE(R_i)/K = n c_{n,k} \PP\left((X,\bZ)\in S_k \right) /K$. Applying Theorem~\ref{rate_k}, we select
\begin{equation}\label{deltak}
    \delta_k \asymp \left(\frac{K s \log d}{nc_{n,k}}\right)^{1/(2\beta+1)} \asymp \left(\frac{s \log d \PP\left((X,\bZ)\in S_k \right)}{N_k}\right)^{1/(2\beta+1)} \asymp \left(\frac{b_{k-1} s \log d }{N_k}\right)^{1/(2\beta+1)},
\end{equation}
and $\lambda_k = c_{k,2} \sqrt{\frac{N_k K^2\log d}{n^2 b_{k-1}\delta_k}}$,
to ensure, with probability greater than $1-2d^{-1}$
 \begin{equation}\label{k1norm}
     \left\|\hat{\btheta}_k-\boldsymbol{\theta}^*\right\|_1 \lesssim 
         \sqrt{s}\left(\frac{\PP\left((X,\bZ)\in S_k \right)s \log d }{N_k}\right)^{\beta/(2\beta+1)}
         \lesssim \sqrt{s}\left(\frac{b_{k-1} s \log d }{N_k}\right)^{\beta/(2\beta+1)}.
 \end{equation} 
 It's important to note that for Proposition~\ref{prop2} to hold which enables us to apply Theorem~\ref{rate_k}, we need to choose $b_{k-1}$ such that
\begin{equation} \label{twoconditions}
    b_{k-1} \geq C_{N} \delta_k  \text{\ and\ }
b_{k-1} \geq 2 \|\hat{\btheta}_{k-1}-\btheta^*\|_1 M_n.
\end{equation}
To meet the first condition and by (\ref{deltak}), we have $ \left(\frac{C_{N}^{2\beta+1} s\log d}{N_k}\right)^{1/(2\beta)} = O(b_{k-1})$. Therefore, we choose $b_{k-1} = c_{3 }\left(\frac{C_{N}^{2\beta+1}s\log d}{N_k}\right)^{1/(2\beta)}$ for some constant $c_{3}$. Substituting this into (\ref{k1norm}), we get
\begin{equation}\label{knormk}
     \left\|\hat{\btheta}_k-\boldsymbol{\theta}^*\right\|_1 \lesssim \sqrt{s}\left(\frac{C_{N} s \log d }{N_k}\right)^{1/2},
\end{equation}
which implies  $ \left\|\hat{\btheta}_K-\boldsymbol{\theta}^*\right\|_1 \lesssim \sqrt{s}\left(\frac{C_{N} s \log d }{N}\right)^{1/2}$, and $ \left\|\hat{\btheta}_K-\boldsymbol{\theta}^*\right\|_2 \lesssim \left(\frac{C_{N} s \log d }{N}\right)^{1/2}$ follows similarly.

Now, let's explore the assumption (\ref{Kprob<1}).
 By definition, we have $N_k = \sum_{(X_i, \bZ_i) \in D_k}\EE\left( R_i\right)= n\EE(R_i)/K = n c_{n,k} \PP\left((X,\bZ)\in S_k \right) /K$, where $0 < c_{n,k} < 1$ is defined as
\[
\PP(R_i=1 \mid Y_i, X_i, \bZ_i, \hat{\btheta}_{k-1})=
\PP(R_i=1 \mid  X_i, \bZ_i, \hat{\btheta}_{k-1})=c_{n,k}\cdot \ind\{(X_i,\bZ_i) \in S_k\}.
\]
To ensure that $0 < c_{n,k} \leq 1$, we require
\[
N_k K \leq n  \PP\left((X,\bZ)\in S_k \right), \ 2 \leq k \leq K,
\]
and it suffices to ensure that $N_k K \leq  C b_{k-1} n$, where $b_{k-1} = c_{3}\left(\frac{C_{N}^{2\beta+1}s \log d}{N_k}\right)^{1/(2\beta)}$. 
Subsequently, some calculation yields $ N_k \leq C\left(n/K \right)^{2\beta/(2\beta+1)}(s \log d)^{1/(2\beta+1)}C_N$ for some constant $C$, which is provided in  (\ref{Kprob<1}).

Next we check that the second condition in (\ref{twoconditions}), i.e., $b_{k-1} \geq 2 \|\hat{\btheta}_{k-1}-\btheta^*\|_1 M_n$ holds for the chosen $b_{k-1}$ and $N_1, \cdots, N_K$. Recall that we choose $N_K=N/2$,   $N_k = \left(t N_{k+1}\right)^{1/\beta}$ for $2 \leq k \leq K-1$, $N_1 = \left(\frac{tN_2}{(s\log d)^{\beta/(2\beta+1)}  C_N^{\beta}}\right)^{(2\beta+1)/2\beta^2}$, where $t = s^{2\beta-1}M_n^{2\beta}(\log d)^{\beta-1}/C_N^{\beta+1}$. When $k=2$, by (\ref{kiterations1}) it suffices to show that
\begin{equation}\label{nchoice1}
    N_2 = O\left( \frac{ C_{N}^{2\beta+1} \log d}{s^{\beta-1}M_n^{2\beta}}  \left(\frac{N_1}{s \log d}\right) ^{2\beta^2/(2\beta+1)} \right),
\end{equation}
and when $2 \leq k \leq K$, by (\ref{knormk}) we need to ensure that
\begin{equation}\label{nchoice2}
    N_k = O\left( \frac{C_{N}^{\beta+1}\log d}{ s^{\beta-1}M_n^{2\beta}}  \left(\frac{N_{k-1}}{s \log d}\right) ^{\beta} \right).
\end{equation}
It is easy to check that  the choice of $N_k$ above satisfies both (\ref{nchoice1}) and (\ref{nchoice2}).

Lastly, we demonstrate that the selected values $N_1, \cdots, N_K$ satisfy the condition $\sum_{k=1}^K N_k \leq N$. Our objective is to prove that for all $0 \leq j \leq K-2$,
\begin{equation}\label{ N_{K-j}}
    N_{K-j} \leq \frac{N}{2^{j+1}},
\end{equation}
and $N_1 \leq \frac{N}{2^{K-1}}$. These inequalities together imply that $\sum_{j=1}^K N_j \leq \frac{N}{2^{K-1}}+\sum_{j=1}^{K-1} \frac{N}{2^j} = N$.
Given  $N_K=N/2$,   $N_k = \left(t N_{k+1}\right)^{1/\beta}$ for $2 \leq k \leq K-1$, we can derive 
\[
N_{K-j} = t^{\sum_{i=1}^j 1/\beta^j}\left(\frac{N}{2}\right)^{1/\beta^j} = t^{\frac{1-1/\beta^j}{\beta-1}}\left(\frac{N}{2}\right)^{1/\beta^j}.
\]
Thus to show (\ref{ N_{K-j}}), it suffices to prove $t^{\frac{1-1/\beta^j}{\beta-1}}\left(\frac{N}{2}\right)^{1/\beta^j} \leq \frac{N}{2^{j+1}}$ for all $0 \leq j \leq K-2$. Some calculations yield
\[
N \geq t^{\frac{1}{\beta -1 }} 2^{\frac{(j+1)\beta^j-1}{\beta^j-1}}.
\]
Noting that $\frac{(j+1)\beta^j-1}{\beta^j-1} = j+1 + \frac{j}{\beta^j-1} \leq K$, we have $N \geq t^{\frac{1}{\beta -1 }} 2^K$ in (\ref{prasp2}), ensuring the result.
To show that $N_1 \leq \frac{N}{2^{K-1}}$, first note that $N_1 = \left(\frac{tN_2}{(s\log d)^{\beta/(2\beta+1)}  C_N^{\beta}}\right)^{(2\beta+1)/2\beta^2}$ and 
$N_2 = t^{\frac{1-1/\beta^{(K-2)}}{\beta-1}}\left(\frac{N}{2}\right)^{1/\beta^{(K-2)}}$.  We need to verify that
\[
\left(\frac{t^{\frac{\beta-1/\beta^{(K-2)}}{\beta-1}} \left(\frac{N}{2}\right)^{1/\beta^{(K-2)}}}{(s\log d)^{\beta/(2\beta+1)}  C_N^{\beta}}\right)^{\frac{2\beta+1}{2\beta^2}} = 
\left(\frac{t^{\frac{\beta-1/\beta^{(K-2)}}{\beta-1}} }{(s\log d)^{\beta/(2\beta+1)}  C_N^{\beta}}\right)^{\frac{2\beta+1}{2\beta^2}} \left(\frac{N}{2}\right)^{\frac{2\beta+1}{2\beta^K}}
\leq \frac{N}{2^{K-1}},
\]
which implies $N  \geq 2^{\frac{2(K-1)\beta^K - (2\beta+1)}{2\beta^K - (2\beta+1)}}
\left(\frac{t^{\frac{\beta-1/\beta^{(K-2)}}{\beta-1}} }{(s\log d)^{\beta/(2\beta+1)}  C_N^{\beta}}\right)^{\frac{\beta^{K-2}(2\beta+1)}{2\beta^K-(2\beta+1)}} $, and this is provided by the second part of (\ref{prasp2}).


\end{proof}

\subsection{Justification of Assumption~\ref{asp6}}\label{rscexample}
In this section, we verify that the restricted strong convexity and restricted smoothness condition hold w.h.p for the class of conditional mean model. For simplicity, we
assume $Y \sim \operatorname{Uniform}(\{-1,1\})$ and $\gamma(y)=1 / \mathbb{P}(Y=y)=2$ is known. The model is defined as 
\[
X=\boldsymbol{\theta}^{* T} \boldsymbol{Z}+\mu Y+u,
\]
where we assume $\bZ \in \RR^d$ is a zero-mean sub-Gaussian vector with parameter $\sigma^2$, $\boldsymbol{Z} \perp u$, and $\mu>0$. We first verify the RSC/RSM condition for the case when $0 < \beta \leq 1$, and the results for the case $\beta > 1$ are similar.

For $k=1$ when data are uniformly sampled from $D_1$, the proof in  \cite{feng2022nonregular} can be easily adapted to verify the RSC/RSM conditions for $0 < \beta \leq 1$. In the following, we focus on the proof of Assumption~\ref{asp6} on the set $\Omega = \{\btheta : \|\btheta - \btheta^*\|_2  \lesssim  \delta_k \}$ at the $k$th iteration for any $2 \leq k \leq K$. We apply a proper kernel function $K$ satisfying (i) $K$ has bounded support on $[-1,1]$, (ii) $\|K\|_{\infty},\left\|K^{\prime}\right\|_{\infty}, \left\|K^{\prime\prime}\right\|_{\infty} \text { and }\tilde{K}_\beta=-\int K^{\prime}(t) t^\beta d t>0$ are bounded above by universal constants, and (iii) $K'(t)>0$ for $t<0$ and $K(\cdot)$ is symmetric. To verify that Assumption~\ref{asp6} holds w.h.p, it suffices to show that the following sparse eigenvalue condition holds w.h.p:
\begin{align}\label{rsccondi: beta<1}
    & \rho_{\max }=\sup \left\{\boldsymbol{v}^T \nabla^2 R_{\delta_k}^{D_k}(\boldsymbol{\theta}) \boldsymbol{v}:\|\boldsymbol{v}\|_2=1,\|\boldsymbol{v}\|_0 \leq C s, \boldsymbol{\theta} \in \Omega,\|\boldsymbol{\theta}\|_0 \leq C s\right\}< C_1 \delta_k^{\beta-1} c_{n,k}, \\
& \rho_{\min }=\inf \left\{\boldsymbol{v}^T \nabla^2 R_{\delta_k}^{D_k}(\boldsymbol{\theta}) \boldsymbol{v}:\|\boldsymbol{v}\|_2=1,\|\boldsymbol{v}\|_0 \leq C s, \boldsymbol{\theta} \in \Omega,\|\boldsymbol{\theta}\|_0 \leq C s\right\}> C_2 \delta_k^{\beta-1} c_{n,k},
\end{align}
for some constants $C_1, C_2 >0$, where $0 < \beta \leq 1$. We let $\bSigma_{\bZ} = \mathbf{Cov}(\bZ)$ and $g(\cdot)$ be the p.d.f of the error $u$.
Denote $\nabla^2 R_{\delta_k}(\boldsymbol{\theta}) = \EE\left[\nabla^2 R_{\delta_k}^{D_k}(\boldsymbol{\theta})|\hat\btheta_{k-1}\right]$ as the population Hessian.
\begin{proposition}\label{RE1: case 2}
    Under the setup above, suppose $g(\cdot)$ is an even function and satisfies the {H\"older condition $|g(y) - g(x)| \leq L_2 |x-y|^\beta$ for any $-\frac{3\mu}{2} \leq x \leq y \leq -\frac{\mu}{2}$, and $\sign(t)(g(-\mu+t) - g(-\mu)) \geq L_1 |t|^\beta$}. If we assume $\log d \gtrsim \log n$ and
    \begin{equation}\label{condition: empr cond for RE}
    \sqrt{\frac{c_{n,k}s \log\left(\frac{(ds\log{d)^2}}{c_{n,k}\delta_k}\right)}{n_k \delta_k^3}} + \frac{s\log(d \vee n_k)}{n_k\delta_k^2} \log\left(\frac{(ds\log{d)^2}}{c_{n,k}\delta_k}\right) \lesssim \delta_k^{\beta-1} c_{n,k},
    \end{equation}
    then with probability greater than $1-c(\log n)^{-1}$, it holds that\[
   \rho_{\min } \geq L_1 \tilde{K}_\beta \delta_k^{\beta-1} c_{n,k} \lambda_{\min }\left(\boldsymbol{\Sigma}_{\boldsymbol{Z}}\right)/2 ,~~ \ 
    \rho_{\max } \leq 2L_2 \tilde{K}_\beta \delta_k^{\beta-1} c_{n,k} \lambda_{\max }\left(\boldsymbol{\Sigma}_{\boldsymbol{Z}}\right).
    \]
\end{proposition}
\begin{proof}
Write    
\begin{equation}\label{eq: decompose}
     \boldsymbol{v}^T \nabla^2 R^{D_k}_{\delta_k}(\boldsymbol{\theta}) \boldsymbol{v} = 
    \bv^T \nabla^2 R_{\delta_k}(\boldsymbol{\theta}^*)\bv+
    \bv^T(\nabla^2 R^{D_k}_{\delta_k}(\boldsymbol{\theta})-\nabla^2 R_{\delta_k}(\boldsymbol{\theta}))\bv+
     \bv^T(\nabla^2 R_{\delta_k}(\boldsymbol{\theta})-\nabla^2 R_{\delta_k}(\boldsymbol{\theta}^*))\bv.
\end{equation}
By Lemma \ref{truetheta_hessian: case 2} we have 
\begin{align}\label{inequa1: beta<1}
        \boldsymbol{v}^T \nabla^2 R_{\delta_k}(\boldsymbol{\theta^*}) \boldsymbol{v} \leq L_2 \tilde{K}_\beta \delta_k^{\beta-1} c_{n,k} \lambda_{\max }\left(\boldsymbol{\Sigma}_{\boldsymbol{Z}}\right), \ \boldsymbol{v}^T \nabla^2 R_{\delta_k}(\boldsymbol{\theta^*}) \boldsymbol{v} \geq L_1 \tilde{K}_\beta \delta_k^{\beta-1} c_{n,k} \lambda_{\min }\left(\boldsymbol{\Sigma}_{\boldsymbol{Z}}\right).
\end{align}
By Lemma \ref{lem: stochastic term} we have with probability greater than $1-c(\log n)^{-1}$,
\[
\sup_{\norm{\bv}_2 = 1, \norm{\bv}_0 \leq Cs}\sup_{\btheta\in\Omega, \norm{\btheta}_0 \leq Cs} \bigl| \bv^T(\nabla^2 R_{\delta_k}^n(\btheta) - \nabla^2 R_{\delta_k}(\btheta)\bv \bigr| \lesssim \delta_k^{\beta-1} c_{n,k}.
\]
Then it remains to show that $|\bv^T(\nabla^2 R_{\delta_k}(\boldsymbol{\theta})-\nabla^2 R_{\delta_k}(\boldsymbol{\theta}^*))\bv| \lesssim \delta_k^{\beta-1} c_{n,k}$ on the set $\Omega = \{\btheta : \|\btheta - \btheta^*\|_2  \lesssim  \delta_k \}$. Given $\hat \btheta_{k-1}$, the population Hessian difference is
\begin{equation}
\begin{aligned}
    &\bv^T(\nabla^2R_{\delta_k}(\btheta) - \nabla^2R_{\delta_k}(\btheta^*))\bv  \\
    &= \EE\left[\gamma(Y) \frac{(\bv^T\bZ)^2}{\delta_k^2}\left(K'\left(\frac{Y(X - \btheta^T \bZ)}{\delta_k} \right) - K'\left(\frac{Y(X - \btheta^{*T} \bZ)}{\delta_k} \right) \right)R \mid \hat \btheta_{k-1} \right]\\
    &= \EE\left[\frac{(\bv^T\bZ)^2}{\delta_k^2}\left(K'\left(\frac{X - \btheta^T \bZ}{\delta_k} \right) - K'\left(\frac{X - \btheta^{*T} \bZ}{\delta_k} \right) \right)R \mid \hat \btheta_{k-1}, Y = 1 \right]\\
    &~~~~+\EE\left[\frac{(\bv^T\bZ)^2}{\delta_k^2}\left(K'\left(\frac{-X + \btheta^T \bZ}{\delta_k} \right) - K'\left(\frac{-X + \btheta^{*T} \bZ}{\delta_k} \right) \right)R \mid \hat \btheta_{k-1}, Y = -1 \right].
\end{aligned}
\end{equation}
We bound the first term and similar for the second term. Denote $\frac{-b_{k-1}\|\hat{\bw}_{k-1}\|_2 +(\hat{\btheta}_{k-1}-\btheta^*)^T\bz}{\delta_k}:=S_{u}^{*-}$ and $\frac{b_{k-1}\|\hat{\bw}_{k-1}\|_2 +(\hat{\btheta}_{k-1}-\btheta^*)^T\bz}{\delta_k}:=S_u^{*+}$.
Recall that in Proposition~\ref{prop2}, we showed that $S_u^{*+} \geq \frac{b_{k-1}\|\hat{\bw}_{k-1}\|_2 -\|\hat{\btheta}_{k-1}-\btheta^*\|_1 M_n}{\delta_k}\geq C_{N}/2$ and $S_u^{*-} \leq -C_{N}/2.$ for some large constant $C_N$. Then $\supp\{K\} = [-1,1] \subseteq [S_u^{*-},S_u^{*+}]$. And in the conditional mean model, we have $f(x \mid \bz, Y=y) = g(x-\btheta^{*T}\bz-\mu y)$. 
\begin{equation}\label{eq: hessian diff}
\begin{aligned}
    \text{First term} &= \EE\left[\EE\left[\frac{(\bv^T\bZ)^2}{\delta_k^2} \left(K'\left(\frac{X - \btheta^T \bZ}{\delta_k} \right) - K'\left(\frac{X - \btheta^{*T} \bZ}{\delta_k} \right)\right)R \mid \hat \btheta_{k-1}, Y=1, X, \bZ \right] \mid \hat \btheta_{k-1}, Y=1 \right]\\
    &= \EE\left[\frac{(\bv^T\bZ)^2}{\delta_k^2} \left(K'\left(\frac{X - \btheta^T \bZ}{\delta_k} \right) - K'\left(\frac{X - \btheta^{*T} \bZ}{\delta_k} \right)\right) c_{n,k} \ind\{(X, \bZ) \in S_k\} \mid \hat \btheta_{k-1}, Y=1\right] \\
    & = \frac{c_{n,k}}{\delta_k^2} \int (\bv^T \bZ)^2 \int_{-b_{k-1}\norm{w_{k-1}}_2 + \hat \btheta_{k-1}^T \bz}^{b_{k-1}\norm{w_{k-1}}_2 + \hat \btheta_{k-1}^T \bz} \left[K'\left(\frac{X - \btheta^T \bZ}{\delta_k} \right) - K'\left(\frac{X - \btheta^{*T} \bZ}{\delta_k} \right) \right] \\
    &\hspace{6cm} g(x - \btheta^{*T}\bz - \mu) \, dx f(\bZ) \,dz\\
    &= \frac{c_{n,k}}{\delta_k^2} \int (\bv^T \bZ)^2 \int_{S_u^{*-}}^{S_u^{*+}} \left[K'\left(\frac{(\btheta - \btheta^*)^T\bZ}{\delta_k} + u \right) - K'\left(u \right) \right] g(\delta_k u - \mu) \, du f(\bZ) \,dz\\
    &= \frac{c_{n,k}}{\delta_k^2} \int (\bv^T \bZ)^2 \int_\R \left[K'\left(\frac{(\btheta - \btheta^*)^T\bZ}{\delta_k} + u \right) - K'\left(u \right) \right] (g(\delta_k u - \mu) - g(-\mu)) \, du f(\bZ) \,dz.
\end{aligned}
\end{equation}
Note that
\[
\left|K'\left(\frac{(\btheta - \btheta^*)^T\bZ}{\delta_k} + u \right) - K'\left(u \right) \right| = \left|\int_u^{u+\frac{(\btheta^*-\btheta)^T\bZ}{\delta_k}} K''(t) \, dt\right| \leq \norm{K''}_\infty \frac{(\btheta^*-\btheta)^T\bZ}{\delta_k}
\]
and
\[
|g(\delta_k u - \mu) - g(-\mu)| \leq L_2 |\delta_k u|^{\beta} \leq C |\delta_k|^\beta.
\]
We have
\begin{equation}
\begin{aligned}
    (\ref{eq: hessian diff}) &\lesssim c_{n,k} \delta_k^{\beta-2} \int (\bv^T \bZ)^2 (\btheta - \btheta^*)^T \bZ f(\bZ) \, d\bZ\\
    &\leq c_{n,k} \delta_k^{\beta-2} \sqrt{\EE[(\bv^T \bZ)^4]} \sqrt{\EE[((\btheta - \btheta^*)^T \bZ)^2]}\\
    &\leq c_{n,k} \delta_k^{\beta-2} \sqrt{\EE[(\bv^T \bZ)^4]} \sqrt{M_1} \norm{\btheta - \btheta^*}_2\\
    &\lesssim c_{n,k} \delta_k^{\beta-2} \delta_k = c_{n,k} \delta_k^{\beta-1}.
\end{aligned}
\end{equation}
The last inequality is because $\bZ$ is a sub-Gaussian vector with a bounded sub-Gaussian norm and thus $\bv^T \bZ$ is also a sub-Gaussian random variable with bounded moment. Combining the bound for the three terms in (\ref{eq: decompose}), we finish the proof.
\end{proof}
\begin{remark}
    If we plug in $c_{n,k} = \frac{Nk K}{n \PP((X,Z) \in S_k)} \asymp \frac{N_k K}{n b_{k-1}}$ and $\delta_k \asymp (\frac{s \log d}{N_k} b_{k-1})^{\frac{1}{2\beta+1}}$, condition (\ref{condition: empr cond for RE}) is equivalent to $\frac{b_{k-1}s(\log d)^{3+\beta^{-1}}}{N_k}=O(1)$.
\end{remark}

\begin{lemma}\label{truetheta_hessian: case 2}
   Suppose $\tilde{K}_\beta=-\int K^{\prime}(t) t^\beta d t>0$, $g(\cdot)$ is an even function and satisfies the {H\"older condition $|g(y) - g(x)| \leq L_2 |x-y|^\beta$ for any $-\frac{3\mu}{2} \leq x \leq y \leq -\frac{\mu}{2}$, and $\sign(t)(g(-\mu+t) - g(-\mu)) \geq L_1 |t|^\beta$}, then for all unit vector $\bv \in \RR^d$ we have
    \[
    \boldsymbol{v}^T \nabla^2 R_{\delta_k}(\boldsymbol{\theta^*}) \boldsymbol{v} \leq L_2 \tilde{K}_\beta \delta_k^{\beta-1} c_{n,k} \lambda_{\max }\left(\boldsymbol{\Sigma}_{\boldsymbol{Z}}\right), \ \boldsymbol{v}^T \nabla^2 R_{\delta_k}(\boldsymbol{\theta^*}) \boldsymbol{v} \geq L_1 \tilde{K}_\beta \delta_k^{\beta-1} c_{n,k} \lambda_{\min }\left(\boldsymbol{\Sigma}_{\boldsymbol{Z}}\right),
    \]
    where $\|\boldsymbol{v}\|_2=1$ and $\|\boldsymbol{v}\|_0 \leq C s$.
\end{lemma}
\begin{proof}
    Denote $\frac{-b_{k-1}\|\hat{\bw}_{k-1}\|_2 +(\hat{\btheta}_{k-1}-\btheta^*)^T\bz}{\delta_k}:=S_{u}^{*-}$ and $\frac{b_{k-1}\|\hat{\bw}_{k-1}\|_2 +(\hat{\btheta}_{k-1}-\btheta^*)^T\bz}{\delta_k}:=S_u^{*+}$.
Recall that in Proposition~\ref{prop2} we showed that
\[
S_u^{*+} \geq \frac{b_{k-1}\|\hat{\bw}_{k-1}\|_2 -\|\hat{\btheta}_{k-1}-\btheta^*\|_1 M_n}{\delta_k}\geq C_{N}/2,
\]
and similarly,
\[
S_u^{*-} \leq -C_{N}/2.
\]
By definition 
\[
    \nabla^2  R^{D_k}_{\delta_k}(\btheta) = - \frac{K}{n}\sum_{(x_i, \bz_i) \in D_k} \gamma(y_i) \frac{y_i^2\bz_i\bz_i^T}{\delta_k^2}K^\prime (\frac{y_i(x_i-\btheta^{T}\bz_i)}{\delta_k})R_i.
    \]
Then,
\begin{align*}
    &\boldsymbol{v}^T \nabla^2 R_{\delta_k}(\boldsymbol{\theta}) \boldsymbol{v}\\
    =&-\frac{c_{n,k}}{\delta_k^2} \int_{\bz} \int_{-b_{k-1}\|\hat{\bw}_{k-1}\|_2 +\hat{\btheta}_{k-1}^T\bz}^{b_{k-1}\|\hat{\bw}_{k-1}\|_2+ \hat{\btheta}_{k-1}^T\bz }  \left(\boldsymbol{v}^T \boldsymbol{z}\right)^2 \\
    & \cdot \left(K^{\prime}\left(\frac{x-\boldsymbol{\theta}^T \boldsymbol{z}}{\delta_k}\right)g\left(x-\boldsymbol{\theta}^{* T} \boldsymbol{z}-\mu\right)+K^{\prime}\left(\frac{-(x-\boldsymbol{\theta}^T \boldsymbol{z})}{\delta_k}\right)g\left(x-\boldsymbol{\theta}^{* T} \boldsymbol{z}+\mu\right)\right) f(\boldsymbol{z}) d x d \boldsymbol{z}\\
    =& - \frac{c_{n,k}}{\delta_k} \int_{\bz} (\bv^T\bz)^2 \int_{S_u^-}^{S_u^+} 
    \left(K^\prime(t)g\left(\delta_k t+\boldsymbol{\theta}^T \boldsymbol{z}-\boldsymbol{\theta}^{* T} \boldsymbol{z}-\mu\right)+ K^\prime(-t)g\left(\delta_k t+ \boldsymbol{\theta}^T \boldsymbol{z}-\boldsymbol{\theta}^{* T} \boldsymbol{z}+\mu\right)\right) f(\boldsymbol{z}) dt d\bz.
\end{align*}
We choose $K(t)$ such that $K^\prime(t)$ is an odd function, then we have 
\begin{align}\label{asp3.5jus: case 2}
    &\boldsymbol{v}^T \nabla^2 R_{\delta_k}(\boldsymbol{\theta^*}) \boldsymbol{v}\\
    =& - \frac{c_{n,k}}{\delta_k}\int_{\bz} (\bv^T\bz)^2 \int_{S_u^{*-}}^{S_u^{*+}} K^\prime(t) \left(
    g \left( \delta_k t -\mu\right) -
    g \left(-\delta_k t -\mu\right)
    \right) dt f(\bz)d\bz.
\end{align}
By the choice of the kernel function we can ensure that
\[
\delta_k t -\mu,  -\delta_k t -\mu \in [\frac{-3\mu}{2}, \frac{-\mu}{2}].
\]
Note that $\tilde{K}_\beta=-\int K^{\prime}(t) t^\beta d t>0$ and $g(\cdot)$ satisfies the H\"older condition, then 
$$L_1 \tilde{K}_\beta \delta_k^{\beta-1} c_{n,k} \lambda_{\min }\left(\boldsymbol{\Sigma}_{\boldsymbol{Z}}\right) \leq \boldsymbol{v}^T \nabla^2 R_{\delta_k}(\boldsymbol{\theta^*}) \boldsymbol{v} \leq L_2 \tilde{K}_\beta \delta_k^{\beta-1} c_{n,k} \lambda_{\max }\left(\boldsymbol{\Sigma}_{\boldsymbol{Z}}\right).
$$
\end{proof}

\begin{lemma}\label{lem: stochastic term}
    If we assume $\log d \gtrsim \log n$ and
    \begin{equation}
    \sqrt{\frac{c_{n,k}s \log\left(\frac{(dslog{d)^2}}{c_{n,k}\delta_k}\right)}{n_k \delta_k^3}} + \frac{s\log(d \vee n_k)}{n_k\delta_k^2} \log\left(\frac{(dslog{d)^2}}{c_{n,k}\delta_k}\right) \lesssim \delta_k^{\beta-1} c_{n,k},
    \end{equation}
    then with probability greater than $1 - c(\log n_k)^{-1}$, it holds that
    \begin{equation}
        \sup_{\norm{\bv}_2 = 1, \norm{\bv}_0 \leq Cs}\sup_{\btheta\in\Omega, \norm{\btheta}_0 \leq Cs} \bigl| \bv^T(\nabla^2 R_{\delta_k}^n(\btheta) - \nabla^2 R_{\delta_k}(\btheta)\bv \bigr| \lesssim \delta_k^{\beta-1} c_{n,k},
    \end{equation}
    where $n_k = \frac{n}{K}$, and $c$ is some positive constant.
\end{lemma}
\begin{proof}
    Consider the class of functions
\begin{equation}\label{eq:F-class}
\mathcal{F}
=
\Bigl\{
f_{\bv,\btheta}(x,y,\bz, R)
=
-\,y\,(\bv^T \bz)^2\,
K'\!\left(\frac{x-\btheta^T \bz}{\delta_k}\right)R
:
\bv,\btheta\in\R^d, \norm{\bv}_2=1, \btheta \in \Omega, \norm{\bv}_0, \norm{\btheta}_0 \le C s
\Bigr\}.
\end{equation}
Our goal reduces to controlling the empirical process
\begin{align}\label{eq:Gn}
\|G_n\|_{\mathcal{F}}
&=
\sup_{f\in\mathcal{F}}
\Biggl|
n_k^{-1/2}
\sum_{i=1}^{n_k}
\Bigl(
f(x_i,y_i,\bz_i, R_i)-\mathbb{E}\,f(X,Y,\bZ, R)
\Bigr)
\Biggr|
\\
&=
n_k^{1/2}\delta_k^2
\sup_{\norm{\bv}_2 = 1, \norm{\bv}_0 \leq Cs}\sup_{\btheta\in\Omega, \norm{\btheta}_0 \leq Cs} \bigl| \bv^T(\nabla^2 R_{\delta_k}^n(\btheta) - \nabla^2 R_{\delta_k}(\btheta)\bv \bigr|.
\end{align}
To achieve this, we will apply the maximal inequality \ref{lem: Maximal ineq} (due to \cite{Kengo2014MaxIneq}).
In order to control the metric entropy, we introduce
\begin{equation}
\begin{aligned}
&\mathcal F_1(q)=\{(x,y,\bz,R)\mapsto (\bv^T \bz)^q: \norm{\bv}_2=1, \norm{\bv}_0 \leq Cs\}, \\
&\mathcal F_2=\{(x,y,\bz,R)\mapsto K'((x-\btheta^T \bz)/\delta_k):\ \btheta\in\Omega,\norm{\btheta}_0 \leq Cs\},\\
&\mathcal F_3=\{(x,y,\bz,R)\mapsto yR\}.
\end{aligned}
\end{equation}
For $\mathcal{F}_3$, it holds trivially that
\[
\log\sup_Q \mathcal N(\epsilon,\mathcal F_3,\|\cdot\|_{Q,2})\lesssim s\log(e/\epsilon).
\]
For $\mathcal F_1(1)$, we know it is contained in the union of at most $\binom{d}{Cs}$ VC-subgraph classes of functions with VC indices bounded by $C's$, and thus for the envelope $F_1=\sup_{\norm{\bv}_2=1,\norm{\bv}_0\le Cs}\ \norm{\bv^T \bz}_2$, we have
\[
\log\sup_Q \mathcal N\bigl(\epsilon\|F_1\|_{Q,2},\ \mathcal F_1(1),\|\cdot\|_{Q,2}\bigr)
\ \lesssim\ s\log d+s\log(e/\epsilon).
\]
For $\mathcal F_2$, notice that $\mathcal F_2' := \bigl\{(x,y,\bz,R) \mapsto x-\btheta^T \bz\bigr\}$ is contained in the union of at most $\binom{d}{Cs}$ VC-subgraph classes of functions with VC indices bounded by $C's$. By assumption $K'$ is of bounded variation, i.e., it can be written as the difference of two bounded nondecreasing functions. By Lemma 2.6.18 of \cite{vaart1996weak} the VC-subgraph property is preserved,
and we obtain
\[
\log\sup_Q \mathcal N\bigl(\epsilon\|K'\|_\infty,\ \mathcal F_2,\ \|\cdot\|_{Q,2}\bigr)
\ \lesssim\ s\log d + s\log(e/\epsilon).
\]

\noindent Since $\mathcal F$ is contained in the class of pointwise product $\mathcal F_1(2)\cdot\mathcal F_2\cdot\mathcal F_3$,
applying Lemma A.6 of \cite{Kengo2014MaxIneq}, we obtain that for the choice of envelope
\[
F=F_1^2\,\|K'\|_\infty
=\sup_{\norm{\bv}_2=1, \norm{\bv}_0\le Cs} |\bv^T \bz|^2\,\|K'\|_\infty,
\]
it holds that
\[
\log\sup_Q \mathcal N\bigl(\epsilon\|F\|_{Q,2},\ \mathcal F,\ \|\cdot\|_{Q,2}\bigr)
\ \lesssim\ s\log d + s\log(e/\epsilon).
\]
Thus, we could choose $v = s, a = d^{C1}$ for some $C_1$ large enough in Lemma \ref{lem: Maximal ineq}. Next, we calculate bound for $\rho^2$, $\norm{F}_{P,2}$ and $\norm{M}_{P,2}$.
\begin{equation}
\begin{aligned}
    \E(f^2 \mid \hat \btheta_{k-1}) 
    &= \E\left[(\bv^T\bz)^4 K'^2\left(\frac{x-\btheta^T\bz}{\delta_k}\right)R \mid \hat \btheta_{k-1} \right]\\
    &= \E\left[ \E\left((\bv^T\bz)^4 K'^2\left(\frac{x-\btheta^T\bz}{\delta_k}\right)R \mid \hat \btheta_{k-1}, x, \bz \right) \mid \hat \btheta_{k-1} \right]\\
    &= \E\left[(\bv^T\bz)^4 K'^2\left(\frac{x-\btheta^T\bz}{\delta_k}\right) c_{n,k} \ind\{(x,\bz) \in S_k\} \mid \hat \theta_{k-1} \right]\\
    & \leq c_{n,k} \int (\bv^T\bz)^4 \int K'^2\left(\frac{x - \btheta^T\bz}{\delta_k}\right)f(x\mid \bz,y)\, dx f(\bz\mid y)\, dz\\
    &= c_{n,k} \delta_k \int (\bv^T\bz)^4 \int K'^2\left(u\right)f(\btheta^T\bz + \delta_k u\mid \bz,y)\, du f(\bz\mid y)\, dz\\
    &\leq c_{n,k} \delta_k \norm{K'}_\infty^2 p_{max} \E(\bv^T\bz)^4 \leq c_{n,k} \delta_k.
\end{aligned}
\end{equation}
Hence we could define $\rho^2 = C_2c_{n,k} \delta_k$ for some positive constant $C_2$.
Now for any sparse index set $S\subset\{1,\dots,d\}$, $|S|\le Cs$, consider the sphere $\mathbb S_S=\bigl\{v:\ \|v\|_2=1,\ \mathrm{supp}(v)\subset S\bigr\}$.
Let $\mathcal N^{(\mathbb S_S)}_{1/6}\subset\mathbb S_S$ denote a $1/6$-net of $\mathbb S_S$ such that $\forall \bv\in\mathbb S_S$,
$\exists \bu\in\mathcal N^{(\mathbb S_S)}_{1/6}$ with $\norm{\bu-\bv} \le 1/6$. We know the cardinality of $\mathcal N^{(\mathbb S_S)}_{1/6}$ is no
larger than $18^{Cs}$ and
\[
\sup_{\bv\in\mathbb S_S} |\bv^T \bZ|^4\ \le\ c \max_{\bv\in\mathcal N^{(S)}_{1/6}} |\bv^T \bZ|^4
\]
for some absolute constant $c$. Since there are no more than $\binom{d}{Cs}$ such sparse index sets, we
know the cardinality of
\[
\bar{\mathcal S}
=\bigcup_{S\subset\{1,\dots,d\},\ |S|\le Cs}\ \mathcal N^{(\mathbb S_S)}_{1/6}
\]
is no larger than $(18d)^{Cs}$. Therefore we obtain
\[
\|F\|_{P,2}^2
=\ \|K'\|_\infty^2\,\mathbb E_P\!\left[\sup_{\|\bv\|_2=1,\ \|\bv\|_0\le Cs} |\bv^T \bZ|^4\right]
\ \lesssim\ \|K'\|_\infty^2\,\mathbb E_P\!\left[\max_{\bv\in\bar{\mathcal S}} |\bv^T \bZ|^4\right]
\ \lesssim\ \|K'\|_\infty^2\,\sigma^4\,(s\log d)^2,
\]
where the last inequality follows from the proof of Lemma 14.12 in \cite{buhlmann2011statistics}. With a similar derivation, we can
also show that
\[
\|M\|_{P,2}^2
=\ \bigl\|\max_{i\le n} F(x_i,y_i,\bz_i,R_i)\bigr\|_{P,2}^2
\ \lesssim\ \|K'\|_\infty^2\,\sigma^4\,(s\log(d\vee n))^2.
\]
Plugging the above results into Lemma \ref{lem: Maximal ineq}, we obtain when $a = d^{C_1} \geq n$ or equivalently $\log d \gtrsim \log n$, with probability greater than $1- c(\log n)^{-1}$,
\begin{equation}
\begin{aligned}
    &\sup_{\norm{\bv}_2 = 1, \norm{\bv}_0 \leq Cs}\sup_{\btheta\in\Omega, \norm{\btheta}_0 \leq Cs} \bigl| \bv^T(\nabla^2 R_{\delta_k}^n(\btheta) - \nabla^2 R_{\delta_k}(\btheta)\bv \bigr|\\
    &\lesssim \sqrt{\frac{c_{n,k}s \log\left(\frac{(dslog{d)^2}}{c_{n,k}\delta_k}\right)}{n_k \delta_k^3}} + \frac{s\log(d \vee n_k)}{n_k\delta_k^2} \log\left(\frac{(dslog{d)^2}}{c_{n,k}\delta_k}\right)\\
    &\lesssim \delta_k^{\beta-1} c_{n,k}.
\end{aligned}
\end{equation}
\end{proof}

\begin{lemma}\label{lem: Maximal ineq}
Under the setup above, suppose $F \ge \sup_{f\in\mathcal{F}} |f|$ is a measurable envelope with $\|F\|_{P,2}<\infty$.
Let $M=\max_{i\le n_k} F(x_i,y_i,\bz_i, R_i)$ and let $\rho^2$ be any positive number such that
$\sup_{f\in\mathcal{F}}\|f\|_{P,2}^2 \le \rho^2 \le \|F\|_{P,2}^2$.
Suppose there exist $a\ge e$ and $\nu\ge 1$ such that
\[
\log \sup_Q N\!\bigl(\epsilon\|F\|_{Q,2},\,\mathcal{F},\,\|\cdot\|_{Q,2}\bigr)
\;\le\;
\nu \log(a/\epsilon),\qquad \epsilon\in(0,1].
\]
Then
\[
\mathbb{E}_P\bigl[\|G_n\|_{\mathcal{F}}\bigr]
\;\le\;
K\Bigl(
\rho\,\sqrt{\nu\log(a\|F\|_{P,2}/\rho)}
\;+\;
\frac{\nu\|M\|_{P,2}}{\sqrt{n_k}}\,
\log(a\|F\|_{P,2}/\rho)
\Bigr).
\]
Moreover, when $a\ge n$, with probability greater than $1-c(\log n)^{-1}$, 
\[
\|G_n\|_{\mathcal F}\ \le\
K(q,c)\Bigl(
\rho\,\sqrt{\nu\log(a\|F\|_{P,2}/\rho)}
\;+\;
\frac{\nu\|M\|_{P,2}}{\sqrt{n_k}}\,
\log(a\|F\|_{P,2}/\rho)
\Bigr),
\]
where $K, K(q,c)$ are absolute constants, $Q$ is any finitely discrete probability measure, $N(\epsilon, \mathcal{F}, \norm{\cdot})$ is the covering number w.r.t the class $\mathcal{F}$ and radius $\epsilon$, and $\norm{f}_{P,p} = (\int|f|^p \,dP)^{1/p}$.
\end{lemma}

When $\beta > 1$, we have similar results. To be specific, we also verify the Assumption \ref{asp6} on the set $\Omega = \{\btheta : \|\btheta - \btheta^*\|_2  \lesssim  \delta_k \}$ at the $k$th iteration for any $2 \leq k \leq K$. We apply a proper kernel function $K$ such that (i) $K$ has bounded support on $[-1,1]$, (ii) $\|K\|_{\infty},\left\|K^{\prime}\right\|_{\infty}, \left\|K^{\prime\prime}\right\|_{\infty} \text { and }\tilde{K} =-\int K^{\prime}(t) t d t>0$ are bounded above by universal constants. Then, it suffices to show that the following sparse eigenvalue condition holds w.h.p:
\begin{align}\label{rsccondi: beta>1}
    & \rho_{\max }=\sup \left\{\boldsymbol{v}^T \nabla^2 R_{\delta_k}^{D_k}(\boldsymbol{\theta}) \boldsymbol{v}:\|\boldsymbol{v}\|_2=1,\|\boldsymbol{v}\|_0 \leq C s, \boldsymbol{\theta} \in \Omega,\|\boldsymbol{\theta}\|_0 \leq C s\right\}< C_1 c_{n,k}, \\
    & \rho_{\min }=\inf \left\{\boldsymbol{v}^T \nabla^2 R_{\delta_k}^{D_k}(\boldsymbol{\theta}) \boldsymbol{v}:\|\boldsymbol{v}\|_2=1,\|\boldsymbol{v}\|_0 \leq C s, \boldsymbol{\theta} \in \Omega,\|\boldsymbol{\theta}\|_0 \leq C s\right\}> C_2 c_{n,k},
\end{align}
for some constants $C_1, C_2 >0$. We let $\bSigma_{\bZ} = \mathbf{Cov}(\bZ)$ and $g(\cdot)$ be the p.d.f of the error $u$.
Denote $\nabla^2 R_{\delta_k}(\boldsymbol{\theta}) = \EE\left[\nabla^2 R_{\delta_k}^{D_k}(\boldsymbol{\theta})|\hat\btheta_{k-1}\right]$ as the population Hessian. We have the following lemmas and propositions:
\begin{proposition}\label{RE1: case beta>1}
    Under the setup above, suppose $g(\cdot)$ is an even function and satisfies the Lipschitz condition $|g(y) - g(x)| \leq L_2 |x-y|$ for any $-\frac{3\mu}{2} \leq x \leq y \leq -\frac{\mu}{2}$, and $\sign(t)(g(-\mu+t) - g(-\mu)) \geq L_1 |t|$. If we assume $\log d \gtrsim \log n$ and
    \begin{equation}
    \sqrt{\frac{c_{n,k}s \log\left(\frac{(dslog{d)^2}}{c_{n,k}\delta_k}\right)}{n_k \delta_k^3}} + \frac{s\log(d \vee n_k)}{n_k\delta_k^2} \log\left(\frac{(dslog{d)^2}}{c_{n,k}\delta_k}\right) \lesssim c_{n,k},
    \end{equation}
    then with probability greater than $1-c(\log n)^{-1}$, it holds that\[
    \rho_{\min } \geq L_1 \tilde{K} c_{n,k} \lambda_{\min }\left(\boldsymbol{\Sigma}_{\boldsymbol{Z}}\right)/2 ,~~ \ 
    \rho_{\max } \leq 2L_2 \tilde{K} c_{n,k} \lambda_{\max }\left(\boldsymbol{\Sigma}_{\boldsymbol{Z}}\right).
    \]
\end{proposition}

\begin{lemma}\label{truetheta_hessian: case beta>1}
   Suppose $\tilde{K}=-\int K^{\prime}(t) t d t>0$, $g(\cdot)$ is an even function and satisfies the Lipschitz condition $|g(y) - g(x)| \leq L_2 |x-y|$ for any $-\frac{3\mu}{2} \leq x \leq y \leq -\frac{\mu}{2}$, and $\sign(t)(g(-\mu+t) - g(-\mu)) \geq L_1 |t|$, then for all unit vector $\bv \in \RR^d$ we have
    \[
    \boldsymbol{v}^T \nabla^2 R_{\delta_k}(\boldsymbol{\theta^*}) \boldsymbol{v} \leq L_2 \tilde{K} c_{n,k} \lambda_{\max }\left(\boldsymbol{\Sigma}_{\boldsymbol{Z}}\right), \ \boldsymbol{v}^T \nabla^2 R_{\delta_k}(\boldsymbol{\theta^*}) \boldsymbol{v} \geq L_1 \tilde{K} c_{n,k} \lambda_{\min }\left(\boldsymbol{\Sigma}_{\boldsymbol{Z}}\right).
    \]
\end{lemma}

\begin{lemma}\label{lem: stochastic term beta >1}
    If we assume $\log d \gtrsim \log n$ and
    \begin{equation}
    \sqrt{\frac{c_{n,k}s \log\left(\frac{(dslog{d)^2}}{c_{n,k}\delta_k}\right)}{n_k \delta_k^3}} + \frac{s\log(d \vee n_k)}{n_k\delta_k^2} \log\left(\frac{(dslog{d)^2}}{c_{n,k}\delta_k}\right) \lesssim c_{n,k},
    \end{equation}
    then with probability greater than $1 - c(\log n_k)^{-1}$, it holds that
    \begin{equation}
        \sup_{\norm{\bv}_2 = 1, \norm{\bv}_0 \leq Cs}\sup_{\btheta\in\Omega, \norm{\btheta}_0 \leq Cs} \bigl| \bv^T(\nabla^2 R_{\delta_k}^n(\btheta) - \nabla^2 R_{\delta_k}(\btheta)\bv \bigr| \lesssim c_{n,k},
    \end{equation}
    where $n_k = \frac{n}{K}$, and $c$ is some positive constant.
\end{lemma}

We also verify the RSC/RSM condition on the population level, i.e. (\ref{eq_population_curvature_beta}) for $\beta > 1$ and (\ref{eq_population_curvature_beta_2}) for $0 < \beta \leq 1$ under mild conditions. For $\beta > 1$, the result is presented in the following proposition.

\begin{proposition}
    If $c_1f(\bz) \leq f'(\btheta^* \bz \mid \bz, y=1) f(\bz \mid y=1) - f'(\btheta^* \bz \mid \bz, y=-1) f(\bz \mid y=-1) \leq c_2f(\bz)$ for some positive constant $c_1$ and $c_2$, then $c_1 \lambda_{\min}(\Sigma_{\bz}) \norm{\bv}_2^2 \leq \bv^T \nabla^2 R(\btheta^*) \bv \leq c_2 \lambda_{\max}(\Sigma_{\bz})\norm{\bv}_2^2$.
\end{proposition}
\begin{proof}
    By definition (\ref{objective})
    \begin{equation}
    \begin{aligned}
        R(\btheta) &= \EE\left[\gamma(Y) L_{01}\{Y(X-\btheta^T\bZ)\}\right]\\
        &= \EE\left[L_{01}\{X-\btheta^T\bZ\} \mid Y=1 \right] + \EE\left[L_{01}\{-X+\btheta^T\bZ\} \mid Y=-1 \right]\\
        &= \PP(X < \btheta^T\bZ \mid Y=1) + \PP(X > \btheta^T\bZ \mid Y=-1)\\
        &= \int \int_{-\infty}^{\btheta^T\bz} f(x\mid \bz, y=1)\ dx f(\bz|y=1)\ d\bz + \int \int_{\btheta^T\bz}^{\infty} f(x\mid \bz, y=-1)\ dx f(\bz|y=-1)\ d\bz.
    \end{aligned}
    \end{equation}
    Hence, \[
    \nabla R(\btheta) = \int \bz f(\btheta^T\bz \mid \bz, y=1) f(\bz \mid y=1) \ d\bz - \int \bz f(\btheta^T\bz \mid \bz, y=-1) f(\bz \mid y=-1) \ d\bz,
    \]
    \[
        \nabla^2 R(\btheta) = \int \bz\bz^T f'(\btheta^T\bz \mid \bz, y=1) f(\bz \mid y=1) \ d\bz - \int \bz\bz^T f'(\btheta^T\bz \mid \bz, y=-1) f(\bz \mid y=-1) \ d\bz,
    \]
    and we can get the quadratic form $\bv^T \nabla^2 R(\btheta^*) \bv =  \int (\bv^T\bz)^2 h(\bz) f(\bz) \ d\bz$, where\[
        c_1 \leq h(\bz) = \left(f'(\btheta^T\bz \mid \bz, y=1) f(\bz \mid y=1) - f'(\btheta^T\bz \mid \bz, y=-1) f(\bz \mid y=-1)\right)/f(\bz) \leq c_2.
    \]
    Thus, $c_1 \lambda_{\min}(\Sigma_{\bz}) \norm{\bv}_2^2 \leq \bv^T \nabla^2 R(\btheta^*) \bv \leq c_2 \lambda_{\max}(\Sigma_{\bz})\norm{\bv}_2^2$.
\end{proof}
For the case $0 < \beta \leq 1$, the result follows directly from Lemma \ref{lem: beta-smooth class} and the fact that $\nabla R(\btheta^*) = 0$.

\subsection{Adaptation Theory}\label{sec_adaptation}
As shown in Section \ref{sec_rate}, the optimality of the proposed estimator $\hat\btheta_K$ critically depends on the choice of tuning parameters $ \{\lambda_k\}_{k=1}^K, \{b_k\}_{k=1}^{K-1}, \{\delta_k\}_{k=1}^K$ and the number of iterations $K$, which requires the knowledge of the smoothness parameter $\beta$ and the sparsity parameter $s$. In this section, we propose Lepski’s methods \citep{lepskii1991problem} to achieve adaptation to unknown smoothness $\beta$ and sparsity $s$ respectively.

\subsubsection{Adaptation to unknown sparsity $s$}\label{sec_adaptation_1}
In this section, we assume that the smoothness parameter $\beta$ is known and construct adaptive estimators $\hat s$ and $\hat \btheta_{\hat s}$, which achieves the optimal rate. Consider a discrete set $\mathcal{D} = \{s_j\}_{j=0}^m$ such that $s_j = 2^j$ and $2^m \leq d \leq 2^{m+1}$. If $\beta > 1$, define the optimal rate $\psi(s) = (\frac{s \log d}{N})^\frac{1}{2}$ and parameters $(b_{k-1}(s), \delta_k(s), \lambda_k(s))$ to be those as in Theorem \ref{beta>beta_*} and Theorem \ref{beta^{**}< beta < beta^*} in each iteration $k =1, \dots, K$, that is:
if $\beta > \frac{1+\sqrt{3}}{2}$, let
\[
    \delta_1(s) = c_1 \left(\frac{s \log d}{N}\right)^{1/(2\beta+1)}, \ \lambda_1(s) = c_2 \sqrt{\frac{N \log d}{n^2 \delta_1(s)}},
\]
and for $2 \leq k \leq K$,
\[
    \delta_{k}(s) = c_1 \left(\frac{s\log d}{N}\right)^{1/(2\beta)}, \ \lambda_k(s) = c_{2} \sqrt{\frac{N \log d}{n^2 b_{k-1}(s)\delta_k(s)}}, b_{k-1}(s) = c_3 \left(\frac{s\log d}{N}\right)^{1/(2\beta)}.
\]
If $1 < \beta \leq \frac{1+\sqrt{3}}{2}$, let
\[
    \delta_1(s) = c_1 \left(\frac{s \log d}{N}\right)^{1/(2\beta+1)}, \ \lambda_1(s) = c_2 \sqrt{\frac{N \log d}{n^2 \delta_1(s)}},
\]
for $2 \leq k \leq K-1$,
\begin{align*}
    &\ b_{k-1}(s) = c_3 
        \left(\log(\frac{N}{s \log d}) \right)^{\frac{(2\beta+1)(1-(\frac{\beta}{2\beta+1})^{k-1})}{2(\beta+1)}} \left(\frac{s \log d}{N}\right)^{\frac{\beta}{\beta+1}(1-(\frac{\beta}{2\beta+1})^{k-1})},\\
    &\delta_k(s)  = c_1 \left(\frac{b_{k-1}(s) s \log d}{N}\right)^{1/(2\beta+1)},\ \lambda_k(s) = c_{2} \sqrt{\frac{N \log d}{n^2 b_{k-1}(s)\delta_k(s)}}
\end{align*}
and
\[
    b_{K-1}(s) = c_3 \left(\frac{s\log d}{N}\right)^{1/(2\beta)}, \delta_{K}(s) = c_1 \left(\frac{s\log d}{N}\right)^{1/(2\beta)}, \ \lambda_K(s) = c_{2} \sqrt{\frac{N \log d}{n^2 b_{K-1}(s)\delta_K(s)}}.
\]
If $0 < \beta \leq 1$, define the optimal rate $\psi(s) = \left(\log(\frac{N}{K s \log d})\right)^{\frac{1}{4\beta}} \left(\frac{K s \log d}{N}\right)^{\frac{1}{2\beta}}$ and parameters $(b_{k-1}(s), \delta_k(s), \lambda_k(s))$ to be those as in Theorem \ref{beta < beta^**} in each iteration $k =1, \dots, K$, that is:
\[
    \delta_1(s) = c_1 \left(\frac{s \log d}{N}\right)^{1/(2\beta+1)}, \ \lambda_1(s) = c_2 \sqrt{\frac{N \log d}{n^2 \delta_1(s)}},
\]
and for $2 \leq k \leq K$,
\begin{align*}
    b_{k-1}(s) =& c_3 \left(\log (\frac{N}{K s \log d})\right)^{\frac{2\beta+1}{4\beta}[1 -  (\frac{1}{2\beta+1})^{k-1}]} \left(\frac{K s \log d}{N}\right)^{\frac{1}{2\beta}[1 - (\frac{1}{2\beta+1})^{k-1}]}, \\
    \delta_k(s)  =& c_1 \left(\frac{b_{k-1}(s) K s \log d}{N}\right)^{1/(2\beta+1)},\ \lambda_k(s) = c_{2} \sqrt{\frac{N K \log d}{n^2 b_{k-1}(s)\delta_k(s)}}. 
\end{align*}
Let $\hat \btheta_s$ be the final estimator after $K$ iterations of Algorithm \ref{algoK} with parameter $(b_{k-1}(s), \delta_k(s), \lambda_k(s))$, where we note that the value of $K$ only depends on $\beta$ which is known in this case.  Lepski's estimator is defined as $\hat \btheta_{\hat s}$, where
\begin{equation}  \label{def: adapt s}
\hat s = \min_{s \in D'}\{\norm{\hat \btheta_s - \hat \btheta_{s'}}_2 \leq \bar c \psi(s'), \forall s' \geq s, s' \in \cD\},
\end{equation}
for some constant $\bar c > 0$.
If the set to be minimized over is empty, we choose $\hat s = 2^m$. Let $s^*$ be the true sparsity, then there must exists $j \in\{0,\dots,m\}$ such that $s_{j-1} < s^* \leq s_j$.

\begin{lemma} \label{lem: hat s leq s_j}
Assume that Assumptions~\ref{asp3}-\ref{asp6} hold, then with probability greater than $1 - 2 K \log_2(d)/d$, we have $\hat s \leq s_j$.
\end{lemma}
\begin{proof}
    Define the sparsity class $C(s) = \{\norm{\btheta^*}_0 \leq s, \norm{\btheta^*}_2 \leq C\}$ for some constant $C>0$. Then, for any $s \geq s^*$, $C(s^*) \subseteq C(s)$, which means Theorem \ref{rate_k} holds for any $s \geq s^*$. That is, $\norm{\hat \btheta_{s} - \btheta^*}_2 \leq \psi(s)$ holds with probability greater than $1 - 2K/d$ for a single $s \geq s^*$. From the union bound, we know that with probability greater than $1 - 2K \log_2(d)/d$, $\norm{\hat \btheta_{s} - \btheta^*}_2 \leq \psi(s)$ holds for each $s \geq s^*, s \in \cD$. We prove $\hat s \leq s_j$ by verifying that $s_j$ is also in the set $\{\norm{\hat \btheta_s - \hat \btheta_{s'}}_2 \leq \psi(s'), \forall s' \geq s, s' \in \cD\}$. This is because by Theorem \ref{rate_k} for any $s' \geq s_j \geq s^*, s' \in \cD$,
    \begin{align*}
    \norm{\hat \btheta_{s_j} - \hat \btheta_{s'}}_2
     & \leq \norm{\hat \btheta_{s_j} - \btheta^*}_2 + \norm{\hat \btheta_{s'} - \btheta^*}_2 \\
     & \leq C\psi(s_j) + C\psi(s') \leq \bar c \psi(s'),
    \end{align*}
    with probability greater than $1 - 2K \log_2(d)/d$. The last inequality is because the rate $\psi(s)$ is a monotonically increasing function with respect to $s$ and by choosing constant $\bar c$ large enough.
    By definition (\ref{def: adapt s}), we complete the proof.
\end{proof}

The following theorem shows that the proposed adaptive estimator $\hat \btheta_{\hat s}$ attains the optimal rate. 

\begin{theorem}
    Assume that Assumptions~\ref{asp3}-\ref{asp6} hold, and $\log \frac{N}{K d \log d} \geq \frac{\log 2}{1-c}$ for some constant $c \in (0,1)$. With probability greater than $1 - 2 K \log_2(d)/d$, we have
    \begin{equation}
        \norm{\hat \btheta_{\hat s} - \btheta^*}_2 \lesssim \psi(s^*).
    \end{equation}
\end{theorem}

\begin{proof}
    From Lemma \ref{lem: hat s leq s_j}, we know that with probability greater than $1 - 2 K \log_2(d)/d$, $\hat s \leq s_j$. Thus, by definition of $\hat s$ and Theorem \ref{rate_k}, we have 
    \begin{align*}
        \norm{\hat \btheta_{\hat s} - \btheta^*}_2 &\leq \norm{\hat \btheta_{\hat s} - \hat \btheta_{s_j}}_2 + \norm{\hat \btheta_{s_j} - \btheta^*}_2\\
        &\leq \bar c \psi(s_j) + C\psi(s_j) \asymp \psi(s_j).
    \end{align*}
    Thus, it suffices to show that $\psi(s_j) \asymp \psi(s^*)$. By the definition of $\psi(s)$ and that $s_j = 2 s_{j-1}$, we obtain for $\beta > 1$
    \[
    \frac{\psi(s_j)}{\psi(s_{j-1})} = \frac{\left(s_j \log d/N \right)^{\frac{1}{2}}}{\left(s_{j-1} \log d/N \right)^{\frac{1}{2}}} = \sqrt{2},
    \] and for $0 < \beta \leq 1$,
    \begin{align*}
    \frac{\psi(s_j)}{\psi(s_{j-1})} &= \frac{\left(\frac{K s_j \log d}{N}\right)^{\frac{1}{2\beta}} \left(\log \frac{N}{K s_j \log d}\right)^{\frac{1}{4\beta}}}{\left(\frac{K s_{j-1} \log d}{N}\right)^{\frac{1}{2\beta}} \left(\log \frac{N}{K s_{j-1} \log d}\right)^{\frac{1}{4\beta}}} \\
    &= \left(\frac{s_j}{s_{j-1}}\right)^{\frac{1}{2\beta}} \left(\frac{\log \frac{N}{2K s_{j-1} \log d}}{\log \frac{N}{K s_{j-1} \log d}}\right)^\frac{1}{4\beta} \\
    &= 2^{\frac{1}{2\beta}} \left(1 - \frac{\log 2}{\log \frac{N}{K s_{j-1} \log d}}\right)^{\frac{1}{4\beta}} \in [2^{\frac{1}{2\beta}}c, 2^{\frac{1}{2\beta}}],
    \end{align*}
    so $\psi(s_{j-1}) \asymp \psi(s_j)$. Since the rate $\psi(s)$ is a monotonically increasing function with respect to $s$, we know that $\psi(s_{j-1}) \leq \psi(s^*) \leq \psi(s_j)$, which implies $\psi(s_{j-1}) \asymp \psi(s^*) \asymp \psi(s_j)$. This completes the proof.
\end{proof}

\subsubsection{Adaptation to unknown smoothness $\beta$}\label{sec_adaptation_2}

In this section, we assume that the sparsity $s$ is known or can be specified based on domain knowledge. The goal is to construct an estimator $\hat\btheta$, which does not require the knowledge of the smoothness parameter $\beta$. Recall that our algorithm involves a set of tuning parameters $ \{\lambda_k\}_{k=1}^K, \{b_k\}_{k=1}^{K-1}, \{\delta_k\}_{k=1}^K$ and the number of iterations $K$, and it produces a sequence of estimators $ \{\hat\btheta_k\}_{k=1}^K$. To emphasize the dependence of $\hat\btheta_k$ on $\delta_k$, we denote the estimator by $\hat\btheta_{\delta_k}$. 

For now, let us simply set $K =\lceil \log\log N \rceil$ and $N_k=N/K$. For simplicity, we assume $n\gg N$, that is the total budget $N$ is much smaller than the sample size $n$. Let $\Delta=\frac{s\log d}{N_k}$. Given any two points $\btheta$ and $\btheta'$ in $\Omega_k$ (defined in Assumption \ref{asp6}), define 
$$
\rho_k(\delta_k)=\frac{R_{\delta_k}^{D_k}(\btheta')-R_{\delta_k}^{D_k}(\btheta)-\nabla R_{\delta_k}^{D_k}(\btheta)^T(\btheta'-\btheta)}{\|\btheta'-\btheta\|_2^2}.
$$
For $k\geq 2$, we set the size of the active set to be 
$$
\hat b_{k-1}(\delta_k)=C\Big(\delta_k\vee \frac{1}{\rho_{k-1}(\hat\delta_{k-1})} \sqrt{\frac{N Ks \log d}{n^2\hat \delta_{k-1}\hat b_{k-2}(\hat\delta_{k-1})} \log \frac{1}{\Delta}}\Big),
$$
where $\hat\delta_{k-1}$ is to be defined later and $C$ is a large constant. Note that for $k=2$, we just set $\hat b_{k-2}(\hat\delta_{k-1})=1$ in the above definition. Throughout the proof, the constant $C$ may differ from line to line. By the proof of Theorem \ref{beta>beta_*}, we set the sampling probability in (\ref{sampling}) to be $\hat c_k(\delta_k)=\frac{C N}{n\hat b_{k-1}(\delta_k)}$ for $k\geq 2$, and $\hat c_1=c_{n,1}=\frac{N}{n}$ which is independent of $\delta_1$. Moreover, set the regularization
parameter to be
$$
\hat \lambda_k(\delta_k)=C\sqrt{\frac{\hat c_k(\delta_k)K\log d}{n\delta_k}},
$$
for $k\geq 2$, and $\hat \lambda_1=C\sqrt{\frac{NK\log d}{n^2\delta_1}}$. 
Now we are ready to introduce the Lepski’s method. Consider the discrete set $\cD = \{1,\frac{1}{2},\dotso, \frac{1}{2^m} \}$ such that $\frac{1}{2^m} \leq \frac1n < \frac{1}{2^{m-1}}.$ The key idea is to choose the largest bandwidth in $\cD$ such that the bias is still dominated by the variance. Formally, the Lepski's method selects
 \begin{equation}\label{leiski_beta_1}
 \hat{\delta}_1 = \max_{\delta_1 \in \cD } \bigg \{
 \norm{\hat{\btheta}_{\delta_1} - \hat{\btheta}_{\delta_1'}}_2 \leq \frac{c}{\rho_1(\delta_1')} \sqrt{\frac{\hat c_1 Ks \log d}{n\delta_1'}}~~\forall \delta_1' \leq \delta_1, \delta_1' \in \cD
 \bigg  \},     
 \end{equation}
 and for $2\leq k\leq K$
 \begin{equation}\label{leiski_beta_k}
 \hat{\delta}_k = \max_{\delta_k \in \cD } \bigg \{
 \norm{\hat{\btheta}_{\delta_k} - \hat{\btheta}_{\delta_k'}}_2 \leq \frac{c}{\rho_k(\delta_k')} \sqrt{\frac{\hat c_k(\delta_k') Ks \log d}{n\delta_k'}}~~\forall \delta_k' \leq \delta_k, \delta_k' \in \cD
 \bigg  \},         
 \end{equation}
 for some constant $c > 0$. If the set to be maximized over is empty, we  choose $\hat{\delta}_k = 1/n$ by default. We further define the oracles as
$$
\delta^*_1 = \max \bigg \{
	\delta_1 \in \cD, \delta_1^\beta \leq c'\sqrt{\Delta \frac{1}{\delta_1}}
	\bigg \},
$$
and for $k\geq 2$
	\[
	\delta^*_k = \max \bigg \{
	\delta_k \in \cD, \delta_k^\beta \leq c'\sqrt{\Delta \frac{\hat b_{k-1}(\delta_k)}{\delta_k}}
	\bigg \},
	\]
	where $c'>0$ is a constant. Note that $\delta^*_k$ is a random quantity as it depends $\hat\delta_{k-1}$ via  $\hat b_{k-1}(\delta_k)$. 

Define the event $\cE=\cap_{1\leq k\leq K} \cE_k$, where for $k\geq 2$
$$
\cE_k=\Big\{\|\nabla  R^{D_k}_{\delta_k}(\btheta^*)  - \nabla  R_{\delta_k, \hat{\btheta}_{\hat\delta_{k-1}}}(\btheta^*) \|_\infty \leq 
C_1\sqrt{\frac{\hat c_{k}(\delta_k)K\log d }{n\delta_k}}, \forall \delta_k\in\cD\Big\},
$$
and 
$$
\cE_1=\Big\{\|\nabla  R^{D_1}_{\delta_1}(\btheta^*)  - \nabla  R_{\delta_1}(\btheta^*) \|_\infty \leq 
C_1\sqrt{\frac{\hat c_{1} K\log d }{n\delta_1}}, \forall \delta_1\in\cD\Big\}.
$$
By the proof of Proposition \ref{E_1}, we can easily prove the following lemma. 
\begin{lemma}\label{lem_adaptive_grid}
 Assume that Assumptions~\ref{asp3}-\ref{asp6} hold. The event $\cE$ holds with probability greater than $1 - 2K\log_{2}(n)/d$. 
\end{lemma}

Recall that
$$
 \nabla \bar R_{\delta}(\btheta) = \sum_{y = \pm 1} \int_{\bZ} \bz y \Big[\int_X\frac 1\delta   K\bigg(\frac{x - \btheta^T\bz}{\delta}\bigg) f(x|\bz,y)dx \Big]f(\bz|y) d\bz. 
$$
 The following theorem shows the convergence rate of the estimator $\hat{\btheta}_{\hat{\delta}_K}$.
 
 \begin{theorem}\label{thm_adapt_beta}
 Assume that Assumptions~\ref{asp3}-\ref{asp6} hold, 
 $\|\nabla_S \bar R_{\delta}(\btheta^*)\|_2\geq C\delta^\beta$, and 
 \begin{equation}\label{eq_thm_adapt_beta_signal}
\min\{|\btheta^*_j|:\btheta^*_j\neq 0\}\geq 
\begin{cases}
C\delta^{*\beta}_1, & \text{if } \beta > 1, \\[6pt]
C\delta_1^* & \text{if } \beta \leq 1,
\end{cases}
\end{equation}
for some sufficiently large constant $C$. On the event $\cE$, we have
\begin{equation}\label{eq_thm_adapt_beta_1}
\norm{\hat{\btheta}_{\hat\delta_1} - \btheta^*}_2\lesssim \frac{1}{\rho_1(\delta_1^*)} \sqrt{\frac{\hat c_1 Ks \log d}{n\delta_1^*}},
\end{equation}
and for $2\leq k\leq K$
\begin{equation}\label{eq_thm_adapt_beta_2}
\norm{\hat{\btheta}_{\hat\delta_k} - \btheta^*}_2\lesssim \frac{1}{\rho_k(\delta_k^*)} \sqrt{\frac{\hat c_k(\delta_k^*) Ks \log d}{n\delta_k^*}}.
\end{equation}
In particular,  with probability greater than $1 - 2K\log_{2}(n)/d$
\begin{equation}\label{eq_thm_adapt_beta_3}
 \|\hat{\btheta}_{\hat{\delta}_K} - \btheta^*\|_2 \lesssim 
\begin{cases}
\Delta^{1/2}, & \text{if } \beta > 1, \\[6pt]
\log^{\frac{1}{4\beta}}(\frac{1}{\Delta})\Delta^{\frac{1}{2\beta}} & \text{if } 1/2\leq \beta \leq 1.
\end{cases}
\end{equation}
 \end{theorem}

To achieve adaptive estimation, we require two more conditions, $\|\nabla_S \bar R_{\delta}(\btheta^*)\|_2\geq C\delta^\beta$, and (\ref{eq_thm_adapt_beta_signal}). The first condition ensures that the approximation bias induced by the smoothed surrogate loss scales exactly as  $\delta^\beta$. The condition is further discussed and verified in Section \ref{sec_bias_lower_bound}. The second condition, known as the signal strength condition, appears to be necessary for deriving a sharp lower bound for Lasso-type estimators \citep{lounici2011oracle}. This lower bound plays a key role in establishing the adaptivity property and can be also of interest in its own right.

This theorem shows that we can achieve adaptive estimation of $\btheta^*$ without knowing the value of the smoothness parameter $\beta$. When $1/2\leq \beta\leq 1$, the rate of the estimator $\hat{\btheta}_{\hat{\delta}_K}$ in (\ref{eq_thm_adapt_beta_3}) exactly matches the upper bound in Theorem \ref{beta < beta^**}. However, when $\beta>1$, compared with Theorems \ref{beta>beta_*} and \ref{beta^{**}< beta < beta^*}, the rate $\Delta^{1/2}=(\frac{Ks\log d}{N})^{1/2}$ incurs an additional $K=\log\log N$ factor. Indeed, the extra factor can be removed with a mild modification of our algorithm. Specifically, we allocate half of the label budget to the last batch, that is $N_K=N/2$, and $|D_K|=n/2$. For the first $K-1$ batches, the label budget is allocated equally $N_k=N/(2K)$. The Lepski's method remains the same except that in the last batch we define
$$
 \hat{\delta}_K = \max_{\delta_K \in \cD } \bigg \{
 \norm{\hat{\btheta}_{\delta_K} - \hat{\btheta}_{\delta_K'}}_2 \leq \frac{c}{\rho_K(\delta_K')} \sqrt{\frac{\hat c_K(\delta_K') s \log d}{n\delta_K'}}~~\forall \delta_K' \leq \delta_K, \delta_K' \in \cD
 \bigg  \}. 
$$

 \begin{corollary}\label{cor_adapt_beta}
Under the same conditions as in Theorem \ref{thm_adapt_beta}, with probability greater than $1 - 2K\log_{2}(n)/d$,
\begin{equation}\label{eq_thm_adapt_beta_4}
 \|\hat{\btheta}_{\hat{\delta}_K} - \btheta^*\|_2 \lesssim 
\begin{cases}
\Big(\frac{s\log d}{N}\Big)^{1/2}, & \text{if } \beta > 1, \\[6pt]
\log^{\frac{1}{4\beta}}(\frac{1}{\Delta})\Delta^{\frac{1}{2\beta}} & \text{if } 1/2\leq \beta \leq 1.
\end{cases}
\end{equation}
 \end{corollary}

The proof of this corollary is nearly the same as the proof of Theorem \ref{thm_adapt_beta}. The only difference is that the oracle $\delta^*_K$ is defined as	
\[
	\delta^*_K = \max \bigg \{
	\delta_K \in \cD, \delta_K^\beta \leq c'\sqrt{\frac{s\log d}{N} \cdot\frac{\hat b_{K-1}(\delta_K)}{\delta_K}}
	\bigg \}.
	\]
We omit the proof. 

\begin{proof}[Proof of Theorem \ref{thm_adapt_beta}]    
By the definition of oracles, we know that 
$$
\delta_1^{*\beta} \asymp \sqrt{\Delta \frac{1}{\delta^*_1}}, ~~\textrm{and}~~ \delta_k^{*\beta} \asymp \sqrt{\Delta \frac{\hat b_{k-1}(\delta^*_k)}{\delta^*_k}}
$$ 
for $k\geq 2$. In addition, Assumption \ref{asp6} implies that $\rho_k(\delta_k)\asymp \hat c_k(\delta_k)$ for $\beta\geq 1$ and $\rho_k(\delta_k)\asymp \hat c_k(\delta_k)\delta_k^{\beta-1}$ for $\beta< 1$.  Similar results hold for $k=1$ if $\hat c_k(\delta_k)$ is replaced with $\hat c_1$. For notational simplicity, let 
    $$
    \psi_1(\delta)=\frac{1}{\rho_1(\delta)} \sqrt{\frac{\hat c_1 Ks \log d}{n\delta}}, ~~~\psi_k(\delta)=\frac{1}{\rho_k(\delta)} \sqrt{\frac{\hat c_k(\delta) Ks \log d}{n\delta}},
    $$
    for $k\geq 2$. 
    
    We now prove the theorem by induction. First, for $k=1$, by Lemma \ref{lem_delta_1_bound}, we have on the event $\cE_1$, $\delta_1^*\leq \hat\delta_1$. Then
    \begin{align*}
	\norm{\hat{\btheta}_{\hat{\delta}_1} - \btheta^*}_2 &\leq \norm{\hat{\btheta}_{\hat{\delta}_1} - \hat{\btheta}_{\delta^*_1}}_2 + \norm{\hat{\btheta}_{\delta^*_1} - \btheta^*}_2\\
	&\lesssim \psi_1(\delta_1^*) + \psi_1(\delta_1^*),        
    \end{align*}
where we use the definition (\ref{leiski_beta_1}) and the proof of Theorem \ref{rate_k} shows that, on the event $\cE_1$, the estimator $\hat{\btheta}_{\delta^*_1}$ with tuning parameters $\delta_1^*$, $\hat\lambda_1$ satisfies 
$$
\norm{\hat{\btheta}_{\delta^*_1} - \btheta^*}_2\lesssim \frac{1}{\hat c_1}(\hat c_1\delta_1^{*\beta}+\sqrt{\frac{\hat c_1 Ks\log d}{n\delta_1^*}})\asymp \delta_1^{*\beta}+\sqrt{\Delta \frac{1}{\delta^*_1}}\asymp \sqrt{\Delta \frac{1}{\delta^*_1}} \asymp\psi_1(\delta_1^*)
$$
for $\beta\geq 1$ and 
$$
\norm{\hat{\btheta}_{\delta^*_1} - \btheta^*}_2\lesssim \frac{(\delta^*_1)^{1-\beta}}{\hat c_1}(\hat c_1\delta_1^{*\beta}+\sqrt{\frac{\hat c_1 Ks\log d}{n\delta_1^*}})\asymp \delta_1^{*}+\sqrt{\Delta \delta_1^{*(1-2\beta)}}\asymp \sqrt{\Delta \delta_1^{*(1-2\beta)}} \asymp\psi_1(\delta_1^*)
$$
for $\beta< 1$. This completes the proof of (\ref{eq_thm_adapt_beta_1}). 
     
Assuming that (\ref{eq_thm_adapt_beta_2}) holds for any $j\leq k-1$, our goal is to verify (\ref{eq_thm_adapt_beta_2}) holds for $k$. By Lemma \ref{lem_delta_k_bound}, we know on the event $\cE$, $\hat\delta_{k-1}\leq C\delta_{k-1}^*$ and $\delta_{k}^*\leq \hat\delta_{k}$. Then
    \begin{align}
	\norm{\hat{\btheta}_{\hat{\delta}_k} - \btheta^*}_2 &\leq \norm{\hat{\btheta}_{\hat{\delta}_k} - \hat{\btheta}_{\delta^*_k}}_2 + \norm{\hat{\btheta}_{\delta^*_k} - \btheta^*}_2\nonumber\\
	&\lesssim \psi_k(\delta_k^*) +\norm{\hat{\btheta}_{\delta^*_k} - \btheta^*}_2, \label{eq_them_adapt_beta_k_bound}
    \end{align}
where we use the definition (\ref{leiski_beta_k}). To upper bound $\norm{\hat{\btheta}_{\delta^*_k} - \btheta^*}_2$, we need to verify the proof of Theorem \ref{rate_k} is still valid with tuning parameters $\delta^*_k$, $\hat\lambda_k(\delta_k^*)$ and $\hat b_{k-1}(\delta_k^*)$. The key is to verify the size of the active set satisfies (\ref{eq_rate_k_3}). Clearly, $\hat b_{k-1}(\delta_k^*)\geq C\delta^*_k$ and we can further show that
$$
\hat b_{k-1}(\delta_k^*)\geq C\frac{1}{\rho_{k-1}(\hat\delta_{k-1})} \sqrt{\frac{N Ks \log d}{n^2\hat \delta_{k-1}\hat b_{k-2}(\hat\delta_{k-1})} \log \frac{1}{\Delta}}\geq C\|\hat\btheta_{\hat\delta_{k-1}}-\btheta^*\|_2\sqrt{\log \frac{1}{\Delta}}.
$$
To see why the last step holds, we first use the assumption that (\ref{eq_thm_adapt_beta_2}) holds for $k-1$,
$$
\norm{\hat{\btheta}_{\hat\delta_{k-1}} - \btheta^*}_2\lesssim \frac{1}{\rho_{k-1}(\delta_{k-1}^*)} \sqrt{\frac{\hat c_{k-1}(\delta_{k-1}^*) Ks \log d}{n\delta_{k-1}^*}}\lesssim \frac{1}{\rho_{k-1}(\delta_{k-1}^*)} \sqrt{\frac{NKs \log d}{n^2\delta_{k-1}^*\hat b_{k-2}(\delta^*_{k-1})}}.
$$
For $\beta\geq 1$, it reduces to
$$
\frac{1}{\rho_{k-1}(\delta_{k-1}^*)} \sqrt{\frac{NKs \log d}{n^2\delta_{k-1}^*\hat b_{k-2}(\delta^*_{k-1})}}\asymp \sqrt{\frac{\Delta \hat b_{k-2}(\delta^*_{k-1})}{\delta^*_{k-1}}}\lesssim \sqrt{\frac{\Delta \hat b_{k-2}(\hat\delta_{k-1})}{\hat\delta_{k-1}}},
$$
as $\hat\delta_{k-1}\leq C\delta_{k-1}^*$ by Lemma \ref{lem_delta_k_bound}. Similarly, for $1/2\leq\beta<1$, 
$$
\frac{1}{\rho_{k-1}(\delta_{k-1}^*)} \sqrt{\frac{NKs \log d}{n^2\delta_{k-1}^*\hat b_{k-2}(\delta^*_{k-1})}}\asymp \sqrt{\Delta \hat b_{k-2}(\delta^*_{k-1})\delta^{*(1-2\beta)}_{k-1}}\lesssim \sqrt{\Delta \hat b_{k-2}(\hat\delta_{k-1})\hat\delta^{(1-2\beta)}_{k-1}},
$$
where in the last step we use both sides of the inequality $\delta_{k-1}^*\leq \hat\delta_{k-1}\leq C\delta_{k-1}^*$. Therefore, the condition (\ref{eq_rate_k_3}) holds, and Theorem \ref{rate_k} implies 
$$
\norm{\hat{\btheta}_{\delta^*_k} - \btheta^*}_2\lesssim \frac{1}{\hat c_k(\delta^*_k)}(\hat c_k(\delta^*_k)\delta_k^{*\beta}+\sqrt{\frac{\hat c_k(\delta^*_k) Ks\log d}{n\delta_k^*}})\asymp \delta_k^{*\beta}+ \sqrt{\Delta \frac{\hat b_{k-1}(\delta_k^*)}{\delta^*_k}}\asymp\psi_k(\delta_k^*)
$$
for $\beta\geq 1$ and 
$$
\norm{\hat{\btheta}_{\delta^*_k} - \btheta^*}_2\lesssim \frac{(\delta^*_k)^{1-\beta}}{\hat c_k(\delta^*_k)}(\hat c_k(\delta^*_k)\delta_k^{*\beta}+\sqrt{\frac{\hat c_k(\delta^*_k) Ks\log d}{n\delta_k^*}})\asymp \delta_k^{*}+\sqrt{\Delta\hat b_{k-1}(\delta_k^*) \delta_k^{*(1-2\beta)}} \asymp\psi_k(\delta_k^*)
$$
for $\beta< 1$. Together with (\ref{eq_them_adapt_beta_k_bound}), we obtain that $\norm{\hat{\btheta}_{\hat\delta_k} - \btheta^*}_2\lesssim \psi_k(\delta_k^*)$. This completes the proof of (\ref{eq_thm_adapt_beta_2}). 

Since the bound (\ref{eq_thm_adapt_beta_2}) also applies to $K$, what remains to do is to simplify the rate. However, the bound for $\hat{\btheta}_{\hat\delta_k}$ depends on $\hat b_{k-1}(\delta^*_k)$ and $\hat\delta_{k-1}$ recursively. Thus, we need to understand the order of $\hat\delta_{k-1}$, that is the order of $\delta_{k-1}^*$ by Lemma \ref{lem_delta_k_bound}, from the first iteration. It is easily seen that $\delta_1^*\asymp \Delta^{\frac{1}{2\beta+1}}$. Therefore, 
$$
\norm{\hat{\btheta}_{\hat\delta_1} - \btheta^*}_2\lesssim \Delta^{\frac{\beta \vee 1}{2\beta+1}}.
$$
In the following, let us consider the three cases $\beta>(1+\sqrt{3})/2$, $1<\beta\leq (1+\sqrt{3})/2$ and $\beta\leq 1$ separately. For $\beta>(1+\sqrt{3})/2$, (\ref{eq_thm_adapt_beta_2}) simplifies to 
\begin{equation}\label{eq_thm_adapt_beta_temp_1}
\norm{\hat{\btheta}_{\hat\delta_k} - \btheta^*}_2\lesssim \sqrt{\Delta \frac{\hat b_{k-1}(\delta_k^*)}{\delta^*_k}}.
\end{equation}
We start from $k=2$. Since $\delta_1^*\leq \hat\delta_1\leq C\delta_1^*$ by Lemma \ref{lem_delta_1_bound}, $\hat b_{1}(\delta_2^*)\asymp \delta_2^*\vee \sqrt{\Delta^{\frac{2\beta}{2\beta+1}}\log \frac{1}{\Delta}}$. In addition, the definition of $\delta_2^*$ implies $\delta_2^{*(2\beta+1)}\asymp \Delta (\delta_2^*\vee \sqrt{\Delta^{\frac{2\beta}{2\beta+1}}\log \frac{1}{\Delta}})$. Given $\beta>(1+\sqrt{3})/2$, we can easily verify that $\delta_2^*$ is the dominating term and it reduces $\delta_2^{*(2\beta+1)}\asymp \Delta \delta_2^*$, that is $\delta_2^*\asymp \Delta^{\frac{1}{2\beta}}$. By (\ref{eq_thm_adapt_beta_temp_1}), we obtain
$$
\norm{\hat{\btheta}_{\hat\delta_2} - \btheta^*}_2\lesssim \Delta^{\frac{1}{2}}.
$$
Assuming that $\norm{\hat{\btheta}_{\hat\delta_{k-1}} - \btheta^*}_2\lesssim \Delta^{\frac{1}{2}}$ and $\delta_{k-1}^*\asymp \Delta^{\frac{1}{2\beta}}$, we can similarly show that $\norm{\hat{\btheta}_{\hat\delta_{k}} - \btheta^*}_2\lesssim \Delta^{\frac{1}{2}}$ and $\delta_{k}^*\asymp \Delta^{\frac{1}{2\beta}}$. The key is to verify $\hat b_{k-1}(\delta^*_k)\asymp \delta^*_k$, and therefore we can exactly replicate the analysis for $k=2$. The analysis for $1<\beta\leq (1+\sqrt{3})/2$ is similar and is omitted. This completes the proof of the first part of (\ref{eq_thm_adapt_beta_3}). 

Finally, for $\beta<1$, (\ref{eq_thm_adapt_beta_2}) simplifies to $\norm{\hat{\btheta}_{\hat\delta_k} - \btheta^*}_2\lesssim \delta_k^*$. For $k=2$, note that $\hat b_1(\delta_2^*)\asymp \delta^*_2\vee \Delta^{\frac{1}{2\beta+1}}\log^{1/2}(\frac{1}{\Delta})$. Similarly, we can verify that $\Delta^{\frac{1}{2\beta+1}}\log^{1/2}(\frac{1}{\Delta})$ is the dominating term, such that $\hat b_1(\delta_2^*)\asymp \Delta^{\frac{1}{2\beta+1}}\log^{1/2}(\frac{1}{\Delta})$. Then the definition of $\delta_2^*$ implies $\delta_2^{*(2\beta+1)}\asymp\Delta \hat b_1(\delta_2^*)$, which reduces to $\delta_2^*\asymp \Delta^{(1+\frac{1}{2\beta+1})\frac{1}{2\beta+1}}\log^{\frac{1}{2(2\beta+1)}} (\frac{1}{\Delta})$. That is 
$$
\norm{\hat{\btheta}_{\hat\delta_2} - \btheta^*}_2\lesssim \Delta^{(1+\frac{1}{2\beta+1})\frac{1}{2\beta+1}}\log^{\frac{1}{2(2\beta+1)}} (\frac{1}{\Delta}).
$$
Finally, we can use induction method to show that
$$
\norm{\hat{\btheta}_{\hat\delta_k} - \btheta^*}_2\lesssim \Delta^{(1+\frac{1}{2\beta+1}+...+\frac{1}{(2\beta+1)^{k-1}})\frac{1}{2\beta+1}}\log^{\frac{1}{2(2\beta+1)}(1+...+\frac{1}{(2\beta+1)^{k-2}})} (\frac{1}{\Delta}),
$$
which agrees with (\ref{eq_beta < beta^**_rate_sub}). Following the same argument as in the proof of Theorem \ref{beta < beta^**}, we complete the proof of the second part of (\ref{eq_thm_adapt_beta_3}). 
\end{proof}

\begin{lemma}\label{lem_delta_1_bound}
Under the same conditions as in Theorem \ref{thm_adapt_beta}, on the event $\cE_1$, it holds that $\delta_1^*\leq \hat\delta_1\leq C\delta_1^*$ for some sufficiently large constant $C$. 
\end{lemma}

\begin{proof}[Proof of Lemma \ref{lem_delta_1_bound}]   
Let us first prove $\delta_1^*\leq \hat\delta_1$. For any $\delta_1\in\cD$ and $\delta_1\leq \delta_1^*$, on the event $\cE_1$, we have
\begin{align}
\|\hat\btheta_{\delta_1}-\hat\btheta_{\delta_1^*}\|_2&\leq \|\hat\btheta_{\delta_1}-\btheta^*\|_2+\|\hat\btheta_{\delta_1^*}-\btheta^*\|_2\nonumber\\
&\lesssim \frac{1}{\rho_1(\delta_1)} \Big(\hat c_1\delta_1^\beta+\sqrt{\frac{\hat c_1 Ks \log d}{n\delta_1}}\Big)+\frac{1}{\rho_1(\delta_1^*)} \Big(\hat c_1\delta_1^{*\beta}+\sqrt{\frac{\hat c_1 Ks \log d}{n\delta_1^*}}\Big)\nonumber\\
&\lesssim \psi_1(\delta_1)+\psi_1(\delta_1^*),\label{eq_lem_lem_delta_1_bound_1}
\end{align}
where the last step holds by noticing the definition of $\delta_1^*$ and $\delta_1\leq \delta_1^*$. Note that the notations $\psi_1(\delta)$ and $\psi_k(\delta)$ are defined in the proof of Theorem \ref{thm_adapt_beta}. When $\beta\geq 1$, 
$$
\psi_1(\delta_1)\asymp \sqrt{\Delta \frac{1}{\delta_1}} \geq  \sqrt{\Delta \frac{1}{\delta_1^*}}\asymp\psi_1(\delta_1^*),
$$
and then (\ref{eq_lem_lem_delta_1_bound_1}) implies $\|\hat\btheta_{\delta_1}-\hat\btheta_{\delta_1^*}\|_2\lesssim \psi_1(\delta_1)$. When $1/2\leq \beta<1$, 
$$
\psi_1(\delta_1)\asymp\sqrt{\Delta \delta_1^{(1-2\beta)}} \geq \sqrt{\Delta \delta_1^{*(1-2\beta)}} \asymp\psi_1(\delta_1^*),
$$
and then (\ref{eq_lem_lem_delta_1_bound_1}) implies $\|\hat\btheta_{\delta_1}-\hat\btheta_{\delta_1^*}\|_2\lesssim \psi_1(\delta_1)$. By the definition (\ref{leiski_beta_1}), we obtain that $\delta_1^*\leq \hat\delta_1$. 

Next, we prove $\hat\delta_1\leq C\delta_1^*$ by contradiction. Assume that for any constant $C_a$, we always have $\hat\delta_1\geq C_a\delta_1^*$. Then there exists $\bar\delta_1\geq C_a\delta_1^*$ and $\bar\delta_1\in\cD$ such that 
$$
\|\hat\btheta_{\bar\delta_1}-\hat\btheta_{\delta_1^*}\|_2\leq \frac{c}{\rho_1(\delta_1^*)} \sqrt{\frac{\hat c_1 Ks \log d}{n\delta_1^*}}. 
$$
The triangle inequality yields,
\begin{equation}\label{eq_lem_lem_delta_1_bound_2}
\|\hat\btheta_{\bar\delta_1}-\btheta^*\|_2\leq \frac{c}{\rho_1(\delta_1^*)} \sqrt{\frac{\hat c_1 Ks \log d}{n\delta_1^*}}+\|\hat\btheta_{\delta_1^*}-\btheta^*\|_2\leq C'\psi_1(\delta_1^*).
\end{equation}
However, Lemma \ref{lem_lasso_lower_bound} shows that $\|\hat\btheta_{\bar\delta_1}-\btheta^*\|_2\geq \bar C{\hat c_1\bar\delta_1^\beta}/{\rho_1(\bar\delta_1)}\geq C_a \bar C\psi_1(\delta_1^*)$. Since $C_a$ can be any constant, as long as we take $C_a>C'/\bar C$, where $C'$ is the constant in (\ref{eq_lem_lem_delta_1_bound_2}), we will have $\|\hat\btheta_{\bar\delta_1}-\btheta^*\|_2> C'\psi_1(\delta_1^*)$, which leads to the contradiction with  (\ref{eq_lem_lem_delta_1_bound_2}). This completes the proof.
\end{proof}

\begin{lemma}\label{lem_lasso_lower_bound}
Under the same conditions as in Theorem \ref{thm_adapt_beta}, for $\delta\geq C_a\delta_1^*$ with $C_a$ being a sufficiently large constant, we have $\|\hat\btheta_1-\btheta^*\|_2\geq \bar C{\hat c_1\delta^\beta}/{\rho_+}$.
\end{lemma}

\begin{proof}[Proof of Lemma \ref{lem_lasso_lower_bound}]
By the KKT condition, we know that $\nabla_j R_{\delta}^{D_1}(\hat\btheta_1)=\lambda_1 \sign(\hat\btheta_j)$ for any $\hat\btheta_j\neq 0$, which yields $|\nabla_j R_{\delta}^{D_1}(\hat\btheta_1)|=\lambda_1$. Recall that we take $\lambda_1=C\sqrt{\frac{\hat c_1 K\log d}{n\delta_1}}$. With a sufficiently large constant $C$, we can claim that the event $\cE_1$ implies $\cE'_1=\{\|\nabla  R^{D_1}_{\delta_1}(\btheta^*)  - \nabla  R_{\delta_1}(\btheta^*) \|_\infty \leq \lambda_1/2\}$. Write 
$$
\nabla_j R_{\delta}^{D_1}(\hat\btheta_1)=\nabla_j R_{\delta}^{D_1}(\hat\btheta_1)-\nabla_j R_{\delta}^{D_1}(\btheta^*)+I_j+II_j,
$$
where $I_j=\nabla_j R_{\delta}^{D_1}(\btheta_1^*)-\nabla_j R_{\delta}(\btheta^*)$ and $II_j=\nabla_j R_{\delta}(\btheta^*)-\nabla_j R(\btheta^*)$. On the event $\cE'_1$, 
$$
\lambda_1/2\leq |\nabla_j R_{\delta}^{D_1}(\hat\btheta_1)-\nabla_j R_{\delta}^{D_1}(\btheta^*)+II_j|\leq 3\lambda_1/2. 
$$
Then $\|\nabla R_{\delta}^{D_1}(\hat\btheta_1)-\nabla R_{\delta}^{D_1}(\btheta^*)+II\|_{2,\hat S}\leq 3\sqrt{\hat s}\lambda_1/2$, where $\|v\|_{2,\hat S}=\sqrt{\sum_{j:(\hat\btheta_1)_j\neq 0} v_j^2}$, and $\hat S$ is the support set of $\hat\btheta_1$ with $\hat s=|\hat S|$. We further have 
\begin{equation}\label{eq_lem_lasso_lower_bound_temp_1}
\|\nabla R_{\delta}^{D_1}(\hat\btheta_1)-\nabla R_{\delta}^{D_1}(\btheta^*)\|_{2,\hat S}\geq \|II\|_{2,\hat S}-3\sqrt{\hat s}\lambda_1/2.
\end{equation}
Note that
\begin{equation}\label{eq_lem_lasso_lower_bound_temp_2}
\rho_+\|\hat\btheta_1-\btheta^*\|_2\geq \|\nabla_j R_{\delta}^{D_1}(\hat\btheta_1)-\nabla_j R_{\delta}^{D_1}(\btheta^*)\|_2\geq \|\nabla R_{\delta}^{D_1}(\hat\btheta_1)-\nabla R_{\delta}^{D_1}(\btheta^*)\|_{2,\hat S}.
\end{equation}
Combining (\ref{eq_lem_lasso_lower_bound_temp_1}) and (\ref{eq_lem_lasso_lower_bound_temp_2}), we obtain 
\begin{equation}\label{eq_lem_lasso_lower_bound_temp_3}
\|\hat\btheta_1-\btheta^*\|_2\geq \frac{1}{\rho_+}(\|II\|_{2,\hat S}-3\sqrt{\hat s}\lambda_1/2)\geq \frac{1}{\rho_+}(\|II\|_{2,\hat S}-3C \sqrt{s}\lambda_1/2),
\end{equation}
where the last step follows from $\hat s\leq C^2s$ by Lemma \ref{lemma3}. The next step to lower bound  $\|II\|_{2,\hat S}$. In the following we first prove that $S\subseteq\hat S$. We prove by contradiction. Assume that there exists $j\in S\backslash \hat S$. Then 
$$
|\btheta^*_j|=|(\hat\btheta_1-\btheta^*)_j|\leq \|\hat\btheta_1-\btheta^*\|_2\lesssim 
\begin{cases}
\delta^\beta+\sqrt{s}\lambda_1\lesssim \delta^\beta, & \text{if } \beta > 1, \\[6pt]
\delta+\sqrt{s}\lambda_1 \delta^{1-\beta} \lesssim \delta& \text{if } \beta \leq 1,
\end{cases}
$$
where the last step holds because $\delta\geq C_a\delta_1^*$ with $C_a$ being a sufficiently large constant. In other words, when the bandwidth is sufficiently large, the bias can dominate the variance. This contradicts with the minimum signal strength condition (\ref{eq_thm_adapt_beta_signal}). 

Back to (\ref{eq_lem_lasso_lower_bound_temp_3}), we further have $\|II\|_{2,\hat S}\geq \|II\|_{2,S}$, which yields
$$
\|\hat\btheta_1-\btheta^*\|_2\geq \frac{1}{\rho_+}(\|II\|_{2,S}-3\sqrt{\hat s}\lambda_1/2)\geq \frac{1}{\rho_+}(C'\hat c_1 \delta^\beta-3C \sqrt{s}\lambda_1/2)\geq  \frac{\bar C}{\rho_+}\hat c_1 \delta^\beta.
$$
\end{proof}

\begin{lemma}\label{lem_delta_k_bound}
Under the same conditions as in Theorem \ref{thm_adapt_beta}, assume that (\ref{eq_thm_adapt_beta_2}) holds for $j\leq k-1$. On the event $\cap_{1\leq j\leq j}\cE_j$, it holds that $\delta_{k}^*\leq \hat\delta_{k}\leq C\delta_k^*$. 
\end{lemma}

\begin{proof}[Proof of Lemma \ref{lem_delta_k_bound}]   
Let us first prove $\delta_k^*\leq \hat\delta_k$. For any $\delta_k\in\cD$ and $\delta_k\leq \delta_k^*$, on the event $\cE_k$, we have
\begin{align}
\|\hat\btheta_{\delta_k}-\hat\btheta_{\delta_k^*}\|_2&\leq \|\hat\btheta_{\delta_k}-\btheta^*\|_2+\|\hat\btheta_{\delta_k^*}-\btheta^*\|_2\nonumber\\
&\lesssim \frac{1}{\rho_k(\delta_k)} \Big(\hat c_k(\delta_k)\delta_k^\beta+\sqrt{\frac{\hat c_k(\delta_k) Ks \log d}{n\delta_k}}\Big)+\frac{1}{\rho_k(\delta_k^*)} \Big(\hat c_k(\delta_k^*)\delta_k^{*\beta}+\sqrt{\frac{\hat c_k(\delta^*) Ks \log d}{n\delta_k^*}}\Big)\nonumber\\
&\lesssim \psi_k(\delta_k)+\psi_k(\delta_k^*).\label{eq_lem_delta_k_bound_1}
\end{align}
To see why the last step holds, we need to simplify the rate. Note that for $\beta\geq 1$,
$$
\frac{1}{\hat c_k(\delta_k)}(\hat c_k(\delta_k)\delta_k^{\beta}+\sqrt{\frac{\hat c_k(\delta_k) Ks\log d}{n\delta_k}})\asymp \delta_k^{\beta}+ \sqrt{\Delta \frac{\hat b_{k-1}(\delta_k)}{\delta_k}}\lesssim \psi_k(\delta_k)
$$
and for $\beta<1$
$$
\frac{(\delta_k)^{1-\beta}}{\hat c_k(\delta_k)}(\hat c_k(\delta_k)\delta_k^{\beta}+\sqrt{\frac{\hat c_k(\delta_k) Ks\log d}{n\delta_k}})\asymp \delta_k+\sqrt{\Delta\hat b_{k-1}(\delta_k) \delta_k^{(1-2\beta)}} \lesssim \psi_k(\delta_k).
$$
We can similarly verify that $\psi_k(\delta_k)\geq C\psi_k(\delta_k^*)$.  Then (\ref{eq_lem_delta_k_bound_1}) implies $\|\hat\btheta_{\delta_k}-\hat\btheta_{\delta_k^*}\|_2\lesssim \psi_k(\delta_k)$ and thus $\delta_k^*\leq \hat\delta_k$. We can also prove $\hat\delta_1\leq C\delta_1^*$ by contradiction using the same argument. Indeed, we can similarly prove 
$$
\|\hat\btheta_k-\btheta^*\|_2\geq 
\begin{cases}
\bar C\delta_k^\beta, & \text{if } \beta > 1, \\[6pt]
\bar C \delta_k& \text{if } \beta \leq 1,
\end{cases}
$$
for $\delta_k\geq C_a \delta_k^*$. We omit the details. 
\end{proof}

\subsubsection{Remark on $\|\nabla_S \bar R_{\delta}(\theta^*)\|_2\geq C\delta^\beta$}\label{sec_bias_lower_bound}
Recall that
$$
 \nabla \bar R_{\delta}(\btheta) = \sum_{y = \pm 1} \int_{\bZ} \bz y \Big[\int_X\frac 1\delta   K\bigg(\frac{x - \btheta^T\bz}{\delta}\bigg) f(x|\bz,y)dx \Big]f(\bz|y) d\bz. 
$$
For any $\|\bv\|_2=1$ with $\|\bv\|_0\leq s$, we have $\| \nabla_S \bar R_{\delta}(\btheta)\|_2\geq |\bv^T\nabla \bar R_{\delta}(\btheta)|$. By the calculation in Proposition \ref{prop2}, 
\begin{align*}
|\bv^T\nabla \bar R_{\delta}(\btheta)|&=\Big|\int \bv^T\bz\int K(u) \frac{(u\delta)^l}{l!}\big(A_1(\btheta^{*T}\bz+\tau u\delta) - A_1(\btheta^{*T}\bz)  \big)du f(z|y=1)dz\\
&-\int \bv^T\bz\int K(u) \frac{(u\delta)^l}{l!}\big(A_{-1}(\btheta^{*T}\bz+\tau u\delta) - A_{-1}(\btheta^{*T}\bz)  \big)du f(z|y=-1)dz\Big|,
\end{align*}
where $A_1(x)=f^{(l)}(x|\bz,Y =1)$ and $A_{-1}(x)=f^{(l)}(x|\bz,Y =-1)$. For simplicity, assume that $\|\btheta^*\|_2=1$, so we can take $\bv=\btheta^*$.  Thus, the condition $\|\nabla_S \bar R_{\delta}(\theta^*)\|_2\geq C\delta^\beta$ holds, if 
\begin{equation}\label{eq_bias_lower_bound_1}
    \int \EE\Big[(A_1(\btheta^{*T}\bz+\tau u\delta)-A_1(\btheta^{*T}\bz))\btheta^{*T}\bz|Y=1\Big] K(u)u^l du\geq C\delta^{\beta-l},
\end{equation}
and 
$$
-\int \EE\Big[(A_{-1}(\btheta^{*T}\bz+\tau u\delta)-A_{-1}(\btheta^{*T}\bz))\btheta^{*T}\bz|Y=1\Big] K(u)u^l du\geq C\delta^{\beta-l}.
$$
It remains to verify the above conditions case by case. Let us consider the following example to verify (\ref{eq_bias_lower_bound_1}). Assume that $A_1(x)$ is piecewise linear, 
$$
A_1(x)=C_1x, ~~\textrm{for}~~ x\geq 0, ~~\textrm{and}~~A_1(x)=C_2x, ~~\textrm{for}~~ x< 0,
$$
where $C_1>C_2\geq 0$. Then we have $\beta=1$ and $l=0$.  Assume that the pdf of $N:=\btheta^*\bz$ given $Y=1$,  is continuous, upper bounded by a constant, and symmetric. The kernel function $K(\cdot)$ is non-negative. To show (\ref{eq_bias_lower_bound_1}), we have 
\begin{align*}
& \int \EE\Big[(A_1(\btheta^{*T}\bz+ u\delta)-A_1(\btheta^{*T}\bz))\btheta^{*T}\bz|Y=1\Big] K(u) du\\
&=\int K(u)[I(u\geq 0)+I(u<0)] \EE[(A_1(N+u\delta)-A_1(N))N|Y=1]du\\
&=\int K(u)I(u\geq 0)\EE[(A_1(N+u\delta)-A_1(N-u\delta))N|Y=1]du
\end{align*}
where the last step holds by using the change  of variable for $u<0$. Furthermore, given $u>0$, 
\begin{align*}
& \EE[(A_1(N+u\delta)-A_1(N-u\delta))N|Y=1]\\
&=\EE\Big\{I(N<-u\delta)N C_2u\delta+I(-u\delta<N<u\delta)N (C_1(u\delta+N)-C_2(N-u\delta))+I(N>u\delta)N C_1u\delta |Y=1\Big\}\\
&=\delta u\Big\{C_1\EE(I(N>u\delta)N|Y=1)+C_2\EE(I(N<-u\delta)N|Y=1)\Big\}+O(\delta^2)\\
&=\delta u\Big\{C_1\EE(I(N>0)N|Y=1)+C_2\EE(I(N<0)N|Y=1)\Big\}+O(\delta^2).
\end{align*}
Let $C:=\EE(I(N>0)N|Y=1)$. Then
\begin{align*}
&\int \EE\Big[(A_1(\btheta^{*T}\bz+ u\delta)-A_1(\btheta^{*T}\bz))\btheta^{*T}\bz|Y=1\Big] K(u)du\\
&=C(C_1-C_2)\delta\int K(u)uI(u\geq 0)du+O(\delta^2)\geq C'\delta. 
\end{align*}
This verifies (\ref{eq_bias_lower_bound_1}). Intuitively, condition  (\ref{eq_bias_lower_bound_1}) requires the non-smoothness of $A_1(x)$ to be asymmetric, so that the bias from the positive and negative sides does not cancel each other out.

\subsection{Binary Response Model with Diverging 
 \texorpdfstring{$\|\theta^*\|_2$}{theta}
}\label{sec_example}

For the binary response model, we can construct a counterexample that shows that the eigenvalues of the Hessian matrix of $R(\btheta^*)$ at the true value $\btheta^*$ are of order $1/\|\btheta^*\|_2$. When  $\|\btheta^*\|_2$ is diverging, the eigenvalues of $\nabla^2 R(\btheta^*)$ shrink to 0 and thus the RSC condition would fail even if we only consider  $\btheta=\btheta^*$. Thus, the  statistical rate presented in Section \ref{sec_theory} is generally not applicable to the case with diverging $\|\btheta^*\|_2$. In the following, we provide the counterexample.

 Recall that the binary response model takes the form $Y = \sign (X - \btheta^{*T}\bZ + u)$. For simplicity we assume $X\sim N(0,1)$, $\bZ\sim N(0,\Ib)$ independent of $X$ and $\gamma(y) \equiv 1$. In addition, we assume the error $u$ is independent of $X$ (but possibly depends on $\bZ$), and the conditional density given $\bZ = \bz$, denoted by $f_{u|\bz}(\cdot)$, is bounded by a constant for all $\bz$. The median of $u$ given $\bZ$ is 0.

By the definition of $R(\btheta)$, it is easily shown that for any $\|\bv\|_2=1$
$$
\bv^T\nabla^2 R(\btheta^*)\bv=2\int (\bz^T\bv)^2f_{u|\bz}(0)\phi(\btheta^{*T}\bz)\phi(z_1)...\phi(z_n)d\bz,
$$
where $\phi$ is the pdf of $N(0,1)$. Since $f_{u|\bz}(\cdot)$ is upper bounded by a constant $C$, we have
\begin{align*}
\bv^T\nabla^2 R(\btheta^*)\bv&\leq 2C\int  (\bz^T\bv)^2 \frac{1}{(2\pi)^{1/2}} \frac{1}{(2\pi)^{d/2}}\exp(-\frac{\bz^T(\Ib+\btheta^*\btheta^{*T})\bz}{2})d\bz\\
&=2C\frac{\bv^T(\Ib+\btheta^*\btheta^{*T})^{-1}\bv}{(2\pi)^{1/2}|\Ib+\btheta^*\btheta^{*T}|^{1/2}},
\end{align*}
where in the last step we use the integral of a normal distribution with variance $(\Ib+\btheta^*\btheta^{*T})^{-1}$. Since the matrix $\Ib+\btheta^*\btheta^{*T}$ has $d-1$ eigenvalues 1 and 1 eigenvalue $1+\|\btheta^*\|^2_2$, we obtain that
$$
\bv^T\nabla^2 R(\btheta^*)\bv\leq \frac{2C}{(2\pi)^{1/2}(1+\|\btheta^*\|^2_2)^{1/2}}.
$$
Thus, when $\|\btheta^*\|_2$ is diverging, $\bv^T\nabla^2 R(\btheta^*)\bv$ is of order $1/\|\btheta^*\|_2$. Note that we can make the approximation error $\bv^T\nabla^2(R_{\delta_k} (\btheta^*) - c_{n,k}R(\btheta^*))\bv$ ignorable with a small bandwidth $\delta_k$ and the stochastic error $\bv^T(\nabla^2R_{\delta_k}^{D_k}(\btheta^*) - \nabla^2 R_{\delta_k}(\btheta^*))\bv$ ignorable via concentration inequality. Thus, we can show that $\bv^T\nabla^2R_{\delta_k}^{D_k}(\btheta) \bv$ is also of order $c_{n,k}/\|\btheta^*\|_2$, violating our RSC condition (even at the true value $\btheta^*$) when $\|\btheta^*\|_2$ is diverging.

\subsection{Analysis of Region-based sampling}\label{sec_sampling}

Recall that 
    \[
    Q_i = g_{3i}(\bar{H}_{i-1}) \ind\{f_i(\bZ_i, \bar{H}_{i-1}) - g_{1i}(\bar{H}_{i-1})\leq X_i \leq f_i(\bZ_i, \bar{H}_{i-1}) + g_{2i}(\bar{H}_{i-1})\}.
    \]
Since $\EE_P(\sum_{i=1}^n Q_i)\leq N$ holds and $\inf_{x \in \RR, \boldsymbol{z} \in \mathbb{R}^d} f(x \mid \boldsymbol{z}) \geq p_{\min}$ for $P\in \mathcal{P}(\beta,s)$, we have $$
\sum_{i=1}^n \EE_P(g_{3i}(\bar{H}_{i-1})(g_{1i}(\bar{H}_{i-1})+ g_{2i}(\bar{H}_{i-1})) )= O(N).$$ 
Finally, we obtain that
\begin{align*}
    &\sum_{i=1}^n\EE\left[\bZ_i \bZ_i^T \cdot Q_i\right]\\
    =& \sum_{i=1}^n\EE\left[\bZ_i \bZ_i^T \cdot 
    g_{3i}(\bar{H}_{i-1}) \ind\{f_i(\bZ_i, \bar{H}_{i-1}) - g_{1i}(\bar{H}_{i-1})\leq X_i \leq f_i(\bZ_i, \bar{H}_{i-1}) + g_{2i}(\bar{H}_{i-1})\}
    \right] \\
    \leq& p_{\max} \sum_{i=1}^n \EE\left[\bZ_i \bZ_i^T \cdot 
    g_{3i}(\bar{H}_{i-1}) (g_{1i}(\bar{H}_{i-1})+g_{2i}(\bar{H}_{i-1}))
    \right]\\
    =& p_{\max}\EE(\bZ\bZ^T) \sum_{i=1}^n \EE\left[g_{3i}(\bar{H}_{i-1}) (g_{1i}(\bar{H}_{i-1})+g_{2i}(\bar{H}_{i-1}))\right]\\
    =& O(N) \cdot \EE(\bZ\bZ^T),
\end{align*}
where we used the fact that $(X_i, \bZ_i) \perp \bar{H}_{i-1} $. Since the sparse eigenvalue assumption (\ref{sparseeigen2}) holds, the last condition in (\ref{sampling1}) holds as well.

\subsection{Additional simulation results}\label{sec_additional_sim}

In this section, we consider the numerical results for our algorithm with $N_1=N/5$ and $N_2=4N/5$ and the comparison of the four methods concerning  $\|\hat{\btheta} - \btheta^*\|_{\infty}$ and prediction errors, which exhibit similar patterns as $\|\hat{\btheta} - \btheta^*\|_1$ and $\|\hat{\btheta} - \btheta^*\|_2$. 

\begin{figure}
\centering
\subfigcapskip -5pt
\subfigure[]{
\includegraphics[width=0.48\textwidth]{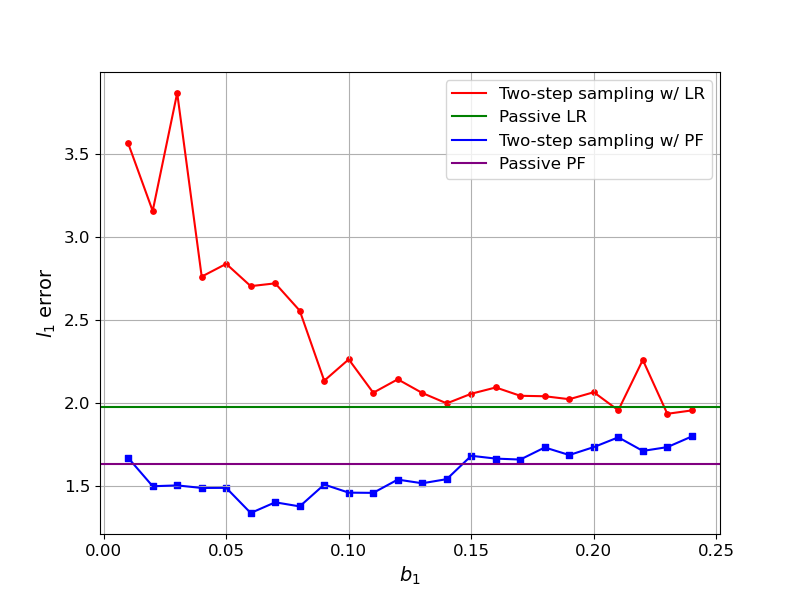}}
\subfigure[]{
\includegraphics[width=0.48\textwidth]{
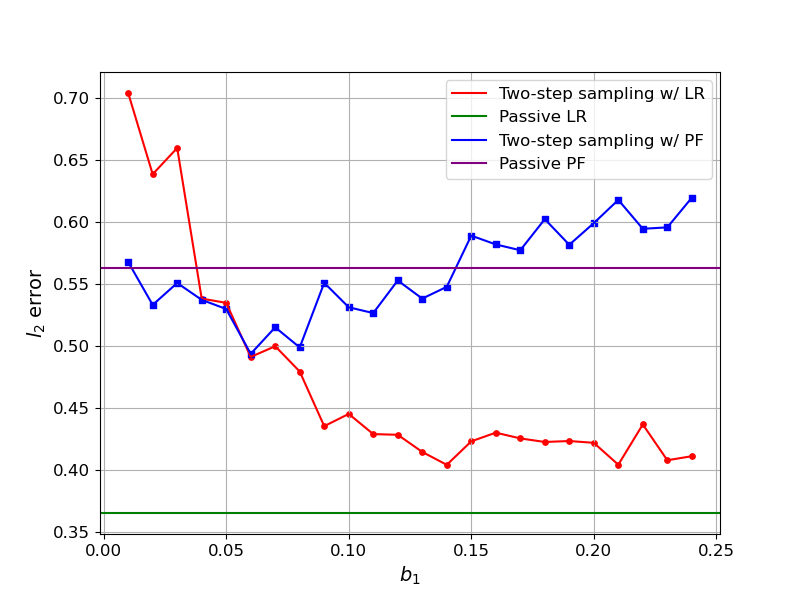}} \vskip -5pt 
\caption{$\|\hat{\btheta} - \btheta^*\|$ in $\ell_1$ and $\ell_2$ for logistic regression. LR: $\ell_1$ penalized logistic regression; PF: path-following algorithm. 1/5 of the label budget is used in the first step for both two-step sampling methods.}
\label{log_5}
\end{figure}

\begin{figure}
\centering
\subfigcapskip -5pt
\subfigure[]{
\includegraphics[width=0.48\textwidth]{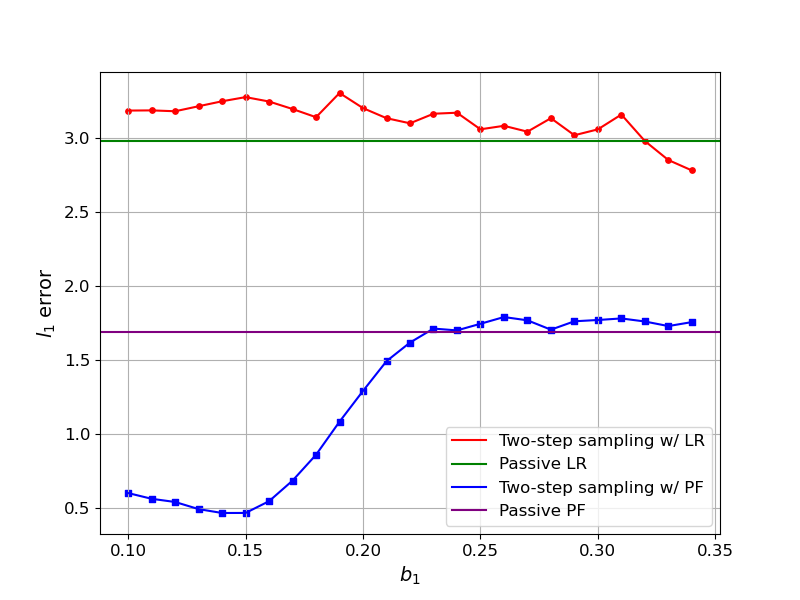}}
\subfigure[]{
\includegraphics[width=0.48\textwidth]{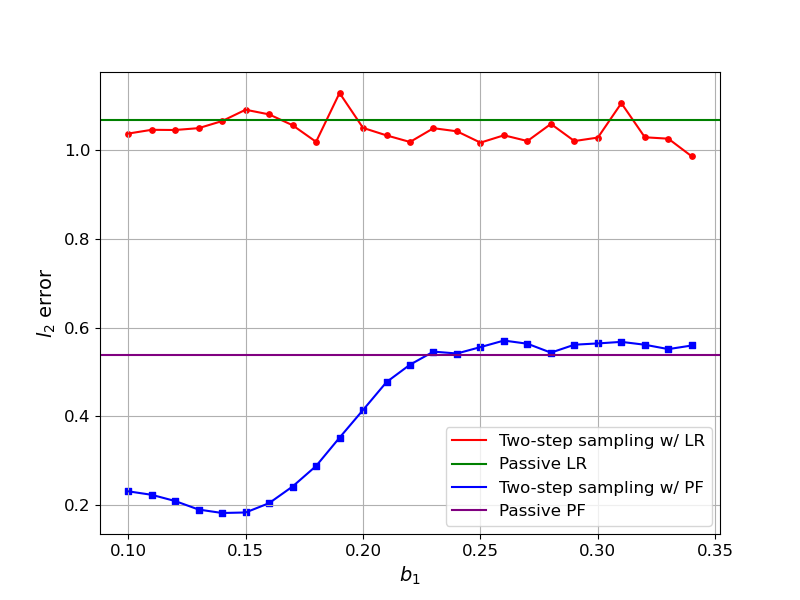}} \vskip -5pt 
\caption{$\|\hat{\btheta} - \btheta^*\|$ in $\ell_1$ and $\ell_2$ for conditional mean model. LR: $\ell_1$ penalized logistic regression; PF: path-following algorithm. 1/5 of the label budget is used in the first step for both two-step sampling methods.}
\label{condi_5}
\end{figure}

\begin{figure}
\centering
\subfigcapskip -5pt
\subfigure[]{
\includegraphics[width=0.48\textwidth]{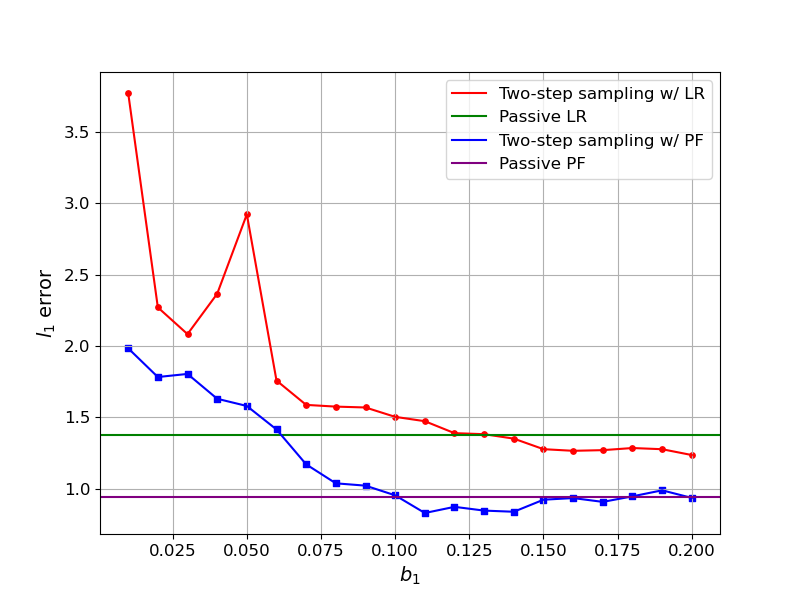}}
\subfigure[]{
\includegraphics[width=0.48\textwidth]{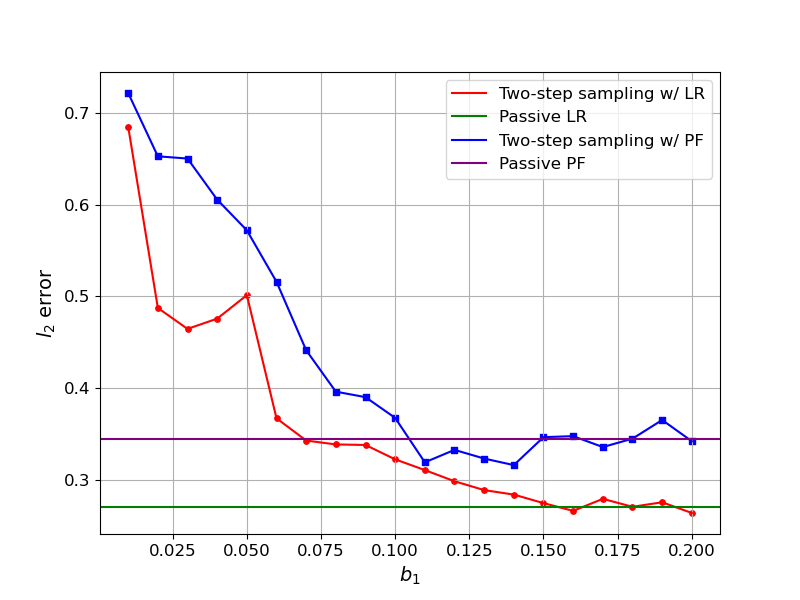}} \vskip -5pt 
\caption{$\|\hat{\btheta} - \btheta^*\|$ in $\ell_1$ and $\ell_2$ for binary response model. LR: $\ell_1$ penalized logistic regression; PF: path-following algorithm. 1/5 of the label budget is used in the first step for both two-step sampling methods.}
\label{manski_5}
\end{figure}

\begin{figure}
\centering
\subfigcapskip -5pt
\subfigure[]{
\includegraphics[width=0.48\textwidth]{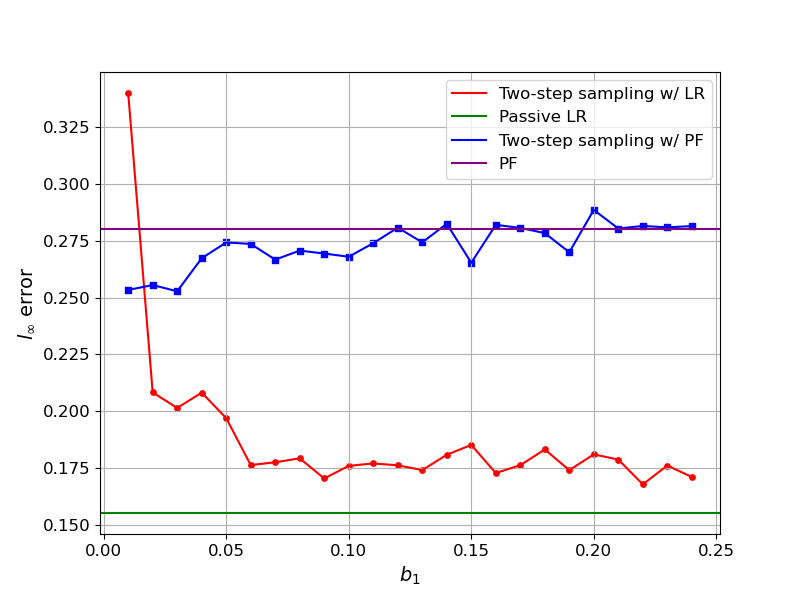}}
\subfigure[]{
\includegraphics[width=0.48\textwidth]{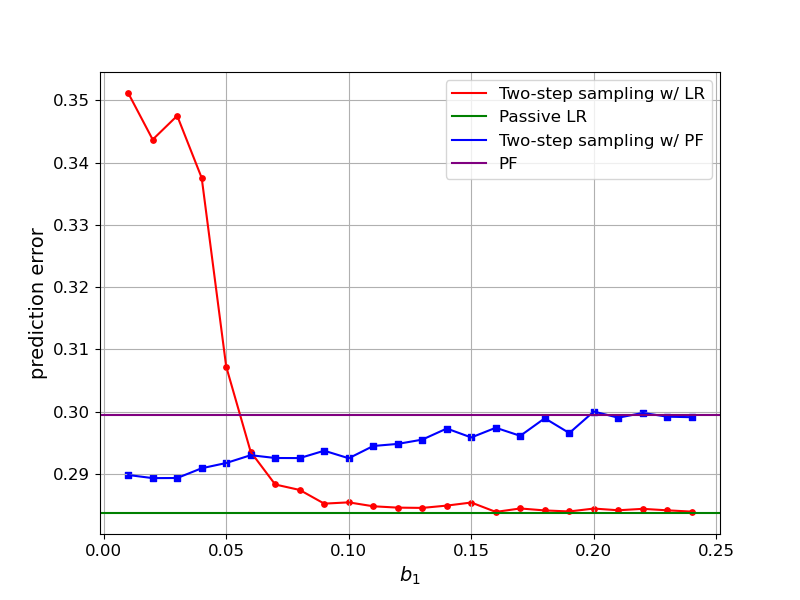}} \vskip -5pt 
\caption{$\|\hat{\btheta} - \btheta^*\|_{\infty}$ and prediction error for logistic regression. LR: $\ell_1$ penalized logistic regression; PF: path-following algorithm. 1/8 of the total budget of labeled data is used in the first step for both two-step sampling methods.}
\label{log_infpred_8}
\end{figure}

\begin{figure}
\centering
\subfigcapskip -5pt
\subfigure[]{
\includegraphics[width=0.48\textwidth]{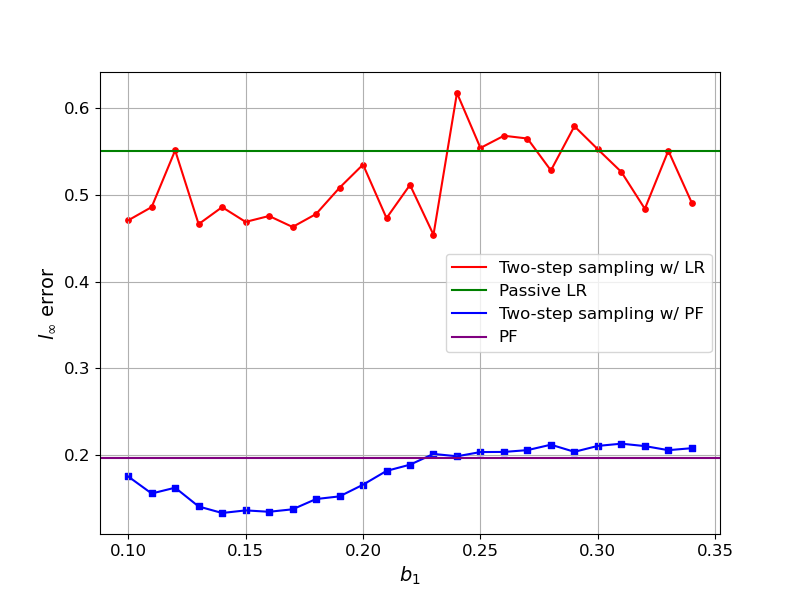}}
\subfigure[]{
\includegraphics[width=0.48\textwidth]{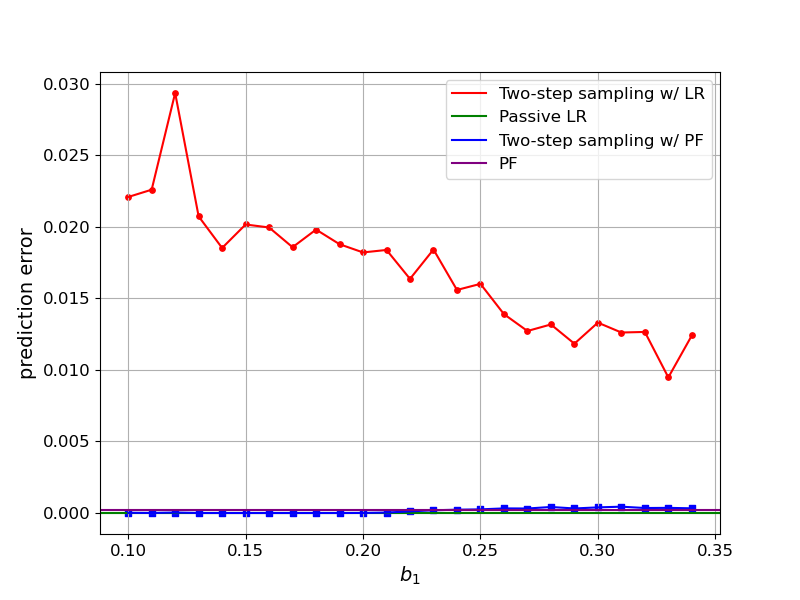}} \vskip -5pt 
\caption{$\|\hat{\btheta} - \btheta^*\|_{\infty}$ and prediction error for conditional mean model. LR: $\ell_1$ penalized logistic regression; PF: path-following algorithm. 1/8 of the total budget of labeled data is used in the first step for both two-step sampling methods.}
\label{condi_infpred_8}
\end{figure}

\begin{figure}
\centering
\subfigcapskip -5pt
\subfigure[]{
\includegraphics[width=0.48\textwidth]{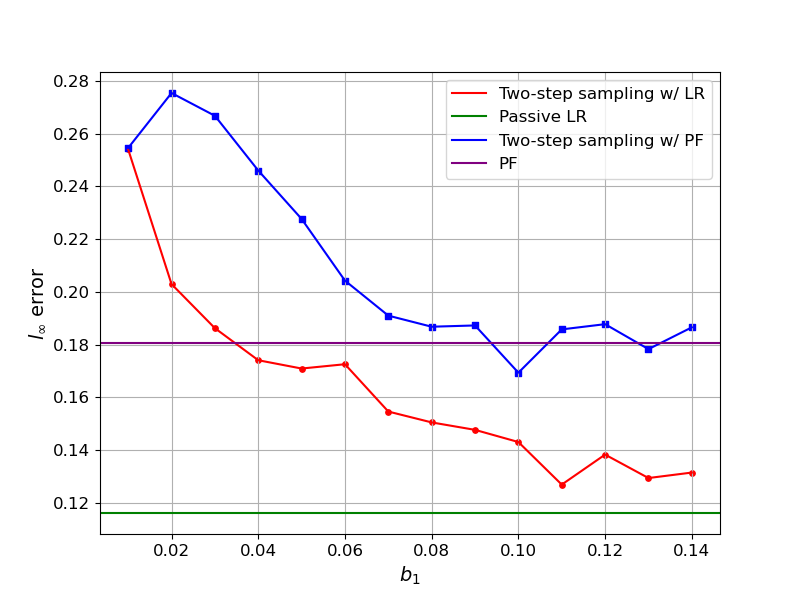}}
\subfigure[]{
\includegraphics[width=0.48\textwidth]{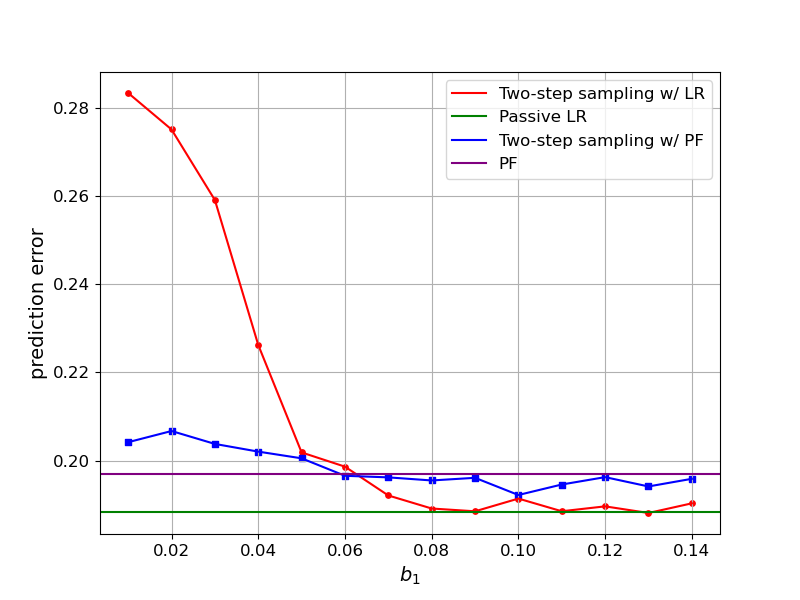}} \vskip -5pt 
\caption{$\|\hat{\btheta} - \btheta^*\|_{\infty}$ and prediction error for binary response model. LR: $\ell_1$ penalized logistic regression; PF: path-following algorithm. 1/8 of the total budget of labeled data is used in the first step for both two-step sampling methods.}
\label{manski_infpred_8}
\end{figure}

\begin{figure}
\centering
\subfigcapskip -5pt
\subfigure[]{
\includegraphics[width=0.48\textwidth]{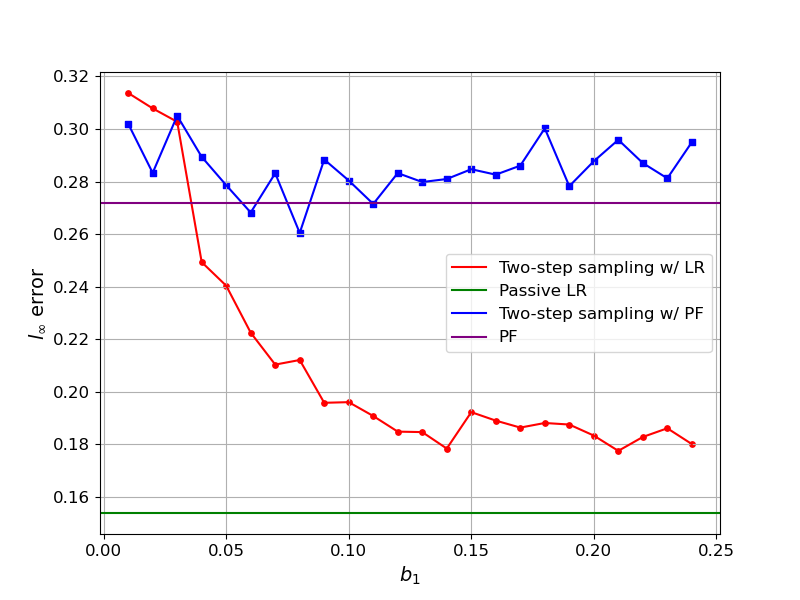}}
\subfigure[]{
\includegraphics[width=0.48\textwidth]{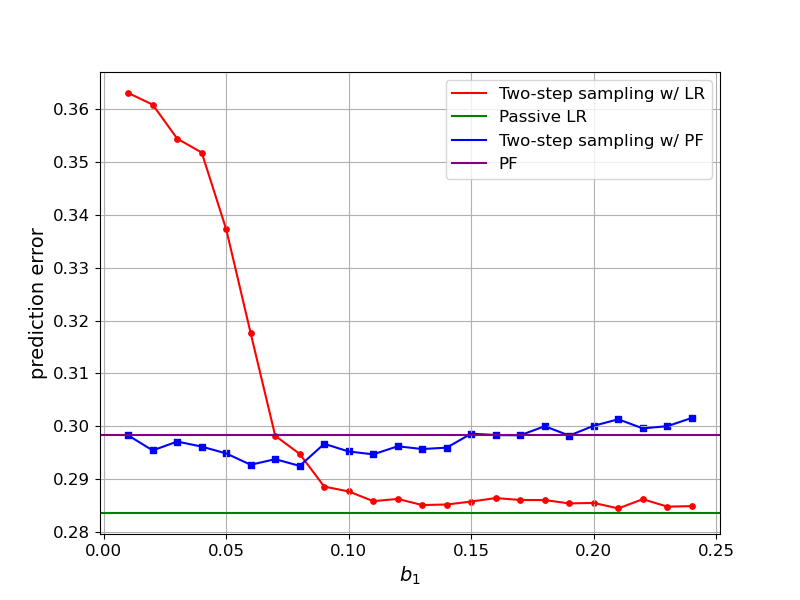}} \vskip -5pt 
\caption{$\|\hat{\btheta} - \btheta^*\|_{\infty}$ and prediction error for logistic regression. LR: $\ell_1$ penalized logistic regression; PF: path-following algorithm. 1/5 of the total budget of labeled data is used in the first step for both two-step sampling methods.}
\label{log_infpred_5}
\end{figure}

\begin{figure}
\centering
\subfigcapskip -5pt
\subfigure[]{
\includegraphics[width=0.48\textwidth]{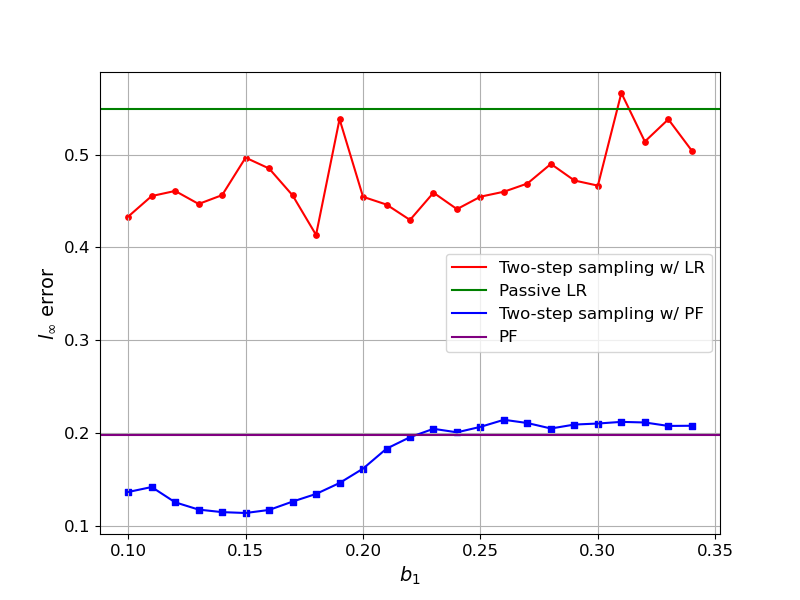}}
\subfigure[]{
\includegraphics[width=0.48\textwidth]{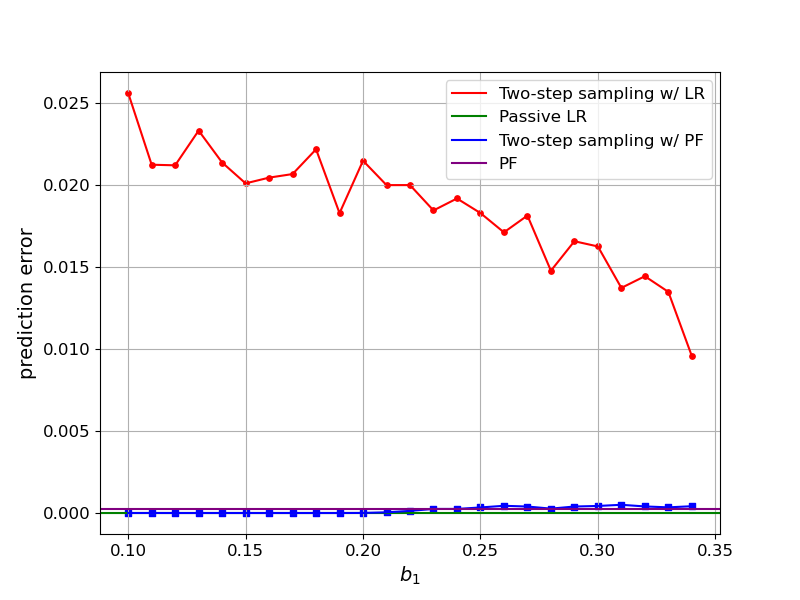}} \vskip -5pt 
\caption{$\|\hat{\btheta} - \btheta^*\|_{\infty}$ and prediction error for conditional mean model. LR: $\ell_1$ penalized logistic regression; PF: path-following algorithm. 1/5 of the total budget of labeled data is used in the first step for both two-step sampling methods.}
\label{condi_infpred_5}
\end{figure}

\begin{figure}
\centering
\subfigcapskip -5pt
\subfigure[]{
\includegraphics[width=0.48\textwidth]{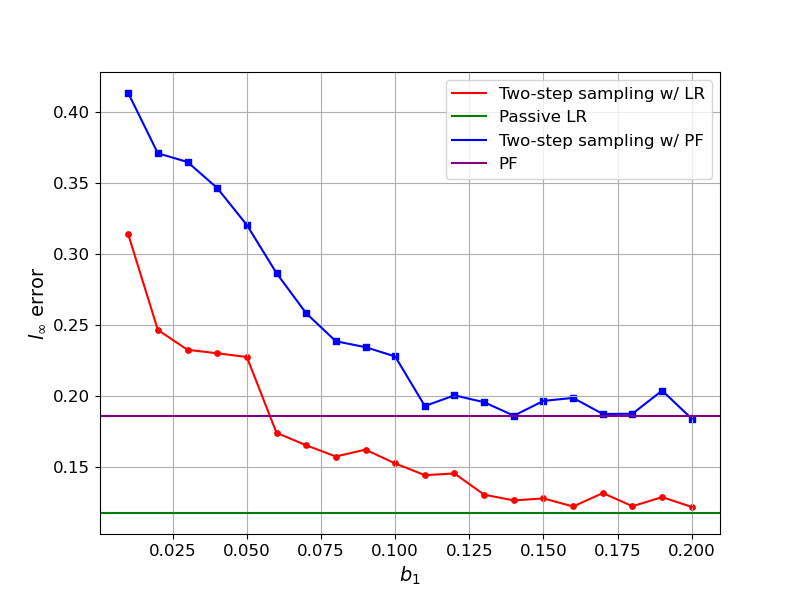}}
\subfigure[]{
\includegraphics[width=0.48\textwidth]{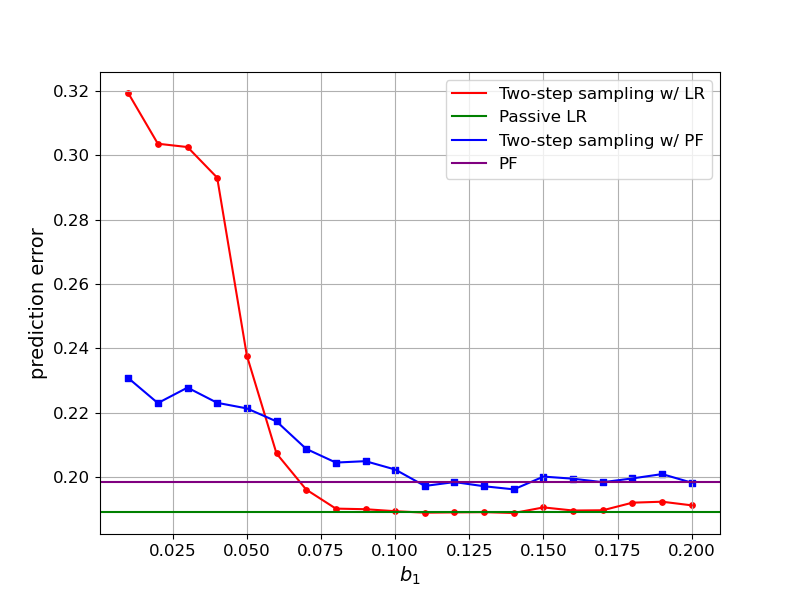}} \vskip -5pt 
\caption{$\|\hat{\btheta} - \btheta^*\|_{\infty}$ and prediction error for binary response model. LR: $\ell_1$ penalized logistic regression; PF: path-following algorithm. 1/5 of the total budget of labeled data is used in the first step for both two-step sampling methods.}
\label{manski_infpred_5}
\end{figure}

\subsection{Additional data analysis}\label{sec_additional_data}
In this section, we conduct an additional real data analysis using the MNIST handwritten digit dataset, a widely used benchmark in machine learning \citep{lecun1998mnist}. The dataset contains 70,000 grayscale images of handwritten digits (0-9), each represented as a $28 \times 28$ pixel array.
To evaluate the effectiveness of our active sampling strategy, we consider a binary classification task using the digits "3" and "5". We choose this pair because they are visually similar and relatively difficult to distinguish, making the classification task more challenging. To further increase the difficulty and to better reflect real-world scenarios in which one class may be underrepresented, we introduce class imbalance.
Specifically, we retain all images of the digit "3" as the majority class ($n_- = 7,141$) and randomly sample images of the digit "5" to create a minority class. We consider two imbalance settings: (i) sampling probability $= 10\%$, resulting in $n_+ = 793$ images of the digit "5"; and (ii) sampling probability $= 20\%$, resulting in $n_+ = 1,785$ images of the digit "5". The former setting represents a more difficult task, where we expect active sampling to provide greater improvement over passive sampling. The response variable is defined as $Y_i = -1$ for digit "3" and $Y_i = 1$ for digit "5". The sample size of the full dataset is $n=n_-+n_+$. 

To incorporate the classification task into our framework, we choose a single pixel (pixel 350) as the primary measurement $X_i$, with the remaining 783 pixels plus an intercept term serving as covariates $\bZ_i \in \mathbb{R}^{784}$. Similar to the setup in Section \ref{sec_data}, our goal is to estimate the optimal individualized threshold $\btheta^T \bZ_i$ such that the digit class can be predicted based on whether the pixel intensity exceeds this threshold ($X_i \geq \btheta^T \bZ_i$) or falls below it ($X_i < \btheta^T \bZ_i$).

We randomly split the dataset into 80\% training set and 20\% test set, with stratified sampling to preserve the class proportions. All pixel values are standardized to have zero mean and unit variance using statistics computed from the training set. 
We compare the performance of our proposed two-step active subsampling method (Active PF) with the passive path-following method (Passive PF), where the labeled data are uniformly sampled from the training set. We consider label budgets of $N = 250, 500, 1000$, and $2000$. For each budget, the experiment is repeated 10 times with different random seeds for sampling, while keeping the training/test split fixed.
Since the dataset is imbalanced, we use the F1 score as the primary evaluation metric, which accounts for both precision and recall and is more informative than accuracy in imbalanced settings. For completeness, we also report the test error.

The results are summarized in Figures~\ref{fig: data imbalanced 10}--\ref{fig: data imbalanced 20} and Tables~\ref{tab: data imbalanced 10}--\ref{tab: data imbalanced 20}. 
Figures~\ref{fig: data imbalanced 10} and \ref{fig: data imbalanced 20} display the averaged F1 scores and test errors over 10 trials under the two imbalance settings. Recall that an F1 score of 1 corresponds to perfect classification performance. In both cases, our proposed active subsampling method consistently outperforms the passive path-following method across all label budgets. The improvement is more substantial in the more challenging setting with sampling probability $= 10\%$, where the F1 score improves by approximately 0.03--0.05 compared to passive sampling. When the sampling probability is $20\%$, the improvement is smaller but still consistent, ranging from 0.01 to 0.02.
Similar patterns are observed for test errors. The active subsampling method achieves lower test errors across all budgets in both settings, with a larger performance gap in the more severely imbalanced case.

Moreover, Tables~\ref{tab: data imbalanced 10} and \ref{tab: data imbalanced 20} show that the active subsampling method not only achieves higher averaged F1 scores but also exhibits smaller standard deviations in most cases, indicating more stable performance. For example, when the sampling probability is $10\%$ and the label budget is $N = 250$, the standard deviation of F1 score is 0.025 for Active PF compared to 0.045 for Passive PF. 
In conclusion, our proposed two-step active subsampling method consistently outperforms passive sampling across different budget levels and imbalance settings, with larger improvements in more challenging scenarios.

\begin{figure}
\centering
\subfigcapskip -5pt
\subfigure[]{
\includegraphics[width=0.48\textwidth]{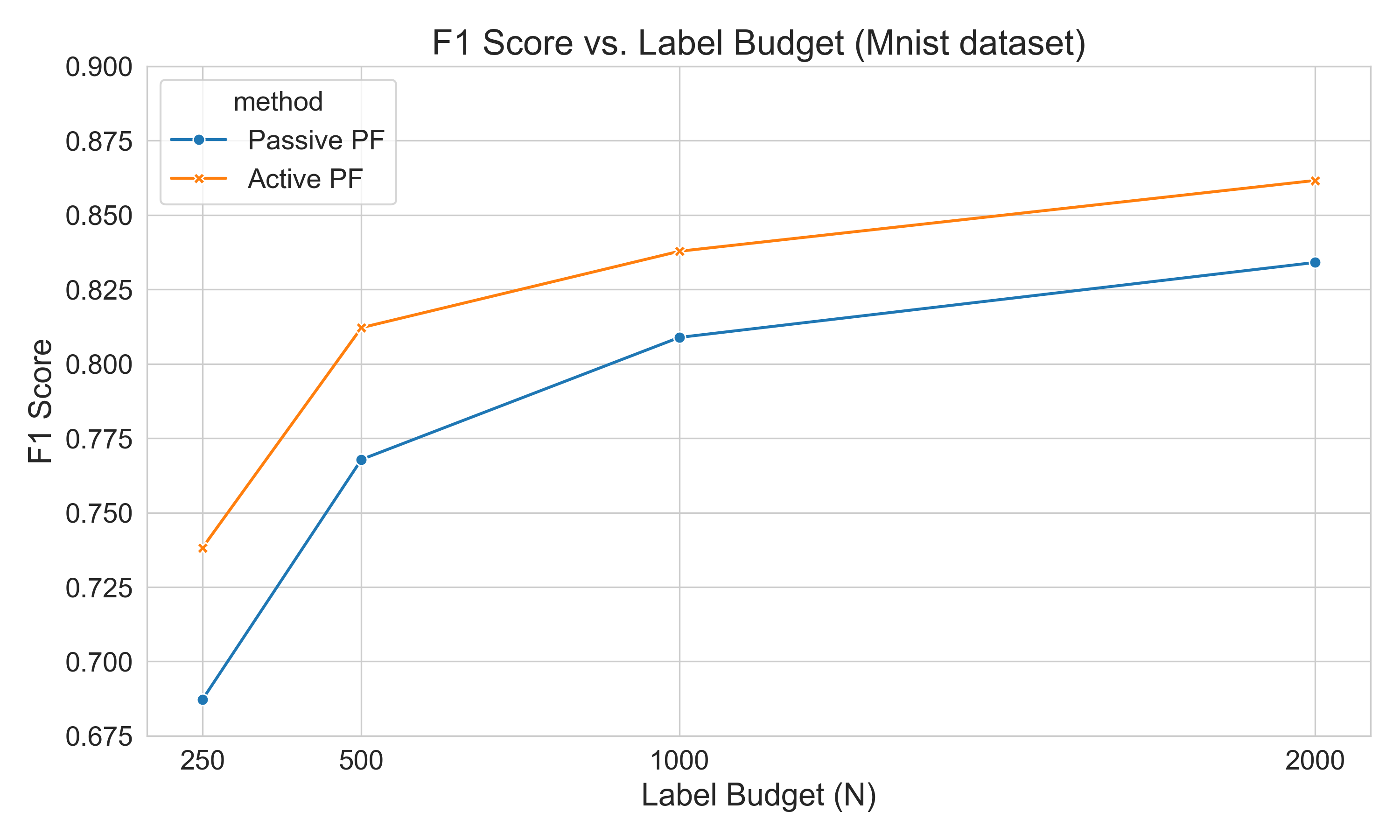}}
\subfigure[]{
\includegraphics[width=0.48\textwidth]{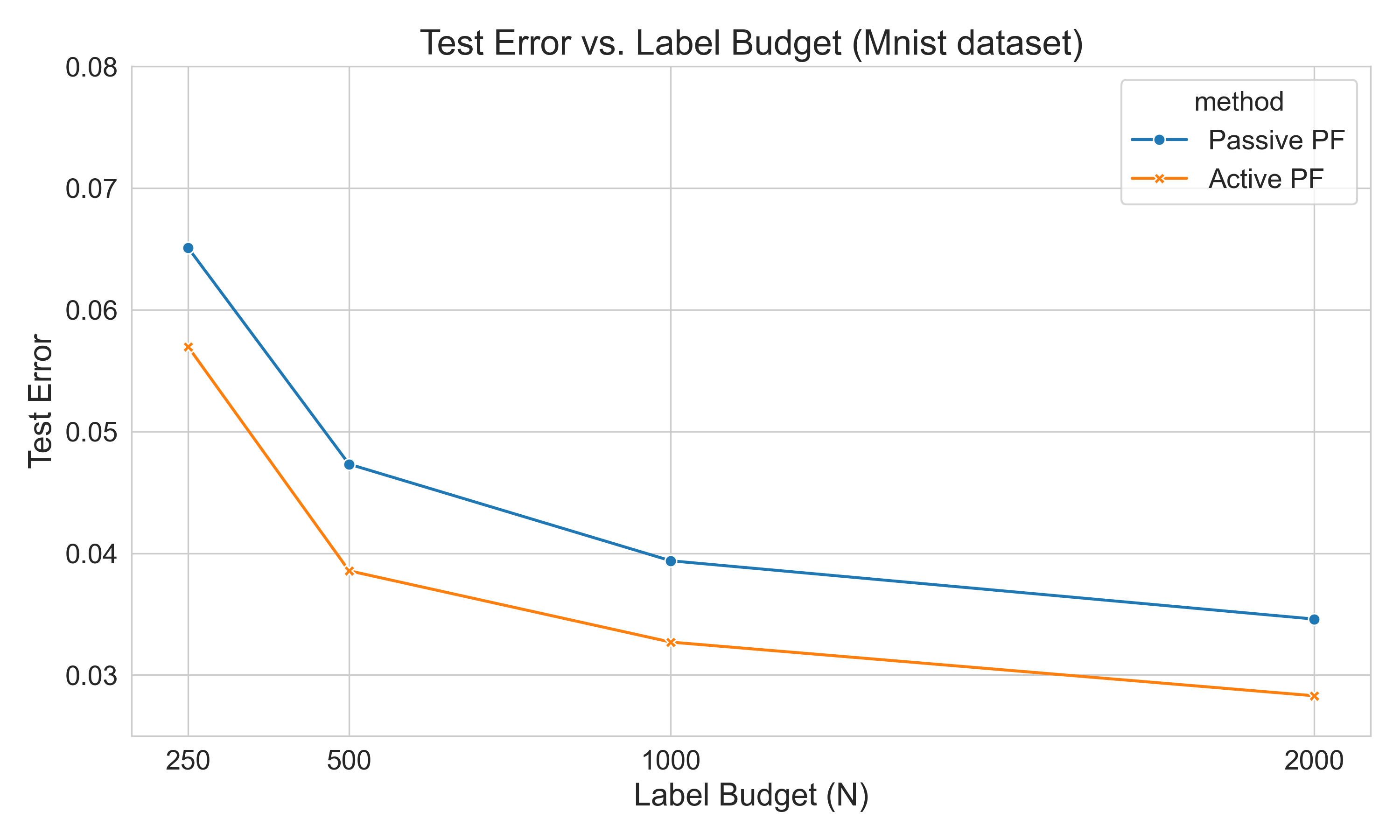}} \vskip -5pt 
\caption{F1 Score and test error vs. label budget with sampling probability $= 10\%$}
\label{fig: data imbalanced 10}
\end{figure}

\begin{table}[htbp]
\centering
\caption{Comparison of Active PF and Passive PF with sampling probability $= 10\%$. Numbers in parentheses are standard deviations. Diff = Active PF $-$ Passive PF for F1 Score and Test Error. Win Rate denotes the proportion of trials where Active PF outperforms Passive PF.}
\label{tab: data imbalanced 10}
\begin{tabular}{c|cccc|cccc}
\hline
 & \multicolumn{4}{c|}{F1 Score} & \multicolumn{4}{c}{Test Error (\%)} \\
$N$ & Passive PF & Active PF & Diff & Win Rate & Passive PF & Active PF & Diff & Win Rate \\
\hline
250  & 0.687(0.045) & 0.738(0.025) & 0.051 & 80\% & 6.51(0.93) & 5.70(0.78) & -0.81 & 80\% \\
500  & 0.768(0.021) & 0.812(0.016) & 0.044 & 90\% & 4.73(0.51) & 3.86(0.37) & -0.88 & 90\% \\
1000 & 0.809(0.009) & 0.838(0.013) & 0.029 & 100\% & 3.94(0.25) & 3.27(0.29) & -0.67 & 100\% \\
2000 & 0.834(0.016) & 0.862(0.012) & 0.028 & 100\% & 3.46(0.38) & 2.83(0.29) & -0.63 & 100\% \\
\hline
\end{tabular}
\end{table}

\begin{figure}
\centering
\subfigcapskip -5pt
\subfigure[]{
\includegraphics[width=0.48\textwidth]{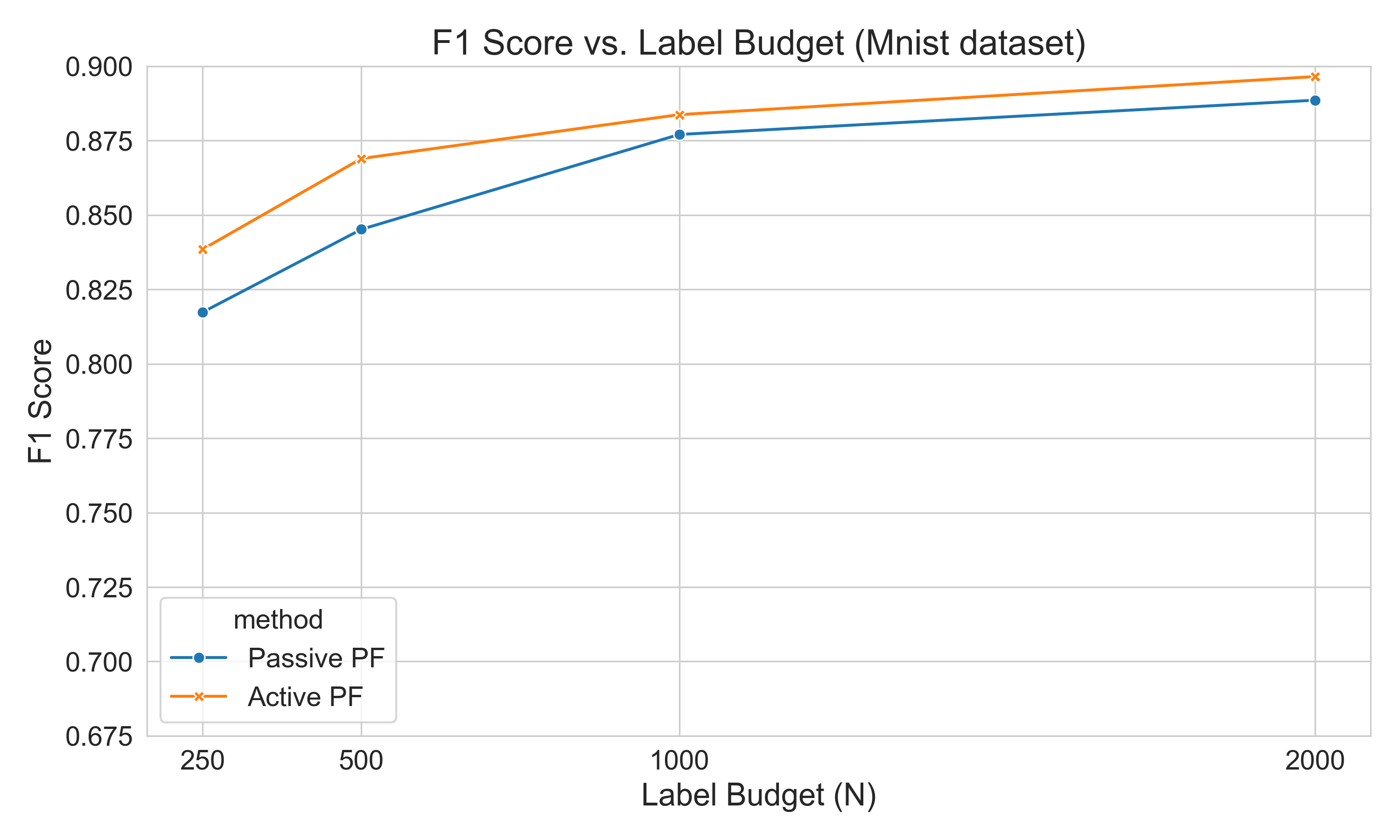}}
\subfigure[]{
\includegraphics[width=0.48\textwidth]{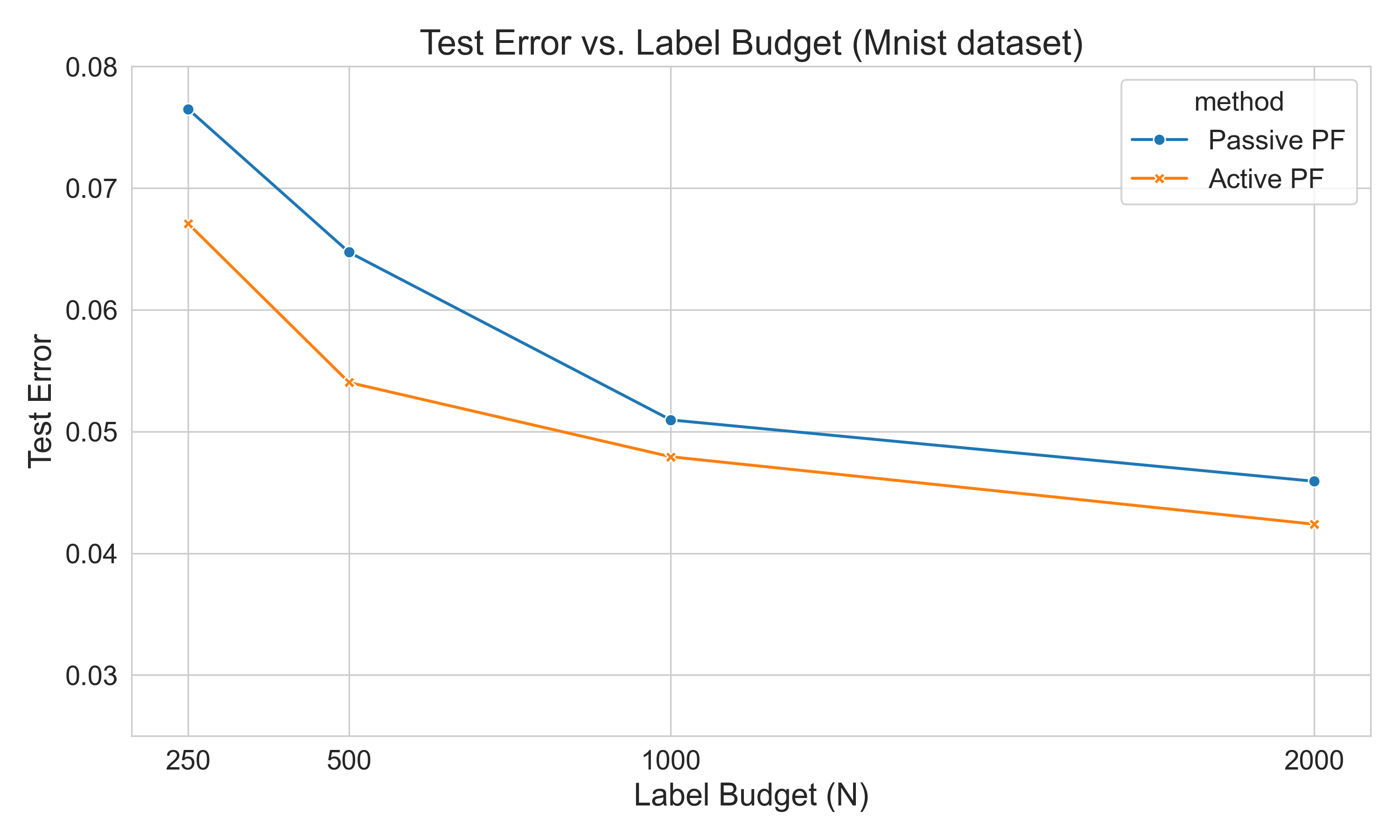}} \vskip -5pt 
\caption{F1 Score and test error vs. label budget with sampling probability $= 20\%$}
\label{fig: data imbalanced 20}
\end{figure}

\begin{table}[htbp]
\centering
\caption{Comparison of Active PF and Passive PF with sampling probability $= 20\%$. Numbers in parentheses are standard deviations. Diff = Active PF $-$ Passive PF for F1 Score and Test Error. Win Rate denotes the proportion of trials where Active PF outperforms Passive PF.}
\label{tab: data imbalanced 20}
\begin{tabular}{c|cccc|cccc}
\hline
 & \multicolumn{4}{c|}{F1 Score} & \multicolumn{4}{c}{Test Error (\%)} \\
$N$ & Passive PF & Active PF & Diff & Win Rate & Passive PF & Active PF & Diff & Win Rate \\
\hline
250  & 0.817(0.022) & 0.838(0.009) & 0.021 & 80\% & 7.65(0.97) & 6.71(0.60) & -0.94 & 80\% \\
500  & 0.845(0.019) & 0.869(0.013) & 0.024 & 90\% & 6.47(0.80) & 5.40(0.63) & -1.07 & 80\% \\
1000 & 0.877(0.009) & 0.884(0.011) & 0.007 & 60\% & 5.10(0.43) & 4.79(0.50) & -0.30 & 60\% \\
2000 & 0.889(0.010) & 0.897(0.010) & 0.008 & 50\% & 4.59(0.42) & 4.24(0.39) & -0.35 & 50\% \\
\hline
\end{tabular}
\end{table}

\end{document}